\let\orgdescriptionlabel\descriptionlabel
\renewcommand*{\descriptionlabel}[1]{%
  \let\orglabel\label
  \let\label\@gobble
  \phantomsection
  \edef\@currentlabel{#1}%
  \let\label\orglabel
  \orgdescriptionlabel{#1}%
}
\newtheorem{theorem}{Theorem}[section]
\newtheorem{lemma}[theorem]{Lemma}
\newtheorem{coro}[theorem]{Corollary}
\newtheorem{prop}[theorem]{Proposition}
\theoremstyle{definition}
\newtheorem{defi}[theorem]{Definition}
\newtheorem{exem}[theorem]{Example}
\newtheorem{nota}[theorem]{Notation}
\theoremstyle{remark}
\newtheorem{rema}[theorem]{Remark}
\numberwithin{equation}{section}
\newcommand{\p}{\mathbb{P}}
\newcommand{\C}{\mathcal{C}}
\newcommand{\E}{\mathcal{E}}
\newcommand{\K}{\mathcal{K}}
\newcommand{\XX}{\mathcal{X}}
\newcommand{\HH}{\mathcal{H}}
\newcommand{\CC}{\mathbb{K}}
\newcommand{\N}{\mathcal{N}}
\newcommand{\GG}{\mathbb{G}}
\newcommand{\TTT}{\mathcal{T}}
\newcommand{\GR}{\mathbb{G}}
\newcommand{\TT}{\mathbb{T}}
\newcommand{\OO}{\mathcal{O}}
\newcommand{\II}{\mathcal{I}}
\newcommand{\FF}{\mathcal{F}}
\newcommand{\UU}{\mathcal{U}}
\newcommand{\QQ}{\mathcal{Q}}
\newcommand{\w}{\wedge}
\newcommand{\bw}{\bigwedge}
\newcommand{\gen}[1]{\langle#1\rangle}
\DeclareMathOperator{\Proj}{Proj}
\DeclareMathOperator{\Hom}{Hom}
\DeclareMathOperator{\rk}{rank}
\DeclareMathOperator{\GL}{GL}
\DeclareMathOperator{\Sing}{Sing}
\DeclareMathOperator{\SL}{SL}
\DeclareMathOperator{\pf}{Pfaff}
\DeclareMathOperator{\Sym}{Sym}
\DeclareMathOperator{\codim}{codim}
\DeclareMathOperator{\im}{Im}
\DeclareMathOperator{\multdeg}{multdeg}
\DeclareMathOperator{\G2}{G}
\DeclareMathOperator{\Spin}{Spin}
\DeclareMathOperator{\Sp}{Sp}
\DeclareMathOperator{\Hilb}{Hilb}
\DeclareMathOperator{\Pic}{Pic}
\def\EExt{\mathcal{E}xt}
\begin{document}

\sloppy

\title{Fano congruences of index $3$ and alternating $3$-forms}

%    Remove any unused author tags.

%    author one information
\author{Pietro De Poi}
\address{Dipartimento di Scienze Matematiche, Informatiche e Fisiche \\
Universit\`a degli Studi di Udine  \\
Via delle Scienze 206 \\
Localit\`a Rizzi \\
33100 Udine \\
Italy}
%\curraddr{here}
\email{pietro.depoi@uniud.it}

%    author two bis information
\author{Daniele Faenzi}
\address{Universit\'e de Bourgogne\\
Institut de Math\'ematiques de Bourgogne\\
UMR CNRS 5584\\
UFR Sciences et Techniques -- B\^atiment Mirande -- Bureau 310\\
9 Avenue Alain Savary \\ 
BP 47870 21078 Dijon Cedex \\ 
France}
%\curraddr{there}
\email{daniele.faenzi@u-bourgogne.fr}

%    author two information
\author{Emilia Mezzetti}
\address{Dipartimento di Matematica e Geoscienze\\
Sezione di Matematica e Informatica\\
Universit\`a degli Studi di Trieste\\
Via Valerio 12/1\\
34127 Trieste\\ 
Italy}
%\curraddr{there}
\email{mezzette@units.it}

\author{Kristian Ranestad}
\address{Department of Mathematics\\
University of Oslo\\
P.O. Box 1053 Blindern\\
NO-0316 Oslo\\
Norway}
%\curraddr{there}
\email{ranestad@uio.no}
\thanks{P.D.P. \& E.M. are members of INdAM - GNSAGA and are supported by PRIN
``Geometria delle variet\`a algebriche''; E.M. is also supported by
FRA, Fondi di Ricerca di Ateneo, Universit\`a di Trieste.
D.F. partially supported by French National Research Agency Grant "BirPol",
ANR-11-JS01-004-01. K.R. partially supported by RCN project no 239015 ``Special Geometries''.}

\subjclass[2010]{14M15, 14J45, 14J60. Secondary 14M06, 14M05.}

\keywords{Fano varieties; congruences of lines; trivectors;
  alternating 3-forms; Cohen-Macaulay varieties; linkage; linear
  congruences; Coble variety; Peskine variety; variety of reductions;
  secant lines; fundamental loci.}

\begin{abstract} We study congruences of lines $X_\omega$ defined by a
  sufficiently 
  general choice of an alternating 3-form $\omega$ in $n+1$ dimensions,
  as Fano manifolds of 
  index $3$ and dimension $n-1$. These congruences include the
  $\G2_2$-variety for $n=6$ and the variety
  of reductions of projected $\p^2 \times \p^2$ for $n=7$.
  
  We compute the degree
  of $X_\omega$ as the $n$-th Fine number and study the Hilbert scheme
  of these congruences proving that the choice of $\omega$ bijectively
  corresponds to $X_\omega$ except when $n=5$.
  The fundamental locus of the congruence is also studied together
  with its singular locus: these varieties include the Coble cubic for $n=8$ and
  the Peskine variety for $n=9$.

  The residual congruence $Y$ of $X_\omega$ with respect to a general
  linear congruence containing $X_\omega$ is analysed in terms of the
  quadrics containing the linear span of $X_\omega$. We prove that $Y$
  is Cohen-Macaulay but non-Gorenstein in codimension $4$.
  We also examine the fundamental locus $G$ of $Y$ of which we
  determine the singularities and the
  irreducible components.
\end{abstract}

\maketitle

\tableofcontents

%%%%%%%%%%%%%%%%%%%

\section{Introduction}\label{sect:intro}
Let $V$ be an $(n+1)$-dimensional $\CC$-vector space, and let $\GR:=G(2,V)\subset\p(\bw^2V)$ 
be the Grassmannian of $2$-dimensional $\CC$-vector subspaces of $V$, or, equivalently,  the Grassmannian of lines in $\p^n=\p(V)$.
 A \emph{congruence of lines} in the projective space $\p(V)$ is an $(n-1)$-dimensional closed subvariety of $\GG$.  
A \emph{linear congruence} is a congruence formed by the 
proper intersection of $\GG$ with a linear space of codimension $n-1$ 
in $\p(\bw^2V)$.  In this paper we consider congruences of lines that are proper components of linear congruences.  
A general $3$-form $\omega\in \bw^3 V^*$  
defines a linear subspace $\Lambda_\omega:=\{[L]\in \p(\bw^2V)\mid \omega(L)=0\}$ of codimension $n+1$ in $\p(\bw^2V)$.  
The intersection $X_\omega:=\GG\cap \Lambda_{\omega}$ is a congruence of lines, the  first main object of this paper.
A general $2$-dimensional subspace $\gen{x,y}\subset V^*$ defines a codimension $n-1$ linear subspace $\Lambda^{xy}_{\omega}\subset\p(\bw^2V)$, 
that contains $\Lambda_\omega$.  The intersection  
\begin{equation*}
\Lambda^{xy}_{\omega}\cap \GG=X_\omega\cup Y_{\omega,x\w y}
\end{equation*}
is a reducible linear congruence, with one component $X_\omega$ and the other component a congruence $Y_{\omega,x\w y}$.  
The congruence $Y_{\omega,x\w y}$---called the \emph{residual congruence}, and sometimes denoted by $Y$ for simplicity---is 
the second main object of this paper.

 The \emph{order} of a congruence is the number of lines in the congruence that pass through a general point in $\p(V)$. 
More generally, the \emph{multidegree} of a congruence is the sequence of the coefficients in the expression 
of the congruence as a linear combination of Schubert cycles in the Chow ring of $\GG$: the \emph{$i$-th multidegree} 
$i=0,\dotsc, \left[\frac{n-1}{2}\right],$ is the number of lines contained in a general $\p^{n-i}\subset\p^n$ 
that intersect a general $\p^i$ contained in $\p^{n-i}$, with $i<n-i$.    
A linear congruence has order one.
Since $X_\omega$ and  $Y_{\omega,x\w y}$ are components of a linear congruence, one of them has order one, the other has order zero.  
The order is the first component of the multidegree referred to above (i.e. the $0$-th multidegree), 
so $X_\omega$ has order one when $n$ is even, while $Y_{\omega,x\w y}$ has order one when $n$ is odd. 
 
The \emph{fundamental locus} of a congruence %of order one  
is the locus of points in $\p(V)$ through which there are infinitely many lines  
of the congruence.    

Congruences of lines of order one appear naturally in several interesting problems in geometry, and thus motivated this study.  These include the classification of varieties with one apparent double point (\cite{CMR04}, \cite{CR11}), 
the degree of irrationality of general hypersurfaces (\cite{BCDeP14}, \cite{BDeP15}) 
and in hyperbolic conservation laws, so called Temple systems of partial differential equations (\cite{AF01}, \cite{DePM05})).  
For a survey of order one congruences of lines, see \cite{DePM07}.

 A well-known fact is that while the general linear congruence in $\GG$ is a Fano variety of index $2$, the congruence $X_\omega$ is a Fano variety of index $3$ (Theorem \ref{thm:qq2}).
 Varieties $X_{\omega}$ for small $n$  have been studied by many authors, 
both by the construction from a $3$-form as above, and by other constructions:  When $n=3,4,5,6$  and $\omega$ is general, then $X_\omega$ is a plane, 
a quadric threefold, the Segre product $\p^2\times \p^2$ 
and the closed orbit of the Lie group $\G2_2$ in its adjoint representation, respectively.   
For the next values of $n$ there are more recent studies 
by Peskine in \cite{P2} in the cases $n=7,9$, by Gruson and Sam in \cite{GS15} in the case $n=8$ 
and by Han in \cite{Han} in the case $n=9$.

In this paper we present and prove general properties, some well-known and some new, of the congruences $X_\omega$ and $Y_{\omega,x\w y}$, for sufficiently general $\omega$, $x$ and $y$.   After presenting equations \ref{eq:importantbis}, and locally free resolutions of their ideals (Theorems \ref{thm:qq2} and \ref{linked}), we give
the multidegree of these congruences in the Chow ring of the Grassmannian (Propositions \ref{prop:tria} and \ref{prop:tric}).

Both $X_\omega$ and $Y_{\omega,x\w y}$ are improper intersections of the Grassmannian with a linear space: 
$Y_{\omega,x\w y}$ is contained in a codimension $n$ linear space which we shall denote by $\Lambda_{\omega,x\w y}$ 
(i.e. $\Lambda_{\omega,x\w y}$ is a 
hyperplane in $\Lambda^{xy}_{\omega}$), see Definition \eqref{span4}, Remark \ref{rmk:l} and Proposition \ref{spanY}. 
 We give different characterisations 
of their linear spans.  In particular we identify the quadrics in the ideal of $\GR$---called also \emph{Pl\" ucker quadrics}---that contain 
the linear span of $X_\omega$ (Proposition \ref{Qn+1}).  
These quadrics necessarily have a large linear singular locus.  
We show (Theorem \ref{2n-quadric}) that this singular locus is the linear span of a congruence $X'_{\omega'}$ of dimension one less.

We study the fundamental loci of $X_\omega$ and of its residual congruences, giving equations and numerical invariants of the fundamental locus $F_{\omega}$ of $X_\omega$ (Proposition \ref{prop:fonda}) and numerical invariants for the fundamental locus $G_{\omega,x\w y}$ of $Y_{\omega,x\w y}$ (Theorem \ref{Gomega}).

Peskine showed recently that if an irreducible congruence of lines is Cohen-Macaulay and  has order one, then the lines of the congruence are the $k$-secant lines to its fundamental locus for some $k$ \cite[Theorem 3.2]{P2}.   The integer $k$ is called the {\em secant index} of the congruence.

The congruences $X_\omega$ and $Y_{\omega,x\w y}$ are both Cohen-Macaulay (Theorem \ref{thm:qq1} and Proposition \ref{Y is CM}, \eqref{y:3}).  
In fact $X_\omega$ is smooth, while  $Y_{\omega,x\w y}$ is singular and not even Gorenstein when $n>5$ (Proposition \ref{singular}). 
Moreover, we prove that both  $X_\omega$ and $Y_{\omega,x\w y}$ are 
arithmetically Cohen-Macaulay (Corollary \ref{cor:acm} and Proposition \ref{Yred}, \eqref{r:1}). Since $X_\omega$ is also subcanonical, it is 
arithmetically Gorenstein also (Corollary \ref{cor:acm}). 

When $n$ is odd, then $X_\omega$ has order one, the fundamental locus $F_{\omega}\subset\p(V)$ has codimension $3$ and is singular in codimension $10$.  The secant index of $X_\omega$ is $(n-1)/2$.  The congruence $Y_{\omega,x\w y}$ has order zero and the fundamental locus $G_{\omega,x\w y}$ is a hypersurface of degree $(n-1)/2$ that contains  $F_{\omega}$. 

When $n$ is even, then $X_\omega$ has order zero, and the fundamental locus $F_{\omega}$ is a hypersurface of degree $(n-2)/2$, while 
$Y_{\omega,x\w y}$ has order one and the fundamental locus $G_{\omega,x\w y}=\Pi\cup G_0$ is the union of the codimension $2$ linear space $\Pi=\{x=y=0\}\subset \p(V)$ and a codimension $3$ subvariety $G_0$ contained in $F_\omega$ (Theorem {\ref{Gomega}).   The secant index of $Y_{\omega,x\w y}$ is $n/2$, and each line in the congruence is $(n-2)/2$-secant to $G_0$ and intersects $\Pi$ (Theorem \ref{pippo2}).

The quadrics in the ideal of $\GG$ naturally correspond to elements of $\bw^4V^*$, and the quadrics in this ideal that contain the linear span $\Lambda_{\omega}$ are naturally characterised via this correspondence. Moreover, for these quadrics, their singular locus is the linear span of a 
congruence of the same type and dimension one less (Theorem \ref{2n-quadric}).

Therefore we introduce and present basic results on this correspondence in Section \ref{sect:1}.  The congruence $X_\omega$ is defined and introduced with basic properties in Section \ref{sect:2}.  Section \ref{sect:fund} is devoted to the fundamental locus of $F_\omega$ of $X_\omega$, while properties of the Hilbert scheme of $X_\omega$ are studied in Section \ref{hilbert}.
In Section \ref{sect:quadrics} we identify the quadrics in the ideal of  $\GG$ that contain the linear span of $X_\omega$.   A general linear space of maximal dimension in such a quadric contains a congruence $X_\omega'$ or a congruence $Y_{\omega',x\w y}$ for some  $3$-form $\omega'$ and some linear forms $x,y\in V^*$.  The final section \ref{residual} is devoted to the  congruence $Y_{\omega,x\w y}$ and its fundamental locus.  
Some of the main properties and invariants of the congruences $X_\omega$ 
and $Y$ and their fundamental loci are collected in Tables \ref{t:1} and \ref{t:2} at the end of the paper.
\bigskip

{\sc Acknowledgements.} We wish to thank Ada Boralevi, Fr\'ed\'eric Han and Christian Peskine for encouragements and many interesting discussions on this topic.
We would like to thank the referee for useful remarks.

\subsection{Notation}\label{notation}
We shall 
work over an algebraically closed field $\CC$ of characteristic zero. 

Throughout the paper, $V$ will be an $(n+1)$-dimensional $\CC$-vector space, and $\GR:=G(2,V)$ will 
be the Grassmannian of $2$-dimensional vector subspaces of $V$.

First, we fix a basis $(e_0,\dotsc,e_n)$ for 
$V$.  
Let $(x_0,\dotsc,x_n)$ be the dual basis in $V^*$.  
Then, the $n$-dimensional projective space defined by the lines of $V$ is 
$\p(V)=\Proj(\CC[x_0\dotsc,x_n])=\Proj(\Sym(V^*))$.
We shall denote---as it is the custom---by $\TTT_{\p(V)}$ and $\Omega_{\p(V)}^1$ the tangent and cotangent bundles of $\p(V)$, respectively. Moreover, 
as usual, $\Omega_{\p(V)}^k:=\bw^k \Omega_{\p(V)}^1$,
$\TTT_{\p(V)}(h):=\TTT_{\p(V)}\otimes \OO_{\p(V)}(h)$, etc. 

We will adopt the following convention on parenthesis:  $S^m\E(t)$
means the twist by $\OO(t)$ of the $m$-th symmetric product of $\E$,
and similarly for exterior powers and any other Schur functor of $\E$.

We consider the Grassmannian $\GR$ 
with its Pl\"ucker embedding: 
$\GR\subset\p(\bw^2V)$, and we fix Pl\"ucker coordinates $p_{i,j}$ on it: for 
example the point defined by 
$p_{i,j}=0$ if $(i,j)\neq(0,1)$, 
i.e. the point $[1,0,\dotsc,0]\in \GR$, corresponds to the 
subspace generated by $e_0$ and $e_1$.

Since there is no standard notation about 
universal and quotient bundles on Grassmannians, we fix it in the following.
On $\GR$ we denote the \emph{universal subbundle} of rank $2$ by $\UU$  and the \emph{quotient bundle} of rank $n-1$ by $\QQ$. 
They fit in the following exact sequence
\begin{equation}\label{seq:uq}
0\to \UU     \to     V\otimes \OO_{\GR}\to \QQ\to 0,
\end{equation}
where the universal subbundle $\UU$ has as its fibre over $\ell\in \GR$ the $2$-dimensional vector subspace $L$ of $V$ 
which corresponds to the point $\ell$, while the quotient bundle $\QQ$ has as its fibre over $\ell\in \GR$ the quotient space $V/L$.

A \emph{congruence of lines} in $\p(V)$ is a family of lines of dimension $n-1$. In other words, 
it is a closed subvariety---not necessarily irreducible---of dimension $n-1$ of $\GR$. 
A \emph{linear congruence} is a congruence corresponding to the proper intersection of $\GR$ with a linear subspace of $\p(\bw^2V)$ of codimension $n-1$. 
Note that this definition of linear congruence is more restrictive than the one given by Peskine  \cite{P2}, who considers linear but possibly improper sections of $\GR$ of pure dimension $n-1$.

The \emph{order} of a congruence $\Gamma$ is  the number of lines of $\Gamma$ passing through a general point $P\in \p(V)$. 
In other words it is the 
degree of the intersection of $\Gamma$ with the Schubert variety $\Sigma_P\subset \GR$  of the lines passing through $P$.

Finally, for $x\in V^*\setminus\{0\}$ we denote by $V_x:=\{x=0\}\subset V$, the hyperplane of equation $x=0$. 
Moreover, for two linearly independent forms  $x,y\in V^*$, we denote by $V_{x\w y}:=\{x=y=0\}\subset V$ the corresponding codimension $2$ subspace 
$V_{x\w y}=V_x\cap V_y$. As in the introduction, its projectivisation is $\Pi:=\p(V_{x\w y})\subset \p(V)$. 

Let $\omega_x\in\bw^3V_x^*$  be the natural
restriction of $\omega$ to $V_x$. If we choose a vector $e\in V$ such that $x(e)=1$, 
then we have an identification $V_x^*\simeq e^\perp \subset V^*$, which induces an inclusion $\bw^3V_x^*\subset \bw^3V^*$. 
Therefore we get 
\begin{equation}\label{decomp}
\bw^3 V^*=\bw^3V_x^*\oplus \bw^{2} V_x^*\w \gen{x}
\end{equation}
which induces a unique decomposition $\omega=\omega_x+\beta_x\w x$.

We shall often decompose $\omega$ in this way, without specifying the choice of the vector $e$.

%%%%%%%%%%%%%%%%%% 

\section{$3$-forms, $4$-forms and linear spaces in quadrics defining the Grassmannian of lines }\label{sect:1}

In this section we discuss quadrics in $\p(\bw^2V)$ defined by $4$-forms on $V$, and in particular by decomposable $4$-forms $\eta\in \bw^4V^*$ that have a linear factor,
i.e. $\eta=x\w \omega$ for some $x\in V^*$ and $\omega\in \bw^3 V^*$.  We shall study the rank of these quadrics.  In the next sections we shall consider the intersection of maximal linear subspaces of such quadrics with the  Grassmannian $\GR$ of lines in $\p(V)$.  In particular, interesting congruences of lines appear in such intersections.

We start by recalling the  well-known isomorphism between the space of $4$-forms on $V$ 
and the space of quadratic forms in the ideal of $\GR$.

Recall that the ideal of the Grassmannian $\GR\subset\p(\bw^2V)$ 
is generated by the quadrics of rank $6$ given by the Pl\"ucker relations. 
One way to obtain them is the following, see \cite{Mu}: let us take a general element $[L]\in\p(\bw^2V)$
\begin{equation*}
[L]=[\sum_{0\le i< j \le n}p_{i,j} e_i\w e_j];
\end{equation*}
then $[L]\in \GR$ if and only if 
\begin{equation*}
L\w L=0;
\end{equation*}  
indeed 
\begin{align*}
[L\w L]&=[(\sum_{0\le i< j \le n}p_{i,j} e_i\w e_j)\w (\sum_{0\le h< k \le n}p_{h,k} e_h\w e_k)]\\
&= [(\sum_{0\le i< j <h<k\le n}2(p_{i,j}p_{h,k}-p_{i,h}p_{j,k}+p_{i,k}p_{j,h})  e_i\w e_j\w e_h\w e_k)]
\end{align*}  
and the vanishing of the coefficients gives the Pl\"ucker relations. More intrinsically, define the \emph{reduced square}
$L^{[2]}\in \bw^4V$ of a bivector $L=\sum_{0\le i< j \le n}p_{i,j} e_i\w e_j\in\bw^2V $ as
\begin{equation*}
L^{[2]}:=\sum_{0\le i< j <h<k \le n}\pf\begin{pmatrix}
0&p_{i,j}&p_{i,h}&p_{i,k}\\
-p_{i,j}&0&p_{j,h}&p_{j,k}\\
-p_{i,h}&-p_{j,h}&0&p_{h,k}\\
-p_{i,k}&-p_{j,k}&-p_{h,k}&0
\end{pmatrix}
 e_i\w e_j\w e_h\w e_k.
\end{equation*}
Clearly we have $L\w L=2 L^{[2]}$ (i.e. this definition does not depend on the chosen basis of $V$), 
and $[L]\in \GR$ if and only if $L^{[2]}=0$. 

Recall that one can extend the definition of quadratic form to define 
a \emph{quadratic map} 
\begin{equation*}
q\colon V_1\to V_2
\end{equation*}
 as a map between $\CC$-vector spaces $V_1$ and $V_2$ such that 
\begin{align*}
 q(a L)&=a^2q(L),& \forall &a\in \CC, \forall L\in V_1
\end{align*}
 and  that 
 \begin{equation*}
 B_q(L,L'):=q(L+L')-q(L)-q(L')
\end{equation*}
  is a bilinear map 
 \begin{equation*}
  B_q\colon V_1\times V_1\to V_2.
\end{equation*}
We then have the following 
\begin{prop}
$\GR\subset\p(\bw^2V)$ is scheme-theoretically the zero locus of the quadratic map associated to the reduced square
\begin{align*}
q^{[2]}\colon  \bw^2V &\to \bw^4V\\
 L&\mapsto L^{[2]}.
\end{align*}
\end{prop}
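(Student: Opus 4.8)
The plan is to reduce the statement to the classical fact that the homogeneous ideal of $\GR$ in its Pl\"ucker embedding is generated in degree two by the Pl\"ucker relations, after identifying the component functions of $q^{[2]}$ with exactly these relations. First I would verify that $q^{[2]}$ really is a quadratic map in the sense recalled just above the statement. Each Pfaffian entry in the definition of $L^{[2]}$ is a homogeneous degree-two polynomial in the Pl\"ucker coordinates $p_{i,j}$, so $q^{[2]}(aL)=a^2q^{[2]}(L)$ is immediate. For the polar form, the identity $L^{[2]}=\tfrac12\,L\wedge L$ together with bilinearity of $\wedge$ gives $(L+L')^{[2]}=L^{[2]}+L\wedge L'+(L')^{[2]}$, whence $B_{q^{[2]}}(L,L')=L\wedge L'$, which is bilinear. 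Thus $q^{[2]}$ is a quadratic map $\bw^2V\to\bw^4V$.

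Next I would fix the basis $\{e_i\wedge e_j\wedge e_h\wedge e_k\}_{i<j<h<k}$ of $\bw^4V$ and read off the components of $q^{[2]}$ in this basis. By the very definition of $L^{[2]}$, the $(i,j,h,k)$-component is the Pfaffian of the displayed $4\times4$ skew matrix, that is $p_{i,j}p_{h,k}-p_{i,h}p_{j,k}+p_{i,k}p_{j,h}$, which is precisely the three-term Pl\"ucker relation. Hence the component functions of $q^{[2]}$ are exactly the quadratic generators of the Pl\"ucker ideal described at the start of the section.

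Set-theoretically the zero locus of $q^{[2]}$ is $\GR$: since $\CC$ has characteristic zero the factor $2$ is invertible, so $L^{[2]}=0$ if and only if $L\wedge L=0$, and the latter characterises decomposable bivectors, i.e. the points of the Grassmannian. To upgrade this to a scheme-theoretic equality of closed subschemes of $\p(\bw^2V)$ it suffices to show that the ideal $J\subset\Sym(\bw^2V^*)$ generated by the components of $q^{[2]}$ coincides with the full homogeneous ideal $I(\GR)$; by the previous step $J$ is the ideal generated by the Pl\"ucker quadrics, so this is exactly the classical assertion that the Pl\"ucker relations generate the homogeneous ideal of $G(2,V)$.

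The only non-formal ingredient, and therefore the main point, is this generation of $I(\GR)$ in degree two; everything else is a direct translation between the wedge-square and the Pfaffian descriptions. I would settle it either by citation (e.g.\ \cite{Mu}) or, for a self-contained treatment, by the standard-monomial/straightening-law argument, which shows that the degree-two part of $I(\GR)$ is spanned by the three-term relations and that $\GR$ is projectively normal, so that no generators in higher degree are needed.
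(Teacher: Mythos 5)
Your proof is correct and follows essentially the same route as the paper, whose (implicit) justification is precisely the preceding discussion: the components of $q^{[2]}$ in the basis $\{e_i\w e_j\w e_h\w e_k\}$ are the three-term Pl\"ucker quadrics, and these are classically known (cf.\ \cite{Mu}) to generate the homogeneous ideal of $\GR$, which gives the scheme-theoretic equality. Your additional verifications (that $q^{[2]}$ is a quadratic map with polar form $B_{q^{[2]}}(L,L')=L\w L'$, and the characteristic-zero remark making $L\w L=2L^{[2]}$ harmless) are sound but routine, and the paper leaves them implicit.
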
 
In other words, the affine cone of the Grassmannian, $C(\GR)$, is the ``kernel'' (i.e. the 
inverse image of  zero) of $q^{[2]}$:
\begin{equation}\label{eq:seq}
0\to C(\GR)\cong \left(q^{[2]}\right)^{-1}(0)\hookrightarrow \bw^2V \xrightarrow{q^{[2]}} \bw^4V.
\end{equation}
Therefore, giving an element of the vector space $I(\GR)_2$ of the quadratic forms in the ideal of Grassmannian  
$I(\GR)$, 
is equivalent to giving a $4$-form in $\bw^4V^*$:
\begin{coro}\label{cor:iso}
The $\SL(n+1)$-equivariant map 
\begin{align*}
\bw^4V^*&\to \Sym^2\left(\bw^2V^*\right)\\
\eta & \mapsto q_{\eta},
\end{align*}
where $q_{\eta}$ is the quadratic
form on $\bw^2V$ defined by
\begin{align*}
q_{\eta}\colon \bw^2V&\to \CC\\ 
L&\mapsto \eta(L^{[2]}),
\end{align*}
 is an isomorphism onto $I(\GR)_2\subset \Sym^2\left(\bw^2V^*\right)$.
\end{coro}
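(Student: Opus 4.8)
The plan is to check linearity and $\SL(n+1)$-equivariance by hand, then to pin down the image as $I(\GR)_2$ by feeding a dual basis of $\bw^4V^*$ into the map and recognising the Pl\"ucker relations, and finally to deduce injectivity from the fact that the reduced squares $L^{[2]}$ span $\bw^4V$. Most of this is bookkeeping; the only structural input needed beyond the displayed formula for $L^{[2]}$ is that $I(\GR)$ is generated in degree two, which was recalled above.

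First I would note that the map is well defined into $\Sym^2(\bw^2V^*)$ and linear. Indeed the reduced square $q^{[2]}\colon L\mapsto L^{[2]}$ is a quadratic map, so for each $\eta$ the composite $q_\eta=\eta\circ q^{[2]}$ is a genuine quadratic form on $\bw^2V$, and $\eta\mapsto \eta\circ q^{[2]}$ is visibly linear in $\eta$. Equivariance is then immediate from the basis-independence of the reduced square encoded in $L\w L=2L^{[2]}$: for $g\in\SL(V)$ one has $(g\cdot L)^{[2]}=g\cdot(L^{[2]})$ under the induced actions on $\bw^2V$ and $\bw^4V$, whence $q_{g\cdot\eta}=g\cdot q_\eta$.

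Next I would show the image lands in $I(\GR)_2$ and in fact fills it. Containment is free from the Proposition: since $C(\GR)=(q^{[2]})^{-1}(0)$, any $L$ with $[L]\in\GR$ has $L^{[2]}=0$, so $q_\eta(L)=\eta(0)=0$, i.e. $q_\eta\in I(\GR)_2$. For surjectivity I would evaluate on the dual basis, taking $\eta=x_i\w x_j\w x_h\w x_k$ for a $4$-subset $i<j<h<k$. Then $q_\eta(L)=\eta(L^{[2]})$ simply reads off the coefficient of $e_i\w e_j\w e_h\w e_k$ in $L^{[2]}$, which by the displayed formula for the reduced square equals the Pfaffian $p_{i,j}p_{h,k}-p_{i,h}p_{j,k}+p_{i,k}p_{j,h}$, i.e. exactly the Pl\"ucker relation indexed by $\{i,j,h,k\}$. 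Since the ideal of $\GR$ is generated by the Pl\"ucker quadrics, these span $I(\GR)_2$, so the image is all of $I(\GR)_2$.

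Finally, for injectivity I would observe that the reduced squares span $\bw^4V$: for $L=e_i\w e_j+e_h\w e_k$ with $\{i,j,h,k\}$ distinct all cross-terms vanish and $L^{[2]}=e_i\w e_j\w e_h\w e_k$, so every basis $4$-vector lies in the image of $q^{[2]}$. Hence $q_\eta=0$ forces $\eta=0$. Combined with the previous step this gives that $\eta\mapsto q_\eta$ is an isomorphism onto $I(\GR)_2$; equivalently, injectivity together with the dimension count $\dim\bw^4V^*=\binom{n+1}{4}=\dim I(\GR)_2$ would close the argument. I do not expect a genuine obstacle here: the only point deserving care is matching the intrinsic square $L^{[2]}$ with the classical Pl\"ucker relations, and that identification is precisely the content of the Pfaffian expansion already displayed.
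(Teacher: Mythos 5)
Your proof is correct and follows essentially the same route as the paper: both rest on the factorisation $q_\eta=\eta\circ q^{[2]}$ together with the Pfaffian expansion of $L^{[2]}$ identifying the forms $q_{x_i\w x_j\w x_h\w x_k}$ with the Pl\"ucker quadrics that generate $I(\GR)$. The paper's proof is a one-line appeal to the exact sequence \eqref{eq:seq}; you simply make explicit the bookkeeping it leaves implicit, namely surjectivity via evaluation on the dual basis and injectivity via the observation that the reduced squares $(e_i\w e_j+e_h\w e_k)^{[2]}=e_i\w e_j\w e_h\w e_k$ span $\bw^4V$.
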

\begin{proof}
We observe that $q_\eta$ is obtained via composition from $q^{[2]}$: $\eta\mapsto q_\eta=\eta \circ q^{[2]}$; 
therefore the thesis follows from \eqref{eq:seq}:
\begin{equation*}
0\to C(\GR)\to \bw^2V \xrightarrow{q^{[2]}} \bw^4V\xrightarrow{\eta} \CC.
\end{equation*}
\end{proof}

\begin{nota}
  We denote by $Q_\eta$ the quadric defined by $q_\eta$ in $\p(\bw^2V)$.
\end{nota}

The singular locus of the quadric $Q_\eta$ is defined by the kernel of the corresponding bilinear form.  
In terms of the corresponding $4$-form $\eta$ we get 
\begin{align}\label{eq:sing}
\Sing(Q_\eta)&=\{[L] \in \p(\bw^2V)\mid \eta(L\w L')=0, \forall L'\in
\bw^2V\} \\
\nonumber &=\{[L] \in \p(\bw^2V)\mid \rho_\eta(L)=0\},
\end{align}
where 
\begin{align*}
\rho_\eta\colon\bw^2V&\to \bw^2V^*\\ 
L&\mapsto \eta(L\w -)
\end{align*} 
is defined by \emph{contraction}. 
Note that $\rho_\eta$ is the \emph{polarity} associated to the quadric $Q_\eta$.

$Q_\eta\subset \p(\bw^2V)$ is a cone with vertex $\Sing(Q_\eta)$.  
Since $\bw^2V$ has dimension $\binom{n+1}{2}$ the rank of $Q_\eta$ is equal to
\begin{equation}\label{eq:rankq}
\rk(q_\eta)=\binom{n+1}{2}-\dim \Sing(Q_\eta)-1= \binom{n+1}{2}-\dim{\ker}{\rho_\eta}=\rk{\rho_\eta}.
\end{equation}

Let $\omega\in \bw^i V^*$ be an $i$-form on $V$ ($i\le n+1$), 
then $\forall j\le i$ the contraction defines two linear maps. The first one is  
\begin{align*}
f_\omega\colon \bw^j V&\to \bw^{i-j}V^*\\
\alpha&\mapsto \omega(\alpha)
\end{align*}
and the other is its transpose (up to sign):
\begin{align*}
%((-1)^{i-j})
{}^tf_\omega\colon  
\bw^{i-j}V&\to \bw^j V^*\\
\beta&\mapsto \omega(\beta).
\end{align*}

\begin{defi}\label{def:rko}
For an $i$-form $\omega\in \bw^iV^*$ we define its $j$-\emph{rank} as the rank of $f_\omega$ 
(or, which is the same, as the rank of 
its transpose). 
If $j=i-1$ we simply call it \emph{rank}  of $\omega$.
\end{defi}

\begin{exem} We shall be interested mainly in the following cases:  
\begin{enumerate}
\item For a $3$-form $\omega\in \bw^3V^*$  its \emph{rank} is its $2$-rank. 
In other words 
\begin{equation*}
\rk\omega:=\rk(\bw^2 V\to V^*: L\mapsto \omega(L))
\end{equation*}
or, which is the same,  
\begin{equation*}
\rk\omega:=\rk( V\to \bw^2V^*: e\mapsto \omega(e)).
\end{equation*}

\item If $\beta\in \bw^2V^*$, then the  \emph{rank} of  $\beta$ is its $1$-\emph{rank}. 

\item  If $\eta$ is a $4$-form, by \eqref{eq:rankq} its $2$-rank coincides with the rank of the quadric $Q_\eta$.
\end{enumerate}
\end{exem}

\begin{rema}
We note that Definition \ref{def:rko} of rank  is different from the usual one  for tensors, which is the minimum number 
of summands in an expression as sum of totally decomposable tensors. 

In particular, in the case of a $2$-form $\beta\in \bw^2V^*$, 
since $f_\beta$ can be identified with the skew-symmetric matrix associated to $\beta$,  the rank defined in Definition \ref{def:rko}
is twice the usual rank of tensors, i.e. $\beta$ is the sum of $\frac{1}{2}\rk\beta$ totally decomposable tensors.  
\end{rema}

We shall study now the rank of the quadratic form $q_\eta$, i.e. the rank of the linear map
$\rho_{\eta}$,  when $\eta$ is a decomposable $4$-form.  
We need some preparation.

Recall from Section \ref{notation} that when 
$x\in V^*$ is nonzero, then $V_x:=\{x=0\}\subset V$, denotes the subspace of equation $x=0$.  A $3$-form $\omega\in \bw^3V^*$ 
has a decomposition $\omega=\omega_x+\beta_x\w x$,  with  
 $\beta_x\in \bw^2V_x^*$ (see \ref{notation}).

\begin{lemma}\label{eta} Let $\eta\in \bw^4V^*\setminus\{0\}$.
\begin{enumerate}
\item\label{xxx} If $\eta$ is totally decomposable, i.e. $\eta=x\w x'\w x''\w x'''$, where 
$x,x',x'',x'''\in V^*$ are linearly independent, then the $2$-rank of $\eta$ is $\rk{\rho_\eta}=6$.
\item\label{xx} If $\eta=\beta\w x\w x'\neq 0$, and $\beta_{x,x'}$ is the restriction of the $2$-form $\beta$
to $V_{x\w x'}=V_x\cap V_{x'}$, 
then $\rk{\rho_\eta}= 2\rk\beta_{x,x'}+2\leq 2n$.
\item\label{x} If $\eta=\omega\w x$, where $x\in V^*\setminus\{0\}$ and $\omega_x$ is the restriction of $\omega$ to $V_x$,  
then $\rk{\rho_\eta}=2\rk{\omega_x}\leq 2n$.

\end{enumerate}
\end{lemma}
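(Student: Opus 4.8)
The plan is to compute $\rk\rho_\eta$ directly from the contraction description in \eqref{eq:sing}--\eqref{eq:rankq}, treating the three cases uniformly by analysing the kernel of the polarity map $\rho_\eta\colon \bw^2 V\to \bw^2 V^*$, $L\mapsto \eta(L\w -)$, and using that $\rk\rho_\eta$ equals the codimension of its kernel. Case \eqref{xxx} is really a special case of \eqref{xx} (take $\beta=x''\w x'''$, so $\beta_{x,x'}=x''\w x'''$ has rank $2$, giving $\rk\rho_\eta=2\cdot 2+2=6$), and \eqref{xx} is in turn a special case of \eqref{x} (take $\omega=\beta\w x'$, so $\omega_x$ is $\beta_{x,x'}\w x'$ up to the identification of \eqref{decomp}); so the substantive content is \eqref{x}, and I would prove \eqref{x} first and then deduce the other two.

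For part \eqref{x}, I would fix a vector $e\in V$ with $x(e)=1$ and use the splitting $V=\gen{e}\oplus V_x$, together with the induced decomposition $\bw^2 V=\bw^2 V_x\oplus (e\w V_x)$. With $\eta=\omega\w x$, I would compute $\rho_\eta(L)=\eta(L\w -)$ explicitly on a bivector $L=L_0+e\w v$ (with $L_0\in\bw^2 V_x$, $v\in V_x$). The key point is that contracting $\omega\w x$ against $L$ and then against a further bivector picks out, via the linear factor $x$, only the piece of $\omega$ that lives on $V_x$, namely $\omega_x$; the terms involving $e\w V_x$ in $L$ pair with $x$ to contribute, while the $\bw^2 V_x$ part of $L$ contracted against $\omega\w x$ produces forms that vanish once the remaining slot is restricted appropriately. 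Carrying this out, I expect $\ker\rho_\eta$ to split as the sum of the part of $\bw^2 V_x$ annihilated by $\omega_x$ (contracted as a $3$-form on $V_x$) and the part $e\w\ker f_{\omega_x}$ where $f_{\omega_x}\colon V_x\to\bw^2 V_x^*$ is the contraction whose rank defines $\rk\omega_x$. The bookkeeping should yield exactly $\rk\rho_\eta=2\rk\omega_x$, the factor $2$ reflecting that each dimension lost in $\ker f_{\omega_x}$ is lost twice (once in $V_x$-directions, once in $e\w V_x$-directions), which is the natural symplectic-type doubling already flagged in the Remark after Definition \ref{def:rko}.

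The cleanest route may instead be to relate $\rho_{\omega\w x}$ to $\rho$ of a lower-dimensional form on $V_x$: after the identification above, the map $\rho_{\omega\w x}$ on $\bw^2 V$ is, up to the decomposition, governed by the $4$-form $\omega_x\w x$ on $V$ plus a $(\beta_x\w x)\w x=0$ term, so the $\beta_x$-part drops out and only $\omega_x$ survives. This reduces \eqref{x} to understanding the quadric attached to $\omega_x\w x$, whose singular locus I can read off from the kernel of contraction by $\omega_x$. I would then record that $\rk\omega_x\le n$ always (since $f_{\omega_x}\colon V_x\to\bw^2 V_x^*$ has source of dimension $\dim V_x=n$), giving the stated bound $\rk\rho_\eta\le 2n$.

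The main obstacle I anticipate is the careful sign and index bookkeeping in the explicit contraction computation: verifying that the cross-terms between the $\bw^2 V_x$ and $e\w V_x$ summands either vanish or assemble correctly, and confirming that the rank doubles rather than, say, only picking up the $V_x$-contribution once. Once the kernel is correctly identified as $(\text{something in }\bw^2 V_x)\oplus e\w\ker f_{\omega_x}$ with the dimensions matching, the rank formulae in all three parts follow immediately, and the inequalities $\le 2n$ come for free from $\rk\omega_x\le\dim V_x=n$ and $\rk\beta_{x,x'}\le \dim V_{x\w x'}=n-1$.
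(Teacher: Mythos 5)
Your proposal is correct, and on the substantive case \eqref{x} it is essentially the paper's own argument: the paper likewise discards the $\beta_x\w x$ part (since $(\beta_x\w x)\w x=0$), writes $L=L_0+v_0\w e_0$ with $L_0\in\bw^2 V_x$, $v_0\in V_x$, and computes $(\omega_x\w x)(L)=\omega_x(L_0)\w x-\omega_x(v_0)$; since the two terms lie in the complementary summands $V_x^*\w x$ and $\bw^2V_x^*$ of $\bw^2 V^*$, the kernel of $\rho_\eta$ is exactly your $\ker({}^tf_{\omega_x})\oplus e\w\ker f_{\omega_x}$, and the rank is $\rk({}^tf_{\omega_x})+\rk(f_{\omega_x})=2\rk\omega_x$ --- the doubling you worried about in your last paragraph is simply the equality of the rank of a map and of its transpose, with one contribution from each summand, not a subtle cancellation of cross-terms. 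Where you genuinely differ is the organisation: the paper proves \eqref{xxx} by the one-line observation that the image of $\rho_\eta$ is spanned by the $\binom{4}{2}=6$ wedge products of pairs of factors of $\eta$, and proves \eqref{xx} by a separate explicit contraction, whereas you deduce \eqref{xxx} from \eqref{xx} and \eqref{xx} from \eqref{x}. This chain is legitimate and more economical, but the reduction of \eqref{xx} to \eqref{x} silently uses the rank identity $\rk\omega_x=\rk\beta_{x,x'}+1$ for $\omega=\beta\w x'$, which you should record explicitly: first, $\eta\neq 0$ forces $\beta_{x,x'}\neq 0$, because $\beta\w x\w x'=\beta_{x,x'}\w x\w x'$; then, choosing $e'\in V_x$ with $x'(e')=1$ and writing $v=v_0+te'$ with $v_0\in V_{x\w x'}$, one gets $f_{\omega_x}(v)=\beta_{x,x'}(v_0)\w x'+t\,\beta_{x,x'}$, again with the two terms in complementary summands, so $\ker f_{\omega_x}=\{v_0\in V_{x\w x'}\mid \beta_{x,x'}(v_0)=0\}$ has dimension $(n-1)-\rk\beta_{x,x'}$ and hence $\rk\omega_x=n-\bigl((n-1)-\rk\beta_{x,x'}\bigr)=\rk\beta_{x,x'}+1$, as needed. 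With that line added, all three parts and the bounds $\rk\rho_\eta\le 2n$ follow exactly as you say, from $\rk\omega_x\le\dim V_x=n$ and $\rk\beta_{x,x'}\le n-1$.
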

\begin{proof}  Case \eqref{xxx} is immediate, since the image of $\rho_\eta$ is spanned by pairs of factors in $\eta$.  

In Case \eqref{xx}, we may assume $x=x_0, x'=x_1$ and $\beta\in \bw^2 \gen{x_2,\dotsc,x_n}$.  For $L\in \bw^2V$, we write $L=L_{01}+v_0\w e_0+v_1\w e_1+ce_0\w e_1$, 
where $L_{01}\in \bw^2\gen{e_2,\dotsc,e_n}$ and $v_0, v_1\in \gen{e_2,\dotsc,e_n}$. Then  
\begin{equation*}
(\beta\w x_0\w x_1)(L)=\beta(L_{01})x_0\w x_1-\beta(v_0)  x_1+\beta(v_1)x_0 +c\beta,
\end{equation*}
so the formula follows.

In Case \eqref{x} we may assume $x=x_0$ and write $\omega\w x_0=\omega_{x_0}\w x_0$.  For $L\in \bw^2V$, we write $L=L_0+v_0\w e_0$, 
where $L_0\in \bw^2\gen{e_1,\dotsc,e_n}$ and $v_0\in \gen{e_1,\dotsc,e_n}$. Then  
\begin{equation*}
(\omega_{x_0}\w x_0)(L)=\omega_{x_0}(L_0)\w x_0-\omega_{x_0}(v_0),
\end{equation*}
and the formula follows.
\end{proof}

The rank of the quadrics in the ideal of the Grassmannian $G(2,6)$ has been studied by Mukai in \cite{Mu}.  In this case $\bw^4 V^*\cong \bw ^2 V^*$, so the three possible ranks of a $2$-form  give three possible ranks for quadrics in the ideal of $G(2,6)$:
\begin{prop}[{\cite[Proposition 1.4]{Mu}}] 
Let $I_r\subset \p(I(G(2,6))_2)$ be the set of quadrics of rank $r$ in the ideal of $G(2,6)$.  Then 
$\p(I(G(2,6))_2)=I_6\cup I_{10}\cup I_{15}$, and $\dim I_6=8$, $\dim I_{10}=13$, $ \dim I_{15}=14$.  
Moreover, 
\begin{itemize}
\item If $[q]\in I_6$, then $q$ is a Pl\" ucker quadric.
\item If $[q]\in I_{10}$, then $q$ is a linear combination of two Pl\" ucker quadrics.
\item If $[q]\in I_{15}$, then $V(q)$ is smooth.
\end{itemize}
\end{prop}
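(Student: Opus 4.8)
The plan is to exploit the exceptional feature of the case $n=5$, namely $\dim V=6$: here a choice of volume form $0\neq\mathrm{vol}\in\bw^6V^*$ furnishes an $\SL(V)$-equivariant isomorphism $\bw^4V^*\cong\bw^2V$, $\eta\mapsto\xi_\eta$, under which $I(G(2,6))_2$ (of dimension $\binom{6}{4}=15$ by Corollary \ref{cor:iso}) is identified with $\bw^2V$, so that $\p(I(G(2,6))_2)=\p(\bw^2V)=\p^{14}$. Because the polarity $\rho_\eta$ of \eqref{eq:sing} depends $\GL(V)$-equivariantly on $\eta$, its rank, and hence the rank of $Q_\eta$ by \eqref{eq:rankq}, is constant on $\GL(V)$-orbits. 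First I would recall that for $\dim V=6$ the nonzero bivectors form exactly three $\GL(V)$-orbits, according to whether the associated skew form has rank $2$, $4$ or $6$, with representatives $e_4\w e_5$, $e_2\w e_3+e_4\w e_5$ and $e_0\w e_1+e_2\w e_3+e_4\w e_5$. The closure of the first orbit is $G(2,6)$ (dimension $8$); the locus of bivectors of rank $\le4$ is the Pfaffian cubic hypersurface $\{\pf=0\}\subset\p^{14}$, whose singular locus is exactly $G(2,6)$, so the rank-$4$ orbit is its smooth locus and has dimension $13$; the open orbit of full-rank bivectors is the complement of $\{\pf=0\}$ and has dimension $14$.

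Next I would transport each representative through the isomorphism and read the quadric rank off Lemma \ref{eta}. The decomposable bivector $e_4\w e_5$ corresponds to the totally decomposable form $\eta=\pm\,x_0\w x_1\w x_2\w x_3$, so $\rk\rho_\eta=6$ by Lemma \ref{eta}\eqref{xxx}; hence $I_6$ is the orbit $G(2,6)$, each such $q$ is a Pl\"ucker quadric (a Pl\"ucker relation in suitable coordinates), and $\dim I_6=8$. The rank-$4$ bivector corresponds to $\eta=x_0\w x_1\w\beta$ with $\beta=\pm\,x_2\w x_3\pm x_4\w x_5$ of maximal rank $4$ on the four-dimensional space $V_{x_0\w x_1}$, so $\rk\rho_\eta=2\cdot4+2=10$ by Lemma \ref{eta}\eqref{xx}; moreover this $\eta=x_0\w x_1\w x_2\w x_3+x_0\w x_1\w x_4\w x_5$ is visibly a sum of two totally decomposable forms, so every $q\in I_{10}$ is a linear combination of two Pl\"ucker quadrics, and $\dim I_{10}=13$.

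The step I expect to be the real obstacle is the open orbit, where Lemma \ref{eta} no longer applies because the corresponding $\eta$ (a sum of three coordinate $4$-forms) has no linear factor: I must prove $\rk\rho_\eta=15$, that is, that $Q_\eta$ is smooth. The direct route is a single explicit evaluation of $\rho_\eta$ on the Pl\"ucker basis for the $\eta$ dual to $e_0\w e_1+e_2\w e_3+e_4\w e_5$, checking that the resulting $15\times15$ skew matrix is nondegenerate. A more conceptual variant: $\det\rho_\eta$ is homogeneous of degree $15$ in $\eta$, and transported to $\bw^2V$ it is an $\SL(6)$-invariant of degree $15$; since $\CC[\bw^2V]^{\SL(6)}=\CC[\pf]$ with $\pf$ of degree $3$, necessarily $\det\rho_\eta=c\,\pf(\xi_\eta)^5$, the single computation above showing $c\neq0$. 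Either way $Q_\eta$ is smooth precisely when $\pf(\xi_\eta)\neq0$, that is, on the open orbit, giving $\dim I_{15}=14$ and the smoothness assertion.

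Finally, since these three orbits exhaust $\p(\bw^2V)$ and the rank is an orbit invariant, the strata $I_6$, $I_{10}$, $I_{15}$ are precisely the three orbits, which simultaneously yields the decomposition $\p(I(G(2,6))_2)=I_6\cup I_{10}\cup I_{15}$, all three dimensions, and the three characterisations.
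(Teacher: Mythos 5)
Your proposal is correct, and there is nothing internal to compare it against: the paper does not prove this statement but imports it verbatim from Mukai (\cite[Proposition 1.4]{Mu}), and your reconstruction is in substance Mukai's own argument — identify $I(G(2,6))_2 \cong \bw^4 V^* \cong \bw^2 V$ via a volume form, stratify $\p(\bw^2 V)$ into the three rank orbits (Grassmannian, smooth locus of the Pfaffian cubic, its complement, of dimensions $8$, $13$, $14$), and evaluate the quadric rank on orbit representatives, which for the two degenerate strata is exactly Lemma \ref{eta}\eqref{xxx} and \eqref{xx}. Your handling of the open orbit is sound: $\det\rho_\eta$ is of degree $15$ and $\SL(6)$-invariant when transported to $\bw^2 V$, and since $\CC[\bw^2 V]^{\SL(6)}=\CC[\pf]$ with $\deg\pf=3$, it must be $c\,\pf^5$; the one explicit check that $c\neq 0$ is genuinely needed and does work — for $\eta$ dual to $e_0\w e_1+e_2\w e_3+e_4\w e_5$ the form $q_\eta$ is, up to scale, $p_{01}p_{23}+p_{01}p_{45}+p_{23}p_{45}$ plus six hyperbolic pairs $(p_{02},p_{13})$, $(p_{03},p_{12})$, $(p_{04},p_{15})$, $(p_{05},p_{14})$, $(p_{24},p_{35})$, $(p_{25},p_{34})$, of total rank $3+12=15$. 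One terminological slip: the matrix of the polarity $\rho_\eta$ in the Pl\"ucker basis is \emph{symmetric}, not skew — it represents the quadratic form $q_\eta$, and indeed $\eta(L\w L')=\eta(L'\w L)$ since $L$, $L'$ have even degree; this does not affect the argument, since nondegeneracy is all you use.
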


%%%%%%%%%%%%%%%%%%%%%%%%%%%%%%%%%%%%%%%%

\subsection{Genericity conditions on $3$-forms}\label{genericity}
In this paper our key objects are defined by ``general'' $3$-forms. In this section, we introduce and discuss 
some of the relevant genericity conditions on $\omega\in \bw^3V^*$.

\begin{defi}[Genericity conditions 1--3]
Let $\omega\in \bw^3V^*$.  We will  consider the following conditions: 
\begin{description}
\item[(GC1)\label{GC1}]  $\omega$ is indecomposable, i.e.  the 
multiplication map 
\begin{align*}
 V^*&\to \bw^4V^*\\ 
x &\mapsto \omega\w x
 \end{align*}
is injective
(the image of this map is the subspace $\omega\w V^*\subset \bw^4V^*$); 

\item[(GC2)\label{GC2}]  $\omega$ has rank $n+1$, i.e.  the linear map
\begin{align}\label{eq:fomegat}
{}^tf_\omega\colon \bw^2V&\to V^*\\ 
L&\mapsto \omega(L)  \nonumber
 \end{align}
is surjective, or, equivalently,   the linear map
 \begin{align}\label{eq:fomega}
f_\omega\colon  V&\to \bw^2V^*\\
v&\mapsto \omega(v) \nonumber
\end{align} 
is injective;

\item[(GC3)\label{GC3}]  for any $v\in V$, $v\neq 0$, $\rk  f_\omega(v)>2$. 
\end{description}
\end{defi}

\begin{rema}  There are dependencies among these conditions:
\begin{enumerate}
\item Condition \ref{GC2} is satisfied, i.e. the rank of $\omega$ is $n+1$, only if $n\ge 4$.  Indeed, when $n=3$, any nonzero $3$-form $\omega$ is totally decomposable
 so it has rank $3$.
 \item Condition \ref{GC1} is satisfied, i.e. $x \mapsto \omega\w x$ is injective, only if $n\ge 5$. Indeed, when $n\leq 4$, any $3$-form is decomposable 
 so the map $x \mapsto \omega\w x$ is not injective, see Example~\ref{ex2}:  with coordinates as in the example, $x_0$ is in the kernel. 
\item 
It is immediate to observe that  condition 
\ref{GC3} is more restrictive than both \ref{GC1} and \ref{GC2}:
\begin{equation*}
\textup{\ref{GC3}}\Longrightarrow\textup{\ref{GC1}}, \  \textup{\ref{GC3}}\Longrightarrow\textup{\ref{GC2}}.
\end{equation*}
\item There are forms that satisfy \ref{GC1} and not \ref{GC2}:
Let $v_0\in V$ be a nonzero vector and assume $\omega(v_0)=0$, i.e. \ref{GC2} is not satisfied.  Then $\omega\in \bw^3 V_0^*$, where $V_0^*=\{v^*\in V^*|v^*(v_0)=0\}$.
 If the multiplication map by $\omega$: $V_0^*\to \bw^4 V_0^*$ is injective, then so is $V^*\to \bw^4 V^*$ and  \ref{GC1} is satisfied.
\item If $n$ is even, there are forms that satisfy \ref{GC2} and not \ref{GC1}:
If  $\omega$ does not satisfy \ref{GC1}, for a suitable choice of coordinates $\omega=x_0\w \beta_0$. When $n$ is even and $\beta_0$ is general, then $\rk \beta_0=n$, hence $\rk \omega=n$ and condition \ref{GC2} holds.
\item If $n$ is odd, \ref{GC2} implies \ref{GC1}:
If  $\omega$ does not satisfy \ref{GC1}, for a suitable choice of coordinates $\omega=x_0\w \beta_0$. When $n$ is odd, $\rk \beta_0\leq n-1$ so  $\rk \omega\leq n-1$ and condition \ref{GC2} does not hold.
\end{enumerate}
\end{rema}

Next proposition ensures that the conditions \ref{GC1}, \ref{GC2}, \ref{GC3} are all satisfied by general $3$-forms for $n$ sufficiently large.
\begin{prop}\label{dimPn}
Let $\omega\in \bw^3V^*$ be general. If $n>3$, then 
conditions \ref{GC1} and  \ref{GC2} hold, and if $n>5$, then $\omega(v)$ has rank at least $4$ for every $v\in V$, i.e. condition \ref{GC3} holds. 
\end{prop}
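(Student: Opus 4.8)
The plan is to treat each of GC1, GC2, GC3 as a Zariski-open condition on $\omega\in\bw^3V^*$ and to bound the dimension of the complementary ``bad'' locus, showing it is a proper subvariety once $n$ is large enough. In each case the bad locus is the image, under the projection $\bw^3V^*\times\p(V)\to\bw^3V^*$ (resp. $\bw^3V^*\times\p(V^*)\to\bw^3V^*$), of an incidence variety; since $\p(V)$ and $\p(V^*)$ are proper these images are closed, so it suffices to check that the incidence variety does \emph{not} dominate $\bw^3V^*$, i.e. that its dimension is strictly less than $\dim\bw^3V^*=\binom{n+1}{3}$.

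First I would treat GC2 and GC1. For GC2, set $\Sigma_2:=\{(\omega,[v]):\omega(v)=0\}$. Fixing $v=e_0$, the condition $\omega(e_0)=0$ means $\omega\in\bw^3V_{x_0}^*=\bw^3\langle x_1,\dots,x_n\rangle$, of dimension $\binom{n}{3}$, so $\dim\Sigma_2=n+\binom{n}{3}$ and the gap to $\binom{n+1}{3}$ is $\binom{n}{2}-n=\tfrac12 n(n-3)$, positive exactly for $n>3$. For GC1, set $\Sigma_1:=\{(\omega,[x]):\omega\w x=0\}$; the fibre over $[x]$ is $x\w\bw^2V^*$, of dimension $\binom{n}{2}$, whence $\dim\Sigma_1=n+\binom{n}{2}$ and the gap is $\tfrac16 n(n+1)(n-4)$, positive exactly for $n>4$. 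Thus the count gives GC2 for general $\omega$ as soon as $n>3$ and GC1 as soon as $n\ge 5$; this last threshold is sharp, in agreement with the Remark, since every $3$-form is decomposable for $n\le 4$.

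The condition GC3 is handled the same way, the key input being the locus of small-rank contractions. Put $\Sigma_3:=\{(\omega,[v]):\rk\omega(v)\le 2\}$, which is closed because $\rk\beta\le2$ for a $2$-form is the equation $\beta\w\beta=0$. Over $[v]=[e_0]$ one has $\omega(e_0)=\beta_{x_0}$ in the decomposition $\omega=\omega_{x_0}+\beta_{x_0}\w x_0$, so the fibre is the product of $\bw^3\langle x_1,\dots,x_n\rangle$ (arbitrary $\omega_{x_0}$, giving $\binom{n}{3}$ parameters) with the set of rank-$\le2$ $2$-forms on an $n$-dimensional space, i.e. the affine cone over $G(2,n)$, of dimension $2(n-2)+1=2n-3$. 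Hence $\dim\Sigma_3=\binom{n}{3}+3n-3$ and the gap to $\binom{n+1}{3}$ is $\binom{n}{2}-(3n-3)=\tfrac12(n-1)(n-6)$, which is positive for $n\ge 7$ and vanishes at $n=6$.

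The main obstacle is therefore precisely the boundary case $n=6$, where the count is inconclusive and one must decide whether $\Sigma_3$ actually dominates $\bw^3V^*$. Here I would use two facts: first, GC3 is $\GL(V)$-invariant, since $\omega(v)$ and $(g\cdot\omega)(gv)$ have the same rank for $g\in\GL(V)$; second, for $\dim V=7$ the space $\bw^3V^*$ carries a dense $\GL(V)$-orbit, namely that of the $\G2_2$-invariant $3$-form, the orbit having dimension $35=\binom{7}{3}$ (its stabiliser being $14$-dimensional). Consequently a general $\omega$ lies in this dense orbit, and by $\GL$-invariance it satisfies GC3 if and only if the $\G2_2$-form does; so it remains only to verify directly that for the $\G2_2$-form every contraction $\omega(v)$ with $v\neq0$ has rank $\ge 4$, which follows from the non-degeneracy of octonionic multiplication (equivalently, both nonzero $\G2_2$-orbits on $V$ yield contractions of rank $\ge 4$). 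Finally, since GC3 implies both GC1 and GC2, this simultaneously re-proves those two conditions for all $n\ge 6$, while the remaining cases $n=4,5$ are already covered by the counts above.
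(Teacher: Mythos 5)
Your proof is correct, and it takes a genuinely different route from the paper's. For \ref{GC1} and \ref{GC2} the paper argues by openness of the two conditions and simply exhibits explicit forms (sums of decomposable terms arranged according to the class of $n+1$ modulo $3$) satisfying them; you instead bound the dimensions of the incidence loci $\Sigma_1,\Sigma_2$ and of their closed images. Both are sound --- the paper's is constructive, yours avoids case-by-case verification --- and the two agree on the sharp threshold for \ref{GC1}: the paper's example for $n+1\equiv 2\bmod 3$ explicitly requires $n>4$, and \ref{GC1} in fact fails for \emph{every} $\omega$ when $n=4$ (any $3$-form in five variables has a linear factor), so both arguments really establish \ref{GC1} only for $n\geq 5$, consistently with the paper's remark following the genericity conditions rather than with the literal wording of the proposition; your sharpness comment handles this honestly. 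The substantive divergence is at \ref{GC3}. The paper disposes of it in one line: the rank-$\leq 2$ forms constitute $G(2,V^*)\subset\p(\bw^2V^*)$, of codimension $\frac{(n-1)(n-2)}{2}>n+1$ for $n>5$, tacitly treating the linear space $P_\omega=\im(\p(f_\omega))\cong\p^n$ as if it were in general position; but the spaces $P_\omega$ form a constrained family, so the honest version of this estimate is exactly your incidence count over pairs $(\omega,[v])$, and that count shows the margin $\frac{1}{2}(n-1)(n-6)$ vanishes precisely at $n=6$, a borderline case the paper's one-liner glosses over. Your separate treatment of $n=6$ --- invariance of \ref{GC3} under $\GL(V)$, density of the $\GL(V)$-orbit of the $\G2_2$-form in $\bw^3 V^*$ (orbit dimension $49-14=35=\binom{7}{3}$), and the direct check that for this form $\omega(v)$ has rank $6$ on the open orbit and rank $4$ on the null quadric (for the representative of Example \ref{ex4} one computes, up to sign, $\omega(e_1)=x_2\w x_3-x_0\w x_4$) --- is correct and actually supplies the detail that the paper's codimension estimate leaves implicit at the boundary case; your final observation that \ref{GC3} implies \ref{GC1} and \ref{GC2} for $n\geq 6$ likewise matches the paper's own remark, so nothing is missing from your argument.
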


\begin{proof}
To prove the first claim, it is enough to find an example for any $n$. For instance  the following ones, depending on the congruence class of $n$ modulo $3$, will do the job:
if $n+1\equiv 0\mod 3$ take $\omega= x_0\w x_1\w x_2+ \dotsm +x_{n-2}\w x_{n-1}\w x_{n}$,  if $n+1\equiv 1\mod 3$, $n>3$, take $\omega= x_0\w x_1\w x_2+ \dotsm +x_{n-3}\w x_{n-2}\w x_{n-1} + x_n\w x_0\w x_3$, if $n+1\equiv 2\mod 3$, $n>4$, take $\omega= x_0\w x_1\w x_2+ \dotsm +x_{n-1}\w x_{0}\w x_{3} + x_n\w x_1 \w x_4$.  To prove the second claim, we recall that the $2$-forms of rank $\leq 2$ describe a Grassmannian of dimension $2(n-1)$ in $\p(\bw^2 V^*)$, hence for $n>5$ the codimension is $>n+1$.
\end{proof}

%%%%%%%%%%%%%%%%%%%%%%%%%%%%%%%%%%%%%%%%%%%%%%%%%%%
\subsection{Quadrics $Q_{\omega\w x}$ and their linear subspaces.}\label{the quadric}
Given a $3$-form $\omega\in \bw^3V^*$, the $4$-forms $\omega\w x$ for $x\in V^*$ define a linear space of quadrics 
\begin{align*}
Q_{\omega\w x}&\in I(\GG)_2,& x&\in V^*.
\end{align*}
As we shall see, they are intimately related to the variety $X_\omega$ which is our main object of study.

In this section we identify linear subspaces of the quadrics $Q_{\omega\w x}$.
If $\omega\in\bw^3V^*$ and $x, y\in V^*$, we define the following linear subspaces of $\p(\bw^2V)$:
\begin{align}\label{span} 
\Lambda_{\omega}&:=\{[L]\in\p(\bw^2V)\mid\omega(L)=0\},\\
\label{span1} \Lambda_{\omega}^x&:=\{[L]\in\p(\bw^2V)\mid\omega(L)\w x=0\},\\
\label{span2} \Lambda_{\omega}^{xy}&:=\{[L]\in\p(\bw^2V)\mid\omega(L)\w x\w y=0\},
\end{align}
and moreover
\begin{align}
\label{span3}  \Lambda_{\omega_x}&:=\{[L]\in \p(\bw^2 V)\mid x(L)=\omega(L)\w x=0\},\\
\label{span4}  \Lambda_{\omega,x\w y}&:=\{[L]\in \p(\bw^2 V)\mid (x\w y)(L)=\omega(L)\w x\w y=0\}.
\end{align}

Furthermore we denote by  
$P_\omega:=\Lambda_\omega^\perp\subset\p(\bw^2 V)^*$ the subspace  orthogonal to $\Lambda_\omega$. 
Note the inclusions 
\begin{equation*}
\begin{matrix}
\Lambda_{\omega}&\subset&\Lambda_{\omega}^x&\subset&\Lambda_{\omega}^{xy}\\
&&\cup &&\cup\\
&&\Lambda_{\omega_x} &&\Lambda_{\omega,x\w y}
\end{matrix}
\end{equation*}
and the relation 
\begin{equation*}
\Lambda_{\omega_x}=\Lambda_{\omega}^x\cap\gen{ G(2,V_x)},
\end{equation*}
 where $V_x$ is the hyperplane $\{ x=0\}$ in $V$. Let us remark that the notation for $\Lambda_{\omega_x}$ is coherent with the one for $\Lambda_\omega$, because $\omega_x$ denotes the restriction of $\omega$ to the hyperplane $V_x$. Moreover $ \Lambda_{\omega,x\w y}$ is the intersection of $\Lambda_{\omega}^{xy}$ with the Schubert hyperplane generated by the planes in $\GR$ meeting the codimension $2$ subspace $V_{x\w y}:=\{x=y=0\}$.

\begin{rema}\label{rmk:l} The codimension of the spaces \eqref{span}-\eqref{span4} depends on the rank of $\omega$, and on the rank of $\omega_x$ as a form on $V_x$.

\begin{enumerate}
\item
\begin{equation*} 
\Lambda_{\omega}=\p(\ker({}^tf_\omega))\subset \p(\bw^2V),
\end{equation*}
where $\ker({}^tf_\omega)$ is the map defined in \eqref{eq:fomegat}; 
therefore 
${\codim}\Lambda_{\omega}%^x
=\rk \omega$.

\item
If $x\in \im{}^tf_\omega\subset V^*$, 
then ${\codim}\Lambda_{\omega}^x=\rk \omega -1$, i.e. $\Lambda_{\omega}$ is a hyperplane in $\Lambda_{\omega}^x$. 
\item 
If $x\notin \im{}^tf_\omega\subset V^*$, then  ${\codim}\Lambda_{\omega}^x=\rk \omega$, i.e. $\Lambda_{\omega}=\Lambda_{\omega}^x$.  
\item
If $\gen{x,y}\subset \im{}^tf_\omega $, then ${\codim}\Lambda_\omega^{xy}=\rk \omega-2$,
 i.e. $\Lambda_{\omega}^x$ is a hyperplane in $\Lambda_{\omega}^{xy}$. 
\item ${\codim}\Lambda_{\omega_x}=n+\rk \omega_x$.

\item 
If $\gen{x,y}\subset \im{}^tf_\omega $, then ${\codim}\Lambda_{\omega,x\w y}=\rk \omega-1$,
 i.e. $\Lambda_{\omega,x\w y}$ is a hyperplane in $\Lambda_{\omega}^{xy}$. 
\end{enumerate}
\end{rema}

For a general $3$-form $\omega$ we immediately get:

\begin{lemma}\label{supsets} If $\omega$ has rank $n+1$, i.e. it satisfies \ref{GC2}, 
then  $\{\Lambda_{\omega}^x\mid x\in V^*\}$ and $\{\Lambda_{\omega}^{xy}\mid x\w y\in \bw^2V^*\}$ are the sets of codimension $n$ subspaces 
and codimension $n-1$ subspaces of $\p(\bw^2V)$  that contain $\Lambda_\omega$, respectively.
\end{lemma}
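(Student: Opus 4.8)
The plan is to rephrase both families through the linear surjection ${}^tf_\omega\colon \bw^2V\to V^*$, $L\mapsto \omega(L)$, which is exactly what condition \ref{GC2} provides, and then reduce to an elementary statement about the lattice of subspaces of the quotient. First I would record the two identifications
\[
\Lambda_{\omega}^x=\p\big(({}^tf_\omega)^{-1}(\gen{x})\big),\qquad \Lambda_{\omega}^{xy}=\p\big(({}^tf_\omega)^{-1}(\gen{x,y})\big),
\]
valid for $x\neq 0$ and for $x,y$ linearly independent respectively. Indeed the defining condition $\omega(L)\w x=0$ says precisely that ${}^tf_\omega(L)=\omega(L)$ is proportional to $x$, i.e. lies in $\gen{x}$, while $\omega(L)\w x\w y=0$ says that $\omega(L)\in\gen{x,y}$ once $x,y$ are independent. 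By Remark \ref{rmk:l}(1) we have $\Lambda_\omega=\p(\ker {}^tf_\omega)$, so all three spaces contain $\Lambda_\omega$.

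Next I would invoke the standard order-preserving bijection between linear subspaces $U\subset\bw^2V$ containing $W:=\ker {}^tf_\omega$ and linear subspaces of the quotient $\bw^2V/W$, given by $U\mapsto U/W$. Under \ref{GC2} the map ${}^tf_\omega$ is onto, hence it induces an isomorphism $\bw^2V/W\cong V^*$, a space of dimension $n+1$; moreover this bijection preserves codimension, and projectivising preserves codimension as well. Therefore codimension-$n$ (resp. codimension-$(n-1)$) projective subspaces of $\p(\bw^2V)$ containing $\Lambda_\omega$ correspond bijectively to codimension-$n$ (resp. codimension-$(n-1)$) subspaces of $V^*$, that is, to $1$-dimensional subspaces $\gen{x}$ (resp. $2$-dimensional subspaces $\gen{x,y}$) of $V^*$.

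Finally I would match the two descriptions: since ${}^tf_\omega$ is surjective, $({}^tf_\omega)^{-1}(\gen{x})$ has image exactly $\gen{x}$, so by the first step $\Lambda_\omega^x$ is the subspace corresponding to $\gen{x}$, and likewise $\Lambda_\omega^{xy}$ is the one corresponding to $\gen{x,y}$. As $x$ runs over $V^*\setminus\{0\}$ every line $\gen{x}$ is obtained, and as $x\w y$ runs over the nonzero decomposable elements of $\bw^2V^*$ every plane $\gen{x,y}$ is obtained; this exhausts the two families, proving the claim. The only point requiring care—and the sole place where \ref{GC2} enters—is that the quotient really is all of $V^*$ of dimension $n+1$: this is what guarantees that $\gen{x}$ and $\gen{x,y}$ have the correct codimensions $n$ and $n-1$ and that the parametrisations are onto. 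Without \ref{GC2} the image of ${}^tf_\omega$ would be smaller and the codimensions would drop accordingly, as already recorded in the remaining parts of Remark \ref{rmk:l}.
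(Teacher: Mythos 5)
Your proof is correct and follows essentially the same route the paper takes: the paper states the lemma as an immediate consequence of Remark \ref{rmk:l}, whose content is exactly your identification $\Lambda_\omega=\p(\ker{}^tf_\omega)$ together with the codimension counts, and your quotient-lattice argument (subspaces of $\p(\bw^2V)$ containing $\Lambda_\omega$ correspond, under the surjection ${}^tf_\omega$ guaranteed by \ref{GC2}, to subspaces of $V^*\cong\bw^2V/\ker{}^tf_\omega$, with $\Lambda_\omega^x$ and $\Lambda_\omega^{xy}$ matching $\gen{x}$ and $\gen{x,y}$) simply writes out that implicit reasoning in full. No gaps: your preliminary identifications $\Lambda_\omega^x=\p\bigl(({}^tf_\omega)^{-1}\gen{x}\bigr)$ and $\Lambda_\omega^{xy}=\p\bigl(({}^tf_\omega)^{-1}\gen{x,y}\bigr)$ are the standard wedge criteria and are exactly what the paper uses.
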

Consider now the quadric $Q_{\omega\w x}\subset \p(\bw^2 V)$, of equation $(\omega\w x)(L\w L)=0$. 
Notice, first, that it depends only on the restriction $\omega_x$ of $\omega$ to $V_x$.  
In fact if $\omega=\omega_x+\beta_x\w x$, then $\omega\w x=\omega_x\w x$. 

We first find the singular locus of $Q_{\omega\w x}$.  
 Let $K_{\omega_x}=\ker f_{\omega_x}
\subset V_x$, and set $\Lambda^K_{\omega_x}  =\p( K_{\omega_x}\bw V)\subset \p(\bw^2V)$.

\begin{lemma}\label{singularlocus_quadrics}
The singular locus of $Q_{\omega\w x}$ is the subspace
$\gen{\Lambda_{\omega_x}\cup \Lambda^K_{\omega_x}}\subset \p(\bw^2V)$.
In particular, if $\omega_x$ has rank $m$, then $Q_{\omega\w x}$ has rank $2m$.
\end{lemma}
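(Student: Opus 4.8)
The plan is to reduce to a convenient basis and then compute the kernel of the polarity $\rho_{\omega\w x}$ explicitly, using the contraction formula already established in Lemma \ref{eta}\eqref{x}. Since a nonzero $x\in V^*$ can be completed to a basis of $V^*$, I would choose coordinates so that $x=x_0$ and $V_x=\gen{e_1,\dotsc,e_n}$; recall that $Q_{\omega\w x}$ depends only on $\omega_x$ because $\omega\w x=\omega_x\w x$. Writing a bivector as $L=L_0+v_0\w e_0$ with $L_0\in\bw^2 V_x$ and $v_0\in V_x$, the formula of Lemma \ref{eta}\eqref{x} reads
\begin{equation*}
\rho_{\omega\w x}(L)=(\omega_x\w x_0)(L)=\omega_x(L_0)\w x_0-\omega_x(v_0).
\end{equation*}
By \eqref{eq:sing} the singular locus is $\p(\ker\rho_{\omega\w x})$, so everything reduces to understanding when this $2$-form vanishes.

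The key observation is that the two summands lie in complementary pieces of the decomposition $\bw^2V^*=\bw^2V_x^*\oplus(V_x^*\w\gen{x})$ from \eqref{decomp}: indeed $\omega_x(L_0)\in V_x^*$, so $\omega_x(L_0)\w x_0$ lies in $V_x^*\w\gen{x}$, whereas $\omega_x(v_0)=f_{\omega_x}(v_0)\in\bw^2V_x^*$. Hence the vanishing of $\rho_{\omega\w x}(L)$ decouples into the two independent conditions $\omega_x(L_0)=0$ (equivalently $\omega_x(L_0)\w x_0=0$, since wedging with $x_0$ is injective on $V_x^*$) and $v_0\in K_{\omega_x}=\ker f_{\omega_x}$, giving
\begin{equation*}
\ker\rho_{\omega\w x}=\ker\bigl(\bw^2V_x\to V_x^*\bigr)\oplus\bigl(K_{\omega_x}\w e_0\bigr).
\end{equation*}
The first summand is the affine cone over $\Lambda_{\omega_x}$: expanding $\omega=\omega_x+\beta_x\w x$, for $L_0\in\bw^2V_x$ one has $\omega(L_0)\w x=\omega_x(L_0)\w x$ because $\beta_x$ contributes only a multiple of $x$, so the defining conditions $x(L_0)=0$, $\omega(L_0)\w x=0$ of $\Lambda_{\omega_x}$ in \eqref{span3} reduce exactly to $L_0\in\bw^2V_x$ with $\omega_x(L_0)=0$.

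It then remains to match $\ker\rho_{\omega\w x}$ with the span $\gen{\Lambda_{\omega_x}\cup\Lambda^K_{\omega_x}}$. Both pieces sit in the (linear) singular locus: $\Lambda_{\omega_x}$ by the identification above, and for $\Lambda^K_{\omega_x}=\p(K_{\omega_x}\w V)$, if $k\in K_{\omega_x}$ and $w\in V_x$ then contracting $\omega_x$ with $k$ first shows $\omega_x(k\w w)=0$, so any $[k\w v]$ satisfies both kernel conditions. As $\Sing(Q_{\omega\w x})$ is linear this already yields $\gen{\Lambda_{\omega_x}\cup\Lambda^K_{\omega_x}}\subseteq\Sing(Q_{\omega\w x})$, and for the reverse inclusion I would use the direct-sum description: any $L=L_0+v_0\w e_0$ in the kernel is a point of the cone over $\Lambda_{\omega_x}$ plus the element $v_0\w e_0\in K_{\omega_x}\w V$, hence lies in the span. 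The one delicate point is that $\Lambda^K_{\omega_x}$ also carries the part $K_{\omega_x}\w V_x\subset\bw^2V_x$; this introduces no new directions precisely because $K_{\omega_x}\w V_x\subseteq\ker(\bw^2V_x\to V_x^*)$ is already absorbed into the cone over $\Lambda_{\omega_x}$.

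Finally, the rank statement follows from \eqref{eq:rankq} and rank--nullity (or directly from Lemma \ref{eta}\eqref{x}). If $\omega_x$ has rank $m$ in the sense of Definition \ref{def:rko}, then $\dim\ker(\bw^2V_x\to V_x^*)=\binom{n}{2}-m$ and $\dim K_{\omega_x}=n-m$, so $\dim\ker\rho_{\omega\w x}=\binom{n}{2}+n-2m=\binom{n+1}{2}-2m$, whence $\rk Q_{\omega\w x}=\binom{n+1}{2}-\dim\ker\rho_{\omega\w x}=2m$. The main obstacle is really only bookkeeping: keeping the contraction conventions consistent so that the two terms of $\rho_{\omega\w x}(L)$ genuinely land in complementary summands — which is what makes the vanishing conditions independent — and checking the absorption $K_{\omega_x}\w V_x\subseteq\ker(\bw^2V_x\to V_x^*)$; once these are secured, both the identification of spans and the rank count are immediate.
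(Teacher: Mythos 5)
Your proof is correct and follows essentially the same route as the paper's: both reduce to the decomposition $L=L_x+v_x\w e$, apply the contraction formula of Lemma \ref{eta}\eqref{x}, and split the vanishing of $\rho_{\omega\w x}(L)$ into the two conditions $\omega_x(L_x)=0$ and $v_x\in K_{\omega_x}$, with the rank then following from the same lemma. You merely make explicit two details the paper leaves implicit — that the two summands land in complementary pieces of $\bw^2V^*=\bw^2V_x^*\oplus(V_x^*\w\gen{x})$, and that $K_{\omega_x}\w V_x$ is absorbed into the cone over $\Lambda_{\omega_x}$ — both of which are correct and welcome clarifications.
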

\begin{proof}
The singular locus  of $Q_{\omega\w x}$ is $\p(\ker\rho_{\omega\w x})$, where 
\begin{align*}
\rho_{\omega\w x}\colon \bw^2 V&\to \bw^2 V^*\\ 
L&\mapsto (\omega\w x)(L),
\end{align*}
so as in the proof of Lemma \ref{eta}, \eqref{x}, we may assume  $\omega=\omega_x\in \bw^3V_x^*$ and let $L=L_x+v_x\w e$, where $L_x\in \bw^2 V_x$,  $v_x\in V_x$ and $x(e)=1$.  Then the singular locus of $Q_{\omega\w x}$ is spanned by classes of $2$-vectors $L$ such that
\begin{equation*}
(\omega\w x)(L)=\omega_x(L_x)\w x -\omega_x(v_x)=0,
\end{equation*}
which implies 
\begin{equation*}
\omega_x(L_x)=\omega_x(v_x)=0,
\end{equation*} 
or equivalently 
\begin{align*}
[L_x]&\in \Lambda_{\omega_x} &\textup{and} && [v_x\w e]&\in \Lambda^K_{\omega_x}.
\end{align*}
The rank of $Q_{\omega\w x}$ equals the rank of  $\rho_{\omega\w x}$, which is $2\rk \omega_x$, by Lemma \ref{eta}\ref{x}.
\end{proof}

\begin{rema} When $f_{\omega_x}:V_x\to \bw^2V_x^*$ is injective, i.e. $\omega_x$ has rank $n$, then 
the singular locus of $Q_{\omega\w x}$ is $ \Lambda_{\omega_x}$ and the rank of $Q_{\omega\w x}$ is $2n$.
\end{rema}

We shall study now the linear spaces in the quadrics $Q_{\omega\w x}$, with special attention to those containing also $\Lambda_\omega$. 

\begin{lemma}\label{span_quadrics}
The quadric $Q_{\omega\w x}$, for $x\in V^*$,   contains the codimension $n$ linear subspaces $\Lambda^x_{\omega}$, 
$\Lambda_{\omega,x\w y}$ for $x\w y\in \bw^2V^*$, and $\p(\bw^2V_x)$. 

In particular, each quadric $Q_{\omega\w x}$, $x\in V^*$, contains $\Lambda_\omega$, and each 
quadric of the pencil generated by $Q_{\omega\w x}$ and $Q_{\omega\w y}$ contains 
the linear subspace
$\Lambda_{\omega,x\w y}$. 
\end{lemma}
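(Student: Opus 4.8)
The plan is to verify each claimed containment directly by checking that the defining quadratic form $q_{\omega\w x}$ vanishes on every point of the relevant linear subspace. Recall from Corollary~\ref{cor:iso} that $q_{\omega\w x}(L) = (\omega\w x)(L^{[2]})$, and by the definition of the reduced square together with $L\w L = 2L^{[2]}$, this is a scalar multiple of $(\omega\w x)(L\w L)$. So for each subspace I must show that $(\omega\w x)(L\w L)=0$ for all $[L]$ in the subspace, and that the dimensions are as stated.

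First I would treat $\Lambda^x_\omega$. By definition \eqref{span1}, a point $[L]\in\Lambda^x_\omega$ satisfies $\omega(L)\w x = 0$ in $\bw^2 V^*$. The key identity is that $(\omega\w x)(L\w L)$ can be rewritten, using the contraction rules for wedge products, as a pairing of $\omega(L)\w x$ against $L$ (up to sign and combinatorial constants): concretely, $(\omega\w x)(L\w L) = \pm 2\,(\omega(L)\w x)(L)$, where $\omega(L)\w x \in \bw^2 V^*$ is contracted against $L\in\bw^2 V$. Since $\omega(L)\w x = 0$ on $\Lambda^x_\omega$, this vanishes, giving $\Lambda^x_\omega \subset Q_{\omega\w x}$. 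The codimension count $\codim\Lambda^x_\omega = n$ follows from Remark~\ref{rmk:l} when $\omega$ has full rank. For $\Lambda_{\omega,x\w y}$ the same computation applies: points here satisfy the stronger condition $\omega(L)\w x\w y = 0$ together with $(x\w y)(L)=0$ (definition \eqref{span4}), and since $\Lambda_{\omega,x\w y}\subset\Lambda^{xy}_\omega$, I would instead verify the inclusion for $Q_{\omega\w x}$ by checking $(\omega\w x)(L\w L)=\pm 2(\omega(L)\w x)(L)$ and observing that $\omega(L)\w x$ annihilates $L$ whenever $\omega(L)\w x\w y=0$ and the Schubert condition holds; more cleanly, since $\Lambda_{\omega,x\w y}\subset\Lambda^x_\omega$ by the inclusion diagram following \eqref{span4} (as $\Lambda_{\omega_x}$ and $\Lambda_{\omega,x\w y}$ sit inside $\Lambda^x_\omega$ and $\Lambda^{xy}_\omega$ respectively), the containment is inherited from the first case once one checks the diagram placement.

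For $\p(\bw^2 V_x)$, a point $[L]$ has $L\in\bw^2 V_x$, so every index in $L$ avoids the direction dual to $x$. Then $\omega\w x$ applied to $L\w L\in\bw^4 V_x$ vanishes because $x$ wedged into a form supported on $V_x$, when contracted against a $4$-vector lying entirely in $\bw^4 V_x$, picks up the factor $x$ evaluated on $V_x$, which is zero. Equivalently, in coordinates with $x=x_0$, writing $\omega\w x_0 = \omega_{x_0}\w x_0$ as in the proof of Lemma~\ref{singularlocus_quadrics}, and $L=L_0$ with $L_0\in\bw^2\langle e_1,\dots,e_n\rangle$, the contraction $(\omega_{x_0}\w x_0)(L_0\w L_0)$ forces an $e_0$ to pair with $x_0$, but $L_0\w L_0$ contains no $e_0$, so the result is zero.

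The last two assertions are then corollaries. Since $\Lambda_\omega\subset\Lambda^x_\omega$ for every $x$ (the inclusion diagram before Remark~\ref{rmk:l}), $\Lambda_\omega\subset Q_{\omega\w x}$ for all $x\in V^*$. For the pencil, any quadric in it has the form $a\,q_{\omega\w x} + b\,q_{\omega\w y} = q_{\omega\w(ax+by)}$ by linearity of $\eta\mapsto q_\eta$ (Corollary~\ref{cor:iso}); and since $\Lambda_{\omega,x\w y}\subset\Lambda^{z}_\omega$ for every $z\in\langle x,y\rangle$ — because $\omega(L)\w x\w y = 0$ implies $\omega(L)\w z$ lies in the span forcing the needed vanishing when paired with $L$ subject to $(x\w y)(L)=0$ — the subspace $\Lambda_{\omega,x\w y}$ lies in each member of the pencil. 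The main obstacle I anticipate is pinning down the combinatorial identity $(\omega\w x)(L\w L)=\pm 2(\omega(L)\w x)(L)$ with the correct sign and normalisation, and making the placement of $\Lambda_{\omega,x\w y}$ inside the various $\Lambda^z_\omega$ fully precise so that the pencil argument is airtight; the individual containments are otherwise routine coordinate computations in the spirit of Lemma~\ref{eta}.
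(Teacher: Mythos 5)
Your treatment of $\Lambda^x_\omega$ and $\p(\bw^2 V_x)$ is correct and is exactly the paper's computation: the identity $(\omega\w x)(L\w L)=\pm2\,(\omega(L)\w x)(L)$ reduces membership of $[L]$ in $Q_{\omega\w x}$ to the vanishing of $(\omega(L)\w x)(L)$, which holds when $\omega(L)\w x=0$ or when $x(L)=0$. The genuine gap is in your handling of $\Lambda_{\omega,x\w y}$: the inclusion $\Lambda_{\omega,x\w y}\subset\Lambda^x_\omega$ that you invoke as the ``more cleanly'' route is false, and the inclusion diagram following \eqref{span4} does not assert it --- it places $\Lambda_{\omega,x\w y}$ inside $\Lambda^{xy}_\omega$ only (your own parenthetical says as much, contradicting the main clause it is attached to). The condition $\omega(L)\w x\w y=0$ means $\omega(L)\in\gen{x,y}$, which does not force $\omega(L)\w x=0$. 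Indeed, by Remark \ref{intersection} and the decomposition $Q_{\omega\w x}\cap\Lambda_\omega^{xy}=\Lambda_\omega^x\cup\Lambda_{\omega,x\w y}$ used in Section \ref{residual}, the spaces $\Lambda^x_\omega$ and $\Lambda_{\omega,x\w y}$ are maximal isotropic subspaces of \emph{opposite} families meeting only in codimension one, so for generic $\omega,x,y$ neither contains the other; your shortcut would even imply $\Lambda_{\omega,x\w y}$ lies in a hyperplane of itself. The same defect infects your pencil argument, where you claim $\Lambda_{\omega,x\w y}\subset\Lambda^z_\omega$ for every $z\in\gen{x,y}$: that is again membership in a linear space defined by the identical vanishing $\omega(L)\w z=0$, which does not hold.

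The repair is the direct computation the paper performs, which you gesture at (``$\omega(L)\w x$ annihilates $L$'') but never complete: from $\omega(L)\w x\w y=0$ write $\omega(L)=ax+by$; then for any $z=cx+dy$ one has $\omega(L)\w z=(ad-bc)\,x\w y$, so the Schubert condition $(x\w y)(L)=0$ gives $(\omega(L)\w z)(L)=0$, i.e. $[L]\in Q_{\omega\w z}$ by your own identity. The missing idea is precisely the intermediate step $\omega(L)\in\gen{x,y}$ together with the observation $\omega(L)\w z\in\CC\, x\w y$ --- once stated, it proves the containment in $Q_{\omega\w x}$ and in every member of the pencil simultaneously (using, as you correctly note, the linearity $a\,q_{\omega\w x}+b\,q_{\omega\w y}=q_{\omega\w(ax+by)}$ from Corollary \ref{cor:iso}), with no appeal to any linear-space inclusion that fails.
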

\begin{proof}
$[L]\in Q_{\omega\w x}$ if and only if
\begin{equation*}
(\omega\w x)(L\w L)=(\omega (L)\w x-\omega (x(L))(L)=-2\omega(L)(x(L))=0,
\end{equation*}
which is equivalent to 
\begin{equation*}
(\omega(L)\w x)(L)=0.
\end{equation*}
So if $\omega(L)\w x=0$ or $x(L)=0$, then $[L]\in Q_{\omega\w x}$.
Therefore $\Lambda_{\omega\w x}\subset Q_{\omega\w x}$ and $\p(\bw^2V_x)\subset Q_{\omega\w x}$.

Similarly, if $x,y\in V^*$ and $\omega(L)\w x\w y=0$, then $\omega(L)=ax+by$ for some $ax+by\in \gen{ x,y}$.  If furthermore $(x\w y)(L)=0$, then 
\begin{equation*}
(\omega(L)\w (cx+dy))(L)=((ax+by)\w (cx+dy))(L)=0
\end{equation*}
so $[L]\in Q_{\omega\w (cx+dy)}$ for any $cx+dy\in \gen{ x,y}$.
Therefore the linear space $\Lambda_{\omega,x\w y}$ is contained in the pencil of quadrics generated by $Q_{\omega\w x}$ and $Q_{\omega\w y}$.
\end{proof}

Since the quadric $Q_{\omega\w x}=Q_{\omega'\w x}$, whenever $(\omega'-\omega)\w x=0$,  the Lemma \ref{span_quadrics} applies to show that 
\begin{equation*}
\Lambda^x_{\omega'}\subset Q_{\omega\w x}
\end{equation*}
for every $[\omega']$ in the set

 \begin{equation}\label{firstfamily}
 \{[\omega']\in \p(\bw^3V^*)\mid %\omega_x=\omega'_x\}
 \omega(L)\w \omega'(L)=0\quad \textup{when} \quad x(L)=0\}.
 \end{equation}

Likewise
\begin{equation*}
\Lambda_{\omega',x\w y}\subset Q_{\omega\w x}
\end{equation*}
for every $[\omega']$ and $[y]\neq [x]$ such that 
 \begin{equation}\label{secondfamily}
\{([\omega'], [y])\in \p(\bw^3V^*)\times \p(V_x^*)\mid 
\omega(L)\w \omega'(L)=0\quad \textup{when} \quad (x\w y)(L)=0\}.
\end{equation}

When the restriction $\omega_x\in \bw^3V_x^*$ has rank $n$, then 
the quadric $Q_{\omega\w x}$ has rank $2n$, by Lemma \ref{eta} \eqref{x}.  Hence 
$Q_{\omega\w x}$ contains two  $\binom{n}{2}$-dimensional families of linear spaces of minimal codimension $n$.  

\begin{theorem}\label{rk2n-quadrics} 
Let $\omega\in \bw^3V^*$ and $x\in V^*$ and assume that the restriction $\omega_x$ to $V_x$ has rank $n$. 
Then:
\begin{enumerate}
\item\label{rk2n:1}
The singular locus of the quadric $Q_{\omega\w x}$ is the linear space 
 \begin{equation*}
\Lambda_{\omega_x}=\{[L]\in \p(\bw^2 V)\mid x(L)=\omega_{x}(L)=0\}.
\end{equation*}
\item The two $\binom{n}{2}$-dimensional spinor varieties of $n$-codimensional linear subspaces in the quadric 
$Q_{\omega\w x}\subset \p(\bw^2 V)$ are birationally parametrised by the sets \eqref{firstfamily} and \eqref{secondfamily} respectively.  
\item The linear space 
 \begin{equation*}
\{[L]\in \p(\bw^2 V)\mid x(L)=0\}=\gen{G(2,V_x)},
\end{equation*}
spanned by the subgrassmannian $G(2,V_x)$, is contained in $Q_{\omega\w x}$ and belongs to the spinor variety parametrised by \eqref{firstfamily} 
if $n$ is odd, and to the spinor variety parametrised by \eqref{secondfamily} if $n$ is even.
\end{enumerate}
\end{theorem}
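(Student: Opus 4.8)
The plan is to reduce all three assertions to the geometry of the single rank-$2n$ quadric $Q_{\omega\w x}$ and its two families of maximal linear subspaces. The first assertion is immediate from Lemma~\ref{singularlocus_quadrics}, whose singular locus is $\gen{\Lambda_{\omega_x}\cup\Lambda^K_{\omega_x}}$ with $K_{\omega_x}=\ker f_{\omega_x}$: since $\omega_x$ has rank $n$ the map $f_{\omega_x}\colon V_x\to\bw^2V_x^*$ is injective, so $K_{\omega_x}=0$, $\Lambda^K_{\omega_x}=\emptyset$, and $\Sing(Q_{\omega\w x})=\Lambda_{\omega_x}$, exactly as in the remark following that lemma. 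This exhibits $Q_{\omega\w x}$ as a cone with vertex $\Lambda_{\omega_x}$ over a smooth quadric $Q_0$ of rank $2n$ in a complementary $\p^{2n-1}$.

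For the second assertion I would use that the maximal linear subspaces of $Q_0$ form two families (the two components of the orthogonal Grassmannian), each a spinor variety $\cong\mathbb{S}_n$ of dimension $\binom{n}{2}$; coning with the vertex, the corresponding maximal subspaces of $Q_{\omega\w x}$ have codimension $n$ in $\p(\bw^2V)$. Fixing $e$ with $x(e)=1$ and writing $L=L_0+v_0\w e$ as in the proof of Lemma~\ref{eta}\eqref{x}, the form $q_{\omega\w x}$ becomes the pairing $\gen{f_{\omega_x}(L_0),v_0}$, which modulo the radical $K_{\omega_x}$ is the split form on $V_x^*\oplus V_x$; thus $Q_0$ is the split quadric and the two families are its rulings. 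The spaces $\Lambda^x_{\omega'}$ (for $\omega'$ in~\eqref{firstfamily}) and $\Lambda_{\omega',x\w y}$ (for $(\omega',y)$ in~\eqref{secondfamily}) already lie in $Q_{\omega\w x}$ by the discussion preceding the theorem; I would check that for general parameters they have codimension exactly $n$, hence are maximal isotropic, and that the two constructions fall into the two distinct rulings. For birationality I would produce the inverse map---a general maximal linear space $\Lambda$ in a ruling meets $\GR$ in a congruence of type $X_{\omega'}$, respectively $Y_{\omega',x\w y}$, recovering $\omega'$ (and $y$)---and combine it with the dimension count $\dim\eqref{firstfamily}=\dim\eqref{secondfamily}=\binom{n}{2}$.

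The third assertion is then a parity computation. The space $\gen{G(2,V_x)}=\{x(L)=0\}$ lies in $Q_{\omega\w x}$, being the locus $v_0=0$ on which $q_{\omega\w x}$ vanishes, and reduces modulo the vertex to the ruling of $V_x^*\oplus 0$. Two maximal isotropics of $Q_0$ lie in the same ruling exactly when their intersection has dimension congruent to $n$ modulo $2$; here $\dim\big(\gen{G(2,V_x)}\cap\Lambda^x_{\omega'}\big)=\dim\ker f_{\omega'_x}$, which after removing the vertex equals $n-\rk\omega'_x$. The decisive input is that on the dominant component of~\eqref{firstfamily} the restriction $\omega'_x$ has generic rank $n-1$ (and similarly for~\eqref{secondfamily}), so the reduced intersection dimension is $1$; hence $\gen{G(2,V_x)}$ lies in the ruling of~\eqref{firstfamily} exactly when $1\equiv n\pmod 2$, i.e. when $n$ is odd, and in that of~\eqref{secondfamily} when $n$ is even.

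The main obstacle is the birational parametrisation in the second assertion: proving that~\eqref{firstfamily} and~\eqref{secondfamily} are irreducible of the expected dimension $\binom{n}{2}$ and that the maps to the two rulings are generically injective. The technical heart is to describe which $\omega'$ satisfy $\omega(\ell)\parallel\omega'(\ell)$ for every line $\ell\subset\p(V_x)$, and in particular to establish that the generic such $\omega'$ has $\rk\omega'_x=n-1$; this one fact simultaneously yields the maximality of $\Lambda^x_{\omega'}$ and, through the intersection parity above, fixes the ruling in the third assertion.
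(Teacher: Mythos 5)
Your handling of part (1) coincides with the paper's (Lemma~\ref{singularlocus_quadrics} plus injectivity of $f_{\omega_x}$), and your dimension counts for part (2) are exactly the paper's proof of that part; the explicit identification of the reduced quadric with the split pairing $\gen{f_{\omega_x}(L_0),v_0}$ on $V_x^*\oplus V_x$ is a pleasant addition that the paper does not spell out. But the ``decisive input'' of your part (3) is false. Membership of $[\omega']$ in \eqref{firstfamily} means, birationally, $\omega'\w x\propto \omega\w x$, i.e.\ $\omega'_x\propto\omega_x$ --- indeed the paper parametrises this family precisely by $\omega'=\omega_x+\beta\w x$ with $\beta\in\bw^2 V_x^*$ --- so $\rk\omega'_x=\rk\omega_x=n$ for \emph{every} member, by the standing hypothesis of the theorem; there is no dominant locus with $\rk\omega'_x=n-1$. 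Consequently, for every $\Lambda^x_{\omega'}$ in the family one computes, exactly as the paper does for $\omega'=\omega$: if $x(L)=0$ then $\omega'(L)\w x=\omega_x(L)\w x$, so $\gen{G(2,V_x)}\cap\Lambda^x_{\omega'}=\{[L]\mid x(L)=\omega_x(L)=0\}=\Lambda_{\omega_x}$, which is the vertex itself --- the reduced intersection is empty, not $1$-dimensional. (A further slip: the relevant kernel is that of ${}^tf_{\omega'_x}$ on $\bw^2 V_x$, of dimension $\binom{n}{2}-\rk\omega'_x$, not that of $f_{\omega'_x}$ on $V_x$.) The paper's proof of part (3) consists of nothing more than this one computation --- the intersection has codimension $n$ in $\gen{G(2,V_x)}$ --- followed by the parity rule; no generic-rank analysis enters anywhere.

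The error is not cosmetic, because your parity criterion is stated correctly (same ruling iff the reduced intersection dimension is congruent to $n$ modulo $2$, in vector-space terms), and feeding it the true value (reduced intersection zero) returns ``$\gen{G(2,V_x)}$ lies in the ruling of \eqref{firstfamily} iff $n$ is \emph{even}'' --- the opposite of what you assert; your stated conclusion is produced only by the false rank claim. You should calibrate the bookkeeping on $n=3$: there $Q_{\omega\w x}$ is the Pl\"ucker quadric $G(2,4)\subset\p^5$; every $\omega'=\omega_x+\beta\w x$ is decomposable, so the spaces $\Lambda^x_{\omega'}$ of \eqref{firstfamily} are $\alpha$-planes (lines through a point), while $\gen{G(2,V_x)}$ is a $\beta$-plane, disjoint from $\Lambda^x_{\omega}$ (intersection of codimension $3=n$), hence it lies in the ruling swept by the spaces $\Lambda_{\omega',x\w y}$ of \eqref{secondfamily} --- with $n=3$ odd. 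This test shows that the parity bookkeeping (in your last paragraph, and indeed in the statement's labelling of the two rulings, which in the paper is likewise deduced only from the codimension-$n$ intersection with $\Lambda^x_\omega$) must be settled by an explicit check of this kind, not absorbed into an unproved assertion that $\rk\omega'_x=n-1$; as written, your argument for part (3) is not a proof.
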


\begin{proof}  It remains, first, to show that the two families of linear subspaces \eqref{firstfamily} and \eqref{secondfamily}  are
$\binom{n}{2}$-dimensional.  The linear spaces in $\Lambda^x_{\omega'}$ in \eqref{firstfamily} depend on a $2$-form $\beta\in \bw^2V^*$ such that $\omega'=\omega_x+\beta\w e$, where $x(e)=1$, so this family is $\binom{n}{2}$-dimensional.

The linear spaces $\Lambda_{\omega',x\w y}$ in \eqref{secondfamily} depend on 
the choice of $y$ and the choice of $\beta'\w y$, such that $\omega'\in \gen{\omega, \beta'\w y}$.  The first choice is $(n-1)$-dimensional, the second is $\binom{n-1}{2}$-dimensional, 
so they sum to  $\binom{n}{2}$.
The second statement of the theorem follows.

For the third statement,  assume $x(L)=0$.  Then clearly
\begin{equation*}
\omega(L)(x(L))=0
\end{equation*}
and so the linear space
\begin{equation*}
\{[L]\mid x(L)=0\}
\end{equation*}
of codimension $n$ is also contained in $Q_{\omega\w x}$.  
The intersection 
\begin{equation*}
\{[L]\mid x(L)=0\}\cap \{[L]\mid \omega(L)\w x=0\}= \{[L]\in \p(\bw^2 V_{x})\mid \omega_{x}(L)=0\}
\end{equation*}
has codimension $n$ in $\{[L]\mid x(L)=0\}$, so this last linear subspace
belongs to the spinor variety parametrised by \eqref{firstfamily}  if $n$ is odd, 
and to the spinor variety parametrised by \eqref{secondfamily}  if $n$ is even.
\end{proof}

\begin{rema}\label{intersection}  We conclude this section observing that two  maximal isotropic spaces $\Lambda_{\omega'}^x$ and $\Lambda_{\omega',x\w y}$ 
of opposite  families on $Q_{\omega\w x}$, obtained from the \emph{same} $3$-form $\omega'$, 
are both contained in the subspace  
\begin{equation*}
\Lambda_{\omega'}^{xy}=\{[L]\in \p(\bw^2V) \mid \omega'(L)\w x\w y=0\} 
\end{equation*}
of codimension $n-1$.   
So they intersect along a subspace having codimension $1$ in both of them. This will come in hand in Section \ref{residual}.
\end{rema}

\section{The congruence}\label{sect:2}

In this section we introduce the congruence $X_\omega$ defined by a $3$-form $\omega$.  
We keep our notation $V$ for a fixed vector space of dimension $n+1$
and $\GG=G(2,V)$.

\subsection{The congruence as a linear section of the Grassmannian}\label{xomega}

Now, our main object of study is the following subset of the Grassmannian $\GR$:
\begin{equation*}
X_\omega:=\{[L]\in \GR\mid \omega(L)=0\}\subset \p(\bw^2V).
\end{equation*}

\subsubsection{The equations of the congruence}
We start by studying $X_\omega$ in coordinates.
Our $3$-form $\omega$ reads:
 \begin{equation}\label{eq:3form}
\omega=\sum_{0\le i< j< k\le n}a_{i,j,k} x_i\w x_j \w x_k\in \bw^3V^*.
\end{equation}
If we write $L$ as
\begin{equation}\label{eq:1}
L=\sum_{0\le a< b \le n}p_{a,b} e_a\w e_b
\end{equation}
where the $p_{a,b}$'s in \eqref{eq:1} satisfy the Pl\"ucker relations, then we require that 
\begin{equation}\label{eq:2}
\omega (\sum_{0\le a<b \le n}p_{a,b}e_a\w e_b)=0.
\end{equation}
More explicitly, we get 
\begin{equation}\label{eq:px}
\omega(L)=\sum_{i,j,k}( (-1)^{i+j-1}(a_{i,j,k}-a_{i,k,j}+a_{k,i,j}))x_kp_{i,j}=0. 
\end{equation} 
Therefore, we deduce that the equations of $X_\omega$ are
\begin{align}\label{eq:important}
\sum_{0\le i<j\le n}( (-1)^{i+j-1}(a_{i,j,k}-a_{i,k,j}+a_{k,i,j}))p_{i,j}&=0, & k&=0,\dotsc,n,
\end{align}
i.e. we have $n+1$ linear equations, together with the Pl\"ucker relations. 
The equations are sometimes more convenient in the form
 \begin{align}\label{eq:importantbis}
\sum_{0\le i<j\le n}( (-1)^{i+j+k-1}(a_{i,j,k}-a_{i,k,j}+a_{k,i,j}))p_{i,j}&=0, & k&=0,\dotsc,n,
\end{align}
\begin{rema}
Note that the equations 
\eqref{eq:important} define the linear span of $X_\omega$, $\gen{X_\omega}$, which was denoted by $\Lambda_\omega$ in \eqref{span}. 
\end{rema}

The linear space generated by  
equations  \eqref{eq:important} has a natural embedding in  $\p(\bw^2 V)^*$ as
the linear subspace 
$P_\omega$,  orthogonal to the linear span of $X_\omega$ in the Pl\" ucker embedding,   
i.e. 
\begin{equation}\label{not:1}
P_\omega:=\Lambda_\omega^\perp
\end{equation}
(see section \ref{the quadric}). 

By Proposition \ref{dimPn}, if $n\geq 4$ and  
the genericity condition \ref{GC2} holds, these equations are linearly independent, therefore $\dim P_\omega=n$, 
while if $n=3$, $\dim P_\omega=2$. 
Let us see the embedding of $P_\omega$ in coordinates: to obtain the parametric equations of $P_\omega$ 
we simply consider the coefficients of the $p_{i,j}$'s 
in $\omega(L)=0$, i.e. in equation \eqref{eq:px}: 
\begin{equation*}
\sum_k( (-1)^{i+j-1}(a_{i,j,k}-a_{i,k,j}+a_{k,i,j}))x_k;
\end{equation*}
therefore, if $q_{i,j}$ are the dual coordinates on $\p(\bw^2 V)^*$, the parametric equations of $P_\omega$
are given by  
\begin{equation}\label{eq:qij}
q_{i,j}=\sum_k( (-1)^{i+j-1}(a_{i,j,k}-a_{i,k,j}+a_{k,i,j}))x_k.
\end{equation}
In other words, if  $\omega$ has rank $n+1$, i.e. 
the linear map $f_\omega: V\mapsto \bw^2 V^*;  v\mapsto \omega(v)$ is injective \ref{GC2},
we have a linear embedding 
\begin{equation}\label{eq:emb}
\p(f_\omega) \colon \p(V)\to \p(\bw^2 V)^*
\end{equation}
defined by \eqref{eq:qij}. 
This map is represented by the following $(n+1)\times (n+1)$ skew-symmetric matrix of linear forms on $\p(V)$: 
\begin{equation}\label{eq:momega}
M_\omega:=\left(\sum_k( (-1)^{i+j-1}(a_{i,j,k}-a_{i,k,j}+a_{k,i,j}))x_k \right)_{\substack{i=0,\dotsc,n \\ j=0,\dotsc,n}}
\end{equation}
using the usual convention, in \eqref{eq:qij}, that $q_{i,j}=-q_{j,i}$. 

Summarising, if $\omega$ has rank $n+1$, then  
\begin{equation}\label{eq:pomega}
\Lambda_\omega^\perp=P_\omega=\im(\p(f_\omega))\cong \p(V).
\end{equation}
Clearly, $P_\omega=\overline{\im(\p(f_\omega))}$ always holds true
(i.e. also if $f_\omega$ is not injective).

\subsubsection{The tangent space of the congruence}

To interpret $X_\omega$ geometrically,  we consider certain linear subspaces of  
$\p(\bw^{3}V^*)$.

Fix a decomposable tensor $L\in \bw^{2}V$ and let $L^*\in \bw^{n-1}V^*$ be its dual.
We shall denote by $\TT_L\subset \p(\bw^3V^*) $ 
the linear span of the union of all the embedded tangent spaces to $G(3,V^*)$
at the points corresponding to the $3$-spaces $\pi^*$ such that   $L^*$ contains $\pi^*$, 
i.e. $\TT_L=\gen{\TT_{\pi^*} G(3,V^*)\mid \pi^*\subset L^*}$.

\begin{lemma} \label{lem} Assume $n\geq 4$, and let $\omega\in \bw^3V^*$. 
Let $L= e\w f\in \bw^2 V$.  We fix a basis of $V$, $e_0,\dotsc,e_n$, with $e_0=e$, $e_1=f$, and the dual basis $x_0, \dotsc, x_n$. Then the following are equivalent:
\begin{enumerate}
\item\label{lem1:1} $[L]\in X_\omega$; 
\item\label{lem1:2} $\omega$ can be uniquely 
written as $\omega=\omega_{01}+\beta_0\w x_0+ \beta_1\w x_1$,
with $\beta_0$, $\beta_1\in \bw ^2\gen{x_2, \dotsc, x_n}$,  $\omega_{01}\in \bw ^3\gen{x_2, \dotsc, x_n}$; 
\item\label{lem1:3} $[\omega]\in \TT_L$.
\end{enumerate}
\end{lemma}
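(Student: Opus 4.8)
The plan is to establish the cycle of implications $\eqref{lem1:1}\Rightarrow\eqref{lem1:2}\Rightarrow\eqref{lem1:3}\Rightarrow\eqref{lem1:1}$, working throughout with the chosen adapted basis in which $L=e_0\w e_1$, so that the condition $[L]\in X_\omega$ reads simply $\omega(L)=\omega(e_0\w e_1)=0$ as an element of $V^*$.

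First I would treat $\eqref{lem1:1}\Leftrightarrow\eqref{lem1:2}$ by an explicit but light coordinate computation. Decompose $\omega$ using the three subspaces determined by $x_0,x_1$: writing $\omega=\omega_{01}+\beta_0\w x_0+\beta_1\w x_1+c\,x_0\w x_1\w x_?$ and grouping terms, the general element of $\bw^3V^*$ decomposes uniquely as a part in $\bw^3\gen{x_2,\dots,x_n}$, a part $\beta_0\w x_0$, a part $\beta_1\w x_1$, and a part of the form $(\text{linear in }x_2,\dots,x_n)\w x_0\w x_1$. The contraction $\omega(e_0\w e_1)$ picks out precisely this last $x_0\w x_1$-component: only the summands containing both $x_0$ and $x_1$ survive contraction against $e_0\w e_1$, yielding the single form $\omega(e_0\w e_1)\in\gen{x_2,\dots,x_n}$. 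Hence $[L]\in X_\omega$ is equivalent to the vanishing of that component, which is exactly the statement that $\omega$ has no $x_0\w x_1$-term, i.e. that $\omega=\omega_{01}+\beta_0\w x_0+\beta_1\w x_1$ with the stated membership of the pieces; uniqueness is the uniqueness of the decomposition \eqref{decomp} applied twice (first splitting off $x_0$, then $x_1$ inside $\bw^3 V_{x_0}^*$).

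Next I would identify $\TT_L$ concretely in these coordinates in order to prove $\eqref{lem1:2}\Leftrightarrow\eqref{lem1:3}$. The dual $L^*=x_2\w\cdots\w x_n\in\bw^{n-1}V^*$, and the $3$-spaces $\pi^*\subset L^*$ are exactly the $3$-dimensional subspaces of $\gen{x_2,\dots,x_n}$. The embedded tangent space to $G(3,V^*)$ at a decomposable $\pi^*=u\w v\w w$ is spanned by the decomposable $3$-vectors $a\w v\w w + u\w b\w w + u\w v\w c$ with $a,b,c\in V^*$; taking the span over all $\pi^*\subset\gen{x_2,\dots,x_n}$ I expect to obtain exactly those $3$-forms whose ``missing'' directions are at most $x_0,x_1$, that is, forms with no $x_0\w x_1$-component. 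This matches the description in \eqref{lem1:2} precisely. The main obstacle is this step: computing $\TT_L=\gen{\TT_{\pi^*}G(3,V^*)\mid\pi^*\subset L^*}$ cleanly and showing it equals $\p$ of the subspace $\bw^3\gen{x_2,\dots,x_n}\oplus(\bw^2\gen{x_2,\dots,x_n}\w x_0)\oplus(\bw^2\gen{x_2,\dots,x_n}\w x_1)$. I would argue one inclusion by exhibiting, for each basis monomial of the target, a suitable $\pi^*$ and tangent vector producing it, and the reverse inclusion by checking that every tangent direction $a\w v\w w$ (with $v,w\in\gen{x_2,\dots,x_n}$) already lies in the target since it can contribute an $x_0$- or $x_1$-factor but never an $x_0\w x_1$-factor from a single generator.

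Assembling these, $\eqref{lem1:3}$ says $[\omega]\in\TT_L$, i.e. $\omega$ lies in the computed subspace, i.e. $\omega$ has no $x_0\w x_1$-term, which is $\eqref{lem1:2}$, which in turn is $\eqref{lem1:1}$; so the three conditions coincide. The hypothesis $n\geq 4$ is used to guarantee that $\gen{x_2,\dots,x_n}$ has dimension $n-1\geq 3$, so that $3$-spaces $\pi^*\subset L^*$ exist and the tangent-space description is nonvacuous. I would close by remarking that the uniqueness asserted in \eqref{lem1:2} is automatic from the directness of \eqref{decomp}, requiring no further argument.
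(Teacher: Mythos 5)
Your proposal is correct and follows essentially the same route as the paper: the equivalence \eqref{lem1:1}$\Leftrightarrow$\eqref{lem1:2} via the observation that $\omega(e_0\w e_1)$ extracts exactly the $x_0\w x_1$-component (the paper phrases this as $a_{0,1,h}=0$ for all $h$ using the coordinates of equation \eqref{eq:important}), and \eqref{lem1:2}$\Leftrightarrow$\eqref{lem1:3} by identifying $\TT_L$ with $\p\bigl(\bw^3\gen{x_2,\dotsc,x_n}\oplus(\bw^2\gen{x_2,\dotsc,x_n}\w x_0)\oplus(\bw^2\gen{x_2,\dotsc,x_n}\w x_1)\bigr)$ through the explicit description of the tangent spaces $\TT_{\pi^*}G(3,V^*)$ for $\pi^*\subset L^*$. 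If anything, you spell out both inclusions of the span computation more fully than the paper, which simply displays the tangent space at one $\pi^*$ and concludes.
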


\begin{proof}
 $[L]\in \GR$ has coordinates $p_{0,1}\neq 0$ and $p_{i,j}=0$ if $(i,j)\neq(0,1)$.
Therefore, from \eqref{eq:important}, we deduce that 
$a_{0,1,h}=0$, $\forall h=0,\dotsc, n$, and \eqref{eq:3form} becomes:
\begin{align*}
\omega=&\sum_{\substack{0\le i< j< k\le n\\ (i,j)\neq (0,1)}}a_{i,j,k} x_i\w x_j \w x_k
=  \beta_0\w x_0+ \beta_1\w x_1+ \omega_{01}, 
\end{align*}
where we have set
\begin{align*}
\beta_0 &:= -\sum_{2\le j< k\le n}a_{0,j,k}x_j\w x_k,
&&\beta_1 := -\sum_{2\le j< k\le n}a_{1,j,k}x_j\w x_k,\\
\omega_{01}  &:=\sum_{2\le i< j< k\le n}a_{i,j,k}x_i\w x_j\w x_k.
\end{align*}
Therefore the equivalence of \eqref{lem1:1} and \eqref{lem1:2} is proved. 
For the last equivalence, observe that
$L^*=x_2\w \dotsm \w x_n$, so, up to a coordinate change, a $3-$space $\pi^*\subset L^*$ can be expressed as
\begin{equation*}
[\pi^*]=[x_{n-2}\w x_{n-1}\w x_n]\in G(3,V^*)\subset\p(\bw^3V^*),
\end{equation*}
and the embedded tangent space to  $G(3,V^*)$ 
at $\pi^*$ can be expressed as
\begin{multline*}
\TT_{\pi^*} G(3,V^*) =\gen{ [(\sum_{i=0}^{n-2}a_ix_i)\w x_{n-1}\w x_n], \\
[(\sum_{i=0}^{n-3}b_ix_i-b_{n-1}x_{n-1})\w x_{n-2}\w x_n],[(\sum_{i=0}^{n-3}c_ix_i+c_n x_n)\w x_{n-2}\w x_{n-1}]}
\end{multline*}
from which the last equivalence follows.
\end{proof}

\begin{coro}
Let $L\subset V$ be a $2$-vector subspace, with $n\ge 4$; then 
\begin{equation}\label{c:1}
\dim(\TT_L)=\frac{n+3}{3}\binom{n-1}{2}-1.
\end{equation} 
\end{coro}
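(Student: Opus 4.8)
The plan is to read off the linear subspace $\TT_L$ directly from Lemma~\ref{lem}, after which the statement reduces to a one-line binomial identity. Fixing the adapted basis $e_0=e$, $e_1=f$ as in that lemma, the equivalence of items \eqref{lem1:1}, \eqref{lem1:2} and \eqref{lem1:3} tells us that $[\omega]\in\TT_L$ holds precisely when $\omega$ lies in the coordinate subspace
\begin{equation*}
W_L := \bw^3\gen{x_2,\dotsc,x_n}\ \oplus\ \bigl(\bw^2\gen{x_2,\dotsc,x_n}\w x_0\bigr)\ \oplus\ \bigl(\bw^2\gen{x_2,\dotsc,x_n}\w x_1\bigr)
\end{equation*}
of $\bw^3V^*$. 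Hence $\TT_L=\p(W_L)$ and $\dim\TT_L=\dim_\CC W_L-1$, so everything comes down to computing $\dim_\CC W_L$.

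Next I would check that this sum of three subspaces is direct and count its dimension. This is transparent in coordinates: the three summands are spanned, respectively, by the standard monomials $x_i\w x_j\w x_k$ with $2\le i<j<k\le n$, by $x_0\w x_j\w x_k$ with $2\le j<k\le n$, and by $x_1\w x_j\w x_k$ with $2\le j<k\le n$. These three families of basis monomials of $\bw^3V^*$ are pairwise disjoint, so the sum is direct. Since $\gen{x_2,\dotsc,x_n}$ has dimension $n-1$, we obtain
\begin{equation*}
\dim_\CC W_L = \binom{n-1}{3} + 2\binom{n-1}{2}.
\end{equation*}

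The final step is the arithmetic. Using $\binom{n-1}{3}=\tfrac{n-3}{3}\binom{n-1}{2}$ and factoring out $\binom{n-1}{2}$ gives $\dim_\CC W_L=\bigl(\tfrac{n-3}{3}+2\bigr)\binom{n-1}{2}=\tfrac{n+3}{3}\binom{n-1}{2}$, and subtracting $1$ yields the stated value of $\dim\TT_L$.

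There is no genuine obstacle here, since the geometric content is entirely carried by Lemma~\ref{lem}; the only points deserving a line of comment are that the hypothesis $n\ge 4$ is exactly what makes the decomposition of that lemma available (and keeps the count meaningful, e.g.\ $\binom{3}{3}=1$ when $n=4$), and that although the basis was chosen adapted to $L$, the resulting dimension does not depend on the choice of $L$, consistently with the right-hand side being independent of $L$.
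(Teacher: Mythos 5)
Your proof is correct and follows essentially the same route as the paper: both deduce from the equivalence of \eqref{lem1:2} and \eqref{lem1:3} in Lemma~\ref{lem} that $\TT_L$ is the projectivisation of the subspace $\bw^3\gen{x_2,\dotsc,x_n}\oplus\bigl(\bw^2\gen{x_2,\dotsc,x_n}\w x_0\bigr)\oplus\bigl(\bw^2\gen{x_2,\dotsc,x_n}\w x_1\bigr)$, giving $\dim(\TT_L)=2\binom{n-1}{2}+\binom{n-1}{3}-1=\frac{n+3}{3}\binom{n-1}{2}-1$. Your write-up merely makes explicit (via the monomial basis) the directness of the sum, which the paper takes for granted from the uniqueness in \eqref{lem1:2}.
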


\begin{proof}
It follows from the equivalence of  \eqref{lem1:3} and \eqref{lem1:2} of Lemma \ref{lem}: 
\begin{align*}
\dim (\TT_L) &= 2\binom{n-1}{2}+\binom{n-1}{3}-1
=\frac{n+3}{3}\binom{n-1}{2}-1.
\end{align*}
\end{proof}

Recall from \S \ref{notation}, that 
a \emph{linear congruence} is a congruence obtained by proper intersection of $\GR$ with a linear subspace of $\p(\bw^2V)$ of codimension $n-1$, and that 
the \emph{order} of a congruence $\Gamma$ is  the number of lines of
$\Gamma$ passing through a general point of $\p(V)$.

\begin{theorem}\label{thm:1}
Let $\omega\in \bw ^3V^*$, 
with $n=\dim \p(V)\ge 3$, be a general $3$-form. 
Then $X_\omega\subset \GR$ has dimension $n-1$, i.e. it is a congruence. Moreover, $X_\omega$ is contained in a reducible linear 
congruence. 
\end{theorem}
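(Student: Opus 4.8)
The plan is to establish the two assertions in turn: the equality $\dim X_\omega=n-1$, which is the subtle point because $X_\omega=\GR\cap\Lambda_\omega$ is an \emph{improper} linear section, and then the existence of an explicit reducible linear congruence containing it. Throughout I take $\omega$ general, so by Proposition~\ref{dimPn} it satisfies \ref{GC2}; thus $\rk\omega=n+1$ and, by Remark~\ref{rmk:l}, $\Lambda_\omega=\langle X_\omega\rangle$ has codimension $n+1$. Since $\dim\GR=2(n-1)$, the \emph{expected} dimension of the section is $n-3$, so the task is precisely to show the intersection is improper by exactly $2$.

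For the dimension I would use the incidence variety
\[
I=\{(p,\ell)\in\p(V)\times X_\omega\mid p\in\ell\},
\]
with projections $\pi_1\colon I\to\p(V)$ and $\pi_2\colon I\to X_\omega$. Since $\pi_2$ has all fibres $\cong\p^1$, one has $\dim X_\omega=\dim I-1$, so it suffices to prove $\dim I=n$. Writing $\beta_e:=\omega(e)\in\bw^2V^*$, a line $\langle e,f\rangle$ lies in $X_\omega$ exactly when $f\in\ker(\beta_e\colon V\to V^*)$, so the fibre $\pi_1^{-1}([e])$ is $\p(\ker\beta_e/\langle e\rangle)$, of dimension $n-1-\rk\beta_e$. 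Because $e\in\ker\beta_e$ always, $\rk\beta_e$ is governed by the induced skew form on $V/\langle e\rangle$; stratifying $\p(V)$ by $\rk\beta_e$ and using that for general $\omega$ the embedding $f_\omega\colon\p(V)\hookrightarrow\p(\bw^2V^*)$ meets the Pfaffian (corank) loci in the expected codimension — which can be verified on the explicit forms in the proof of Proposition~\ref{dimPn} — I would show that the dominant contribution to $\dim I$ is always exactly $n$. Concretely, for $n$ odd the generic rank is $n-1$ and the top stratum is all of $\p(V)$ with $0$-dimensional fibres; for $n$ even the generic rank is $n$ (empty fibres), while the relevant stratum is the Pfaffian hypersurface $\{\rk\beta_e\le n-2\}$ of dimension $n-1$, carrying $1$-dimensional fibres. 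Either way $\dim I=n$, hence $\dim X_\omega=n-1$, and the small cases $n=3,4,5$ are checked directly. This parity analysis is the main obstacle, since it is exactly what detects the excess of the intersection and, incidentally, separates the order-one ($n$ odd) from the order-zero ($n$ even) behaviour.

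For the ``moreover'' I would fix general $x,y\in V^*$ and take $\Lambda_\omega^{xy}$ from \eqref{span2}. As $\im{}^tf_\omega=V^*$, Remark~\ref{rmk:l} gives $\codim\Lambda_\omega^{xy}=\rk\omega-2=n-1$, so $\GR\cap\Lambda_\omega^{xy}$ is a linear congruence, and it contains $X_\omega$ because $\Lambda_\omega\subset\Lambda_\omega^{xy}$. To prove reducibility it suffices to exhibit one line of this section outside $X_\omega$. Decomposing $\omega=\omega_x+\beta_x\w x$ as in \eqref{decomp}, a direct contraction shows that for $L\in\bw^2V_x$ one has $\omega(L)=\omega_x(L)+\beta_x(L)\,x$; hence any $[L]\in X_{\omega_x}\subset G(2,V_x)$ with $\beta_x(L)\neq0$ satisfies $\omega(L)=\beta_x(L)\,x\in\langle x,y\rangle\setminus\{0\}$, so that $[L]\in\GR\cap\Lambda_\omega^{xy}$ while $[L]\notin X_\omega$. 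Such an $L$ exists for general $\omega$ because the lower-dimensional congruence $X_{\omega_x}$ is non-empty and is not contained in the hyperplane $\{\beta_x=0\}$ of its linear span. This produces a second component and proves reducibility; it remains only to record that $\GR\cap\Lambda_\omega^{xy}$ is a genuine (proper) linear congruence of pure dimension $n-1$, which follows by applying the same incidence/stratification bound to the condition $\omega(L)\in\langle x,y\rangle$, so that no component can be excessive. The whole weight of the argument thus sits in the dimension count of the second step; once $\dim X_\omega=n-1$ is in hand, the reducible linear congruence is produced by an elementary computation.
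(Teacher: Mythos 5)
Your proposal is correct, but it takes a genuinely different route from the paper. The paper's elementary proof (valid for $n\ge 6$, with $n\le 5$ checked on the explicit examples) works in the space $\p(\bw^3V^*)$ of $3$-forms: via Lemma~\ref{lem} it identifies $\{\omega \mid [L]\in X_\omega\}$ with the linear space $\TT_L$ spanned by tangent spaces to $G(3,V^*)$, computes $\dim \TT_L$, and runs the incidence count over the $2(n-1)$-dimensional family of $\TT_L$'s; a second proof (Theorem~\ref{thm:qq1}) realises $X_\omega$ as the zero locus of a section of the globally generated bundle $\QQ^*(1)$ and applies Bertini, which yields smoothness and purity for free. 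You instead fibre the point--line incidence variety over $\p(V)$ and stratify by $\rk\beta_e$, $\beta_e=\omega(e)$, observing that the fibre over $[e]$ is $\p(\ker\beta_e/\gen{e})$; this is precisely the pointwise skew-matrix computation the paper performs later (the matrix $A$ in the proof of Proposition~\ref{prop:ordine}, and the loci $M_r$ of Proposition~\ref{prop:fonda}), globalised into a dimension count. What your route buys: it treats both parities uniformly, and it delivers as immediate byproducts the order of $X_\omega$ and the structure of the fundamental locus, results the paper derives separately afterwards; what it costs is the genericity input that the corank strata of $e\mapsto\beta_e$ meet $\p(V)$ in at least the expected codimension $\binom{2j}{2}$ resp.\ $\binom{2j+1}{2}$ (only codimension $\ge 2j-1$ resp.\ $\ge 2j+1$ is actually needed), which you delegate to a check on the explicit forms of Proposition~\ref{dimPn}. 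That step is legitimate by upper semicontinuity in $\omega$, and can alternatively be outsourced to the Bertini-type theorem for degeneracy loci of general sections of the globally generated bundle $\Omega^2_{\p(V)}(3)$ (as the paper does in Section~\ref{deg loci}, following \cite{HT}), but as written it is the one place where the verification is asserted rather than carried out; note also that in the even case you need, and correctly use, the nonemptiness of the Pfaffian hypersurface to get the lower bound $\dim I\ge n$. Your reducibility argument via $[L]\in X_{\omega_x}$ with $\beta_x(L)\neq 0$, giving $\omega(L)=\beta_x(L)\,x$, is sound and in fact reproves a special case of Theorem~\ref{union}\,\eqref{u:1} ($\Lambda_\omega^x\cap\GR=X_\omega\cup X_{\omega_x}$), whereas the paper's proof of the ``moreover'' is terser, deferring the decomposition of the linear congruence to Section~\ref{residual}; your sketch of properness of $\GR\cap\Lambda_\omega^{xy}$ by the same stratification applied to the condition $\omega(L)\in\gen{x,y}$ is at a comparable level of detail to the paper's own treatment.
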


\begin{proof}
We give here an elementary proof of the Theorem which works for $n\geq 6$. For $n\leq 5$, see Examples \ref{ex1}, \ref{ex2}, \ref{ex3} in Section \ref{examples}. A different proof of the Theorem will be given in Section \ref{congr_deg}. 

By case \eqref{lem1:3} of Lemma \ref{lem} we need to find how many spaces of the form $\TT_L$ pass through $\omega$. First of all, 
we observe that two (general) 
spaces $\TT_L$ and $\TT_{L'}$ are in general position; in fact, if 
$L=\gen{e_0, e_1}$ and $L'=\gen{e_2, e_3}$, 
then, by the equivalence of cases \eqref{lem1:2} and \eqref{lem1:3} of Lemma \ref{lem}, 
a $3$-form $[\omega]\in \p(\bw^3V^*)$  
belongs to $\TT_L \cap \TT_{L'}$ if and only if 
we can write 
\begin{multline*}
 \omega=x_0\w x_2\w\phi_{02}+x_0\w x_3\w\phi_{03}+x_1\w x_2\w\phi_{12}+x_1\w x_3 \w\phi_{13}+\\
 +x_0 \w \beta_0+x_1 \w \beta_1+x_2 \w \beta_2 + x_3\w\beta_3+ \omega', 
 \end{multline*}
where, as in Lemma \ref{lem}, 
 $\beta_0,\dotsc, \beta_3 \in \bw ^2{\gen{x_4, \dotsc, x_n}}$ and $\omega'\in \bw ^3{\gen{x_4, \dotsc, x_n}}$.

From this, we infer that 
\begin{align*}
\dim(\TT_L \cap \TT_{L'})+1&=4(n-3)+4\binom{n-3}{2}+\binom{n-3}{3}=\\
&=\frac{(n-3)(n-1)(n+4)}{6},
\end{align*}
from which we deduce 
\begin{align*}
\dim(\gen{\TT_L, \TT_{L'}})+1&=\dim (\TT_L)+ \dim (\TT_{L'})-\dim(\TT_L \cap \TT_{L'})+1\\
&=2\frac{n+3}{3}\binom{n-1}{2}-\frac{(n-3)(n-1)(n+4)}{6}\\
&=\binom{n+1}{3},
\end{align*}
i.e. $\gen{\TT_L, \TT_{L'}}=\p(\bw^3V^*)$,
and $\TT_L$ and $\TT_{L'}$ are in general position.

Then, we observe that there is a family of dimension $\dim \GR=2(n-1)$ of $\TT_L$'s, therefore, recalling \eqref{c:1}, if $\omega$ is general, 
the dimension of the  $\TT_L$'s passing through $\omega$ is equal to  
\begin{align*}
\dim \GR+\dim(\TT_L) -\dim(\p(\bw^3V^*)
&=2(n-1)+\frac{n+3}{3}\binom{n-1}{2}-\binom{n+1}{3}\\
&=n-1.
\end{align*}

By \eqref{eq:important}, it follows that $X_\omega$ is contained in a linear congruence.
\end{proof}

\subsubsection{More genericity conditions}

Theorem \ref{thm:1} naturally gives rise to the following genericity
conditions, which we call 4 and 5, after \S \ref{genericity}.

\begin{defi}\label{GC45}
We shall consider the following conditions on a $3$-form $\omega\in\bw^3V^*$:
\begin{description}
\item[(GC4)\label{GC4}]  $X_\omega$ has the expected dimension $n-1$;
\item[(GC5)\label{GC5}] $X_\omega$ satisfies \ref{GC4} and it is smooth. 
\end{description}
\end{defi} 
\begin{rema} \label{implicationsGC}
The condition \ref{GC4} implies the condition  \ref{GC2} in \ref{genericity}.
\begin{enumerate}
\item When $\omega$ has rank $m$, then the linear span of $X_\omega$ has codimension $m$ in $\p(\bw^2V)$ and the codimension of $X_\omega$ in $\GR$ is at most $m-2$.
\item  For $n\geq 8$, the $3$-form $\omega=x_0\w x_1\w x_2+x_3\w x_4\w x_5+x_6\w x_7\w x_8$, has rank $9$, while $X_\omega$ has codimension $6$, so \ref{GC2} holds, while \ref{GC4} fails for $\omega.$
\end{enumerate}
\end{rema}

\subsubsection{The order of the congruence}

We compute now the order of the congruence $X_\omega$.

\begin{prop}\label{prop:ordine}
If 
$\omega$ is general, $X_\omega\subset \GR$ is a congruence of order zero if $n$ is even and of order one if $n$ is odd.
\end{prop}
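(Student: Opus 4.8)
The plan is to compute the order directly from its definition as the degree of the intersection of $X_\omega$ with the Schubert variety $\Sigma_P$ of lines through a general point $P=[v]\in\p(V)$. The crucial simplification is that $\Sigma_P$ is a \emph{linear} subspace: the lines through $P$ correspond to the $2$-planes containing $v$, so $\Sigma_P=\p(v\w V)\subset\p(\bw^2V)$, a linearly embedded $\p^{n-1}$. Since $\Sigma_P\subset\GR$, every point of $\Sigma_P$ already satisfies the Pl\"ucker relations, and hence $X_\omega\cap\Sigma_P=\Lambda_\omega\cap\Sigma_P$. The order is therefore reduced to a linear-algebra count, namely the dimension of the intersection of the two linear spaces $\Lambda_\omega$ and $\Sigma_P$; because this is an intersection of linear subspaces it is automatically reduced, so the scheme-theoretic degree equals the naive point count.

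Next I would translate membership into a statement about an alternating form. Writing $\beta_v:=f_\omega(v)=\omega(v)\in\bw^2V^*$, a point $[v\w w]\in\Sigma_P$ lies in $\Lambda_\omega$ exactly when $\omega(v\w w)=0$, i.e. when the further contraction $\beta_v(w)\in V^*$ vanishes, which says precisely that $w$ lies in the radical $\ker\beta_v$ of the alternating form $\beta_v$. Since $\omega(v\w v)=0$ we always have $v\in\ker\beta_v$, so
\begin{equation*}
\Lambda_\omega\cap\Sigma_P=\p(v\w\ker\beta_v),
\end{equation*}
whose dimension is $\dim\ker\beta_v-2$ (the map $w\mapsto v\w w$ has kernel $\gen{v}$). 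Thus the order equals $\dim\ker\beta_v-1$ whenever this is $0$ or $1$.

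The parity of $n$ now does all the work. The form $\beta_v$ is alternating on the $(n+1)$-dimensional space $V$, so $\rk\beta_v$ is even and $\dim\ker\beta_v=n+1-\rk\beta_v\equiv n+1\pmod 2$; moreover $v\in\ker\beta_v$ forces $\rk\beta_v\le n$. For $n$ even the largest even value $\le n$ is $n$, giving $\dim\ker\beta_v=1$, so $\Lambda_\omega\cap\Sigma_P=\emptyset$ and the order is $0$. For $n$ odd the largest even value $\le n$ is $n-1$, giving $\dim\ker\beta_v=2$, so $\Lambda_\omega\cap\Sigma_P$ is a single reduced point and the order is $1$.

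The only delicate point, and the step I expect to be the main obstacle, is to guarantee that for \emph{general} $\omega$ and \emph{general} $v$ the rank of $\beta_v$ is actually maximal, so that the generic corank stated above is attained rather than some larger one coming from the fundamental locus. I would handle this by upper-semicontinuity of the corank of $\beta_v$ in $(\omega,v)$ together with explicit witnesses realising the maximal rank: for $n$ even the form $\omega=x_0\w(x_1\w x_2+\dotsm+x_{n-1}\w x_n)$ gives $\beta_{e_0}$ of rank $n$, and for $n$ odd the form $\omega=x_0\w(x_1\w x_2+\dotsm+x_{n-2}\w x_{n-1})$ gives $\beta_{e_0}$ of rank $n-1$. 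The locus where the rank fails to be maximal is then closed and proper, so a general $\omega$ attains the maximal generic rank at a general $v$ (which in particular avoids the points where the corank jumps); this is exactly the genericity already encoded in condition \ref{GC2}. For the small cases $n\le 5$, where a general $\omega$ need not satisfy \ref{GC2}, one instead reads off the order from the explicit descriptions of $X_\omega$. Once generic maximality is secured, the rest is the formal parity computation above.
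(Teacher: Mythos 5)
Your proof is correct and is essentially the paper's argument: the paper likewise intersects $X_\omega$ with $\Sigma_P$ and reduces the order to the kernel of a skew-symmetric $n\times n$ linear system (the matrix $A$ of equations \eqref{eq:sis}, which is exactly your $\beta_v=\omega(v)$ descended to $V/\gen{v}$), concluding by the parity of the rank of an alternating form, with the cases $n\le 5$ deferred to the explicit examples just as you do. The only difference is cosmetic — you phrase the computation coordinate-freely via the radical of $\beta_v$ and justify genericity by semicontinuity plus explicit witnesses, where the paper works in Pl\"ucker coordinates and simply invokes generality of $\omega$.
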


\begin{proof}
Let $n\geq 6$. We look at the intersection $\Sigma_P\cap X_\omega$, where, without loss of generality, 
we can suppose that $P=[1,0,\dotsb,0]=[e_0]$. $\Sigma_P$ is given by the lines of type $[L]=[e_0\w f]$, 
with $e_0$ and $f$ linearly independent; so
$\Sigma_P$  is defined, in the Pl\"ucker coordinates, by $p_{i,j}=0$ if $i>0$. 

Therefore, by \eqref{eq:importantbis}, the intersection $\Sigma_P\cap X_\omega$ is defined by the $n$ equations:
\begin{equation}\label{eq:sis}
\sum_{0< j\le n}( (-1)^{j+k-1}(a_{0,j,k}-a_{0,k,j}))p_{0,j}=0
\end{equation}
where $k=1,\dotsc,n$, i.e. we have $n$ linear equations in the $n$ indeterminates $p_{0,1},\dotsc,p_{0,n}$.
Let $A$ be the matrix associated to the homogeneous linear system \eqref{eq:sis}; 
then $A$ is skew-symmetric, and therefore, 
if $\omega$ is general, the system \eqref{eq:sis} has only the zero solution if $n$ is even and has a vector space of solutions of dimension one if $n$ is odd. 

If $n\leq 5$, see the examples in Section \ref{examples}.

\end{proof}

\subsection{The congruence $X_\omega$ as a degeneracy locus}\label{congr_deg}

We recall now some facts about vector bundles on Grassmannians, and we apply them to the study of the congruence $X_\omega$.
We recall  the
universal exact sequence \eqref{seq:uq} on the Grassmannian $\GR=G(2,V)$: 
\begin{equation*}
 0\to \UU     \to     V\otimes \OO_{\GR}\to \QQ\to 0.
 \end{equation*}

Recall that, for the Pl\"ucker embedding, 
$\OO_{\GR}(1) \cong\det(\UU^*) \cong\det(\QQ)$.
Over $\GR$, we have also  
$
H^0(\QQ)=V$, 
$H^0(\UU^*)=V^*.$ 
We refer to \cite{--98W}
  for terminology and basic results on homogeneous bundles.
  
 \begin{theorem}\label{thm:qq1}
The congruence $X_\omega$ is the zero locus of a section of
$\QQ^*(1)$. If $\omega$ is general enough, $X_\omega$ 
is smooth of the expected dimension $n-1$, i.e. 
$X_\omega$ satisfies \ref{GC5}.
\end{theorem}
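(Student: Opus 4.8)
The plan is to exhibit $X_\omega$ as the zero locus of a natural section of $\QQ^*(1)$ coming from $\omega$, verify that this section vanishes precisely on the congruence, and then use a standard dimension/genericity argument (together with the section's general-position properties) to conclude smoothness of the expected dimension.

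First I would construct the section. The key observation is that a $3$-form $\omega \in \bw^3 V^*$ contracts against the universal subbundle to produce a section of a twist of the quotient bundle. Concretely, recall that over $\ell \in \GR$ the fibre of $\UU$ is the $2$-plane $L \subset V$; contracting $\omega$ with the decomposable bivector $L = e\w f$ gives an element $\omega(L) \in V^*$. Since $\OO_{\GR}(1) \cong \det \UU^*$, the assignment $L = e \w f \mapsto \omega(e\w f)$ is a well-defined global section $s_\omega$ of $\Hom(\det\UU, V^* \otimes \OO_{\GR}) = V^* \otimes \OO_{\GR}(1)$. I would then compose with the dual of the inclusion $\UU \hookrightarrow V \otimes \OO_{\GR}$ from \eqref{seq:uq}: dualising gives $V^* \otimes \OO_{\GR} \twoheadrightarrow \UU^*$, but what I actually want is to land in $\QQ^*(1)$. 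The right map is to note that $\omega(e\w f)$ annihilates both $e$ and $f$ (by antisymmetry of $\omega$, $\omega(e \w f)(e) = \omega(e\w f \w e) = 0$ and similarly for $f$), so $\omega(L)$ lies in the annihilator $L^\perp = (V/L)^* = \QQ^*|_\ell$. Hence $s_\omega$ is naturally a section of $\QQ^*(1)$, whose fibrewise zero locus is exactly $\{\ell : \omega(L) = 0\} = X_\omega$. This matches the scheme-theoretic equations \eqref{eq:important} defining $X_\omega$, so I would check that the vanishing is scheme-theoretic, not just set-theoretic.

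Next, the dimension and smoothness. The bundle $\QQ^*(1)$ has rank $n-1 = \rk \QQ$, and $\dim \GR = 2(n-1)$, so the expected codimension of the zero locus of a section equals the rank $n-1$, giving expected dimension $2(n-1)-(n-1) = n-1$. This is exactly \ref{GC4}. For smoothness, I would invoke a Bertini-type argument: the sections of $\QQ^*(1)$ coming from $3$-forms are the image of the linear map $\bw^3 V^* \to H^0(\QQ^*(1))$, and I would need that this linear system is base-point-free, or at least generically transverse. Base-point-freeness amounts to the statement that for every $\ell \in \GR$ there is some $\omega$ with $\omega(L) \neq 0$, which is clear since for a fixed $2$-plane $L$ one can easily find a $3$-form not annihilating it. Then the standard Bertini theorem (in characteristic zero, over the algebraically closed field $\CC$) gives that the zero locus of a general section is smooth of the expected dimension. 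Since the dimension count already gives $X_\omega$ of the expected dimension for general $\omega$ (consistent with Theorem \ref{thm:1}), the genericity condition \ref{GC5} follows.

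The main obstacle I expect is verifying the transversality / base-point-freeness of the linear system precisely enough to apply Bertini, and confirming that the section $s_\omega$ genuinely lands in $\QQ^*(1)$ rather than merely in $\UU^*(1)$ or $V^*\otimes\OO_{\GR}(1)$ --- that is, the identification $\omega(L) \in L^\perp$ must be checked carefully using the antisymmetry of $\omega$. A subtler point is that base-point-freeness of $\{s_\omega\}$ as $\omega$ ranges over $\bw^3 V^*$ is not automatically enough, since $X_\omega$ is cut out by a \emph{single} section, not by a general member of a system of divisors; I would therefore appeal to the version of Bertini for zero loci of sections of globally generated vector bundles, checking that $\QQ^*(1)$ is globally generated (which it is, being a quotient of $V^*\otimes\OO_{\GR}(1)$ twisting a globally generated bundle) and that the evaluation of the subsystem spanned by $3$-forms is surjective on fibres. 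Once global generation of the relevant subsystem is in hand, smoothness of the general $X_\omega$ is immediate.
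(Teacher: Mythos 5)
Your proposal is correct and takes essentially the same route as the paper: the paper uses Borel--Bott--Weil to identify $H^0(\QQ^*(1))\simeq \bw^3 V^*$, views $\omega$ as the resulting section $\varphi_\omega$ with zero locus $X_\omega$, and concludes by a Bertini-type theorem, exactly your plan (you merely build the section by hand via $\omega(L)\in L^\perp\cong \QQ^*|_\ell$ instead of citing BBW, which is fine). One small slip worth fixing: dualising \eqref{seq:uq} exhibits $\QQ^*(1)$ as a \emph{subbundle}, not a quotient, of $V^*\otimes\OO_{\GR}(1)$, so that remark does not prove global generation; but your fallback fibrewise check does the job immediately (given $[L]$ with $L=\gen{e,f}$ and $\xi\in L^\perp$, the form $\omega=x\w y\w \xi$ with $x,y$ dual to $e,f$ evaluates to $\omega(e\w f)=\xi$), or alternatively note $\QQ^*(1)\cong\bw^{n-2}\QQ$, which is globally generated as an exterior power of the globally generated $\QQ$.
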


\begin{proof}
We use  the theorem of Borel-Bott-Weil, in
  particular  \cite[Theorem 4.1.8]{--98W}. 

 Here, this gives a natural isomorphism
\begin{equation*}
H^0(\QQ^*(1))\simeq \bw^3V^*.
\end{equation*}
Therefore a $3$-form $\omega$ on $V$ can be interpreted as a global section of 
 $\QQ^*(1)$ and it defines  a bundle map 
\begin{equation}\label{eq:asterisco}
\OO_{\GR}\xrightarrow{\varphi_\omega} \QQ^*(1).
\end{equation}
The degeneracy locus of $\varphi_\omega$  is therefore  $X_\omega$, which was defined in Section \ref{xomega} as $\{[L]\in \GR \mid \omega(L)=0\}$. By a Bertini type theorem,  for general $\omega$, $X_\omega$  has codimension equal to 
$\rk\QQ^*(1)=n-1$, and it is smooth. Moreover also  $\dim X_\omega=n-1$. 
\end{proof}

From Theorem \ref{thm:qq1} we deduce further global properties of  $X_\omega$.

\begin{theorem}\label{thm:qq2}
If $\omega$ is an alternating $3$-form on $\p(V)$ that satisfies \ref{GC4}, then $X_\omega$ is a Fano variety of index $3$ and
  dimension $n-1$ with Gorenstein singularities, which is smooth if
  $\omega$ is general \ref{GC5}. 
Moreover 
 the sheaf  $\OO_{X_\omega}$  has the Koszul locally free resolution:
\begin{equation}\label{eq:resolution}
0\to\OO_{\GR}(2-n)\to\bw^{n-2}(\QQ(-1))\to \dotsb \to \QQ(-1)\xrightarrow{{}^t\varphi_\omega} \OO_{\GR} \to \OO_{X_\omega} \to 0. 
\end{equation}
\end{theorem}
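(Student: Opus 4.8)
The plan is to build everything on Theorem~\ref{thm:qq1}, which already produces $X_\omega$ as the zero locus of a regular section of the rank $n-1$ bundle $\QQ^*(1)$ on $\GR=G(2,V)$ when \ref{GC4} holds (and smoothly under \ref{GC5}). First I would write down the Koszul complex associated to this section. A section $s\in H^0(\QQ^*(1))$ whose zero locus has the expected codimension $\rk\QQ^*(1)=n-1$ gives a resolution of $\OO_{X_\omega}$ by the exterior powers of the dual bundle, namely
\begin{equation*}
0\to\bw^{n-1}\bigl((\QQ^*(1))^*\bigr)\to\dotsb\to\bw^{1}\bigl((\QQ^*(1))^*\bigr)\xrightarrow{{}^t\varphi_\omega}\OO_{\GR}\to\OO_{X_\omega}\to 0.
\end{equation*}
The only computation here is to identify the terms: $(\QQ^*(1))^*\cong\QQ(-1)$, so $\bw^k\bigl((\QQ^*(1))^*\bigr)\cong\bw^k(\QQ(-1))=(\bw^k\QQ)(-k)$, giving the middle terms $\bw^{n-2}(\QQ(-1)),\dotsc,\QQ(-1)$ displayed in \eqref{eq:resolution}. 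The top term requires evaluating $\bw^{n-1}(\QQ(-1))=(\det\QQ)(1-n)=\OO_{\GR}(1)\otimes\OO_{\GR}(1-n)=\OO_{\GR}(2-n)$, using the standard identification $\det\QQ\cong\OO_{\GR}(1)$ recalled just before the statement. This pins down the left end of \eqref{eq:resolution} and closes the shape of the resolution.

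For the Fano and index assertions I would compute the canonical bundle by adjunction. Since $X_\omega$ is the zero locus of a regular section of $\QQ^*(1)$, the conormal bundle is $(\QQ^*(1))^*|_{X_\omega}$ and adjunction gives
\begin{equation*}
\omega_{X_\omega}\cong\bigl(\omega_{\GR}\otimes\det(\QQ^*(1))\bigr)|_{X_\omega}.
\end{equation*}
Here $\det(\QQ^*(1))=(\det\QQ^*)(n-1)=\OO_{\GR}(-1)(n-1)=\OO_{\GR}(n-2)$, and the canonical bundle of $\GR=G(2,n+1)$ is $\omega_{\GR}=\OO_{\GR}(-(n+1))$. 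Hence $\omega_{X_\omega}\cong\OO_{\GR}(-(n+1)+(n-2))|_{X_\omega}=\OO_{X_\omega}(-3)$, exhibiting $X_\omega$ as Fano of index $3$, of dimension $n-1$, as claimed. Notice this argument is uniform and does not require smoothness: it only needs that $X_\omega$ be a Cohen--Macaulay complete-intersection-type locus so that the Koszul complex is exact and the conormal sheaf has the expected form; this is exactly what \ref{GC4} buys, and it is what underlies the Gorenstein statement, since a subcanonical (in fact $\omega_{X_\omega}$ invertible) Cohen--Macaulay scheme is Gorenstein. The smoothness under \ref{GC5} is already handed to us by Theorem~\ref{thm:qq1}.

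The one genuinely delicate point is the exactness of \eqref{eq:resolution} under the weaker hypothesis \ref{GC4} alone, where $X_\omega$ may be singular. The clean argument is that a section of a vector bundle $\E$ of rank $r$ whose zero scheme has codimension exactly $r$ is a regular section, i.e. the components of $s$ form a regular sequence locally, so the Koszul complex on $\E^*\xrightarrow{s}\OO$ is a resolution; this is the standard fact that an ideal generated by $r$ elements cutting out a codimension $r$ locus is a complete intersection, hence Cohen--Macaulay with exact Koszul resolution. I would invoke \ref{GC4} precisely to guarantee the expected codimension $n-1$, then cite this local regularity to conclude exactness of the global Koszul complex, and read off that $\OO_{X_\omega}$ is Cohen--Macaulay. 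Combined with the adjunction computation showing $\omega_{X_\omega}\cong\OO_{X_\omega}(-3)$ is invertible, Cohen--Macaulay plus invertible dualizing sheaf yields Gorenstein singularities. The main obstacle, then, is not any single hard computation but ensuring the bookkeeping of twists in $\bw^k(\QQ(-1))$ and in $\det\QQ$ is correct and that the regularity argument is stated at the right level of generality to cover the singular case permitted by \ref{GC4}.
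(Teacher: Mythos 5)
Your proposal is correct and follows essentially the same route as the paper's proof: exactness of the Koszul complex of the regular section $\varphi_\omega$ of $\QQ^*(1)$ under \ref{GC4}, the determinant identification $\bw^{n-1}(\QQ(-1))\cong\det(\QQ^*(1))^*\cong\OO_{\GR}(2-n)$, and adjunction yielding $\omega_{X_\omega}\cong\OO_{\GR}(-n-1+n-2)|_{X_\omega}=\OO_{X_\omega}(-3)$. You merely spell out explicitly the points the paper leaves terse (why expected codimension forces the section to be regular, and why local complete intersection plus invertible dualizing sheaf gives the Gorenstein claim), and all your twist bookkeeping checks out.
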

\begin{proof} We note that the Koszul complex associated with the
  section $\varphi_\omega$ of $\QQ^*(1)$ defining $X_\omega$
  is exact. The resolution follows observing that
  $\det(\QQ^*(1))^*\cong \det(\QQ(-1))\cong \OO_{\GR}(-n+2)$.  Then
  by adjunction,  the canonical sheaf $\omega_{X_\omega}$ of
  $X_\omega$ is locally free of rank $1$ and has the following expression 
\begin{equation*}
\omega_{X_\omega}\cong \omega_{\GR}\otimes\bw^{n-1} (\QQ^*(1))|_{X_\omega}
\cong \OO_{\GR}(-n-1+n-2)|_{X_\omega}
= \OO_{X_\omega}(-3). 
\end{equation*}
\end{proof}

\begin{coro}\label{cor:acm}
In the hypothesis of the preceding theorem, $X_\omega$ is arithmetically Cohen-Macaulay (aCM for short, in what follows) 
in its linear span $\Lambda_\omega$ and arithmetically Gorenstein (aG for short). 
\end{coro}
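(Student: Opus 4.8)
The plan is to deduce both statements from the Koszul resolution \eqref{eq:resolution} established in Theorem \ref{thm:qq2}, together with the vanishing of intermediate cohomology of the bundles appearing in it. Recall that a closed subscheme of projective space is aCM precisely when its homogeneous coordinate ring is Cohen--Macaulay, equivalently when the twisted structure sheaf $\OO_{X_\omega}(t)$ has no intermediate cohomology, i.e. $H^i(X_\omega, \OO_{X_\omega}(t)) = 0$ for all $t \in \mathbb{Z}$ and all $0 < i < \dim X_\omega = n-1$. Since $X_\omega$ sits in its linear span $\Lambda_\omega$ via the Pl\"ucker embedding restricted from $\GR$, the twists $\OO_{X_\omega}(t)$ are the restrictions of $\OO_{\GR}(t)$, so it suffices to control the cohomology of the terms $\bw^{k}(\QQ(-1))(t)$ on the Grassmannian.

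First I would break the resolution \eqref{eq:resolution} into short exact sequences and chase cohomology. The bundles $\bw^{k}(\QQ(-1))$ for $0 \le k \le n-2$ are homogeneous bundles on $\GR$, and by Borel--Bott--Weil (as invoked via \cite[Theorem 4.1.8]{--98W}) one computes that their twists have no intermediate cohomology in the relevant range; the only nonzero cohomology sits in degree $0$ or in top degree. Feeding this through the long exact sequences coming from the Koszul complex, the intermediate cohomology $H^i(\OO_{X_\omega}(t))$ for $0 < i < n-1$ is squeezed between vanishing groups and must itself vanish. This establishes the aCM property. The main technical obstacle here is the bookkeeping of Borel--Bott--Weil: one must verify that for every twist $t$ and every $k$, the weight attached to $\bw^{k}(\QQ(-1))(t)$ is either dominant (contributing only to $H^0$) or singular (contributing nothing) or antidominant (contributing only to top cohomology), so that nothing leaks into the middle range. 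This is routine but must be done with care across all $k$ and $t$.

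For the arithmetically Gorenstein claim, the plan is to combine the aCM property with subcanonicity. Theorem \ref{thm:qq2} already gives $\omega_{X_\omega} \cong \OO_{X_\omega}(-3)$, so the canonical sheaf is a twist of $\OO_{X_\omega}$; this is exactly the condition that $X_\omega$ is subcanonical. A general principle states that an aCM subvariety of projective space whose canonical module is a twist of its coordinate ring (equivalently, which is subcanonical) is arithmetically Gorenstein, because its coordinate ring is then a Cohen--Macaulay ring whose canonical module is free of rank one. I would invoke this, noting that the self-duality of the Koszul resolution \eqref{eq:resolution} -- the comparison of $\OO_{\GR}(2-n)$ at the tail with $\OO_{\GR}$ at the head, reflecting $\bw^{k}(\QQ(-1)) \cong \bw^{n-2-k}(\QQ(-1))^* \otimes \det(\QQ(-1))$ -- directly exhibits the resolution as symmetric, which is the resolution-level manifestation of the Gorenstein property.

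The step I expect to be the genuine obstacle is the cohomology vanishing via Borel--Bott--Weil: establishing uniformly over all integer twists $t$ and all exterior powers $k$ that $\bw^{k}(\QQ(-1))(t)$ carries no middle cohomology. Everything else -- the mechanical long-exact-sequence chase and the standard implication ``aCM $+$ subcanonical $\Rightarrow$ aG'' -- is formal once that vanishing is in hand. I would therefore structure the proof by first stating the vanishing as the key lemma, proving it by a weight computation, and then deriving both aCM and aG as consequences.
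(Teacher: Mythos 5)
Your proposal is correct and follows essentially the same route as the paper: the paper deduces the aCM property from the Koszul resolution \eqref{eq:resolution} together with the fact that the exterior powers of $\QQ$ are aCM bundles by Bott's theorem (your Borel--Bott--Weil vanishing lemma is exactly this, which the paper simply cites from \cite[Ch.~4]{--98W} rather than re-verifying weight by weight), and then concludes aG from aCM plus subcanonicity via \cite[Proposition 4.1.1]{M}, just as you do. One small caution: vanishing of $H^i(\OO_{X_\omega}(t))$ for $0<i<n-1$ alone is not equivalent to aCM (one also needs $H^1(\II_{X_\omega}(t))=0$ for all $t$), but your cohomology chase through the resolution delivers that vanishing as well, so the argument stands.
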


\begin{proof}
The first statement follows from resolution \eqref{eq:resolution}, because the exterior powers of $\QQ$
are aCM by Bott's theorem (cf.  \cite[Ch. 4]{--98W}). Since  $X_\omega$ is also subcanonical by Theorem \ref{thm:qq2}, then it is aG by \cite[Proposition 4.1.1]{M}.
\end{proof}

\subsubsection{The cohomology class of the congruence}\label{coho}

The cohomology class of $X_\omega$ is given by Porteous formula:
\begin{equation*}
[X_\omega]=c_{n-1}(\QQ^*(1))\cap [\GG].
\end{equation*}  
If 
\begin{equation*}
P^0\subset P^1\subset \dotsb \subset P^{n-1-i}\subset \dotsb \subset P^{n-j}\subset \dotsb\subset P^n=\p(V)
\end{equation*}
is a complete flag, 
the cohomology ring of $\GG$ has basis
$\sigma_{i,j}$,  $i=0,\dotsc,n-1$,  $j=0,\dotsc,i$,
where 
\begin{equation*}
\sigma_{i,j}\cap [\GG] =[\{[L]\in \GG| L\subset P^{n-j}, L\cap P^{n-1-i}\neq\emptyset\}].
\end{equation*}
Then 
\begin{equation*}
c_i(\QQ)=\sigma_i:=\sigma_{i,0},
\end{equation*}
 and therefore $c_i(\QQ^*)=(-1)^i\sigma_i$.
As above, we write formally the Chern polynomial of $\QQ^*$
\begin{equation*}
c_t(\QQ^*)=\prod_{i=1}^{n-1}(1-a_it),
\end{equation*}
where $a_1,\dotsc,a_{n-1}$ are formal symbols. Since 
$\sigma_1\cap [\GG]$ is the class of a hyperplane section of the Pl\"ucker embedding, 
we have also $c_1(\OO_{\GR}(1))=\sigma_1$, and we get
\begin{equation*}
c_t(\QQ^*(1))=(1+(a_1+\sigma_1)t)\dotsm (1+(a_{n-1}+\sigma_1)t),
\end{equation*}
and therefore, applying \cite[Lemma 2.1]{DP}, 
\begin{align*}
c_{n-1}(\QQ^*(1))&=(a_1+\sigma_1)\dotsm(a_{n-1}+\sigma_1)=\sum_{\ell=0}^{n-1}(-1)^{n-1-\ell}\sigma_{n-1-\ell}\sigma_1^\ell\\
&=\sum_{\ell=0}^{n-1}(-1)^{n-1-\ell}\sigma_{n-1-\ell}(\sum_{i=0}^{\left[\frac{\ell}{2}\right]}\left(
\binom{\ell}{i}\cdot\frac{\ell-2i+1}{\ell-i+1}
\right)
\sigma_{\ell-i,i})\\ 
&=:\sum_{\ell=0}^{{\left[\frac{n-1}{2}\right]}}d_\ell(n)\sigma_{n-1-\ell,\ell}.
\end{align*}
Here the integers $d_\ell(n)$ are defined by the last equality, and their collection is called the \emph{multidegree of $X_\omega$}.
By Poincar\'e duality
\begin{equation*}
\int_{[\GG]}\sigma_{n-1-\ell,\ell}\sigma_{n-1-\ell',\ell'}=\delta_{\ell,\ell'}
\end{equation*}
so the multidegree of $X_{\omega}$ is also defined by
\begin{align*}
d_\ell(n)&=\int_{[X_{\omega}]}\sigma_{n-1-\ell,\ell} & 0\leq \ell&\leq n-1.
\end{align*}
\begin{lemma}\label{recursion}  The multidegree $(d_\ell(n))$, $\ell=0,\dotsc,n-1$ satisfies the initial condition
\begin{align*}
d_0(2m)&=0, & d_0(2m-1)&=1, &  m&=2,3,4,\dotsc
\end{align*}
 and the recursion relation
\begin{equation*}
d_\ell(n)=d_{\ell-1}(n-1)+d_{\ell}(n-1)
\end{equation*}
 when $\ell=1,2,\dotsc,\left[\frac{n-1}{2}\right]$.
\end{lemma}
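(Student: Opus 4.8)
The statement splits cleanly into two independent parts: the initial condition, which is essentially a reformulation of Proposition \ref{prop:ordine}, and the recursion, which I would derive by restricting the defining bundle $\QQ^*(1)$ to a sub-Grassmannian $G(2,V')$ attached to a hyperplane $V'\subset V$ and then doing Schubert bookkeeping. Notably, this route avoids the explicit ballot-number expansion of $\sigma_1^\ell$ entirely and uses only the definition of the $d_\ell(n)$ as intersection numbers together with the self-duality $\int_{[\GR]}\sigma_{n-1-\ell,\ell}\sigma_{n-1-\ell',\ell'}=\delta_{\ell,\ell'}$ recorded above.

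\textbf{Initial condition.} First I would observe that $d_0(n)=\int_{[X_\omega]}\sigma_{n-1,0}$, and that $\sigma_{n-1,0}\cap[\GR]$ is exactly the Schubert cycle $\{[L]\in\GR\mid P\in L\}$ of lines through a general point $P$. Hence $d_0(n)$ coincides with the order of the congruence $X_\omega$, and Proposition \ref{prop:ordine} gives the order as $0$ for $n$ even and $1$ for $n$ odd. Writing $n=2m$ and $n=2m-1$ then yields $d_0(2m)=0$ and $d_0(2m-1)=1$ for all $m\ge 2$.

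\textbf{Recursion.} I would fix a hyperplane $V'\subset V$ with $\dim V'=n$, and a complete flag on $V$ adapted to it (with $V'$ as a flag member), giving the inclusion $\iota\colon G(2,V')\hookrightarrow\GR$ whose fundamental class is the Schubert cycle $\sigma_{1,1}$. Restricting the universal sequence \eqref{seq:uq} gives a splitting $\QQ|_{G(2,V')}\cong\QQ'\oplus\OO$ with $\QQ'$ the quotient bundle of $G(2,V')$, hence $\QQ^*(1)|_{G(2,V')}\cong\QQ'^*(1)\oplus\OO(1)$. Since $\QQ'^*(1)$ has rank $n-2$, the top Chern class of the right-hand side collapses to
\begin{equation*}
\iota^*c_{n-1}(\QQ^*(1))=c_{n-1}(\QQ'^*(1))+\sigma_1\,c_{n-2}(\QQ'^*(1))=\sigma_1\,c_{n-2}(\QQ'^*(1)),
\end{equation*}
because $c_{n-1}(\QQ'^*(1))=0$. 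Now for $1\le\ell$ I would use $\sigma_{n-1-\ell,\ell}=\sigma_{1,1}\cdot\sigma_{n-2-\ell,\ell-1}$ together with the projection formula $\int_{\GR}\alpha\cdot\sigma_{1,1}=\int_{G(2,V')}\iota^*\alpha$ to obtain
\begin{equation*}
d_\ell(n)=\int_{\GR}c_{n-1}(\QQ^*(1))\,\sigma_{n-1-\ell,\ell}=\int_{G(2,V')}\sigma_1\,c_{n-2}(\QQ'^*(1))\,\iota^*\sigma_{n-2-\ell,\ell-1}.
\end{equation*}
Since the partition $(n-2-\ell,\ell-1)$ already fits the $(n-2)\times 2$ box of $G(2,V')$, the restriction rule for Schubert classes gives $\iota^*\sigma_{n-2-\ell,\ell-1}=\sigma_{n-2-\ell,\ell-1}$. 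Expanding $c_{n-2}(\QQ'^*(1))=\sum_{\ell'}d_{\ell'}(n-1)\,\sigma_{n-2-\ell',\ell'}$, applying Pieri's rule $\sigma_1\cdot\sigma_{n-2-\ell,\ell-1}=\sigma_{n-1-\ell,\ell-1}+\sigma_{n-2-\ell,\ell}$, and using the self-duality $\int_{G(2,V')}\sigma_{n-2-a,a}\sigma_{n-2-b,b}=\delta_{a,b}$ of the middle Schubert classes, exactly the contributions $\ell'=\ell-1$ and $\ell'=\ell$ survive, giving $d_\ell(n)=d_{\ell-1}(n-1)+d_\ell(n-1)$.

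\textbf{Main obstacle.} The Pieri and duality manipulations are routine; the delicate point is the boundary behaviour. For odd $n$ at the extremal value $\ell=\frac{n-1}{2}$ the Pieri product loses its second summand $\sigma_{n-2-\ell,\ell}$ (it ceases to be a partition), and this is precisely matched by $d_\ell(n-1)=0$ being out of range on $G(2,V')$. I would verify this vanishing is consistent so that the stated recursion holds verbatim on the whole range $1\le\ell\le\big[\frac{n-1}{2}\big]$, under the convention that out-of-range multidegrees are set to zero. A secondary point to check carefully is that the flag can indeed be chosen adapted to $V'$, so that the branching $\iota^*\sigma_{a,b}=\sigma_{a,b}$ for $a\le n-2$ is literally the restriction of Schubert cycles used above.
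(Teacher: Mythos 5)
Your proof is correct --- including the boundary analysis at $\ell=\left[\frac{n-1}{2}\right]$ for odd $n$ --- and it follows the same skeleton as the paper's proof (restrict to the sub-Grassmannian of a hyperplane, extract a factor $\sigma_1$, conclude by Pieri and self-duality of the middle Schubert classes), but the implementation is genuinely different. The paper argues with the congruence itself: it writes $\omega=\omega_x+\beta_x\w x$, chooses the flag so that $P^{n-\ell}\subset \p(V_x)$, observes that the Schubert cycle $Z_\ell(n)$ then sits inside $G(2,V_x)$ with class $\sigma_{n-2-\ell,\ell-1}$ there, and uses the identity $[X_\omega]\cap [G(2,V_x)]=[X_{\omega_x}]\cap[H_{\beta_x}]$ --- i.e.\ the restriction of the congruence to the hyperplane is a hyperplane section of the smaller congruence, the class-level version of Theorem \ref{union}~\eqref{u:3} --- before computing $\deg(X_\omega\cap Z_\ell(n))$ on both Grassmannians. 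You never touch the $3$-form: you replace the hyperplane-section identity by Whitney's formula applied to the restricted tautological sequence $0\to\OO(1)\to \QQ^*(1)|_{G(2,V')}\to {\QQ'}^*(1)\to 0$ (your splitting assertion is also fine, since the extension class lies in $H^1(G(2,V'),\QQ')=0$ by Bott's theorem, though Whitney needs no splitting), and you replace the adapted-flag trick by the factorisation $\sigma_{n-1-\ell,\ell}=\sigma_{1,1}\cdot\sigma_{n-2-\ell,\ell-1}$ together with the projection formula along $\iota$. Since the $d_\ell(n)$ are defined in \S\ref{coho} purely by the Schubert expansion of $c_{n-1}(\QQ^*(1))$, your route is tautological in $\omega$ --- no genericity of the form and no properness of the intersections $X_\omega\cap Z_\ell(n)$ is needed --- whereas the paper's route carries the geometric content (restriction of $X_\omega$ as a hyperplane section of $X_{\omega_x}$) that it reuses later in the text. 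Also, your explicit matching of the Pieri term $\sigma_{n-2-\ell,\ell}$ that drops out at the extremal $\ell$ with the out-of-range vanishing $d_\ell(n-1)=0$ is only implicit in the paper, via the triangle convention $a_{(i,i)}=a_{(i,i-1)}$; making it explicit is a small but genuine gain in completeness.
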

\begin{proof}
The degree $d_0(n)$ is simply the order of the congruence $X_\omega$, so the initial condition follows from Proposition \ref{prop:ordine}.

Next, recall, from Schubert calculus, that  $\sigma_{i,j}\sigma_1=\sigma_{i+1,j}+\sigma_{i,j+1}$ (when $i>j$).
Let $\omega=\omega_x+\beta_x\w x$. We choose a flag such that  $P^{n-\ell}\subset \p(V_x)\subset \p(V)$.  If $\ell>0$, then
$d_\ell(n)=\deg X_\omega\cap Z_\ell(n)$
where 
\begin{equation*}
Z_\ell(n)=\{[L]|L\subset P^{n-\ell}, L\cap P^{\ell}\neq \emptyset\} \subset G(2,V),
\end{equation*}
and has class 
\begin{equation*}
[Z_\ell(n)]=\sigma_{n-1-\ell,\ell}\cap [G(2,V)].
 \end{equation*}
But $P^{n-\ell}\subset \p(V_x)$, so 
$Z_\ell(n)\subset G(2,V_x)$.    The class of this subvariety in $G(2,V_x)$ is 
\begin{equation*}
[Z_\ell(n)]=\sigma_{n-2-\ell,\ell-1}\cap [G(2,V_x)],
  \end{equation*}
while
\begin{equation*}
[X_{\omega}]\cap [G(2,V_x)]=[\{[L]| \omega_x(L)=\beta_x(L)=0\}]= [X_{\omega_x}]\cap [H_{\beta_x}]\cap [G(2,V_x)],
 \end{equation*} 
where clearly $H_{\beta_x}$ is the hyperplane defined by $\{\beta_x(L)=0\}$. 
Computing the degree of $X_\omega\cap Z_\ell(n)$ on $G(2,V)$ and on $G(2,V_x)$ we get
\begin{align*}
d_\ell(n)&=\int_{[X_{\omega_x}]}\sigma_{n-2-\ell,\ell-1}\sigma_1=\int_{[X_{\omega_x}]} (\sigma_{n-1-\ell,\ell-1}+\sigma_{n-2-\ell,\ell})\\
&=d_{\ell-1}(n-1)+d_{\ell}(n-1).
\end{align*}
\end{proof}

For low values of $n$, $3\leq n \leq 9$, we get the following multidegree for $X_\omega$:
\begin{align}\label{multidegree}
(1,0)&, &(0,1)&, &(1,1,1)&, &(0,2,2)&, &(1,2,4,2)&, &(0,3,6,6)&, &(1,3,9,12,6).
\end{align}

The recursion of Lemma \ref{recursion}, and the initial degrees of \eqref{multidegree}, may be displayed in a triangle with initial entries
\begin{align*}
a_{(2k+1,0)}&=1,& a_{(2k,0)}&=0,& k&=0,1,2,\dotsc
\end{align*}
and 
\begin{align*}
a_{(i,j)}&=a_{(i,j-1)}+a_{(i-1,j)}& i&=1,2,\dotsc,\textup{ and } j=1,2,\dotsc,i.
\end{align*}
The multidegree of $X_\omega$ is identified as $(d_\ell(n))=(a_{(n-1-\ell,\ell)}),\ell=0,...,n-1$.

\begin{equation}\label{eq:tridn} (a_{i,j})=
 \begin{array}{cccccccccc} % brackets may be (...), [...], \{...\}, or left out
      1 & & & & & & & & & \\
      0 & 0 & & & & & & & & \\
      1 & 1 & 1& & & & & & & \\
      0 & 1 & 2&2 & & & & & & \\
      1 & 2 & 4&6 & 6& & & & & \\
      0 & 2 & 6&12 & 18&18 & & & & \\
      1 & 3 & 9& 21& 39& 57& 57& & & \\
      0 & 3 & 12&33 & 72& 129& 186 &186 & & \\
      1 & 4 & 16& 49& 121& 250& 436& 622& 622& \\
      0 & 4 & 20& 69& 190& 440& 876&1498 &2120 &2120 \\
   \end{array}.
   \end{equation}
In fact  
\begin{equation*}
a_{(n-1-\ell,\ell)}=d_\ell(n)=d_{\ell-1}(n-1)+d_{\ell}(n-1)=a_{(n-1-\ell,\ell-1)}+a_{(n-2-\ell,\ell)}.
  \end{equation*}

\subsubsection{The degree of the congruence and the Fine numbers}

The Fine numbers form the sequence $1,0,1,2,6,18,57,186,\dotsc$ see \cite{fine,DS}.

\begin{prop}\label{prop:tria}  
The degree of $X_\omega\subset \GG$ is the $n$-th Fine number  that can be read off 
as the $(n+1)$-st diagonal element 
 \begin{equation*}
\deg X_\omega=a_{(n-1,n-1)}=a_{(n-1,n-2)}=d_{n}(2n-1)
\end{equation*}
 in the above triangle of numbers.
  The multidegree of $X_\omega$ is given by the antidiagonals 
  \begin{equation*}
  (d_{0}(2m),d_{1}(2m),\dotsc,d_{m-1}(2m))=(a_{(2m-1,0)},a_{(2m-2,1)},\dotsc,a_{(m,m-1)})
    \end{equation*}
 when $n=2m$  and
    \begin{equation*}
  (d_{0}(2m-1),d_{1}(2m-1),\dotsc,d_{m-1}(2m-1))=(a_{(2m-2,0)},a_{(2m-3,1)},\dotsc,a_{(m-1,m-1)}).
    \end{equation*}
    when $n=2m-1$.
  \end{prop}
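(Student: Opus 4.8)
The plan is to establish that the degree of $X_\omega$ equals the $n$-th Fine number by identifying it with a specific diagonal entry of the triangle \eqref{eq:tridn}, and then recognizing this diagonal sequence as the Fine numbers. The key observation is that $\deg X_\omega = \int_{[X_\omega]}\sigma_1^{n-1}$, which is the top self-intersection of the hyperplane class. First I would note that in the Schubert basis, $\sigma_1^{n-1}$ expands so that $\deg X_\omega = \sum_\ell d_\ell(n)\int_{[\GG]}\sigma_{n-1-\ell,\ell}\,\sigma_1^{n-1}$ is awkward to compute directly; instead, the cleaner route is to observe that the degree of $X_\omega$ as a subvariety of $\p(\bw^2 V)$ is $\deg X_\omega = \int_{[X_\omega]}\sigma_1^{n-1}$, and to relate this to the diagonal entries $a_{(n-1,n-1)}$ by a direct combinatorial identification.

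The heart of the argument is combinatorial. The triangle \eqref{eq:tridn} is generated by the boundary condition $a_{(2k+1,0)}=1$, $a_{(2k,0)}=0$ together with the Pascal-type recursion $a_{(i,j)}=a_{(i,j-1)}+a_{(i-1,j)}$. I would first verify that the diagonal entries $a_{(m,m)}$ coincide with the last entry $a_{(m,m-1)}$ of the same row: since the recursion at the diagonal gives $a_{(m,m)}=a_{(m,m-1)}+a_{(m-1,m)}$, and by the symmetry/range convention $a_{(m-1,m)}$ lies outside the valid index range $j\leq i$ (so it is taken to be zero), we get $a_{(m,m)}=a_{(m,m-1)}$, justifying the equalities $a_{(n-1,n-1)}=a_{(n-1,n-2)}$ and also $a_{(n-1,n-1)}=d_n(2n-1)$ under the identification $d_\ell(n)=a_{(n-1-\ell,\ell)}$ with $\ell=n$, $n\to 2n-1$, which reads off the correct antidiagonal entry.

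Next I would identify the diagonal sequence $(a_{(m,m)})_m = 1,0,1,2,6,18,57,186,\dots$ with the Fine numbers. The standard characterization of the Fine numbers $F_m$ is via the recurrence or the generating function relation to the Catalan numbers, and the sequence $1,0,1,2,6,18,57,186,\dots$ matches $F_0,F_1,F_2,\dots$ exactly. To prove the match rigorously rather than merely by inspection, I would show that the array $(a_{(i,j)})$ satisfies the defining recurrence of the triangle of Fine numbers (this triangle appears in the literature, e.g. \cite{DS}): the Pascal recursion together with the alternating $0,1$ boundary is precisely the generating scheme for the Fine triangle whose diagonal gives the Fine numbers. Granting the recursion of Lemma \ref{recursion} and the initial multidegrees of \eqref{multidegree}, this is a matter of matching two triangular arrays defined by the same recursion and the same boundary data, hence they agree entrywise by induction.

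The main obstacle I anticipate is the bookkeeping around the index conventions and the boundary of the triangle — specifically, making precise that $d_\ell(n)$ extends consistently to the value $\ell=n$ (giving $d_n(2n-1)$) and that the ``off-diagonal'' terms vanish so that $a_{(m,m)}=a_{(m,m-1)}$. The recursion of Lemma \ref{recursion} is only asserted for $\ell\leq\left[\frac{n-1}{2}\right]$, whereas the diagonal entries require reading the triangle beyond that range, so I would need to check that the triangle \eqref{eq:tridn}, built from the full Pascal recursion on all admissible $(i,j)$, genuinely computes $d_n(2n-1)$ as its diagonal — i.e. that extending the recursion symmetrically past the midpoint is legitimate. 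Once the identification of $(a_{(i,j)})$ with the Fine triangle is secured, reading off the multidegree as the stated antidiagonals for $n=2m$ and $n=2m-1$ is immediate from the definition $(d_\ell(n))=(a_{(n-1-\ell,\ell)})$, and the degree statement follows because the top antidiagonal entry is the relevant diagonal element of the triangle.
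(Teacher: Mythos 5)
Your proposal has a genuine gap at its central step: you never prove that $\deg X_\omega$ equals the diagonal entry of the triangle, and this is precisely the geometric content of the proposition, not index bookkeeping. By definition, the antidiagonal entries $a_{(n-1-\ell,\ell)}$ are the multidegrees $d_\ell(n)=\int_{[X_\omega]}\sigma_{n-1-\ell,\ell}$ of $X_\omega\subset G(2,n+1)$, whereas the Pl\"ucker degree is $\deg X_\omega=\int_{[X_\omega]}\sigma_1^{n-1}=\sum_\ell d_\ell(n)\int_{\GG}\sigma_{n-1-\ell,\ell}\,\sigma_1^{n-1}$, a weighted sum of the antidiagonal entries with nontrivial (ballot-number) weights --- it is not itself an entry of the triangle attached to $X_\omega$. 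You explicitly decline to compute this expansion (``awkward'') and substitute ``a direct combinatorial identification'' with $a_{(n-1,n-1)}$, but that identification is nowhere supplied; it is exactly the statement to be proved. Everything you do carry out --- the convention $a_{(i,i)}=a_{(i,i-1)}$ from treating out-of-range entries as zero, and matching the Fine recursion --- is internal to the triangle and is the easy part (the paper itself takes the Fine-number identification for granted, citing the literature). A minor but symptomatic confusion: under the identification $d_\ell(n')=a_{(n'-1-\ell,\ell)}$ the diagonal entry is $d_{n-1}(2n-1)=a_{(n-1,n-1)}$, not $d_{n}(2n-1)=a_{(n-2,n)}$, which is out of range however you extend the array; your attempt to read the index $\ell=n$ literally does not produce the diagonal.

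The paper closes this gap by a geometric restriction argument that your sketch lacks. Set $m=n$, let $\dim V=2m$ with $\omega\in\bw^3V^*$ general, split $V^*=V_0^*\oplus\gen{x_{m+1},\dotsc,x_{2m-1}}$ with $\dim V_0^*=m+1$, and write $\omega=\omega_0+x_{m+1}\w\beta_{m+1}+\dotsb+x_{2m-1}\w\beta_{2m-1}+\alpha$. Then $X_\omega\cap\{[L]\mid L\subset P^m\}=X_{\omega_0}\cap H_{\beta_{m+1}}\cap\dotsb\cap H_{\beta_{2m-1}}$: the left side pairs $[X_\omega]$ with the class $\sigma_{m-1,m-1}$, i.e.\ computes the diagonal triangle entry as the top multidegree of the \emph{larger} congruence in $G(2,2m)$, while the right side is a general codimension-$(m-1)$ linear section of $X_{\omega_0}$, of length $\deg X_{\omega_0}$. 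This single identity is the proof. If you prefer to stay on your combinatorial route, you would have to carry out the computation you set aside: iterated Pieri shows that $\int_{\GG}\sigma_{n-1-\ell,\ell}\,\sigma_1^{n-1}$ counts monotone lattice paths from $(n-1-\ell,\ell)$ to $(n-1,n-1)$ in the region $j\le i$, and propagating the Pascal recursion from the antidiagonal to the diagonal reproduces exactly these weights, giving $a_{(n-1,n-1)}=\sum_\ell d_\ell(n)\int_{\GG}\sigma_{n-1-\ell,\ell}\,\sigma_1^{n-1}=\deg X_\omega$; this is a legitimate alternative to the paper's argument, but it is absent from your proposal. Finally, your boundary worry is resolved by the Pieri convention $\sigma_{a,b}=0$ for $a<b$ (as used in the proof of Lemma \ref{recursion}), not by symmetry: the symmetric convention $d_\ell=d_{n-1-\ell}$ would make the recursion false at the top of its stated range, as one checks on the row $(1,3,9,12,6)$.
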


\begin{proof} 
  It remains to show that
   \begin{equation*}
\deg X_\omega=d_{m}(2m-1).
 \end{equation*}
  Let $\dim V=2m$, 
and let $\omega\in \bw^3V^*$ be a general $3$-form.  Assume $V_0^*\subset V^*$ is a general subspace of dimension $m+1$, and choose $x_{m+1},\dotsc,x_{2m-1}\subset V^*$ such that 
   \begin{equation*}
  V^*=V_0^*\oplus\gen{ x_{m+1},\dotsc,x_{2m-1}}.
   \end{equation*}
  Then 
   \begin{equation*}
  \omega=\omega_0+x_{m+1}\w\beta_{m+1}+\dotsm+x_{2m-1}\w\beta_{2m-1}+\alpha
   \end{equation*}
  where $\omega_0\in \bw^3V_0^*$, $\beta_i\in \bw^2V_0^*$, and $\alpha\in \bw^2\gen{ x_{m+1},\dotsc,x_{2m-1}}\w V^*$.
  Let $P^{m}=\{x_{m+1}=\dotsb=x_{2m-1}=0\}\subset \p(V)$ and let
  $Z_{m-1}(2m-1) =\{[L]| L\subset P^m\}$.
   Then
  \begin{align*}
  X_\omega\cap Z_{m-1}(2m-1)&=\{[L]| L\subset P^m, \omega_0(L)=\beta_{m+1}(L)=\dotsb=\beta_{2m-1}(L)=0\}\\
   &= X_{\omega_0}\cap H_{\beta_{m+1}}\cap\dotsm\cap H_{\beta_{2m-1}}.
   \end{align*}
   So 
    \begin{equation*}
   \deg X_{\omega_0}=\deg (X_\omega\cap Z_{m-1}(2m-1))=d_{m}(2m-1),
    \end{equation*}
 and the proposition follows.
  \end{proof}

\section{The fundamental locus}\label{sect:fund}

\begin{defi}\label{fund} The \emph{fundamental locus} of a congruence $X$ 
is the set of points of $\p(V)$ belonging to infinitely many lines of $X$. 
\end{defi}

It is clear that if $X$ has order zero, then its fundamental locus is simply the union of the lines of $X$.
We will see in a minute that the fundamental locus of the congruence $X_\omega$ has a natural structure of scheme 
as a suitable degeneracy locus.

%%%%%%%%%%%%%%%%%% 

\subsection{A skew symmetric matrix associated with a trilinear form}

The $3$-form $\omega\in \bw^3 V^*$ defines a natural map of vector
bundles on $\p(V)$. In this section we identify this map and its
degeneracy loci on $\p(V)$. 

The first observation is that
there is a natural isomorphism:
\begin{equation*}
H^0(\Omega^2_{\p(V)}(3)) \cong \bw^3 V^*.
\end{equation*}
This is provided again by Borel-Bott-Weil's theorem.
Now, in view of the natural isomorphisms
\begin{align*}
 H^0(\Omega^2_{\p(V)}(3))\subset& H^0(\Omega^1_{\p(V)}\otimes \Omega_{\p(V)}^1(3))\cong \Hom (\Omega^1_{\p(V)}(1)^*, \Omega_{\p(V)}^1(2))\\
\cong &\Hom (\TTT_{\p(V)}(-1), \Omega_{\p(V)}^1(2)),
\end{align*}
the $3$-form $\omega\in  \bw^3 V^*$ 
 determines a bundle map:
\begin{equation}
  \label{fomega}
  \phi_\omega\colon \TTT_{\p(V)}(-1)\to \Omega_{\p(V)}^1(2).
\end{equation}
 Note that $\phi_\omega$ is skew-symmetric, in the sense that 
\begin{equation*}
\phi_\omega^*=-\phi_\omega(1),
\end{equation*}
where $\phi_\omega^*: \TTT_{\p(V)}^1(-2)\to  \Omega_{\p(V)}(1)$
is the dual map of $\phi_\omega$. Indeed, by the above description,  
$H^0(\Omega^2_{\p(V)}(3))$ is just the skew-symmetric part of $\Hom (\TTT_{\p(V)}(-1), \Omega_{\p(V)}^1(2))$, and the map
induced by $\phi_\omega$ on the global sections 
\begin{equation*}
  f_\omega\colon H^0(\TTT_{\p(V)}(-1))\to H^0(\Omega_{\p(V)}^1(2))
\end{equation*}
is the map $f_\omega$ of \eqref{eq:fomega}. 

 We can interpret $\phi_\omega$ in more concrete terms,  in the following way:  from Euler sequence twice  we get the diagram
\begin{equation} \label{diagramma}
\begin{CD}
\TTT_{\p(V)}(-1)@>\phi_\omega>> \Omega_{\p(V)}^1(2)\\
@AAA  @VVV \\
V\otimes \OO_{\p(V)}@>M_\omega>> V^*\otimes \OO_{\p(V)}(1)
\end{CD}
\end{equation}
where $M_\omega$ is obtained by composition, so we can think of $M_\omega$ as 
a $(n+1)\times(n+1)$ skew-symmetric matrix with linear entries on 
$\p(V)$.  In fact, this matrix $M_\omega$ in suitable coordinates is the matrix 
defined in \eqref{eq:momega}. 
\begin{rema}\label{rank1} 
Since $\phi_\omega$ is a skew-symmetric map between two  bundles of rank $n$,  
$M_\omega$ has rank at most $n$ when $n$ is even, and $n-1$ when $n$ is odd.  
We will see in next Section that, 
if $\omega$ is general, then these are the generic ranks of $M_\omega$.
\end{rema}

 The map $\phi_\omega$ will be considered again in Section \ref{proj bundles}, 
where we will describe its kernel and cokernel both in the cases $n$ even and $n$ odd.

%%%%%%%%%%%%%%%%%%%%%%%%%%%%%%%%%%%%%%

\subsection{Degeneracy loci}\label{deg loci}

%%%%%%%%%%%%%%%%%%%%%%%%%%%%%%%%%%%%%%

Let us now study the degeneracy loci of $\phi_\omega$ (or, which is the same, of $M_\omega$). 
Let us denote  by $M_r=\{P\in\p(V)\mid \rk{{\varphi_\omega}_\mid}_P\leq r\}$ the locus of the points where $\phi_\omega$ has rank 
at most $r$.

We endow  $M_r$
with its natural scheme structure, given by the principal Pfaffians,  
$(r+2)\times (r+2)$ if $r$ is even, and $(r+1)\times(r+1)$ if $r$ is odd.  
Notice that in this last case $M_r=M_{r-1}$. Moreover, by Remark \ref{rank1}, $\p(V)=M_n$ if $n$ is even and $\p(V)=M_{n-1}$ if $n$ is odd.

The degeneracy loci of a (twisted) skew-symmetric map of vector bundles is studied in \cite{HT}: 
in particular, in \cite[Theorem 10(b)]{HT} the
cohomology class of each degeneracy locus is computed. In our setting, the class of $M_r$, if $r$ is even, is 
\begin{equation}\label{class}
[M_r]=\det\begin{pmatrix}c_{n-r-1}& c_{n-r}& \cdots&\\
c_{n-r-3}& c_{n-r-2}& &\\
\vdots&& \ddots& \\
&&&c_1
\end{pmatrix}
\end{equation}
where $c_i=c_i(\Omega_{\p(V)}^1(1)\otimes \sqrt{\OO_{\p(V)}(1)})$ and $\sqrt{\OO_{\p(V)}(1)}$ has to be thought, formally, by the 
\emph{squaring principle}, as a 
line bundle such that $\sqrt{\OO_{\p(V)}(1)}\otimes\sqrt{\OO_{\p(V)}(1)}=\OO_{\p(V)}(1)$.

Now, $c_t(\OO_{\p(V)}(1))=1+ht$, where $c_t$ as usual denotes the Chern polynomial and $h$ is the hyperplane class, so, if we put 
$c_t(\sqrt{\OO_{\p(V)}(1)})=1+at$, where $a$ is a formal symbol, we have 
$1+ht=(1+2at)$, which implies that $c_t(\sqrt{\OO_{\p(V)}(1)})=1+\frac{h}{2}t$; in other words,  in expression \eqref{class},  we can write
$c_i=c_i(\Omega_{\p(V)}^1\otimes \OO_{\p(V)}(\frac{3}{2}))$.

We recall, for example by the Euler sequence, that 
\begin{equation*}
c_t(\Omega_{\p(V)}^1)=(1-ht)^{n+1}
\end{equation*}
and therefore 
\begin{align*}
c_i(\Omega_{\p(V)}^1)&=(-1)^i\binom{n+1}{i}h^i, & i=0,\dotsc, n.
\end{align*}
On the other hand, it is easy to see, reasoning as above, that $c_t(\OO_{\p(V)}(\frac{3}{2}))=1+\frac{3}{2}ht$.

Finally, if we write formally $c_t(\Omega_{\p(V)}^1)=\prod_{i=1}^n(1+a_iht)$, recalling the formula for the Chern polynomial of a tensor product,
we obtain
\begin{align*}
c_t(\Omega_{\p(V)}^1\otimes \OO_{\p(V)}(\frac{3}{2})) &=\prod_{i=1}^n(1+(a_i+\frac{3}{2})ht\\
&=\left(\sum_{k=0}^i(-1)^k\frac{3^{i-k}(1+k)}{2^{i-k}(n+1-k)}\binom{i+1}{k+1}\right)\binom{n+1}{i+1} h^i. 
\end{align*}
In particular
\begin{align*}
c_1&=(\frac{n}{2}-1)h,\\
c_2&=\frac{n^2-5n+12}{8}h^2,\\
c_3&=\frac{n^3-9n^2+44n-108}{48}h^3, && \textup{etc.}
\end{align*}

\subsubsection{Even $n$}

In this case 
\begin{equation*}
[M_{n-2}]=c_1(\Omega_{\p(V)}^1\otimes \OO_{\p(V)}(\frac{3}{2}))=(\frac{n}{2}-1)h
\end{equation*}
i. e. $M_{n-2}$ is a hypersurface of degree $\frac{n}{2}-1$. 

By a Bertini type theorem, its singular locus is contained in $M_{n-4}$ and it is equal to $M_{n-4}$ if $\omega$ is general, for which we have 
\begin{equation*}
[M_{n-4}]=\det\begin{pmatrix}c_3& c_4& c_5\\
c_1& c_2& c_3\\
0&1& c_1
\end{pmatrix}=c_1(c_2c_3-c_1c_4+c_5)-c_3^2
\end{equation*}
and in particular it has codimension $6$ in $\p(V)$; therefore, $M_{n-2}$ is smooth up to $\p^4$, 
and we expect that it is singular of dimension $0$ in
 $\p^6$ and of dimension $2$ in $\p^8$. But, making explicit calculations, we obtain
\begin{equation*}
[M_{n-4}]=\frac{(n)(n-6)(n+1)(n+2)(n^2-9n+44)}{2880}h^6
\end{equation*}
and therefore  $M_{n-2}$ is smooth also in $\p^6$. Actually for $n=6$
we get a smooth quadric $5$-fold as degeneracy locus, as we shall see in Example \ref{ex4}.

Moreover $M_{n-6}$ has codimension $15$, so $M_{n-4}$ is smooth up to $n=14$.

%%%%%%%%%%%%%%%%%% 

\subsubsection{Odd $n$} 

%%%%%%%%%%%%%%%%%% 
 In this case we have 
\begin{align}\label{degreeF}
[M_{n-3}]=\det\begin{pmatrix}c_2& c_3\\
1& c_1
\end{pmatrix}&=c_1c_2-c_3
=\frac{n^3-6n^2+11n+18}{24}h^3=(\frac{1}{4}\binom{n-1}{3}+1) h^3
\end{align}
i.e. $M_{n-3}$ is a codimension $3$ subvariety of $\p(V)$ of degree $\frac{1}{4}\binom{n-1}{3}+1$.

By a Bertini type theorem, its singular locus is contained in $M_{n-5}$ and it is equal to $M_{n-5}$ if $\omega$ is general, for which we have 
\begin{align*}
[M_{n-5}]=&\det\begin{pmatrix}
c_4& c_5& c_6& c_7\\
c_2& c_3& c_4& c_5\\
1& c_1& c_2& c_3\\
0&0&1& c_1
\end{pmatrix}
=c_2c_3c_5 + 2c_1c_4c_5-c_1c_3c_6-c_1c_2c_7-c_5^2+c_3c_7
\end{align*}
and in particular it has codimension $10$ in $\p(V)$; therefore,
$M_{n-3}$ is smooth up to $\p^9$, and is generically singular in dimension $1$ in
 $\p^{11}$.  As above, we can make explicit calculations, obtaining
\begin{equation*}
[M_{n-5}]=\frac{n(n-1)(n + 1)^2(n+2)(n+3)(n^4-26n^3+311n^2-1966n+5400)}{4838400}h^{10}.
\end{equation*}

\subsection{Equations of the fundamental locus} 
Recall from \eqref{diagramma} that 
the map
\begin{align*}
\p(f_\omega)\colon  \p(V)&\to \p(\bw^2 V)^*\\ 
 P&\mapsto [M_\omega(P)]=\left[\left(\sum_k( (-1)^{i+j-1}(a_{i,j,k}-a_{i,k,j}+a_{k,i,j}))x_k(P) \right)_{\substack{i=0,\dotsc,n \\ j=0,\dotsc,n}}\right],
\end{align*}
of \eqref{eq:emb} is the projectivised map on global sections of the bundle map 
 $\phi_\omega\colon \TTT_{\p(V)}(-1)\to \Omega_{\p(V)}^1(2).$ 
The degeneracy locus $M_{r}$ defined at the beginning of Section \ref{deg loci} may therefore be interpreted as
  \begin{equation*}
M_{r}=\{ P\in \p(V)|\rk M_\omega(P)\leq r\}.
\end{equation*}

\begin{nota}
 Let $F_\omega\subset \p(V)$ denote the locus where the map $\phi_\omega$ drops rank.
As a degeneracy locus $F_\omega$ has a natural scheme structure.  
In fact, it is a scheme structure on the fundamental locus of $X_\omega$ (see Definition \ref{fund}):
\end{nota}

\begin{prop}\label{prop:fonda} 
 Let $\omega\in \bw^3V^*$ be general,
 and let $M_{r}\subset \p(V)$ be the degeneracy locus where $\phi_\omega$ has rank at most $r$.  
Then  $F_\omega=M_{n-2}$ if $n$ is even, and $F_\omega=M_{n-3}$ if $n$ is odd, and is a 
scheme structure on the fundamental locus of $X_\omega$. The lines of $X_\omega$ through a  point 
$P\in M_{r}\setminus M_{r-2}$, $r$ even, form a star in a $\p^{n-r}$.

If $n$ is even, then $F_\omega$ is a  hypersurface of degree ${\frac{n}{2}}-1$, it is smooth if $n\leq 6$ and it is singular in codimension 
$5$ if $n\ge 8$. 
If $n$ is odd, then $F_\omega$ has codimension $3$ and degree $\frac{1}{4}\binom{n-1}{3}+1$, it is smooth if $n\leq 9$ and it is singular in 
codimension $7$ if $n\ge 11$.
\end{prop}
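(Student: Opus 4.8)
The plan is to first describe geometrically, for a point $P\in\p(V)$, the family of lines of $X_\omega$ passing through $P$; this identifies the fundamental locus of Definition~\ref{fund} with a degeneracy locus of $\phi_\omega$, after which every numerical statement is read off from the Chern class computations of Section~\ref{deg loci}. Write $P=[v]$ with $v\in V$. The lines of $\GR$ through $P$ form the Schubert space $\Sigma_P\cong\p(V/\gen{v})$, consisting of the classes $[v\w w]$; such a line lies on $X_\omega$ precisely when $\omega(v\w w)=0$ in $V^*$, equivalently when $w$ lies in the radical of the alternating form $f_\omega(v)=\omega(v)\in\bw^2V^*$ of \eqref{eq:fomega}. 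Since $v$ itself belongs to this radical, the lines of $X_\omega$ through $P$ are parametrised by $\p(\ker f_\omega(v)/\gen{v})$.

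Now $f_\omega(v)$ is exactly the value $M_\omega(P)$ of the skew-symmetric matrix of linear forms of \eqref{eq:momega}, so $\dim\ker f_\omega(v)=n+1-\rk M_\omega(P)$. Hence, if $\rk M_\omega(P)=r$ (necessarily even), the lines of $X_\omega$ through $P$ are all the lines through $P$ contained in the $\p^{\,n-r}=\p(\ker f_\omega(v))$, that is, they form a star in a $\p^{\,n-r}$; and there are infinitely many of them exactly when $n-r\ge 2$, i.e.\ $r\le n-2$. Therefore the fundamental locus is set-theoretically $\{P\mid \rk M_\omega(P)\le n-2\}$. By Remark~\ref{rank1} the generic rank of $M_\omega$ is $n$ for $n$ even and $n-1$ for $n$ odd, so this locus equals $M_{n-2}$ when $n$ is even and $M_{n-3}=M_{n-2}$ when $n$ is odd (recall that $M_{n-2}=M_{n-3}$ for $n$ odd). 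Endowing it with the Pfaffian scheme structure of the degeneracy locus gives the asserted scheme structure on the fundamental locus of $X_\omega$.

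The degree and singularity assertions then follow directly from Section~\ref{deg loci}. For $n$ even, the class $[M_{n-2}]=(\tfrac n2-1)h$ exhibits $F_\omega$ as a hypersurface of degree $\tfrac n2-1$; for general $\omega$ its singular locus is the codimension-$6$ locus $M_{n-4}$, whose computed class vanishes for $n\le 6$ (so $F_\omega$ is then smooth, in fact a smooth quadric for $n=6$) and is nonzero for $n\ge 8$, where it has codimension $6$ in $\p(V)$ and hence codimension $5$ in the hypersurface $F_\omega$. For $n$ odd, $[M_{n-3}]=(\tfrac14\binom{n-1}{3}+1)h^3$ gives the codimension and degree; the singular locus is the codimension-$10$ locus $M_{n-5}$, which is empty for $n\le 9$ since its expected codimension exceeds $\dim\p(V)$, and is nonempty of codimension $10$ in $\p(V)$ for $n\ge 11$, hence of codimension $7$ in $F_\omega$.

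The delicate point, which I would treat last, is the \emph{Bertini-type} assertion that for general $\omega$ the singular locus of $M_{n-2}$ (resp.\ $M_{n-3}$) is \emph{equal} to $M_{n-4}$ (resp.\ $M_{n-5}$) and not merely contained in it, together with the claim that these loci attain their expected codimension. The subtlety is that $\phi_\omega$ does not range over all skew-symmetric maps $\TTT_{\p(V)}(-1)\to\Omega^1_{\p(V)}(2)$ but only over the subfamily cut out by $\bw^3V^*\subset\bw^2V^*\otimes V^*$, so the genericity required by the Harris--Tu formalism \cite{HT} must be verified within this constrained family. The non-vanishing of the explicitly computed classes $[M_{n-4}]$ and $[M_{n-5}]$ is precisely what confirms that the expected codimensions are achieved for general $\omega$, and this is what pins down the stated ranges of smoothness.
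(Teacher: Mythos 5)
Your proposal is correct and follows essentially the same route as the paper: you identify the lines of $X_\omega$ through $P=[v]$ with $\p(\ker f_\omega(v)/\gen{v})$, so that the fundamental locus is the locus $\rk M_\omega(P)\le n-2$ (hence $M_{n-2}$ for $n$ even and $M_{n-3}$ for $n$ odd, by parity of the rank of a skew form and Remark~\ref{rank1}), and you then read off degrees and singular loci from the Chern-class computations of Section~\ref{deg loci}, exactly as the paper does via the coordinate matrix \eqref{eq:mop} and the system \eqref{eq:sis}. Your closing paragraph even flags the genericity issue for the constrained family $\bw^3V^*\subset\bw^2V^*\otimes V^*$ more explicitly than the paper, which disposes of it with the same appeal to a Bertini-type theorem (note only that non-vanishing of the Harris--Tu class shows non-emptiness rather than, by itself, that the expected codimension is attained; that attainment is what the Bertini-type argument, using global generation of $\Omega^2_{\p(V)}(3)$, supplies).
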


\begin{proof}  Let  $P\in \p(V)$. 
Without loss of generality we may suppose that $P=[1,0,\dotsc,0]=[e_0]$, 
and evaluate the matrix $M_\omega$ at the point $P$:
\begin{equation}\label{eq:mop}
M_\omega(P)=\left( (-1)^{i+j-1}(a_{0,i,j}) \right)_{\substack{i=0,\dotsc,n \\ j=0,\dotsc,n}},
\end{equation}
where we have set $a_{0,j,i}:=-a_{0,i,j}$ if $j>i$. 
Then, if we cut the first row and the first column of $M_\omega(P)$---which are zero---we obtain a submatrix that 
coincides with the matrix $A$ associated to the homogeneous linear system \eqref{eq:sis} 
(proof of Proposition \ref{prop:ordine}), whose zeros define exactly the lines passing through $P$.

On one hand, since $M_\omega$ defines a map on global sections between bundles of rank $n$, 
the rank of $M_\omega(P)$ is at most $n$.  
On the other hand, 
$M_\omega(P)$ is skew symmetric, so it has even rank, so $P\in F_\omega$ if and only if $\rk (M_\omega(P))\leq n-2$ when $n$ is even, 
and $\rk (M_\omega(P))\leq n-3$ when $n$ is odd.
Furthermore, if $M_\omega(P)$ has rank $r$, then there is a linear $\p^{n-r-1}\subset \GR$ parametrising lines of the congruence 
$X_\omega$ 
that pass through $P$, and vice versa.
The dimension, degree formulas for $F_\omega$ and its singular locus
 follow from Section \ref{deg loci}. 
\end{proof}

\begin{rema}
If $n$ is odd, i.e. when the order of $X_\omega$ is one, another scheme structure on $F_\omega$ is described in \cite{DP0}. 
It comes from the interpretation of $F_\omega$ as branch locus of the projection from the incidence correspondence to $\p(V)$. 
With this structure $F_\omega$ is non-reduced because its codimension is $3$, 
while from this point of view its expected codimension would be $2$.
\end{rema}

The fundamental locus $F_\omega$ of the congruence $X_\omega$ has a natural interpretation in terms 
of the secant varieties of the Grassmannian. Consider the natural filtration 
 of $\p(\bw^2 V)$ by the secant varieties of the Grassmannian:
\begin{equation*}
\GR\subset S^1\GR \subset \dotsb\subset S^{r-1}\GR\subset S^{r}\GR= \p(\bw^2 V),
\end{equation*}
where  $r=[\frac{n-1}{2}]$.  In the dual space $\p(\bw^2 V)^*$ there is a dual
 filtration. Write $\GR'=G(n-1,V^*)$. Then the filtration can be interpreted in the form:
\begin{equation*}
\GR'\subset S^1\GR' \subset \dotsb\subset S^r\GR' =\GR^*\subset \p(\bw^2 V^*).
\end{equation*}
The last secant variety $\GR^*$ is the dual of $\GR$ parametrising its tangent hyperplanes. 
If $n$ is odd, the general tangent hyperplane is tangent at one point only (corresponding to a line of $\p(V)$), 
the previous variety of the filtration parametrises hyperplanes which are tangent along the lines of a $3$-space, 
and so on, until the smallest one corresponds to hyperplanes tangent along the lines of a $\p^{n-2}$. 
If $n$ is even, the description is similar, but the general tangent hyperplane is tangent along the lines of a $2$-plane, and so on.

\begin{coro}\label{fundamental} 
Let $\omega\in \bw^3V^*$ be a $3$-form 
such that $f_\omega\colon V\to \bw^2V^*$ is injective (condition \ref{GC2}).
If we identify $\p(V)$ with $P_\omega=\im(\p(f_\omega))\subset \p( \bw^2V^*)$, 
i.e. the space of  linear equations defining the congruence $X_\omega\subset \p(\bw^2V)$,
then 
\begin{align*}
M_{2k}&=P_\omega\cap S^k\GR', &  k&\geq 0.
\end{align*}
In particular, 
the fundamental locus $F_\omega$ is the locus of equations whose rank, as a skew-symmetric matrix, is $<n-1$, 
and, when $\omega$ is general, the singular locus of $F_\omega$ is the locus of equations whose rank is $<n-3$.
\end{coro}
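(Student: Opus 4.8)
The plan is to read the statement directly off the linear embedding $\p(f_\omega)$ together with the rank stratification of $\p(\bw^2 V^*)$ by secant varieties. First I would recall from \eqref{eq:momega}--\eqref{eq:pomega} that, under \ref{GC2}, the map $\p(f_\omega)$ of \eqref{eq:emb} identifies $\p(V)$ with $P_\omega=\Lambda_\omega^\perp\subset\p(\bw^2 V^*)$, sending a point $P=[v]$ to the $2$-form $\omega(v)$ whose skew-symmetric matrix is exactly $M_\omega(P)$. By Definition \ref{def:rko}, the rank of this matrix coincides with the rank of the $2$-form $\omega(v)$, so that the degeneracy locus $M_{2k}=\{P\mid \rk M_\omega(P)\le 2k\}$ is carried by $\p(f_\omega)$ onto the set of points of $P_\omega$ whose associated $2$-form has rank at most $2k$.

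Next I would identify this set with $P_\omega\cap S^k\GR'$. Here the secant filtration discussed just before the corollary enters: a class $[\beta]\in\p(\bw^2 V^*)$ lies on $S^k\GR'$ precisely when the radical of $\beta$ has dimension at least $n+1-2k$, equivalently when $\rk\beta\le 2k$, so that $S^k\GR'$ is cut out set-theoretically by the vanishing of the $(2k+2)\times(2k+2)$ Pfaffians. Thus $S^k\GR'$ is defined by exactly the rank condition describing the image of $M_{2k}$. Since the image of $\p(f_\omega)$ is all of $P_\omega$, intersecting with $P_\omega$ is automatic, and we obtain $M_{2k}=P_\omega\cap S^k\GR'$ under the identification $\p(V)\cong P_\omega$. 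The only point needing care is the bookkeeping matching the even rank $2k$ of a skew form with the index $k$ of the secant variety; this index-and-parity matching is the main, though elementary, obstacle.

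Finally, the two ``in particular'' statements follow by specialising and recalling the generic ranks. As $\omega$ is general, Remark \ref{rank1} gives that $M_\omega$ has generic rank $n$ for $n$ even and $n-1$ for $n$ odd, so by Proposition \ref{prop:fonda} the fundamental locus is $F_\omega=M_{n-2}$ for $n$ even and $F_\omega=M_{n-3}$ for $n$ odd. Because a skew-symmetric matrix has even rank, in both parities the condition $\rk<n-1$ is equivalent to $\rk\le n-2$ (resp. $\rk\le n-3$), so $F_\omega$ is precisely the locus of points of $P_\omega$ whose equation has rank $<n-1$. The same argument applied to the singular locus, which by Section \ref{deg loci} equals $M_{n-4}$ (resp. $M_{n-5}$) for general $\omega$, shows that $\Sing F_\omega$ is the locus of equations of rank $<n-3$, completing the plan.
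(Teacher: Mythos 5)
Your proof is correct and takes essentially the same route as the paper's: the identification $P\mapsto[M_\omega(P)]$ of $\p(V)$ with $P_\omega$, the dictionary between the Pfaffian degeneracy loci $M_{2k}$ and the rank strata $S^k\GR'$ (with the same $k\leftrightarrow 2k$ bookkeeping you flag), and Proposition \ref{prop:fonda} together with the parity of skew-symmetric rank for the two ``in particular'' claims. The only difference is cosmetic: where the paper disposes of the singular-locus statement by noting that the open strata $S^k\GR'\setminus S^{k-1}\GR'$ are smooth, you cite directly the Bertini-type identifications $\Sing F_\omega = M_{n-4}$ (resp.\ $M_{n-5}$) for general $\omega$ from Section \ref{deg loci}, which is the same underlying fact.
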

\begin{proof}
When $f_\omega$ is injective, $P\mapsto [M_{\omega}(P)]$ defines the identification of $\p(V)$ with $P_\omega$, 
and so the corollary follows immediately from Proposition \ref{prop:fonda}, noting that $S^k\GR'\setminus S^{k-1}\GR'$ is smooth.
\end{proof} 

Finally, we state a theorem 
illustrating the geometric connection between a congruence of order $1$ and its fundamental locus.

\begin{theorem}\label{thm:pippo}
Let $\omega$ be a  $3$-form that satisfies \ref{GC2} on $\p(V)$, with $\dim(V)=n+1$ even. 
Then $X_\omega$ is the closure of the family of $(\frac{n-1}{2})$-secant lines of $F_\omega$.
\end{theorem}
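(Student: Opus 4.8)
The plan is to combine two ingredients: a direct computation of the scheme $\ell\cap F_\omega$ for a line $\ell\in X_\omega$, showing that every such line is $\tfrac{n-1}{2}$-secant to $F_\omega$, together with Peskine's characterisation \cite[Theorem 3.2]{P2} of order-one Cohen--Macaulay congruences, which upgrades this to the statement that $X_\omega$ is the \emph{whole} closure of the family of $\tfrac{n-1}{2}$-secant lines. Since $\dim V=n+1$ is even, $n$ is odd, so by Proposition \ref{prop:ordine} the congruence $X_\omega$ has order one, and by Theorem \ref{thm:qq1} it is an irreducible Cohen--Macaulay variety. Thus Peskine's theorem applies and yields $X_\omega=\overline{\{k\text{-secant lines of }F_\omega\}}$ for a single integer $k$, the secant index; it then remains only to show $k=\tfrac{n-1}{2}$, which I read off from a general line of $X_\omega$.

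The heart of the argument is a reduction of the skew-symmetric matrix $M_\omega$ along a line of the congruence. Fix $\ell=\p(L)\in X_\omega$ with $L=\gen{e,f}$, so that $\omega(e\w f\w u)=0$ for all $u\in V$. Recalling from \eqref{diagramma} that $M_\omega(v)$ is the skew form $w\mapsto \omega(v\w w)\in V^*$, I first check that for every $v\in L$ one has $L\subseteq\ker M_\omega(v)$ and $\im M_\omega(v)\subseteq L^\perp$: indeed for $u\in L$ the bivector $v\w u$ is a multiple of $e\w f$, whence $\omega(v\w u\w w)=0$ for all $w$. Consequently $M_\omega(v)$ descends, for $v\in L$, to a map
\begin{equation*}
\bar M_\omega(v)\colon V/L\longrightarrow L^\perp\cong (V/L)^*,
\end{equation*}
which is again skew-symmetric, now of even size $n-1$, and of the same rank as $M_\omega(v)$.

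Next I would identify $\ell\cap F_\omega$. Choosing a splitting $V=L\oplus W$ with $W\cong V/L$, the matrix $M_\omega(v)$ for $[v]\in\ell$ is block-diagonal with a vanishing $L$-block and $W$-block equal to $\bar M_\omega(v)$; hence every $(n-1)\times(n-1)$ principal Pfaffian involving an index of $L$ carries a zero row and vanishes identically on $\ell$, while the one on the $W$-indices equals $\pf(\bar M_\omega(v))$. Since $F_\omega=M_{n-3}$ (Proposition \ref{prop:fonda}) is cut out by these principal Pfaffians, the scheme $\ell\cap F_\omega$ equals the zero locus on $\ell\cong\p^1$ of $\pf(\bar M_\omega(v))$. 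As $\bar M_\omega(v)$ has even size $n-1$ with entries linear in the parameter of $\ell$, its Pfaffian is a binary form of degree $\tfrac{n-1}{2}$; for general $\ell$ it is not identically zero (the generic rank being $n-1$), so $\ell$ meets $F_\omega$ in a length-$\tfrac{n-1}{2}$ subscheme. This fixes the secant index $k=\tfrac{n-1}{2}$ and, with Peskine's theorem, proves the claim.

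The main obstacle I anticipate is the scheme-theoretic bookkeeping: one must ensure that restricting the intrinsically defined ideal of $F_\omega$ (all principal $(n-1)$-Pfaffians of $M_\omega$) to $\ell$ really leaves only the single Pfaffian $\pf(\bar M_\omega)$, so that the length is \emph{exactly} $\tfrac{n-1}{2}$ and not merely at least that; the block-diagonal normal form above makes this transparent, but it rests on the factorisation through $V/L$ of the previous step and on the vanishing of the remaining principal Pfaffians. A secondary point is to confirm the genericity hypotheses feeding Peskine's theorem—irreducibility and the order-one property—so that the computed intersection number is genuinely the secant index of $X_\omega$.
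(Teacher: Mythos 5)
Your first half---the block reduction of $M_\omega$ along a line of the congruence---is essentially the paper's forward direction, and your normal form is in fact slightly sharper: the paper merely counts the $m=\tfrac{n-1}{2}$ rank-drop points of the quotient pencil via its Pfaffian, whereas you verify that the full Pfaffian ideal cutting out $F_\omega=M_{n-3}$ (Proposition \ref{prop:fonda}) restricts on $\ell$ to the single binary form $\pf(\bar M_\omega)$, so that $\ell\cap F_\omega$ has length exactly $m$ as a scheme. Where you genuinely diverge is the converse inclusion. The paper does not invoke \cite[Theorem 3.2]{P2}: it argues directly that if a line meets $F_\omega$ in $m$ points, then the $(n-1)\times(n-1)$ principal Pfaffians of the pencil $M(s,t)$---binary forms of degree $m$---share those $m$ roots, hence have a degree-$m$ GCD and are all proportional; this forces a common $2$-dimensional kernel $L$ for the whole pencil, and Lemma \ref{lem} then gives $[L]\in X_\omega$. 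That argument is elementary and uses only \ref{GC2}, which is exactly the hypothesis of the theorem.

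This exposes the one genuine gap in your proposal: Peskine's theorem requires $X_\omega$ to be an irreducible, Cohen--Macaulay congruence of order one, and none of this follows from \ref{GC2} alone. Proposition \ref{prop:ordine} and the smoothness statement in Theorem \ref{thm:qq1} are proved for \emph{general} $\omega$; Cohen--Macaulayness comes from the Koszul resolution and so needs \ref{GC4} (expected dimension $n-1$), and \ref{GC2} does not imply \ref{GC4} (Remark \ref{implicationsGC}); irreducibility is asserted nowhere under \ref{GC2} and realistically needs \ref{GC5} (smoothness plus connectedness via the resolution). You flag this as ``a secondary point to confirm'', but it cannot be confirmed from the stated hypothesis---under \ref{GC2} alone $X_\omega$ need not even be a congruence. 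So your route proves the theorem for sufficiently general $\omega$ (under \ref{GC4} the order-one property for odd $n$ does follow, since Porteous gives $d_0(n)=1$ and $\dim I_\omega=n$ forces finiteness of lines through a general point), but not under \ref{GC2} as stated. The paper is aware of this alternative: the remark after the theorem notes that for smooth $F_\omega$ the result also follows from \cite[Theorem 4.6]{P2}. What the paper's GCD argument buys is independence from all genericity beyond \ref{GC2}; what your route buys is a cleaner scheme-theoretic identification of $\ell\cap F_\omega$ and a conceptual placement within Peskine's general theory of secant indices.
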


\begin{proof}
Let $n=2m+1$ and consider as usual the map $f_\omega \colon V\to \bw^2V^*$ of \eqref{eq:fomega}.  
Up to a change of coordinates, a  line $L$ can be written as $L =\gen{e_0,e_1}$. We can interpret the elements of $\bw^2V^*$ as 
in \eqref{eq:momega} as $(n+1)\times(n+1)$ skew-symmetric matrices, and therefore, since we are supposing \ref{GC2},  the image of $L$ 
is a pencil of $(n+1)\times(n+1)$ skew-symmetric matrices of the form $M(s,t)=sf_\omega(e_0)+tf_\omega(e_1)$, and 
each matrix in the pencil has rank at most $n-1$ (see Remark \ref{rank1}). 

Now, if $[L]\in X_\omega$, then the matrices $M(s,t)$ all contain the same $2$-subspace $L$ in their kernel.  
So on the quotient space by $L$, the matrices are $(n-1)\times(n-1)$ of generic rank $n-1$.  They are all skew-symmetric, so in the pencil there are 
$(n-1)/2=m$ matrices  of smaller rank. 
 
The same argument can be reversed.  If there are $m$ matrices $M(s,t)$ of  rank smaller than $m$, the principal Pfaffians of order $m$ of these matrices have $m$ common zeroes. Therefore their GCD is a non-zero homogeneous polynomial of degree $m$, and we conclude that they are all proportional. 
So the matrices $M(s,t)$ must have a common rank $2$ subspace $L$ in their kernel. From Lemma \ref{lem} it follows that $[L]\in X_\omega$.
\end{proof}
\begin{rema} When $F_\omega$ is smooth, the result also follows from  \cite[Theorem 4.6]{P2}.
\end{rema}

%%%%%%%%%%%%%%%%%%%%%%%%%%%%%%%%%%%%%%%%%%%%%%%%%%%%

\subsection{Examples}\label{examples}

%%%%%%%%%%%%%%%%%%  

When $n\leq 7$
the natural group action of $\SL(V^*)$ on $\bw^3V^*$ has finitely many orbits.  In particular, there is a unique open orbit, so we list in the examples below, for $3$-forms $\omega$ of this open orbit,  the  congruence $X_\omega$ and fundamental locus $F_\omega$ of $X_\omega$.
We start considering the case $n=3$ in which Lemma \ref{lem} does not apply. 
\begin{exem}\label{ex1}
If $n=3$, $\omega\in \bw^3 V^*$ is totally decomposable, so without loss of generality we may assume 
\begin{equation*}
\omega=x_1\w x_2\w x_3;
\end{equation*}
the equation \ref{eq:px} reduces to 
\begin{equation*}
x_1p_{23}-x_2p_{13}+x_3p_{12}=0.
\end{equation*}
So $X_\omega=\{p_{12}=p_{13}=p_{23}=0\}\subset \GR$ is the $\alpha$-plane of lines passing through the point $[e_0]: \{x_1=x_2=x_3=0\}$,  which is $F_\omega$ in $\p(V)=\p^3$. 
\end{exem}

\begin{exem} \label{ex2}
If $n=4$,
there are two (non-trivial) orbits.  If $\omega\in \bw^3 V^*$ belongs to the open orbit it is the product of a $1$-form and a general $2$-form, so without loss of generality we may assume 
\begin{equation*}
\omega=x_0\w (x_1\w x_2+x_3\w x_4).
\end{equation*}
The equation \ref{eq:px} reduces to 
\begin{equation*}
x_0(p_{12}+p_{34})-x_1p_{02}+x_2p_{01}-x_3p_{04}+x_4p_{03}=0.
\end{equation*}
So $X_\omega=\{p_{12}+p_{34}=p_{01}=p_{02}=p_{03}=p_{04}=0\}\subset \GR$ is a smooth quadric threefold, a smooth hyperplane section of the Grassmannian of lines in $\{x_0=0\}=\p(V_{x_0})\subset \p(V)$. 
In particular, the fundamental locus $F_\omega=\p(V_{x_0})$. 
 As a congruence 
of $\p^4$ $X_\omega$ has order zero and class one, i.e. it is a Schubert variety: $[X_\omega]=\sigma_{2,1}$. 

If instead $[\omega]$ lies in the closed orbit, it is totally decomposable, so we can suppose $\omega= x_1\w x_2\w x_3$, and  we deduce the equations
$p_{12}=p_{13}=p_{23}=0$, i.e. the condition to be incident to the line $\{x_1=x_2=x_3=0\}$. In this case, $\dim(X_\omega)=4$.
\end{exem}

\begin{exem}\label{ex3}
Let $n=5$ and $\omega\in \bw^3 V^*$. In this case there are $4$ orbits.  They are described in \cite{S} (see \cite{AOP} and references therein for 
modern accounts). 
In particular, it is shown that the secant variety of $G(3,6)$ is the whole $\p^{19}$, so for $\omega$ in the open orbit, we may assume that 
\begin{equation*}
\omega= x_0\w x_1 \w x_2+x_3\w x_4 \w x_5
\end{equation*}
which means $a_{0,1,2}=a_{3,4,5}$ and $a_{i,j,k}=0$ for $(i,j,k)\neq  (0,1,2), (3,4,5)$. 

From \eqref{eq:important} we deduce $p_{0,1}=p_{0,2}=p_{1,2}=p_{3,4}=p_{3,5}=p_{4,5}=0$, so  $X_\omega$ is contained in a reducible linear congruence 
and is given by the lines which meet the two planes 
$\alpha=\{x_0=x_1=x_2=0\}$ and $\beta=\{x_3=x_4=x_5=0\}$ in general position, so $X_\omega=\p^2\times \p^2$ and $F_\omega=\alpha\cup \beta$.

Since the Schubert cycle which represents the lines meeting a plane is $\sigma_2$, 
by Pieri's formula we have that in the Chow ring of the Grassmannian, our congruence 
is $[X_\omega]=\sigma_2^2=\sigma_4+\sigma_{3,1}+\sigma_{2,2}$, which confirms our calculations in \eqref{multidegree} that  its multidegree is $(1,1,1)$. 
We remark also that $\rk(M_\omega)=4$ for 
points of $\p^5$ not in the fundamental locus, and $\rk(M_\omega)=2$ for the points in the fundamental locus, see also Proposition \ref{prop:fonda}.

If $\omega$ belongs to the second largest orbit, in which case $[\omega]$ is a point on a projective tangent space to $G(3,6)$, then we may assume that 
\begin{equation*}
\omega=x_0\w x_1 \w x_2+x_2\w x_3 \w x_4 + x_4\w x_5\w x_0.
\end{equation*}
Also in this case $X_\omega$ has dimension $4$, while for $\omega$ in the remaining two orbits, the dimension of $X_\omega$ is $>4$.
\end{exem}

\begin{exem}\label{ex4}
Let $n=6$ and $\omega\in \bw^3 V^*$. In this case there is 
an open orbit and $8$ other (non-trivial) orbits, see \cite{Sch} and\cite{AOP} for explanations and references.  If $\omega$ belongs to the open orbit, we may assume that 
\begin{equation*}
\omega= x_1\w x_2 \w x_3+x_4\w x_5 \w x_6+x_0\w(x_1\w x_4+x_2\w x_5+x_3\w x_6).
\end{equation*}
The stabiliser of $\omega$ is the simple Lie group $\G2_2$.  The
congruence $X_\omega\subset\GR$ is 
the homogeneous variety $\G2_2\subset\p^{13}$, the $5$-dimensional closed orbit of the projectivised adjoint representation of  $\G2_2$,
a
Fano manifold of index $3$.  This congruence has order $0$, and the
fundamental locus is a smooth quadric $5$-fold in $\p(V)$  (\cite{FH}, Ch.22, \cite{KR}, \cite{Mukai}).
We will say more on this example further on, cf. Example \ref{G2}.
\end{exem}

\begin{exem}
  For $n=7$, there is an open orbit and $21$ other non-trivial orbits 
(\cite{Gurevich35}, \cite{Gurevich}, \cite{ozeki}, \cite{Djokovic}, \cite{Holweck}). 
A representative of the open orbit, according to Ozeki,  is 
\begin{equation*}
 \omega=x_0\w x_1 \w x_2+x_0\w x_3\w x_4+x_1\w x_3\w x_5+x_1\w x_6\w x_7+x_2\w x_3\w x_6+ x_2\w x_5\w x_7+x_4\w x_5\w x_6. 
\end{equation*}
\DJ okovi\'c gives a different representative: 
\begin{equation*}
\omega=x_0\w(x_1+x_2)\w x_3+x_1\w x_4\w x_5+x_2\w x_6\w x_7+x_0\w x_4\w x_6+x_3\w x_5\w x_7.
\end{equation*}
The variety $X_\omega\subset\GR(2,V)$ parametrises the trisecant lines
of a general projection in $\p^7$ of the Severi variety
$\p^2\times\p^2\subset \p^8$ (\cite{Iliev-Manivel}). This projection
is $F_\omega$.
The computation $\deg(X_\omega)=57$ is \cite[Proposition 4.6]{Iliev-Manivel}.
\end{exem}

\begin{exem} \label{eg:n=8}
  For $n=8$, there are infinitely many orbits. They are described in \cite{VE}. It is still possible to write explicitly a general $3$-form $\omega$. Indeed, there is  a continuous   family of semi-simple orbits, depending on $4$ parameters, that can be explicitly described. Their union is a Zariski-dense  open subset in $\bw^3 V^*$. To write a $3$-form $\omega$ in this family, we introduce the following notation: 
\begin{align*} 
p_1&= x_0\w x_1 \w x_2+x_3\w x_4\w x_5+x_6\w x_7\w x_8\\
p_2&= x_0\w x_3 \w x_6+x_1\w x_4\w x_7+x_2\w x_5\w x_8\\
p_3&= x_0\w x_4 \w x_8+x_1\w x_5\w x_6+x_2\w x_3\w x_7\\
p_4&= x_0\w x_5 \w x_7+x_1\w x_3\w x_8+x_2\w x_4\w x_6\\
\end{align*}
Then a general $\omega=\lambda_1p_1+\lambda_2p_2+\lambda_3p_3+\lambda_4p_4$, where the coefficients satisfy $\lambda_1\lambda_2\lambda_3\lambda_4\neq 0$ and other explicit open conditions. For more details see \cite{VE}.

The moduli space of
  alternating trilinear forms is related to the moduli space of curves
  of genus $2$, cf.  \cite{GS15,GSW13} and references therein.
  The fundamental locus is a Coble cubic in $\p^8$, whose singular
  locus is an Abelian surface given as Jacobian of a curve of genus
  $2$ with a $(3,3)-$polarisation.
  More precisely, the moduli space of alternating 3-forms obtained as
  GIT quotient $\bw^3 V^* \sslash \GL(V)$ contains a dense subset 
  which is isomorphic to the moduli space of genus 2 curves C with a
  marked Weierstrass point. 

  The topological Euler characteristic of the $7$-fold $X_\omega$ is $0$.
\end{exem}

\begin{exem}
  The case $n=9$ has been studied by Peskine. The congruence $X_\omega$ is formed by the $4$-secant lines of the fundamental locus $F_\omega$, 
that is a smooth variety of dimension $6$ in $\p^9$, also called \emph{Peskine variety}. $F_\omega$ is not quadratically normal. 
In fact it is on the border of Zak's conjectures on $k$-normality (cf. \cite[Conjecture 1]{Zak}).  
$X_\omega$ has been considered also in \cite{DV10} in a construction of
  hyper-K\"ahler fourfolds.
\end{exem}

\bigskip

{\sc Problem.}
  Compute the Hodge numbers of $X_\omega$ for general $\omega$. The
  topological Euler characteristic of $X_\omega$ for $n=2t+2$ for $t=3,4,5,6$
  equals $0$, $-254$, $-8412$, $-284598$, so in these cases the
  derived category of $X_\omega$ cannot admit a full exceptional sequence.
  Is it true that the same thing happens for any $n \ge 8$?

\medskip

\subsection{Incidence varieties and projective bundles}\label{proj bundles}

Here we look more closely at the relation between a congruence and its
fundamental locus. This has been developed also in \cite{Han,P2}.

Consider the projective bundle $\XX=\p(\UU^*)$ over the
Grassmannian $\GR$. 
This bundle can be seen as the universal line
over $\GR$,
i.e. the point-line incidence variety in $\p^n \times \GR$.
Let  $\OO_\XX(\ell)$ be the tautological relatively ample line bundle on $\XX$ and 
\begin{equation*}
\lambda\colon \XX \to \GR
\end{equation*}
be the projection. It is well-known that there is a canonical isomorphism 
\begin{equation*}
\p(\Omega^1_{\p(V)}(2)) \simeq \XX.
\end{equation*}
Let $\OO_{\XX}(h)$ be the relatively ample line bundle on $\p(\Omega^1_{\p(V)}(2))$ and 
\begin{equation*}
\mu\colon  \p(\Omega^1_{\p(V)}(2))\to \p(V)
\end{equation*}
be the natural projection. Then, we have 
\begin{align}
  \label{OO(1)}
\lambda^*(\OO_{\GR}(1)) &\simeq \OO_\XX(h), & \mu^*(\OO_{\p(V)}(1)) &\simeq \OO_\XX(\ell).  
\end{align}
For brevity, we often denote the pull-back of a bundle $\E$ on $\p(V)$
to $\XX$ also by $\E$ omitting the symbol $\mu^*$, and likewise for $\GR$.

Again we see $\omega \in \bw^3 V^*$ as an element of
$H^0(\XX,\QQ^*(1))$ under the isomorphism:
\begin{equation*}
\bw^3 V^*  \simeq H^0(\GR,\QQ^*(1))\simeq H^0(\XX,\QQ^*(1)).
\end{equation*}
We let $I_\omega$ be the zero locus of $\omega$ in this sense, i.e. 
the zero-locus of the pull-back of $\varphi_\omega$ to $\XX$.
Clearly $I_\omega \simeq \p(\UU^*\mid_{X_\omega})$, i.e. $I_\omega$ is the point-line  incidence variety restricted to $X_\omega$.

\subsubsection{Locally free resolution of the fundamental locus}

The $3$-form $\omega$ can be considered as global section of $\Omega^2(3)$
over $\p(V)$, which is to say as the skew-symmetric morphism
$\phi_\omega$ of \eqref{fomega}.
Write $\C_\omega$ for the cokernel sheaf of $\phi_\omega$.
Looking back at \eqref{diagramma} we may write, for even $n$, the exact sequence:
\begin{equation}
  \label{f-even}
  0 \to T_{\p(V)}(-1) \to \Omega^1_{\p(V)}(2) \to \C_\omega \to 0,
\end{equation}
   and for odd $n=2m+1$, the resolution:
\begin{equation}
  \label{f-odd}
   0 \to \OO_{\p(V)}(1-m) \to T_{\p(V)}(-1) \to \Omega^1_{\p(V)}(2) \to
   \II_{F_\omega/\p(V)}(m) \to 0,
 \end{equation}
   where we used $\C_\omega \simeq \II_{F_\omega/\p(V)}(m)$.

   \begin{coro}
     If $n$ is odd and $\omega$ satisfies \ref{GC4}, then the
     fundamental locus $F_\omega$ is a Fano variety and
     $\omega_{F_\omega} \simeq \OO_{F_\omega}(-3)$.
   \end{coro}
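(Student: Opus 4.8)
The plan is to splice the resolution \eqref{f-odd} with the structure sequence of $F_\omega$ to produce a short locally free resolution of $\OO_{F_\omega}$ on $\p(V)$, and then to extract the dualizing sheaf from the self-dual shape of that resolution. Write $n=2m+1$, set $E:=\TTT_{\p(V)}(-1)$, and observe that $\Omega^1_{\p(V)}(2)\simeq E^*(1)$, so that $\phi_\omega\colon E\to E^*(1)$ is the skew-symmetric map of \eqref{fomega}. Using $\C_\omega\simeq\II_{F_\omega/\p(V)}(m)$, I would twist \eqref{f-odd} by $\OO_{\p(V)}(-m)$ and splice it with $0\to\II_{F_\omega/\p(V)}\to\OO_{\p(V)}\to\OO_{F_\omega}\to 0$ to obtain
\begin{equation*}
0\to\OO_{\p(V)}(1-2m)\to E(-m)\xrightarrow{\phi_\omega(-m)}E^*(1-m)\to\OO_{\p(V)}\to\OO_{F_\omega}\to 0.
\end{equation*}
This is a locally free resolution of length $3$; since $F_\omega$ has codimension $3$ in the smooth variety $\p(V)$, the equality of projective dimension and codimension forces $F_\omega$ to be Cohen--Macaulay, and $\mathcal{E}xt^i_{\OO_{\p(V)}}(\OO_{F_\omega},\OO_{\p(V)})=0$ for $i\neq 3$.

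The heart of the argument is the self-duality of this resolution, which comes for free from the skew-symmetry $\phi_\omega^*=-\phi_\omega(1)$ recorded after \eqref{fomega}. Applying $\mathcal{H}om(-,\OO_{\p(V)})$ and twisting by $\OO_{\p(V)}(1-2m)$ sends the four terms $\OO_{\p(V)},\,E^*(1-m),\,E(-m),\,\OO_{\p(V)}(1-2m)$ to $\OO_{\p(V)}(1-2m),\,E(-m),\,E^*(1-m),\,\OO_{\p(V)}$ (using $(E(-m))^*(1-2m)=E^*(1-m)$ and $\OO_{\p(V)}^*(1-2m)=\OO_{\p(V)}(1-2m)$), while the skew-symmetric middle map goes to $\pm$ itself. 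Thus the dualized-and-twisted complex is isomorphic to the original resolution, which is precisely the Buchsbaum--Eisenbud structure theorem for codimension $3$ Gorenstein ideals. Comparing the two resolutions at the right-hand end gives $\mathcal{E}xt^3(\OO_{F_\omega},\OO_{\p(V)})\otimes\OO_{\p(V)}(1-2m)\simeq\OO_{F_\omega}$, hence $\mathcal{E}xt^3(\OO_{F_\omega},\OO_{\p(V)})\simeq\OO_{F_\omega}(2m-1)$; in particular this dualizing module is invertible, so $F_\omega$ is Gorenstein.

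It then remains to twist by $\omega_{\p(V)}=\OO_{\p(V)}(-n-1)=\OO_{\p(V)}(-2m-2)$: by duality for the Cohen--Macaulay scheme $F_\omega$,
\begin{equation*}
\omega_{F_\omega}\simeq\mathcal{E}xt^3(\OO_{F_\omega},\omega_{\p(V)})\simeq\OO_{F_\omega}(2m-1)\otimes\OO_{F_\omega}(-2m-2)=\OO_{F_\omega}(-3),
\end{equation*}
and since $\OO_{F_\omega}(3)$ is the restriction of the ample $\OO_{\p(V)}(3)$, the anticanonical sheaf is ample and $F_\omega$ is Fano. The steps are mostly bookkeeping with twists; the one point that must be handled with care is the self-duality of the resolution, i.e.\ checking both that the four terms are exchanged by the functor $\mathcal{H}om(-,\OO_{\p(V)}(1-2m))$ and that the skew-symmetry of $\phi_\omega$ makes the middle differential match, so that the two complexes are genuinely isomorphic and the identification of $\mathcal{E}xt^3$ is exact rather than merely up to an unknown twist.
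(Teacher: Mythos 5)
Your proof is correct and follows essentially the same route as the paper: both arguments rest on the self-duality of the sequence \eqref{f-odd} induced by the skew-symmetry of $\phi_\omega$, dualise it to compute the relevant $\EExt$ sheaf, and conclude via duality on $\p(V)$. The only cosmetic difference is that you splice \eqref{f-odd} into a length-$3$ resolution of $\OO_{F_\omega}$ and compute $\EExt^3(\OO_{F_\omega},\OO_{\p(V)})$ (with the Buchsbaum--Eisenbud/Gorenstein framing as a bonus), whereas the paper computes the shift-equivalent $\EExt^2(\II_{F_\omega/\p(V)}(m),\OO_{\p(V)}) \simeq \OO_{F_\omega}(m-1)$ directly.
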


   \begin{proof}
     Let $n=2m+1$. Since $\omega$ satisfies \ref{GC4} we have the
     exact sequence \eqref{f-odd} and $F_\omega$ is a subvariety of
     $\p(V)$ of pure codimension $3$. We have:
     \begin{equation*}
     \EExt^2_{\p(V)}(\II_{F_\omega/\p(V)},\OO_{\p(V)}(-n-1)) \simeq \omega_{F_\omega}.
   \end{equation*}
   On the other hand, dualising the self-dual exact sequence
     \eqref{f-odd}, we easily get :
     \begin{equation*}
     \EExt^2_{\p(V)}(\II_{F_\omega/\p(V)}(m),\OO_{\p(V)}) \simeq \OO_{F_\omega}(m-1).
   \end{equation*}
   Therefore:
     \begin{equation*}
     \omega_{F_\omega} \simeq \OO_{F_\omega} (m-1+m-n-1) \simeq \OO_{F_\omega} (-3).
   \end{equation*}
 \end{proof}

\subsubsection{Incidence variety and projectivised cokernel sheaf}

The main feature of the cokernel sheaf $\C_\omega$ is that it
recovers the tautological $\p^1$-bundle over $X_\omega$, the
incidence variety $I_\omega$.

 \begin{prop} \label{blup} 
   There is an isomorphism:
    \begin{equation*}
I_\omega \simeq \p(\C_\omega).
  \end{equation*}
 \end{prop}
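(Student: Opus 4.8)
The plan is to realise both $I_\omega$ and $\p(\C_\omega)$ as closed subschemes of $\XX$ and to identify them there, exploiting the two rulings in the double description $\XX=\p(\UU^*)=\p(\Omega^1_{\p(V)}(2))$. First I would describe $\p(\C_\omega)$ as a degeneracy locus on $\XX$ over $\p(V)$. Since $\C_\omega=\operatorname{coker}\phi_\omega$, there is a surjection $\Omega^1_{\p(V)}(2)\twoheadrightarrow\C_\omega$ with kernel $\im\phi_\omega$, so by the universal property of the Grothendieck projectivisation the closed subscheme $\p(\C_\omega)\subset\p(\Omega^1_{\p(V)}(2))=\XX$ is exactly the maximal locus on which the tautological quotient $q_\mu\colon\mu^*\Omega^1_{\p(V)}(2)\twoheadrightarrow\OO_\XX(h)$ kills $\mu^*\im\phi_\omega$. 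As $\mu^*\TTT_{\p(V)}(-1)\twoheadrightarrow\mu^*\im\phi_\omega$, this locus is the zero scheme $Z(s)$ of the composite $s:=q_\mu\circ\mu^*\phi_\omega\colon\mu^*\TTT_{\p(V)}(-1)\to\OO_\XX(h)$, a section of the rank-$n$ bundle $\mu^*\Omega^1_{\p(V)}(1)\otimes\OO_\XX(h)$. Thus $\p(\C_\omega)=Z(s)$ scheme-theoretically.

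Next I would use skew-symmetry to drop the rank to $n-1=\codim I_\omega$. The point of $\XX$ over $(\ell,P)$ records the tangent direction $\tau$ of $\ell$ at $P$; this gives a tautological sub-line-bundle $\mathcal L\hookrightarrow\mu^*\TTT_{\p(V)}(-1)$ with fibre $\langle\tau\rangle$, and by construction $q_\mu$ is the evaluation of a $1$-form on this same direction $\tau$. Hence the generator of $\mathcal L$ satisfies $s(\tau)=\langle\phi_\omega(\tau),\tau\rangle=0$, because $\phi_\omega$ is skew-symmetric ($\phi_\omega^*=-\phi_\omega(1)$). Therefore $s$ factors through $\mathcal N:=\mu^*\TTT_{\p(V)}(-1)/\mathcal L$, yielding $\bar s\colon\mathcal N\to\OO_\XX(h)$ with $Z(\bar s)=Z(s)=\p(\C_\omega)$.

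The heart of the argument is then to identify $\mathcal N\simeq\lambda^*\QQ$ and $\bar s$ with $\lambda^*\varphi_\omega$. On $\XX$ the tautological flag $\langle P\rangle\subset L\subset V$ gives inclusions $\OO_\XX(-\ell)\hookrightarrow\lambda^*\UU\hookrightarrow V\otimes\OO_\XX$; one checks $\mathcal L\simeq\lambda^*\UU/\OO_\XX(-\ell)\simeq\OO_\XX(\ell-h)$, and then the third isomorphism theorem gives $\mathcal N=(V\otimes\OO_\XX)/\lambda^*\UU=\lambda^*\QQ$ with no residual twist (this is confirmed on determinants, since $\det\TTT_{\p(V)}(-1)=\OO_{\p(V)}(1)$). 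Under this identification $\bar s$ becomes a section of $\lambda^*(\QQ^*(1))$, and since $H^0(\XX,\lambda^*(\QQ^*(1)))\simeq H^0(\GR,\QQ^*(1))\simeq\bw^3V^*$, the assignment $\omega\mapsto\bar s$ is an $\SL(V)$-equivariant $\CC$-linear endomorphism of the irreducible module $\bw^3V^*$, hence a nonzero scalar multiple of the identity by Schur's lemma (nonzero because $s\neq0$ for $\omega\neq0$). Thus $\bar s$ is proportional to $\lambda^*\varphi_\omega$, whence $\p(\C_\omega)=Z(\bar s)=\lambda^{-1}(Z(\varphi_\omega))=\lambda^{-1}(X_\omega)=\p(\UU^*|_{X_\omega})=I_\omega$.

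The main obstacle is precisely this last bookkeeping: tracking the Grothendieck convention and all the twists so that the sub-line is exactly $\OO_\XX(\ell-h)$ and the quotient is exactly $\lambda^*\QQ$, and so that the two "contraction with $\omega$" maps produced by the two rulings of $\XX$ genuinely coincide. I would settle the latter by the equivariance/Schur argument above rather than a fibre computation, though one may alternatively verify $\bar s=\lambda^*\varphi_\omega$ by evaluating both at a single point $(\ell,P)$, where each sends $\bar w\in V/L$ to the restriction $\omega(w)|_{\bw^2 L}\in\bw^2 L^*=\OO_{\GR}(1)_\ell$. As a consistency check, over a point $P\in\p(V)$ the fibre of $Z(s)$ is $\p(\ker\beta_P)$, where $\beta_P$ is the skew form $M_\omega(P)$ descended to $T_{\p(V),P}$; this is exactly the set of lines of $X_\omega$ through $P$, i.e.\ the fibre of $I_\omega$, in agreement with the description in Proposition \ref{prop:fonda}.
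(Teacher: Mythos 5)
Your proof is correct and follows essentially the same route as the paper's: both cut $\p(\C_\omega)$ out of $\XX=\p(\Omega^1_{\p(V)}(2))$ as the zero locus of the section of $\Omega^1_{\p(V)}(\ell+h)$ induced by $\phi_\omega$, use skew-symmetry to show this section lands in the subbundle $\K(h-\ell)\simeq \QQ^*(h)$ (your factoring through $\mathcal{N}=\mu^*\TTT_{\p(V)}(-1)/\mathcal{L}\simeq\lambda^*\QQ$ is exactly the dual formulation of this step), and then identify the resulting section of $\lambda^*(\QQ^*(1))$ with $\varphi_\omega$. Where the paper appeals to compatibility of the Borel--Bott--Weil identifications $H^0(\Omega^2_{\p(V)}(3))\simeq\bw^3V^*\simeq H^0(\GR,\QQ^*(1))$, you make the same uniqueness argument explicit via Schur's lemma, which is a harmless variant.
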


 \begin{proof}
   We noticed that $I_\omega$ is the zero-locus in $\XX$ of
   $\varphi_\omega$, so our goal will be to describe
   also $\p(\C_\omega)$ this way. 
   So first of all recall the tautological exact sequences:
   \begin{align*}
     & 0 \to \K \to \Omega^1_{\p(V)}(2 \ell) \to \OO_\XX(h) \to 0, \\
     & 0 \to \OO_\XX(h-\ell) \to \UU^* \to \OO_\XX(\ell) \to 0,
   \end{align*}
   where the vertical tangent bundle $\K$ is defined by the sequence.
   It is clear that these fit into a commutative exact diagram:
    \begin{equation*}
   \xymatrix@-3ex{
     0\ar[r] & \K(h-\ell) \ar[r] \ar[d] & \Omega^1_{\p(V)}(\ell + h) \ar[r] \ar[d]& \OO_\XX(2h-\ell) \ar[d] \ar[r] & 0 \\
     0\ar[r] &  \QQ^*(h) \ar[r] & V^*\otimes \OO_\XX(h) \ar[d] \ar[r] &  \UU^*(h) \ar[d] \ar[r] & 0 \\
     && \OO_\XX(h+\ell) \ar@{=}[r] & \OO_\XX(h+\ell).
   }
    \end{equation*}
   This shows:
   \begin{equation*} 
\QQ^*(h) \simeq \K(h-\ell).
    \end{equation*}
   
   Remark that, by the sequences \eqref{f-even} and
   \eqref{f-odd}, the variety $\p(\C_\omega)$ is cut in $\XX \simeq
   \p(\Omega^1_{\p(V)}(2))$ by $T_{\p(V)}(-1)$ linearly on the
   fibres of $\mu$, i.e. it is the zero-locus of
   a section $\psi_\omega : T_{\p(V)}(-\ell) \to \OO_\XX(h)$.
   Notice that:
     \begin{equation*}
\psi_\omega \in H^0(\XX,\Omega^1_{\p(V)}(h+\ell)) \simeq
    H^0(\p(V),\Omega^1_{\p(V)} \otimes \Omega^1_{\p(V)}(3)).
 \end{equation*}

    Observe that $\psi_\omega$ lies in the skew-symmetric part
    $H^0(\p(V),\Omega^2_{\p(V)}(3))$. In other words, the image of
    $\psi_\omega$ in the summand $H^0(\p(V),S^2 \Omega^1_{\p(V)}(3))$
    is zero. Now:
    \begin{equation*}
    H^0(\p(V),S^2 \Omega^1_{\p(V)}(3)) \simeq H^0(\XX,\OO_\XX(2h-\ell)),
     \end{equation*}
    so $\psi_\omega$ goes to zero under the projection
    $\Omega^1_{\p(V)}(\ell + h) \to  \OO_\XX(2h-\ell)$, i.e. it lies
    in $H^0(\XX,\K(h-\ell)) \simeq H^0(\XX,\QQ^*(h))$. This is
    compatible with the isomorphism $H^0(\p(V),\Omega^2_{\p(V)}(3))
    \simeq \bw^3 V^* \simeq H^0(\GR,\QQ^*(1))$, so that $\psi_\omega$
    agrees with $\varphi_\omega$.
 \end{proof}
Let again $M_{r}\subset \p(V)$ be the locus of points where $\phi_\omega$ has rank at most $r$.  Then Proposition \ref{blup} and the sequences \ref{f-even} and  \ref{f-odd} yield
\begin{coro} When $n$ is odd, the incidence variety $I_\omega$ is the blow-up of $\p(V)$
along $F_\omega$.
When $n$ is even, the restriction of the incidence variety $I_\omega$ is a
$\p^1$-bundle over $F_\omega\setminus M_{n-4}$, and a $\p^{2k-1}$-bundle over $M_{n-2k}\setminus M_{n-2k-2}$, when $k=2,..., (n-2)/2$.\end{coro}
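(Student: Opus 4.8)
The plan is to start from the identification $I_\omega \simeq \p(\C_\omega)$ of Proposition \ref{blup} and to read off both assertions from the cokernel sheaf $\C_\omega$ of $\phi_\omega$ together with its resolutions \eqref{f-even} and \eqref{f-odd}. Throughout, $\mu\colon I_\omega \to \p(V)$ denotes the structure map of $\p(\C_\omega)$, and I will use repeatedly that $I_\omega = \p(\UU^*|_{X_\omega})$ is a $\p^1$-bundle over the smooth irreducible $X_\omega$, hence itself smooth, reduced and irreducible of dimension $n$.

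For odd $n = 2m+1$, resolution \eqref{f-odd} gives $\C_\omega \simeq \II_{F_\omega/\p(V)}(m)$, and since projectivisation is insensitive to twisting by a line bundle, $I_\omega \simeq \p(\C_\omega) = \Proj_{\p(V)}(\Sym^\bullet \II_{F_\omega/\p(V)})$. The natural surjection of graded $\OO_{\p(V)}$-algebras $\Sym^\bullet \II_{F_\omega/\p(V)} \twoheadrightarrow \bigoplus_{d \ge 0} \II^d_{F_\omega/\p(V)}$ onto the Rees algebra induces a closed immersion of the blow-up $\mathrm{Bl}_{F_\omega}\p(V) = \Proj_{\p(V)}(\bigoplus_d \II^d_{F_\omega/\p(V)})$ into $\p(\C_\omega) \simeq I_\omega$. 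I would then conclude that this closed immersion is an isomorphism by a dimension count: the blow-up is irreducible of dimension $n$, so its image is a closed irreducible subset of dimension $n$ inside the irreducible $n$-dimensional $I_\omega$, hence all of it; since $I_\omega$ is reduced, a surjective closed immersion into it is an isomorphism.

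For even $n$ the generic rank of the skew-symmetric $\phi_\omega$ is $n$, so $\C_\omega$ is supported exactly on $M_{n-2} = F_\omega$ and $\mu$ maps $I_\omega$ into $F_\omega$, as befits order zero. I would then stratify $F_\omega$ by the rank of $\phi_\omega$: on the locally closed stratum $M_{n-2k} \setminus M_{n-2k-2}$ the morphism $\phi_\omega$ has constant rank $n-2k$, so its cokernel $\C_\omega$ restricts there to a locally free sheaf of rank $2k$. Consequently $\p(\C_\omega)$ restricts to a $\p^{2k-1}$-bundle over this stratum; taking $k=1$ yields the $\p^1$-bundle over $F_\omega \setminus M_{n-4}$, and $k = 2,\dots,(n-2)/2$ the remaining cases. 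This is consistent with Proposition \ref{prop:fonda}, by which the lines of $X_\omega$ through a point of $M_r \setminus M_{r-2}$ form a star in a $\p^{n-r}$, i.e.\ a $\p^{n-r-1}$; with $r = n-2k$ this gives fibres $\p^{2k-1}$ exactly.

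The routine input is the even case, where constant rank of a bundle map forces local freeness of the cokernel. The genuine obstacle is the odd case: one must know that $\p(\C_\omega)$ really is the blow-up and not merely a birational model with the correct fibres, i.e.\ that $\II_{F_\omega/\p(V)}$ is of linear type so that $\Sym^\bullet \II_{F_\omega/\p(V)}$ agrees with the Rees algebra after taking $\Proj$. The dimension-and-reducedness argument above is designed to circumvent a direct verification of linear type, but it rests entirely on $I_\omega = \p(\C_\omega)$ being reduced and irreducible of the expected dimension $n$, which is exactly what Proposition \ref{blup} and the smoothness of $X_\omega$ provide.
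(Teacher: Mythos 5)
Your proof is correct and follows essentially the same route as the paper, which derives the corollary in one line from Proposition \ref{blup} together with the sequences \eqref{f-even} and \eqref{f-odd}; your even-$n$ constant-rank stratification and your odd-$n$ identification of $\p(\C_\omega)$ with the blow-up are exactly the details that one-line proof leaves implicit. In fact your treatment of the odd case is more careful than the paper's: the distinction between $\Proj(\Sym^\bullet \II_{F_\omega/\p(V)})$ and the Rees-algebra blow-up is a genuine point the paper glosses over, and your argument via the closed immersion of the blow-up into the reduced irreducible $n$-dimensional $I_\omega \simeq \p(\C_\omega)$ closes it cleanly without verifying that $\II_{F_\omega/\p(V)}$ is of linear type.
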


\subsubsection{Linear sections of the fundamental locus}

This framework can be further used to study linear sections of $F_\omega$ and $X_\omega$.
Indeed, let $M^* \subset V^*$ and $N^* \subset \bw^2 V^*$
be linear subspaces and we write
$X_{\omega,N} = \p(N) \cap X_\omega$ and $I_{\omega,N}=
\p(\UU^*|_{X_{\omega,N}})$. Also we write $I_{M,\omega}$ for the fibre
product of $I_\omega$ and $\p(M)$ over $\p(V)$.
Assume until the end of the section that $I_{\omega,N}$ and
$I_{M,\omega}$ have expected dimension.

\begin{lemma}
 The image in $\p(V)$ of $I_{\omega,N}$ is the
 degeneracy locus of a map $N \otimes \OO_{\p(V)} \to \C_\omega$. 
 Likewise, the image in $\GR$ of $I_{M,\omega}$ is the degeneracy
 locus of a map $M \otimes \OO_{X_\omega} \to \UU^*\mid_{X_\omega}$. 
\end{lemma}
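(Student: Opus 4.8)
The plan is to prove both halves by the same mechanism: each of $I_{\omega,N}$ and $I_{M,\omega}$ is cut out inside $I_\omega$ by the pullback of the linear forms defining $\p(N)$, respectively $\p(M)$, and the image under the appropriate projection is then identified, fibre by fibre, with the locus where a morphism of sheaves fails to be surjective, i.e. with a degeneracy locus. The two statements are genuinely symmetric, one being read over $\p(V)$ via $\mu$ and $\C_\omega$, the other over $\GR$ via $\lambda$ and $\UU^*$.

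For the image in $\p(V)$ of $I_{\omega,N}$ I would invoke the identification $I_\omega \simeq \p(\C_\omega)$ of Proposition \ref{blup}, under which the projection to $\p(V)$ is $\mu$ and the tautological quotient line bundle is $\OO_\XX(h)|_{I_\omega} = \lambda^*\OO_\GR(1)|_{I_\omega}$. Since $N^* \subset \bw^2 V^* = H^0(\GR,\OO_\GR(1))$, the forms defining $\p(N)$ restrict to sections of this tautological bundle; using the Bott identification $\bw^2 V^* \simeq H^0(\p(V),\Omega^1_{\p(V)}(2))$ together with the surjection $\Omega^1_{\p(V)}(2) \twoheadrightarrow \C_\omega$ coming from \eqref{f-even}–\eqref{f-odd}, they assemble into a morphism $N^* \otimes \OO_{\p(V)} \to \C_\omega$ whose induced sections of $\OO_\XX(h)$ agree with the restricted Pl\"ucker forms (this compatibility is exactly the one established inside the proof of Proposition \ref{blup}). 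The key computation is fibrewise: over $[v] \in \p(V)$ the fibre $\mu^{-1}([v]) \cap I_\omega = \p(\C_{\omega,[v]})$ parametrises the lines of $X_\omega$ through $[v]$, embedded linearly in $\p(\bw^2 V)$ by $\OO_\XX(h)$. A quotient line $q$ of $\C_{\omega,[v]}$ lies in $\p(N)$ exactly when $q$ annihilates the image of $N^*$, so $\p(N)$ meets this fibre — equivalently $[v] \in \mu(I_{\omega,N})$ — precisely when the images of the forms of $N^*$ do not generate $\C_{\omega,[v]}$, that is, when $N^* \otimes \OO_{\p(V)} \to \C_\omega$ is not surjective at $[v]$. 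This is the asserted degeneracy locus.

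For the image in $\GR$ of $I_{M,\omega}$ I would run the symmetric argument over $\GR$, using $I_\omega \simeq \p(\UU^*|_{X_\omega})$ with projection $\lambda$ and tautological quotient $\OO_\XX(\ell)|_{I_\omega} = \mu^*\OO_{\p(V)}(1)|_{I_\omega}$. Dualising the universal sequence \eqref{seq:uq} yields the surjection $V^*\otimes\OO_\GR \twoheadrightarrow \UU^*$, and restricting it to $M^* \subset V^* = H^0(\GR,\UU^*)$ gives the morphism $M^*\otimes\OO_{X_\omega} \to \UU^*|_{X_\omega}$. The forms of $M^*$ cut out $\p(M)$ and pull back (through $\mu^*$) to sections of $\OO_\XX(\ell)$ whose common zero locus on $I_\omega$ is exactly the fibre product $I_{M,\omega}$. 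Over $[L]\in X_\omega$ the fibre $\lambda^{-1}([L]) = \p(L)$ meets $\p(M)$ iff some nonzero $v \in L$ is killed by all of $M^*$, i.e. iff the restriction map $M^*\to L^* = \UU^*_{[L]}$ fails to be surjective; hence $\lambda(I_{M,\omega})$ is the degeneracy locus of $M^*\otimes\OO_{X_\omega}\to\UU^*|_{X_\omega}$.

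The one genuinely delicate point is matching the set-theoretic image with the degeneracy locus \emph{as schemes}, and this is where the hypothesis that $I_{\omega,N}$ and $I_{M,\omega}$ have expected dimension enters. Away from the special strata the argument is the routine fibre computation above, but $\C_\omega$ fails to be locally free along the fundamental locus $F_\omega$ (and along the further strata $M_{n-2k}$ for even $n$), so the fibre $\p(\C_{\omega,[v]})$ can jump in dimension and the naive count breaks down there. I would handle this by reducing to the Fitting-ideal description of the degeneracy scheme, checking via flat base change on the open where the relevant sheaf is locally free that the image and the determinantal scheme coincide, and then using the expected-dimension assumption to rule out embedded or excess components, so that the two agree everywhere. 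The expected-dimension hypothesis is precisely what forces the degeneracy loci to carry their expected codimension and thus to be identified cleanly with the images.
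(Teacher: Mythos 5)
Your proof is correct and follows essentially the same route as the paper: identify $I_\omega$ with $\p(\C_\omega)$ (resp.\ $\p(\UU^*|_{X_\omega})$) via Proposition \ref{blup} and \eqref{OO(1)}, observe that cutting with $\p(N)$ (resp.\ $\p(M)$) corresponds to vanishing of sections of the tautological pull-back line bundle, and push forward to identify the image with the non-surjectivity locus of $N\otimes\OO_{\p(V)}\to\C_\omega$ (resp.\ $M\otimes\OO_{X_\omega}\to\UU^*|_{X_\omega}$); your fibrewise formulation of the last step is just the pointwise version of the paper's statement that the vanishing locus is the projectivised cokernel, whose image is the support of that cokernel. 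Your closing paragraph on Fitting ideals and the expected-dimension hypothesis is a harmless refinement beyond what the paper proves, not a gap.
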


\begin{proof}
We treat only $N^* \subset \bw^2 V^*$, the argument for $M^* \subset
V^*$ being analogous.

By Proposition \ref{blup}, we have $\p(\C_\omega) \simeq I_\omega$,
and we know by \eqref{OO(1)} that $\OO_{I_\omega}(h)$ is the pull-back
of $\OO_{\GR}(1)$.
So, in $I_\omega$, cutting
with $\p(N) \hookrightarrow \p(\bw^2 V)$ corresponds to the
vanishing of a morphism $N\otimes \OO_{I_\omega} \to
\OO_{I_\omega}(h)$. 
This vanishing locus is isomorphic to the projectivised
cokernel of the direct image in $\p(V)$ of this map, which again
by \eqref{OO(1)} is a morphism $N\otimes \OO_{\p(V)} \to
\C_\omega$.
So the image in $\p(V)$ of  $I_{\omega,N}$
is the degeneracy locus of this last morphism. Actually for even $n$
this map factors through $N\otimes \OO_{F_\omega} \to \C_\omega$ so
the image of $I_{\omega,N}$ is a degeneracy locus inside $F_\omega$.
\end{proof}

\begin{exem} \label{G2}
  For $n=4$ and generic $\omega$, $F_\omega$ is $\p^3 \subset \p^4$
  and $\C_\omega$ is the null correlation bundle on $\p^3$. On the
  other hand, $X_\omega$ is a smooth quadric threefold. The
  variety $I_\omega$ is the projectivisation of the null
  correlation bundle, which in turn is isomorphic to the projectivised
    spinor bundle over the quadric threefold. In other words, $I_\omega$
  is the complete flag for the Lie group $\Spin(5)$ or
  equivalently of $\Sp(4)$.

  For $n=6$ and generic $\omega$, the variety $I_\omega$ is the complete flag for the
  exceptional group $\G2_2$, $F_\omega$ is a smooth
  $5$-dimensional quadric and $X_\omega$ is the $5$-dimensional homogeneous space
  $\G2_2/P(\alpha_2)$ of Picard number $1$. The sheaf
  $\C_\omega$ is the rank-2 stable $\G2_2$-homogeneous bundle 
  on the quadric $F_\omega$, also called Cayley bundle (cf. \cite{Ott}
  for a description in terms of spinor bundles).

  Taking a general linear section of codimension $2$ of $X_\omega$,
  i.e. a general subspace $\CC^2 = N^* \subset \bw^2 V^*$, one
  gets a smooth Fano threefold $X_{\omega,N}$ of genus 10. The image
  in $\p^6=\p(V)$ of the
  manifold $\p(\UU^*|_{X_{\omega,N}})$ is a
  determinantal cubic hypersurface of $F_\omega$ defined by an
  injective map of the form:
  \begin{equation*}
  \OO_{F_\omega}^2 \to \C_\omega. 
  \end{equation*}
  This is a Fano variety of index $2$, singular along a curve of
  degree $18$ and arithmetic genus $10$, given by the locus where the
  map displayed above vanishes. This curve is the image in $\p^6$
  of the Hilbert scheme of lines contained in the Fano threefold
  $X_{\omega,N}$. If $L$ is any such line, $\UU^*$ splits
  over $L$ as $\OO_{L}\oplus
  \OO_{L}(1)$, so that $\OO_{L}(h)$ contracts $\p(\UU^*|_L)$ to 
  a plane with a marked point,
  which is the point in $\p^6$ that corresponds to the given line $L$.
\end{exem}

\subsubsection{Further degeneracy locus for even $n$}

Let us briefly study the further degeneracy locus of the bundle map $\phi_\omega$ in case $n$ is
even. Set $t=n/2-1$. We know that $F_\omega \subset \p^n$ is a hypersurface of
degree $t$, whose singular locus is, generically,
$F_\omega'=M_{n-4}$, a subvariety of codimension $6$ in $\p^n$. 

Recall that the singular locus of $F_\omega'$ is, generically, 
$M_{n-6}$, a subvariety of codimension $15$ in $\p^n$.
All these varieties are subcanonical.

To write a locally free resolution of $F_\omega'$, assuming that the
codimension is $6$ as expected, we use the
sheafified J\'ozefiak-Pragacz complex, cf. for instance \cite[\S
6.4.6]{--98W}.
We denote by $\Gamma^{a,b}$ the Schur functor associated with the
Young tableau having two columns of sizes $a$ and $b$.
This gives:
\begin{equation}
\begin{split}
  0 &\to \OO_{\p^n}(-3t) \to \wedge^2 T_{\p^n}(-2t-3) \to
  \Gamma^{2t+1,1}T_{\p^n}(-4t-3) \to S^2 T_{\p^n}(-t-3) \oplus\\
  &\oplus S^2 \Omega_{\p^n}(-2t+3) \to \Gamma^{2t+1,1}\Omega_{\p^n}(t+3)
  \to \wedge^2 \Omega_{\p^n}(-t+3) \to \II_{F_\omega'/\p^n} \to 0.
\end{split}
\end{equation}

This resolution is self-dual up to sign and up to twisting by $\OO_{\p^n}(-3t)$. In particular, dualising the
resolution and using $\omega_{F_\omega'} \simeq
\EExt^5_{\p^n}(\II_{F_\omega'/\p^n},\OO_{\p^n}(-n-1))$ we get:

\begin{coro} If $F_\omega'$ has expected codimension $6$ in
  $\p^n$, then its canonical bundle is
\begin{equation*}
\omega_{F_\omega'} \simeq \OO_{F_\omega'}(t-3).
\end{equation*}
\end{coro}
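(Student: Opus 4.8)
The plan is to read off the canonical bundle of $F_\omega'$ directly from the self-duality of the sheafified J\'ozefiak--Pragacz resolution, instead of computing the terminal $\EExt$ sheaf by hand. First I would record that, under the standing hypothesis $\codim F_\omega' = 6$, the displayed complex is exact by the acyclicity criterion (the generic Pfaffian codimension is attained), so $\OO_{F_\omega'}$ is Cohen--Macaulay of codimension $6$ in $\p^n$. Duality for Cohen--Macaulay subschemes then gives $\omega_{F_\omega'}\simeq \EExt^6_{\p^n}(\OO_{F_\omega'},\OO_{\p^n}(-n-1))$, which via the sequence $0\to \II_{F_\omega'/\p^n}\to \OO_{\p^n}\to \OO_{F_\omega'}\to 0$ and the vanishing of $\EExt^{>0}_{\p^n}(\OO_{\p^n},\OO_{\p^n}(-n-1))$ agrees with the sheaf $\EExt^5_{\p^n}(\II_{F_\omega'/\p^n},\OO_{\p^n}(-n-1))$ in the statement, since $\omega_{\p^n}\simeq \OO_{\p^n}(-n-1)$.

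Next I would make the self-duality explicit. Completing the displayed complex to a resolution $0\to F_6\to\cdots\to F_1\to F_0\to \OO_{F_\omega'}\to 0$ of $\OO_{F_\omega'}$ with $F_0=\OO_{\p^n}$ and $F_6=\OO_{\p^n}(-3t)$, the assertion that the resolution is self-dual up to sign and up to the twist $\OO_{\p^n}(-3t)$ means precisely that $F_i^\vee\simeq F_{6-i}\otimes\OO_{\p^n}(3t)$, compatibly with the differentials. I would verify the determining instances of this: at the ends $F_0^\vee=\OO_{\p^n}$ and $F_6\otimes\OO_{\p^n}(3t)=\OO_{\p^n}$ agree, which pins the twist as $3t$; at the centre $F_3=S^2T_{\p^n}(-t-3)\oplus S^2\Omega_{\p^n}(-2t+3)$ one has $F_3^\vee\simeq F_3(3t)$ using $T_{\p^n}^\vee\simeq\Omega_{\p^n}$; and the pairings $F_1\leftrightarrow F_5$, $F_2\leftrightarrow F_4$ are the matching statements for $\wedge^2$ and for $\Gamma^{2t+1,1}$, using $\Gamma^{a,b}(\mathcal E)^\vee\simeq\Gamma^{a,b}(\mathcal E^\vee)$ together with the appropriate twists.

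Finally I would run the $\EExt$ computation through this symmetry. Applying $\mathcal{H}om(-,\OO_{\p^n}(-n-1))$ to the resolution of $\OO_{F_\omega'}$ and substituting $F_i^\vee\simeq F_{6-i}(3t)$ turns the dualised complex, term by term, into the original resolution reversed and twisted by $\OO_{\p^n}(3t-n-1)$. Since the dualised complex has its only nonvanishing cohomology in top degree, equal to $\omega_{F_\omega'}$, while the original complex has $\OO_{F_\omega'}$ as its only homology, I conclude $\omega_{F_\omega'}\simeq \OO_{F_\omega'}(3t-n-1)$. Substituting $n=2t+2$ gives $3t-n-1=t-3$, i.e. $\omega_{F_\omega'}\simeq\OO_{F_\omega'}(t-3)$, as claimed. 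I expect the only delicate point to be the bookkeeping of the two twists, the self-duality twist $\OO_{\p^n}(-3t)$ and the canonical twist $\OO_{\p^n}(-n-1)$, together with the reindexing that identifies the terminal cohomology of the dualised complex with the degree-zero homology of the original one; the exactness itself is automatic once the expected codimension is assumed.
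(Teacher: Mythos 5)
Your proof is correct and takes essentially the same route as the paper: both rest on the self-duality of the sheafified J\'ozefiak--Pragacz resolution up to sign and the twist $\OO_{\p^n}(-3t)$, dualised against $\OO_{\p^n}(-n-1)$ to read off the canonical bundle, with $3t-n-1=t-3$ since $n=2t+2$. The only cosmetic difference is that the paper extracts $\omega_{F_\omega'}$ as $\EExt^5_{\p^n}(\II_{F_\omega'/\p^n},\OO_{\p^n}(-n-1))$ while you work with $\EExt^6_{\p^n}(\OO_{F_\omega'},\OO_{\p^n}(-n-1))$ and note the equivalence, and you spell out the term-by-term pairings $F_i^\vee\simeq F_{6-i}(3t)$ that the paper leaves implicit.
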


For $n \le 6$, $F_\omega'$ is just empty. 
For $n=8$ i.e. $t=3$, $F_\omega'$ is an Abelian surface, actually the Jacobian of the
genus-2 curve associated with $\omega$, embedded by the triple Riemann
Theta divisor, appearing as singular locus of
the relevant Coble cubic, cf. Example \ref{eg:n=8}.

For $n=10$, i.e. $t=4$ and generic $\omega$, $F_\omega'$ is a smooth canonical
$4$-fold of degree 99. For $t=5, 6, 7, 8, 9$ the degree of $F_\omega'$ is
$364, 1064, 2652, 5871, 11858$.

%%%%%%%%%%%%%%%%%% 

\section{Hilbert scheme}\label{hilbert}

%%%%%%%%%%%%%%%%%% 

Define the open dense subset $\K$ of $\bw^3 V^*$ by the
condition that $\omega$ belongs to $\K$ if and only if $X_\omega$ has
dimension $n-1$.
For $\omega \in \K$, we  let $P(t)$ be the Hilbert polynomial of
$X_\omega$. Then we
define $\HH$ to be the union of the components of the Hilbert scheme
$\Hilb_{P(t)}(\p(\bw^2 V))$ that contain at least one point of
the form $[X_\omega]$, with $\omega \in \K$.
Sending the proportionality class $[\omega]$ of $\omega$ to 
$[X_\omega]$ we get a morphism:
\begin{equation*}
\rho\colon \p(\K) \to \HH.
\end{equation*}
Our goal here is to prove the following.

\begin{theorem}\label{Hilbsch}
Let $\dim V=n+1\ge 6$ and assume that $\omega\in \bw^3V^*$ satisfies \eqref{GC4}. Then 
$\HH$ is
  irreducible and smooth at any point
  $[X_\omega]$ corresponding to $\omega \in \K$. Moreover:
  \begin{enumerate}[(i)]
  \item for $n \ge 6$, $\rho$ embeds $\p(\K)$ as an open dense subset
    of $\HH$, so
    $\dim(\HH) = \binom{n+1}{3}-1$;
  \item for $n = 5$, $\rho$ is dominant with rational curves as fibres, so
    $\dim(\HH) = \binom{n+1}{3}-2$.
  \end{enumerate}
\end{theorem}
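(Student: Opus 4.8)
The plan is to read off the local structure of $\HH$ at $[X_\omega]$ from the first-order deformations of $X_\omega$ as a congruence, that is, from its normal bundle inside $\GR$. Working in the Grassmannian, the natural ambient for congruences, and using that $X_\omega$ is the zero locus of the section $\varphi_\omega$ of $\QQ^*(1)$ (Theorem \ref{thm:qq1}), one has $\N_{X_\omega/\GR}\cong\QQ^*(1)|_{X_\omega}$, so the Zariski tangent space to $\HH$ at $[X_\omega]$ is $H^0(X_\omega,\QQ^*(1)|_{X_\omega})$. The strategy is to compute this space, show that its dimension equals $\dim\im(\rho)$, and read off the fibres of $\rho$; smoothness, irreducibility and the dimension of $\HH$ will then follow from a formal dimension count, with no separate obstruction computation.

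First I would restrict global sections from $\GR$ to $X_\omega$ via
\begin{equation*}
0\to\II_{X_\omega}\otimes\QQ^*(1)\to\QQ^*(1)\to\QQ^*(1)|_{X_\omega}\to 0.
\end{equation*}
Using $H^0(\GR,\QQ^*(1))\cong\bw^3V^*$ and $H^1(\GR,\QQ^*(1))=0$ (Borel--Bott--Weil, as in Theorem \ref{thm:qq1}), the long exact sequence reads
\begin{equation*}
0\to H^0(\II_{X_\omega}\otimes\QQ^*(1))\to\bw^3V^*\to H^0(\QQ^*(1)|_{X_\omega})\to H^1(\II_{X_\omega}\otimes\QQ^*(1))\to 0.
\end{equation*}
I would compute the two flanking groups by tensoring the Koszul resolution \eqref{eq:resolution} of $\OO_{X_\omega}$ with $\QQ^*(1)$, which resolves $\II_{X_\omega}\otimes\QQ^*(1)$ by the bundles $\bw^k\QQ\otimes\QQ^*(1-k)$ whose cohomology is controlled by Bott's theorem. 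The leading term $\QQ(-1)\otimes\QQ^*(1)\cong\QQ\otimes\QQ^*$ has $H^0=\CC$ (as $\QQ$ is simple) and accounts for $\varphi_\omega$, which restricts to the zero section on $X_\omega$. The aim of this step is the vanishing $H^1(\II_{X_\omega}\otimes\QQ^*(1))=0$ together with the identification $H^0(\II_{X_\omega}\otimes\QQ^*(1))=\{\eta\in\bw^3V^*\mid X_\omega\subseteq X_\eta\}$, the space of $3$-forms whose section vanishes along $X_\omega$; granting these, $h^0(\QQ^*(1)|_{X_\omega})=\binom{n+1}{3}-\dim\{\eta\mid X_\omega\subseteq X_\eta\}$.

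The distinction between the two cases is localised entirely in the dimension of this space, which the Koszul--Bott computation should show to be $1$ for $n\ge 6$ and $2$ for $n=5$. Geometrically this is transparent: for $n\ge6$ only the scalar multiples of $\omega$ vanish along $X_\omega$, whereas for $n=5$, where $X_\omega\cong\p^2\times\p^2$ consists of the lines meeting the planes $\{x_0=x_1=x_2=0\}$ and $\{x_3=x_4=x_5=0\}$, both $x_0\w x_1\w x_2$ and $x_3\w x_4\w x_5$ vanish along $X_\omega$, so the space is $\gen{x_0\w x_1\w x_2,\ x_3\w x_4\w x_5}$. Hence $h^0(\QQ^*(1)|_{X_\omega})$ equals $\binom{n+1}{3}-1$ for $n\ge6$ and $\binom{6}{3}-2=18$ for $n=5$. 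Correspondingly $d\rho$ is injective for $n\ge6$, while for $n=5$ the fibre of $\rho$ over $[X_\omega]$ is the locus in $\p(\gen{x_0\w x_1\w x_2,\,x_3\w x_4\w x_5})\cong\p^1$ where $\dim X_\eta$ stays equal to $n-1$, namely $\p^1$ with two points removed, a rational curve. In either case $h^0(\N_{X_\omega/\GR})=\dim\im(\rho)$.

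Smoothness and irreducibility then follow formally: $\im(\rho)$ is an irreducible locally closed subvariety of $\HH$ through $[X_\omega]$ of dimension $\dim\im(\rho)=h^0(\N_{X_\omega/\GR})=\dim T_{[X_\omega]}\HH$, so $[X_\omega]$ is a smooth point lying on a single component, in which $\im(\rho)$ is dense. Since by definition every component of $\HH$ contains some $[X_{\omega'}]$ with $\omega'\in\K$, and each such point is smooth, hence on a unique component meeting $\im(\rho)$, the scheme $\HH$ is irreducible and equals $\overline{\im(\rho)}$. For $n\ge6$, $\rho$ is then an injective open immersion onto a dense open subset, giving (i) and $\dim\HH=\binom{n+1}{3}-1$; for $n=5$ the rational-curve fibres give (ii) and $\dim\HH=\binom{n+1}{3}-2$. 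I expect the main obstacle to be the Bott-theoretic bookkeeping of the twisted Koszul complex: one must verify that the intermediate terms $\bw^k\QQ\otimes\QQ^*(1-k)$ produce no spurious cohomology, so that $H^1(\II_{X_\omega}\otimes\QQ^*(1))$ vanishes and $H^0$ collapses to the one- or two-dimensional space above; pinning down precisely the jump from dimension $2$ at $n=5$ to dimension $1$ for $n\ge6$ is the delicate point that separates (i) from (ii).
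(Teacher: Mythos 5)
Your proposal follows essentially the same route as the paper's proof: you identify the tangent space of $\HH$ at $[X_\omega]$ with $H^0(\N)$ for $\N\cong\QQ^*(1)|_{X_\omega}$, control it by tensoring the Koszul resolution \eqref{eq:resolution} with $\QQ^*(1)$ and invoking Borel--Bott--Weil vanishing for the terms $\bw^p\QQ\otimes\QQ^*(1-p)$ (with exactly one exceptional $H^2$ for $n=5$, which the paper computes as $H^2(\E_{2\lambda_3-3\lambda_2})\simeq\CC$), use that the fibres of $\rho$ are linear, and conclude smoothness and irreducibility by the same dimension count. The only differences are cosmetic: you package the computation through the ideal-sheaf sequence $0\to\II_{X_\omega}\otimes\QQ^*(1)\to\QQ^*(1)\to\N\to 0$ instead of taking sections of the resolution of $\N$ directly, and you identify the two-dimensional space $\gen{x_0\w x_1\w x_2,\,x_3\w x_4\w x_5}$ geometrically at $n=5$ (consistently with the paper's $\p^1\cap\p(\K)$ fibres) rather than purely representation-theoretically.
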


\begin{proof}
  Let $n\ge 5$. The algebraic map $\rho$ is defined precisely for all
  $\omega$  lying in $\K$ and the fibres of $\rho$ consist of the forms
  $\omega'$ such that $X_\omega = X_{\omega'}$. Recall that $X_\omega$
  is determined by its linear span $\Lambda_\omega$ so that
  $[\omega']$ lies in $\rho^{-1}([X_\omega])$ if and only if $\Lambda_\omega=\Lambda_{\omega'}$.
  On the other hand $\Lambda_\omega$ is formed by the 2-vectors $L \in
  \p(\bw^2 V)$ such that $\omega(L)=0$.
  Therefore $\rho^{-1}([X_\omega])$ consists of the forms $[\omega']$ such that
  $\omega'(L)=0$ for all $L \in \Lambda_\omega$. Since this is a
  linear condition, we see that the fibre $\rho^{-1}([X_\omega])$ is a linear
  section of $\p(\K)$.

  Let now $\omega \in \p(\K)$ and recall that $X_\omega$ is obtained
  as zero-locus of the global section $\varphi_\omega$ of $\QQ^*(1)$ (see equation \eqref{eq:asterisco}).
  Then the normal bundle $\N$ of $X_\omega$ in $\GR$ is $\QQ^*(1)|_{X_\omega}$.
  Taking the tensor product of $\QQ^*(1)$ with the Koszul complex
  \eqref{eq:resolution} we obtain then a resolution over 
  $\GR$:
  \begin{equation*}
  \dotsb \to \bw^p \QQ \otimes \QQ^*(1-p) \to \dotsb \to \QQ
  \otimes \QQ^* \to \QQ^*(1) \to \N \to 0,
  \end{equation*}
  where $p$ ranges from $2$ to $n-1$.

  It is well-known that $H^0(\QQ \otimes \QQ^*) \simeq \CC$
  and $H^0(\QQ^*(1)) \simeq \bw^3 V^*$(see Section \ref{congr_deg}). Also
  it is clear that:
  \begin{align*}
  H^k(\QQ \otimes \QQ^*) &= H^k(\QQ^*(1)) =0, &
  \forall k > 0. 
  \end{align*}

  Taking global sections of the rightmost part of the previous display, since the map $\QQ
  \otimes \QQ^* \to \QQ^*(1)$ is just $\varphi_\omega$, we get a
  linear map:
  \begin{equation*}
  \rho_\omega \colon \bw^3 V^*/\gen{ \omega} \to H^0(\N),
   \end{equation*}
  which is nothing but the differential of $\rho$ at $[\omega]$.

  Since we already mentioned that the fibres of $\rho$ are linear
  spaces, we have to check that $\rho_\omega$ is an
  isomorphism for $n \ge 6$, or a surjection with $1$-dimensional
  kernel for $n=5$. We will also check that, in both cases,
  $H^k(\N)=0$ for $k>0$. This will show that $\HH$ is
  irreducible (as image of $\p(\K)$) and smooth
  over the image of $\rho$. This will imply that, for $n \ge 6$, $\rho^{-1}[X_\omega]$ is a
  single point, whereby proving that $\rho$ is a birational morphism,
  actually an isomorphism over the image of $\rho$. On the other hand, if 
  $n =5$, we will deduce that $\rho$ is a surjective morphism whose
  fibres are of the form $\p^1 \cap \p(\K)$, which is a rational
  (perhaps non compact) curve. Both claims on
  $\dim(\HH)$ clearly follow.

  In turn, in order to prove the required properties of $\rho_\omega$,
  for $n \ge 6$ it suffices to check the following vanishing:
  \begin{align}
    \label{eq:bottvanish}
  H^k(\bw^p \QQ \otimes \QQ^*(1-p))&=0, &
    p=&2,\dotsc,n-2,& \forall k.
  \end{align}
  On the other hand, for $n=5$ we have to verify the same vanishing as
  \eqref{eq:bottvanish} except for $p=3$ and $k=2$, where we have to check:
   \begin{equation*}
  H^2(\bw^3 \QQ \otimes \QQ^*(-2)) \simeq \CC.
   \end{equation*}

  To prove these facts, we rely on Borel-Bott-Weil's theorem. Indeed, $\bw^p \QQ \otimes \QQ^*(1-p)$
  is an extension of two vector bundles associated with irreducible
  representations of $\GL(V)$. More precisely, write $\E_\lambda$ for the
  homogeneous bundle associated with the weight $\lambda$ of $\GL(V)$
  and $\lambda_1,\dotsc,\lambda_n$ for the fundamental weights. Then, 
  the homogeneous bundles given by the irreducible representations appearing in the
  filtration of $\bw^p \QQ \otimes \QQ^*(1-p)$ are
  $\E_{\lambda_{n+1-p}+\lambda_3-p\lambda_2}$ and
  $\E_{\lambda_{n+2-p}+(1-p)\lambda_2}$.
  Now, by Borel-Bott-Weil's theorem, for $n \ge 6$ we have:
   \begin{equation*}
  H^k(\E_{\lambda_{n+1-p}+\lambda_3-p\lambda_2})=
  H^k(\E_{\lambda_{n+2-p}+(1-p)\lambda_2})=0,
   \end{equation*}
  for $p=2,\dotsc,n-2$, and $\forall k$. This implies
  \eqref{eq:bottvanish} for $n \ge 6$.
  On the other hand, for $n=5$, again Bott's theorem implies the same
  vanishing except for $p=3$ and $k=2$, where we get:
  \begin{align*}
  H^2(\E_{2\lambda_3-3\lambda_2})&\simeq \CC, &
  H^2(\E_{\lambda_{4}-2\lambda_2})&=0.
  \end{align*}
  This implies
  \eqref{eq:bottvanish} for $n = 5$ and thus concludes the proof.
\end{proof}

\section{Quadrics containing 
$\gen{X_\omega}$ and congruences in linear subspaces}\label{sect:quadrics} 

%%%%%%%%%%%%%%%%%% 

As usual we denote by $\Lambda_\omega$ the linear span of $X_{\omega}$. 
We are interested in characterising the quadrics in the ideal of $\GR$ that contain also  $\Lambda_{\omega}$.

First we compute the dimension of this space of quadrics.
\begin{lemma}\label{n+1} Let $\omega\in\bw^3V^*$ be a $3$-form such that $X_\omega$ has dimension $n-1$, i.e satisfying condition \ref{GC4}. Then 
$h^0(\II_{\Lambda_\omega\cup \GR}(2))=n+1.$ 
\end{lemma}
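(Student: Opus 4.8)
The plan is to identify quadrics through $\GR$ with $4$-forms and then pin down the dimension by a cohomological computation on $X_\omega$. First I would invoke Corollary \ref{cor:iso}, which gives $H^0(\II_\GR(2))=I(\GR)_2\cong\bw^4V^*$ via $\eta\mapsto Q_\eta$; under this identification $H^0(\II_{\Lambda_\omega\cup\GR}(2))$ becomes the space of $4$-forms $\eta$ with $\Lambda_\omega\subseteq Q_\eta$. Since $\Lambda_\omega=\p(W)$ for $W:=\ker({}^tf_\omega)=\{L\in\bw^2V\mid\omega(L)=0\}$, in characteristic zero polarisation turns this into the condition $\eta(L\w L')=0$ for all $L,L'\in W$, i.e. $\eta$ annihilates $\langle W\w W\rangle\subseteq\bw^4V$. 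I would verify the contraction identity expressing $(\omega\w x)(L\w L')$ through $\iota_{\omega(L)}L'+\iota_{\omega(L')}L$, which shows at once that every $\eta\in\omega\w V^*$ annihilates $W\w W$; this is precisely the content of Lemma \ref{span_quadrics}. Thus $\omega\w V^*$ lies in the space we want, and $\dim(\omega\w V^*)=n+1$ as soon as $x\mapsto\omega\w x$ is injective, i.e. as soon as $\omega$ satisfies condition \ref{GC1}. This yields $h^0(\II_{\Lambda_\omega\cup\GR}(2))\ge n+1$.

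I would then check that condition \ref{GC4} forces condition \ref{GC1} when $n\ge 5$: if $\omega$ were decomposable, $\omega=x\w\beta$, then $X_\omega$ would contain $\{[L]\in G(2,V_x)\mid\beta(L)=0\}$, of dimension $2n-5>n-1$, contradicting \ref{GC4}. (For $n=4$ this fails — there $\omega=x_0\w\beta_0$ does satisfy \ref{GC4} while the count drops to $n$, as in Example \ref{ex2} — so the statement is understood for $n\ge5$.)

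For the reverse inequality I would compute the dimension exactly. Restriction of Pl\"ucker quadrics to $\Lambda_\omega$ gives a linear map $I(\GR)_2\to H^0(\OO_{\Lambda_\omega}(2))$ whose kernel is exactly $H^0(\II_{\Lambda_\omega\cup\GR}(2))$ and whose image is $I(X_\omega\subset\Lambda_\omega)_2$, the latter because $\GR$ is defined by the Pl\"ucker quadrics and hence $X_\omega=\GR\cap\Lambda_\omega$ is defined in $\Lambda_\omega$ by their restrictions. Therefore
\[
h^0(\II_{\Lambda_\omega\cup\GR}(2))=\binom{n+1}{4}-\dim I(X_\omega\subset\Lambda_\omega)_2 .
\]
Since $X_\omega$ is arithmetically Cohen--Macaulay in $\Lambda_\omega\cong\p^{d}$, with $d=\binom{n+1}{2}-n-2$ (Corollary \ref{cor:acm}), it is projectively normal, so $\dim I(X_\omega\subset\Lambda_\omega)_2=\binom{d+2}{2}-h^0(\OO_{X_\omega}(2))$.

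It remains to compute $h^0(\OO_{X_\omega}(2))$. As $X_\omega$ is aCM with $\omega_{X_\omega}\cong\OO_{X_\omega}(-3)$ (Theorem \ref{thm:qq2}), all higher cohomology of $\OO_{X_\omega}(2)$ vanishes, so $h^0(\OO_{X_\omega}(2))=\chi(\OO_{X_\omega}(2))$; twisting the Koszul resolution \eqref{eq:resolution} by $\OO_{\GR}(2)$ gives
\[
\chi(\OO_{X_\omega}(2))=\sum_{k=0}^{n-1}(-1)^k\,\chi\!\left(\GR,\ \bw^k\QQ(2-k)\right),
\]
each summand being evaluated by the Borel--Bott--Weil theorem on $\GR=G(2,V)$. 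The main, and essentially the only laborious, obstacle is to carry out this Bott computation and check that the resulting alternating sum, fed into the two displayed formulas, collapses to the clean value $n+1$ for $n\ge5$; the numerics can be checked against known cases (for $n=5$ one gets $h^0(\OO_{X_\omega}(2))=36$ and $h^0(\II_{\Lambda_\omega\cup\GR}(2))=15-9=6=n+1$). A more self-contained route would avoid the Bott computation by proving directly that $\langle W\w W\rangle$ equals the kernel of the contraction $\bw^4V\to V$ by $\omega$, equivalently that $\Sym^2W\to\bw^4V\to V$ is exact in the middle; establishing that exactness is the principal difficulty along this alternative path.
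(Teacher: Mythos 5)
Your route is a close cousin of the paper's rather than a genuinely independent argument: the paper's proof also hinges on the identity $I(X_\omega)_2=I(\GR)_2+I(\Lambda_\omega)_2$ and on a Borel--Bott--Weil evaluation of the twisted Koszul resolution \eqref{eq:resolution}, except that it computes $h^0(\II_{X_\omega\mid\GR}(2))=h^0(\QQ(1))-h^0(\bw^2\QQ)=n\binom{n+1}{2}-n-1$ in two BBW terms and concludes $\dim I(\Lambda_\omega)_2\cap I(\GR)_2=n\binom{n+1}{2}-h^0(\II_{X_\omega\mid\GR}(2))=n+1$ directly, with no separate lower bound. Your lower bound via $\omega\w V^*$ and Lemma \ref{span_quadrics} is not part of the paper's proof of this lemma (the paper uses that family only afterwards, in Proposition \ref{Qn+1}, to identify the space with $\QQ_\omega$), so for the count itself it is redundant; on the other hand your remark that \ref{GC4} forces \ref{GC1} for $n\ge 5$, with genuine failure at $n=4$ where the count drops to $n$, is correct and in fact sharper than the paper's statement: for $n=4$ the tail term of the twisted Koszul complex is $\bw^3\QQ(-1)\otimes\OO_{\GR}(2)\cong\OO_{\GR}$ and contributes an extra $1$, so the paper's two-term BBW formula, and hence the value $n+1$, silently require $n\ge 5$.

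There are, however, two soft spots. First, the step asserting that the image of the restriction $I(\GR)_2\to H^0(\OO_{\Lambda_\omega}(2))$ is all of $I(X_\omega\subset\Lambda_\omega)_2$ carries the entire weight of the upper bound, and your justification --- that $X_\omega$ is cut out scheme-theoretically by the restricted Pl\"ucker quadrics --- does not deliver it: scheme-theoretic generation in degree $2$ does not control the degree-$2$ graded piece of the saturated ideal, and the standard regular-sequence argument is unavailable here precisely because $\Lambda_\omega$ cuts $\GR$ improperly (codimension $n+1$ cutting only codimension $n-1$). Without surjectivity, rank--nullity gives only $h^0(\II_{\Lambda_\omega\cup\GR}(2))\ge \binom{n+1}{4}-\dim I(X_\omega\subset\Lambda_\omega)_2$, i.e.\ the lower bound again, so your second half would prove nothing new. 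The paper makes the equivalent claim ($I(X_\omega)=I(\GR)+I(\Lambda_\omega)$) with comparable brevity; to make either version airtight one should extract from \eqref{eq:resolution}, together with the BBW acyclicity of the terms $\bw^k\QQ(2-k)$ for $k\ge 3$, that $H^0(\II_{X_\omega\mid\GR}(2))$ is spanned by products of the $n+1$ linear forms in $\Lambda_\omega^\perp$ with linear forms. Second, you defer the decisive Bott computation, checking only $n=5$. It does collapse as you hope: for $n\ge 5$ all terms $\bw^k\QQ(2-k)$ with $3\le k\le n-1$ are acyclic, so $\chi(\OO_{X_\omega}(2))=\chi(\OO_{\GR}(2))-\chi(\QQ(1))+\chi(\bw^2\QQ)$, and with $h^0(\QQ(1))=(n+1)\binom{n+1}{2}-(n+1)$ and $h^0(\bw^2\QQ)=\binom{n+1}{2}$ your two displayed formulas yield $\dim I(X_\omega\subset\Lambda_\omega)_2=\binom{n+1}{4}-(n+1)$ and hence $h^0(\II_{\Lambda_\omega\cup\GR}(2))=n+1$. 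But as written the proposal is a correct plan with one non-sequitur at its pivot and its main computation left unexecuted, not yet a complete proof.
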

\begin{proof}
The homogeneous ideal of $\Lambda_\omega\cup \GR$ is the intersection of the homogeneous ideals of 
$\Lambda_\omega$ and $\GR$, so $H^0(\II_{\Lambda_\omega\cup \GR}(2))$ is simply the homogeneous piece of degree $2$ of this intersection. 

Recall that $X_\omega$ is the intersection of $\GR$
and $\Lambda_\omega$, so that we have an equality of homogeneous ideals, $I(X_\omega) = I(\GR) +
I(\Lambda_\omega)$. Consider therefore the following exact sequence of ideals:
\begin{equation*}
0 \to I(\Lambda_\omega) \cap I(\GR) \to I(\Lambda_\omega) \oplus I(\GR) \to I(X_\omega) \to 0.
\end{equation*}
Looking at the dimension of the homogeneous pieces of degree $2$ we get:
\begin{equation*}
\dim I(\Lambda_\omega)_2\cap I(\GR)_2 = \dim I(\GR)_2 + \dim
I(\Lambda_\omega)_2 - \dim I(X_\omega)_2.
\end{equation*}
Also, we have an exact sequence:
\begin{equation*}
0 \to \II_{\GR\mid\p(\bw^2 V)} \to \II_{X_\omega \mid \p(\bw^2  V)}\to \II_{X_\omega \mid \GR} \to 0.
\end{equation*}
Twisting by $\OO_{\p (\bw^2 V)}(2)$ and taking global sections, we
easily get:
\begin{equation*}
\dim I(X_\omega)_2 = \dim I(\GR)_2 + h^0(%\GR,
\II_{X_\omega \mid \GR}(2)),
\end{equation*}
and therefore:
\begin{equation*}
\dim I(\Lambda_\omega)_2\cap I(\GR)_2 =  \dim I(\Lambda_\omega)_2 - h^0(%\GR,
\II_{X_\omega \mid \GR}(2)).
\end{equation*}
To finish we have to compute the right-hand-side. On one hand, since
$\gen{X_\omega}=\Lambda_\omega$ has codimension $n+1$ in $\p(\bw^2 V)=\p^{\binom{n+1}{2}-1}$, we
easily get $\dim I(\Lambda_\omega)_2=n\binom{n+1}{2}$.
On the other hand, recall that $X_\omega$ is the vanishing locus of the
global section $\varphi_\omega$ of $\QQ^*(1)$, so that, when the dimension of $X_\omega$ is $n-1$, the Koszul
complex \eqref{eq:resolution}
is a resolution of the ideal of $X_\omega$ on $\GR$.
Now we twist this sequence with $\OO_{\GR}(2)$ and take global sections. 
Applying Borel-Bott-Weil's theorem, we get:
\begin{align*}
h^0(
\II_{X_\omega \mid \GR}(2)) & =h^0(
\QQ(1))-h^0(
\bw^2 \QQ) 
=n\binom{n+1}{2}-n-1.  
\end{align*}
Putting together, we obtain the  equality $\dim I(\Lambda_\omega)_2\cap I(\GR)_2 =n+1$.
\end{proof}

We shall find a natural isomorphism 
\begin{equation*}
V^*\to H^0(\p(\bw^2V),\II_{\Lambda_\omega \cup \GR}(2))
\end{equation*}
 parametrising this subspace in the space of quadrics defining $\GR$.

 \begin{nota}
\label{quadric}   
With notation as in Corollary \ref{cor:iso}, we  denote by $\QQ_\omega$ the 
image of the map $V^* \to I(\GR)$ sending $x$ to $q_{\omega\w x}$. These quadratic forms correspond to the quadrics $Q_{\omega\w x}$ introduced and studied in Section \ref{the quadric}.
 \end{nota}

\begin{prop}\label{Qn+1} Assume that $n\geq 5$ 
and that $X_\omega$ has dimension $n-1$ (condition \ref{GC4}).
Then
\begin{equation*}
\QQ_\omega=H^0(\p(\bw^2V),\II_{\Lambda_\omega\cup\GR}(2)).
\end{equation*}
Furthermore, for any quadric $Q_{\omega \w x}\in \QQ_\omega$ of rank $2n$, the linear span $\Lambda_\omega=\gen{ X_{\omega}}$ has codimension one in the maximal isotropic subspace $\Lambda_\omega^x$ in  $Q_{\omega\w x}$. 
\end{prop}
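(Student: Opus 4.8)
The plan is to establish the two assertions in turn, leaning on the earlier structural results. For the equality $\QQ_\omega=H^0(\p(\bw^2V),\II_{\Lambda_\omega\cup\GR}(2))$ I would first prove the inclusion $\subseteq$ and then compare dimensions. The inclusion is immediate: by Corollary \ref{cor:iso} each generator $q_{\omega\w x}$ of $\QQ_\omega$ lies in $I(\GR)_2$, hence vanishes on $\GR$, while Lemma \ref{span_quadrics} shows $Q_{\omega\w x}\supseteq\Lambda^x_\omega\supseteq\Lambda_\omega$, so $q_{\omega\w x}$ also vanishes on $\Lambda_\omega$; thus $q_{\omega\w x}\in H^0(\II_{\Lambda_\omega\cup\GR}(2))$ and $\QQ_\omega\subseteq H^0(\II_{\Lambda_\omega\cup\GR}(2))$. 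By Lemma \ref{n+1} the right-hand space has dimension $n+1$, so it remains to show $\dim\QQ_\omega=n+1$. Since $\QQ_\omega$ is the image of $x\mapsto q_{\omega\w x}$ and Corollary \ref{cor:iso} identifies $q_{\omega\w x}$ with $\omega\w x$, the kernel of this map equals $\{x\in V^*\mid\omega\w x=0\}$, so the desired equality $\dim\QQ_\omega=n+1$ is precisely condition \ref{GC1}.

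The main obstacle is therefore to show that, for $n\ge 5$, condition \ref{GC4} forces \ref{GC1}, and I would argue by contraposition. If \ref{GC1} fails there is a nonzero $x_0\in V^*$ with $\omega\w x_0=0$; since the map $\bw^3V^*\to\bw^4V^*$, $\alpha\mapsto\alpha\w x_0$, has kernel $x_0\w\bw^2V^*$, we may write $\omega=x_0\w\beta_0$ with $0\neq\beta_0\in\bw^2V_{x_0}^*$. A short contraction computation then gives, for any $L=e\w f$ with $e,f\in V_{x_0}$, the identity $\omega(L)=\beta_0(e,f)\,x_0$; hence $X_\omega\cap G(2,V_{x_0})$ is exactly the Pl\"ucker hyperplane section $\{[L]\mid\beta_0(L)=0\}$ of $G(2,V_{x_0})$. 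As $\beta_0\neq 0$ and $\dim V_{x_0}=n$, this is a nonempty closed subvariety of $X_\omega$ of dimension $2n-5$, and $2n-5>n-1$ exactly when $n\ge 5$, contradicting \ref{GC4}. This closes the first assertion; the computation of $X_\omega\cap G(2,V_{x_0})$ is the only genuinely new input, the rest being bookkeeping with the quoted lemmas.

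For the final clause, fix $x$ with $Q_{\omega\w x}$ of rank $2n$, which by Lemma \ref{singularlocus_quadrics} (equivalently Lemma \ref{eta}\,\eqref{x}) means the restriction $\omega_x$ has rank $n$. Since \ref{GC4} implies \ref{GC2} (Remark \ref{implicationsGC}), we have $\rk\omega=n+1$ and $\im{}^tf_\omega=V^*$; then Remark \ref{rmk:l} gives $\codim\Lambda_\omega=\rk\omega=n+1$, and because $x\in\im{}^tf_\omega$ it also gives $\codim\Lambda^x_\omega=\rk\omega-1=n$ with $\Lambda_\omega\subset\Lambda^x_\omega$ a hyperplane. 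It then suffices to recall that $\Lambda^x_\omega$ is a maximal isotropic subspace of $Q_{\omega\w x}$: it lies in $Q_{\omega\w x}$ by Lemma \ref{span_quadrics}, and taking $\omega'=\omega$ in the family \eqref{firstfamily} exhibits it as a member of one of the two spinor families of codimension-$n$ linear subspaces of $Q_{\omega\w x}$ described in Theorem \ref{rk2n-quadrics} (alternatively, a rank-$2n$ quadric cone in $\p(\bw^2V)$ has maximal linear subspaces of codimension exactly $n$, matching $\codim\Lambda^x_\omega$). Combining these facts yields that $\Lambda_\omega$ has codimension one in the maximal isotropic subspace $\Lambda^x_\omega$, as required.
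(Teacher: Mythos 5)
Your proof is correct and follows essentially the same route as the paper's: the inclusion $\QQ_\omega\subseteq H^0(\II_{\Lambda_\omega\cup\GR}(2))$ via Lemma \ref{span_quadrics}, the dimension count via Lemma \ref{n+1} together with the indecomposability of $\omega$ (so that $V^*\to\QQ_\omega$ is an isomorphism), and the final clause via Remark \ref{rmk:l} and the description of the codimension-$n$ isotropic families in Theorem \ref{rk2n-quadrics}. The only difference is that where the paper merely cites its earlier remarks for the implication \ref{GC4}$\Rightarrow$\ref{GC1}, you prove it directly: your computation that a decomposable $\omega=x_0\w\beta_0$ forces $X_\omega$ to contain the $(2n-5)$-dimensional hyperplane section $\{\beta_0(L)=0\}$ of $G(2,V_{x_0})$ is precisely the missing content (note that the cited remarks literally cover only odd $n$, since \ref{GC2} does not imply \ref{GC1} for even $n$), and it also explains why $n=4$ must be excluded, so your write-up is, if anything, more self-contained on this point than the paper's.
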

\begin{proof}
The quadrics in $\QQ_\omega$ contain $\Lambda_\omega$ in view of Corollary \ref{span_quadrics}.  Since $X_\omega$ has dimension $n-1$, 
Remarks \ref{implicationsGC} and 2.13 imply that $\omega$ is indecomposable. Therefore $V^*\to \QQ_\omega$ is an isomorphism, so the first part of the proposition follows. The second part follows from Corollary 2.19.
\end{proof}

We shall denote by  $X_{\omega_{x}}$ the congruence in $G(2,V_x)$ defined by the restriction $\omega_x$ of $\omega$ to $V_x$. We recall also from \ref{span3}, the subspace $\Lambda_{\omega_x}$. 

\begin{coro}\label{n:4} Assume that $n\geq 5$  
and that $X_\omega$ has dimension $n-1$ (condition \ref{GC4}).
If $Q_{\omega\w x}\in \QQ_\omega$ has rank $2n$, 
then
\begin{equation*}
\Sing(Q_{\omega\w x})=\Lambda_{\omega_x}=\gen{ X_{\omega_{x}}}.
\end{equation*}
\end{coro}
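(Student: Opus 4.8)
The plan is to read both claimed equalities off the structure theory of the quadrics $Q_{\omega\w x}$ developed in Section~\ref{the quadric}, after first converting the rank hypothesis into a statement about the restriction $\omega_x$.

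First I would translate the assumption $\rk Q_{\omega\w x}=2n$. By Lemma~\ref{singularlocus_quadrics} the rank of $Q_{\omega\w x}$ equals $2\rk\omega_x$, so $\rk Q_{\omega\w x}=2n$ forces $\rk\omega_x=n=\dim V_x$. Equivalently, the contraction $f_{\omega_x}\colon V_x\to\bw^2 V_x^*$ is injective, i.e. $\omega_x$ satisfies condition \ref{GC2} on $V_x$, and its kernel $K_{\omega_x}$ vanishes. Hence the auxiliary space $\Lambda^K_{\omega_x}=\p(K_{\omega_x}\w V)$ appearing in Lemma~\ref{singularlocus_quadrics} is empty, and that lemma (equivalently Theorem~\ref{rk2n-quadrics}, part~\eqref{rk2n:1}) gives the first equality at once: $\Sing(Q_{\omega\w x})=\Lambda_{\omega_x}=\{[L]\in\p(\bw^2 V)\mid x(L)=\omega_x(L)=0\}$.

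For the second equality I would identify $\Lambda_{\omega_x}$ with a linear span. The equation $x(L)=0$ cuts out $\p(\bw^2 V_x)=\gen{G(2,V_x)}$ (the third statement of Theorem~\ref{rk2n-quadrics}), and on this subspace the condition $\omega_x(L)=0$ is precisely the linear system \eqref{eq:important} attached to the $3$-form $\omega_x\in\bw^3 V_x^*$ on the $n$-dimensional space $V_x$. Thus $\Lambda_{\omega_x}$ is the span-space \eqref{span} of $\omega_x$ computed inside $\p(\bw^2 V_x)$, and it contains the congruence $X_{\omega_x}=G(2,V_x)\cap\Lambda_{\omega_x}$. By the Remark following \eqref{eq:importantbis}, applied with $\omega_x$ in place of $\omega$, these linear equations cut out exactly the linear span of $X_{\omega_x}$, which yields $\gen{X_{\omega_x}}=\Lambda_{\omega_x}$.

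The step I expect to be the main obstacle is justifying that last application of the Remark, i.e. the nondegeneracy of $X_{\omega_x}$ inside $\Lambda_{\omega_x}$ (that $X_{\omega_x}$ spans $\Lambda_{\omega_x}$ and is not contained in a proper subspace). I would deduce this from the fact that $X_{\omega_x}$ is arithmetically Cohen-Macaulay in $\Lambda_{\omega_x}$ by Corollary~\ref{cor:acm} applied to $\omega_x$: its homogeneous ideal is generated by the Pl\"ucker quadrics together with the linear forms defining $\Lambda_{\omega_x}$, so no further linear form vanishes on $X_{\omega_x}$ and the congruence is nondegenerate. The delicate point is that Corollary~\ref{cor:acm} (and the exactness of the Koszul resolution \eqref{eq:resolution}) requires $\omega_x$ to satisfy \ref{GC4} on $V_x$, i.e. $\dim X_{\omega_x}=n-2$; this I would verify separately, using $\rk\omega_x=n$ together with \ref{GC4} for $\omega$ and $n\ge 5$ to keep $X_{\omega_x}$ of the expected dimension.
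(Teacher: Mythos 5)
Your proof is essentially the paper's own: it translates the rank hypothesis into $\rk\omega_x=n$ via Lemma~\ref{singularlocus_quadrics}, obtains the first equality from Theorem~\ref{rk2n-quadrics}~\eqref{rk2n:1} (with $\Lambda^K_{\omega_x}=\emptyset$ since $K_{\omega_x}=0$), and obtains the second by identifying $\Lambda_{\omega_x}$ with the span-space of the restricted form $\omega_x$ inside $\p(\bw^2V_x)$ --- exactly the content of the paper's one-line proof, which cites Proposition~\ref{Qn+1} and Theorem~\ref{rk2n-quadrics}~\eqref{rk2n:1} and then writes $\gen{X_{\omega_x}}=\{[L]\mid x(L)=\omega_x(L)=0\}$, i.e.\ the Remark following \eqref{eq:importantbis} applied to $\omega_x$. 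The nondegeneracy point you flag is precisely what that Remark leaves implicit, and your Koszul/aCM mechanism is the right way to make it precise (exactness of \eqref{eq:resolution} for $\omega_x$ gives $h^0(\II_{X_{\omega_x}\mid G(2,V_x)}(1))=n$, so no extra linear form vanishes); just note that its prerequisite, condition \ref{GC4} for $\omega_x$, does not follow from $\rk\omega_x=n$ alone (by Remark~\ref{implicationsGC} there are full-rank forms whose zero locus has excess dimension), and the paper itself never deduces \ref{GC4} for $\omega_x$ from \ref{GC4} for $\omega$ --- so on this last step you are, if anything, more explicit than the paper, which silently assumes the same genericity.
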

\begin{proof} It follows immediately from Proposition \ref{Qn+1} and Theorem \ref{rk2n-quadrics}, \eqref{rk2n:1}, because $\gen{ X_{\omega_x}}=\{[L]\mid x(L)=\omega_x(L)=0\}=\{[L]\mid x(L)=\omega(L)\w x=0\}.$
\end{proof}

\begin{rema} If $n=3$, the only quadric $Q_{\omega\w x}$ is the Grassmannian $G(2,V)$, which is smooth, and also $X_{\omega_x}$ is empty. If $n=4$, the rank of $Q_{\omega\w x}$ is $6$ for any choice of $x$, so $\Sing(Q_{\omega\w x})$ is a $\p^3$. On the other hand $X_{\omega_x}=\Lambda_{\omega_x}$ is a 
$\p^2$. So the conclusion of Corollary \ref{n:4} is not true for $n=4$.
\end{rema}

\begin{coro}\label{4-form}   Assume that $n\geq 5$  
and that $X_\omega$ has dimension $n-1$ (condition \ref{GC4}).
If $\omega_x$ has rank at least $\frac{n+2}{2}$ for every $x\in V^*$,
then there are no $4$-forms vanishing on the linear span of $X_\omega$, i.e. on  $\Lambda_\omega$. 
 In particular, there are no quadrics in the ideal of $\GR$ that are singular along $X_\omega$.
\end{coro}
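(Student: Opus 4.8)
The plan is to argue by contradiction, turning the statement into a numerical inequality via the dictionary between $4$-forms and quadrics from Section \ref{sect:1}. Suppose there were a nonzero $\eta\in\bw^4V^*$ vanishing on $\Lambda_\omega$ in the relevant sense, that is, whose associated quadric $Q_\eta$ is singular along $\Lambda_\omega=\gen{X_\omega}$. Since $\Sing(Q_\eta)$ is a linear space, this is the same as requiring $Q_\eta$ to be singular along $X_\omega$, which is why the ``in particular'' clause will follow at once.

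First I would reduce $\eta$ to the family $\QQ_\omega$. As $Q_\eta$ is a cone with vertex $\Sing(Q_\eta)$, the hypothesis $\Lambda_\omega\subseteq\Sing(Q_\eta)$ gives in particular $\Lambda_\omega\subseteq Q_\eta$; and $q_\eta\in I(\GR)_2$ by Corollary \ref{cor:iso}. Hence $q_\eta\in H^0(\p(\bw^2V),\II_{\Lambda_\omega\cup\GR}(2))$, which by Proposition \ref{Qn+1} is exactly $\QQ_\omega$. Therefore $q_\eta=q_{\omega\w x}$ for some $x\in V^*$, and since $\bw^4V^*\to I(\GR)_2$ is an isomorphism (Corollary \ref{cor:iso}) I conclude $\eta=\omega\w x$; moreover $x\neq0$ because $\eta\neq0$.

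The second step is the rank count. By Lemma \ref{eta}\,\eqref{x} (equivalently Lemma \ref{singularlocus_quadrics}) the rank of $Q_{\omega\w x}$ is $2\rk\omega_x$, while by \eqref{eq:rankq} the codimension of $\Sing(Q_\eta)=\p(\ker\rho_\eta)$ in $\p(\bw^2V)$ equals $\rk\rho_\eta=\rk Q_\eta$. Since \ref{GC4} forces \ref{GC2}, the span $\Lambda_\omega$ has codimension $\rk\omega=n+1$ (Remark \ref{rmk:l}). The inclusion $\Lambda_\omega\subseteq\Sing(Q_\eta)$ then forces $\rk Q_\eta\le n+1$, i.e. $2\rk\omega_x\le n+1$. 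But the hypothesis gives $\rk\omega_x\ge (n+2)/2$, hence $2\rk\omega_x\ge n+2$, a contradiction. This rules out any such $\eta$, and the statement about quadrics singular along $X_\omega$ is the special case already recorded.

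I expect the only subtle point to be the very first reduction: one must keep ``singular along $\Lambda_\omega$'' sharply distinct from the weaker ``containing $\Lambda_\omega$''. Lemma \ref{span_quadrics} already produces an $(n+1)$-dimensional family $\QQ_\omega$ of quadrics through $\Lambda_\omega$, so no cheap cohomological vanishing can give the claim; it is genuinely the rank estimate $2\rk\omega_x\ge n+2$ that forbids the vertex of any of these quadrics from swallowing the whole of $\Lambda_\omega$. Once this is understood, the rest is bookkeeping with ranks and codimensions.
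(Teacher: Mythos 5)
Your proof is correct and takes essentially the same route as the paper's: you reduce $\eta$ to the family $\QQ_\omega$ via Proposition \ref{Qn+1}, then play the rank $2\rk\omega_x$ of $Q_{\omega\w x}$ (Lemma \ref{eta}) against the codimension $n+1$ of $\Lambda_\omega$, exactly as in the original argument. Your additional care --- distinguishing ``singular along $\Lambda_\omega$'' from the weaker ``containing $\Lambda_\omega$'', and invoking the isomorphism of Corollary \ref{cor:iso} to pass from $q_\eta=q_{\omega\w x}$ to $\eta=\omega\w x$ --- only makes explicit steps the paper leaves implicit.
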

\begin{proof} Any $4$-form $\eta$ defines a quadric $Q_\eta$ in the ideal of $\GR$.  If $\eta$ vanishes on 
$\Lambda_\omega$, then $X_\omega\subset \Sing( Q_{\eta})$.  Now, by  Proposition \ref{Qn+1},  the linear space $\Lambda_\omega$ is in $Q_{\eta}$ only if   the $4$-form $\eta$ is of the form $\omega\w x$  for some $x\in V^*$.  Furthermore, by Lemma \ref{eta}, the quadric  $Q_{\omega\w x}$ has rank equal to $2\rk \omega_x$.  The space $\Lambda_\omega$ has codimension $n+1$, so it is contained in $\Sing( Q_{\omega\w x})$ only if this rank is at most $n+1$, i.e. 
$2\rk \omega_x\leq n+1$.
\end{proof}

Note that the assumption on $\omega_x$ in Corollary \ref{4-form} is satisfied for general $\omega$. Indeed, let $n=2m-1$, and let $V=V_0\oplus V_1$ be a decomposition in $m$-dimensional subspaces. Let $x\in V_1^*$ and $\omega=\omega_0+\beta\w x$, where $\omega_0\in \bw^3V_0^*$ and $\beta\in \bw^2V_1^*$ are generic forms.   Then $\omega_x=\omega_0$ and has rank $m=\frac{n+1}{2}$.  The variety $X_\omega$ is 
 contained in the singular locus of the  quadric $Q_{\omega_0\w x}$ of rank $2m=n+1$.

Similarly, when $n=2m-2$, let $V=V_0\oplus V_1$ be a decomposition in a $(m-1)$-dimensional subspace and a $m$-dimensional subspace. Let $x\in V_1^*$ and $\omega=\omega_0+\beta\w x$, where $\omega_0\in \bw^3V_0^*$ and $\beta\in \bw^2V_1^*$  are generic forms.   Then $\omega_x=\omega_0$ and has rank $m-1=\frac{n}{2}$.  The variety $X_\omega$ is 
contained in the singular locus of the  quadric $Q_{\omega_0\w x}$ of rank $2m-2=n$.

The next Theorem gives a more precise formulation of Corollary \ref{n:4}.

\begin{theorem}\label{2n-quadric} Let $n\geq 5$ and assume that $\omega\in \bw^3V^*$ is a $3$-form such that $X_\omega$ has dimension $n-1$ 
(condition \ref{GC4}). Let 
$x\in V^*$ be a linear form such that   $\omega_{x}$ 
has rank $n$ and $\omega_{x\w y}$, the restriction of $\omega$ to $V_{x\w y}$, has rank $\geq \frac{n+1}{2}$ for every $y\in V_x^*$. 
 Then 
\begin{equation*}
\Sing(Q_{\omega\w x})\cap \GR= X_{\omega_{x}}. 
\end{equation*} 
Conversely, if $n\geq 7$,   then $Q_{\omega\w x}$ is the unique quadric that contains $\GR$ 
 and is singular along $X_{\omega_{x}}$. Furthermore, the quadric $Q_{\omega\w x}$ contains the linear span of $X_{\omega'}$ for any $\omega'$ such that $\omega'\w x=\omega_{x}\w x$, i.e. $\omega'_x=\omega_x$. 
\end{theorem}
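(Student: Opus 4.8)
The plan is to treat the three assertions in turn; the first and the last follow quickly from earlier results, while the uniqueness statement for $n\ge 7$ carries the real content. For the equality $\Sing(Q_{\omega\w x})\cap\GR = X_{\omega_x}$, I would apply Corollary \ref{n:4}: since $\omega_x$ has rank $n$, the quadric $Q_{\omega\w x}$ has rank $2n$ and $\Sing(Q_{\omega\w x}) = \Lambda_{\omega_x} = \gen{X_{\omega_x}}$. As $\Lambda_{\omega_x} = \{[L]\mid x(L) = \omega_x(L) = 0\}$ is contained in $\gen{G(2,V_x)} = \{[L]\mid x(L) = 0\}$, intersecting with $\GR$ gives $\Lambda_{\omega_x}\cap G(2,V_x) = \{[L]\in G(2,V_x)\mid\omega_x(L) = 0\} = X_{\omega_x}$. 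For the final (``Furthermore'') assertion, $\omega'_x = \omega_x$ is equivalent to $(\omega'-\omega)\w x = 0$, hence $Q_{\omega'\w x} = Q_{\omega\w x}$; Lemma \ref{span_quadrics} applied to $\omega'$ gives $\Lambda^x_{\omega'}\subset Q_{\omega'\w x}$, and since $\gen{X_{\omega'}} = \Lambda_{\omega'}\subset\Lambda^x_{\omega'}$ the linear span of $X_{\omega'}$ lies in $Q_{\omega\w x}$.

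For uniqueness, recall that a quadric containing $\GR$ is $Q_\eta$ for a unique $\eta\in\bw^4V^*$ (Corollary \ref{cor:iso}), and that it is singular along $X_{\omega_x}$ precisely when $\Lambda_{\omega_x}\subset\Sing(Q_\eta)$, since $\Sing(Q_\eta)$ is linear and $\gen{X_{\omega_x}} = \Lambda_{\omega_x}$. Writing $\Lambda_{\omega_x} = \p(W)$ with $W = \ker({}^tf_{\omega_x})\subset\bw^2V_x$, I would use the $4$-form analogue of the decomposition \eqref{decomp} to write $\eta = \eta_x + \gamma\w x$ with $\eta_x\in\bw^4V_x^*$ and $\gamma\in\bw^3V_x^*$, and then compute, for $L\in W$ and $L' = L'_x + v'\w e$ with $x(e) = 1$, that $\eta(L\w L') = \eta_x(L\w L'_x)\pm\gamma(L\w v')$. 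Since this must vanish for all $L'$, the condition $\Lambda_{\omega_x}\subset\Sing(Q_\eta)$ splits into two independent conditions: (a) $W$ lies in the radical of $\eta_x$, i.e. $Q_{\eta_x}$ is singular along $X_{\omega_x}$ inside $\p(\bw^2V_x)$; and (b) $\gamma(L) = 0$ in $V_x^*$ for every $L\in W$. Now Corollary \ref{4-form}, applied to $V_x$ (of dimension $n$) and the form $\omega_x$, shows that (a) forces $\eta_x = 0$: the hypothesis that $\omega_{x\w y} = (\omega_x)_y$ has rank $\ge\frac{n+1}{2}$ for every $y\in V_x^*$ is exactly the genericity required there (and one checks, using Remark \ref{implicationsGC}, that $X_{\omega_x}$ has the expected dimension $n-2$, so that the corollary applies).

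It remains to analyse (b). The vanishing $\gamma(L) = 0$ for all $L\in W = \ker({}^tf_{\omega_x})$ is equivalent to $\im(f_\gamma)\subseteq\im(f_{\omega_x})$, so by injectivity of $f_{\omega_x}$ (condition \ref{GC2}) there is $B\in\Hom(V_x,V_x)$ with $f_\gamma = f_{\omega_x}\circ B$, that is $\iota_w\gamma = \iota_{Bw}\omega_x$. Total antisymmetry of $\gamma$ then forces $\omega_x(Bv,w,u) = \omega_x(v,Bw,u)$ for all $u,v,w$, i.e. $B$ is self-adjoint with respect to every $2$-form $\iota_u\omega_x$. The goal is to show this forces $B$ to be a scalar, whence $\gamma\in\gen{\omega_x}$ and $\eta = c\,\omega_x\w x = c\,\omega\w x$, so $Q_\eta = Q_{\omega\w x}$ and uniqueness follows.

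The main obstacle is precisely this scalar lemma. Testing self-adjointness on an eigenvector $v$ of $B$ with eigenvalue $\lambda$ gives $\im(B-\lambda)\subseteq\ker(\iota_v\omega_x)$; if $B$ were diagonalisable and non-scalar, this would make $\omega_x$ block-diagonal along the eigenspace decomposition, and the requirement that $\omega_x$ have full rank $n$ would force each block to have full rank, hence dimension $\ge 5$ and $n\ge 10$ — which rules out non-scalar $B$ in the range where decomposable restrictions are excluded (compare Remark \ref{implicationsGC}, where decomposable forms fail \ref{GC4}). The hardest and most delicate step is to turn this into a complete argument valid for all $n\ge 7$: one must also dispose of the non-diagonalisable case using the radical estimate $\dim\ker(\iota_v\omega_x)\le n-4$ coming from \ref{GC3}, and verify that the stated rank conditions genuinely preclude the block-diagonal behaviour of $\omega_x$ that would otherwise produce extra quadrics. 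This is where the genericity of $\omega$ is essentially used and where I expect to spend the most effort.
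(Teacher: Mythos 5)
Your first and third assertions are handled exactly as the paper handles them (Corollary \ref{n:4}, respectively Lemma \ref{span_quadrics}), and your reduction of the uniqueness statement is also the paper's: write $\eta=\eta_x+\gamma_x\w x$, note that singularity along $\gen{X_{\omega_x}}$ forces $\eta_x(L)=\gamma_x(L)=0$ for all $[L]\in\Lambda_{\omega_x}$, and kill $\eta_x$ by Corollary \ref{4-form} applied on $V_x$, which is indeed where the hypothesis $\rk\omega_{x\w y}\ge\frac{n+1}{2}$ is consumed. The divergence — and the gap — is in your condition (b). The paper proves no ``scalar lemma'' at all: it invokes Theorem \ref{Hilbsch}. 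The set of $3$-forms $\gamma_x$ on $V_x$ with $\gamma_x(L)=0$ for every $L\in\Lambda_{\omega_x}$ is precisely the linear space identified in the proof of Theorem \ref{Hilbsch} with the fibre $\rho^{-1}([X_{\omega_x}])$, and for $\dim \p(V_x)=n-1\ge 6$ — i.e.\ exactly $n\ge 7$ — the Borel--Bott--Weil computation there shows this fibre is a single reduced point, so $\gamma_x\in\gen{\omega_x}$ and $\eta$ is proportional to $\omega\w x$. You replace this with the operator $B$ satisfying $\omega_x(Bv,w,u)=\omega_x(v,Bw,u)$ together with the claim that $B$ must be scalar, and you explicitly leave that claim unproved; since this is the entire content of uniqueness, the proposal as written does not establish the theorem, and it leaves unexplained where $n\ge 7$ enters (your sketch does not use it, whereas the paper needs precisely $n-1\ge 6$ for Theorem \ref{Hilbsch}).

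Worse, the scalar lemma is actually \emph{false} under the hypotheses your step (b) uses (injectivity of $f_{\omega_x}$, full rank of $\omega_x$, large rank of the restrictions $\omega_{x\w y}$). If $V_x=V_1\oplus V_2$ and $\omega_x=\omega_1+\omega_2$ with $\omega_i\in\bw^3V_i^*$ generic (say $\dim V_i=5$, so $n=10$ and $\rk\omega_x=10$), then $B=\lambda_1\mathrm{id}_{V_1}\oplus\lambda_2\mathrm{id}_{V_2}$ is self-adjoint for every $\iota_u\omega_x$, and indeed $\gamma_x=\lambda_1\omega_1+\lambda_2\omega_2$ vanishes on $\Lambda_{\omega_x}$, because $\omega_x(L)=\omega_1(L_1)+\omega_2(L_2)$ with the two summands in the complementary subspaces $V_1^*$ and $V_2^*$; the resulting $Q_{\gamma_x\w x}$ is then a second quadric through $\GR$ singular along $X_{\omega_x}$. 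What excludes this block behaviour is not the rank conditions (which such $\omega_x$ satisfies) nor any restriction on $n$ in the direction you suggest, but the requirement that $X_{\omega_x}$ have the expected dimension $n-2$: in the example above $X_{\omega_x}$ acquires a component of dimension $n-1$ (lines spanned by a point of $\p(V_1)$ and a point of a plane $U_2$ with $[U_2]\in X_{\omega_2}$, the latter a quadric threefold in $G(2,V_2)$), so $\omega_x$ falls outside the locus $\K$ where Theorem \ref{Hilbsch} applies. In other words, the hypothesis you dismiss in a parenthesis — that $\omega_x$ satisfies \ref{GC4} on $V_x$ — is the one doing the real work, and completing your program would amount to re-proving by hand the injectivity statement of Theorem \ref{Hilbsch}; the efficient repair is simply to cite that theorem, as the paper does.
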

\begin{proof} The quadric $Q_{\omega\w x}$ has rank $2n$ because $\omega_x$ has rank $n$ by assumption  (Lemma \ref{eta}).  The first part follows immediately from Corollary  \ref{n:4}.

For the converse, a general $3$-form $\omega_{x}$ on $V_{x}$ extends to $3$-forms  on $V$, which again define a quadric singular on $X_{\omega_{x}}$.  It suffices therefore identify the space of quadrics  in the ideal of $\GR$ singular along $X_{\omega_{x}}$.

Again we use the correspondence between $4$-forms and quadrics in the ideal of $\GR$ (see Corollary \ref{cor:iso}).
Let $Q=Q_\eta$ be a quadric singular along $X_{\omega_{x}}\subset \p(\bw^2V_{x})\subset\p(\bw^2V),$ where $\eta $ is a $4$-form as above.  Write
$\eta=\eta_x+\gamma_x\w x$ for some $4$-form $\eta_x$ and $3$-form $\gamma_x$ on $V_{x}$.  
Then $Q_\eta$ is singular along $\gen{ X_{\omega_{x}}}$ only if $\eta(L)=0$ for any $[L]\in \gen{ X_{\omega_{x}}}$.  But $x(L)=0$ for any $L\in \bw^2V_x$, so in this case 
$\eta (L)=\eta_x(L)+\gamma_x(L)\w x=0$ only if $\eta_x(L)=\gamma_x(L)=0$.  
But if $\eta_x(L)=0$ for any $[L]\in \gen{ X_{\omega_{x}}},$ then $\eta_x=0$ by Corollary \ref{4-form}.
Hence $\eta=\gamma_x\w x$.  Furthermore, by Theorem \ref{Hilbsch} 
the spaces $\{[L]\mid \gamma_x(L)\w x=0\}$ and $\gen{ X_{\omega_x}}$ are equal if and only if $\gamma_x$ is proportional to $\omega_{x}$. 
Therefore $Q=Q_\eta=Q_{\omega\w x}=Q_{\gamma_x\w x}$ for any $\gamma_x$ whose restriction to $V_{x}$ is proportional to $\omega_{x}$.
In particular, 
$Q$ depends only on $X_{\omega_{x}}$, and contains $X_\omega$ in a maximal dimensional linear subspace.
\end{proof}

We recall  that Theorem \ref{rk2n-quadrics} describes the 
two families of maximal dimensional subspaces in the quadric $Q_{\omega\w x}$ of rank $2n$.  One family is in bijection with all $3$-forms 
whose restriction to $V_{x}$ coincides with $\omega_{x}$. The congruence $X_\omega$ is contained in a unique maximal dimensional linear subspace $\Lambda_\omega^x$ of this family in the quadric $Q_{\omega\w x}$.   
We shall study now  the union and the intersection of $X_{\omega}$ and  $X_{\omega_x}$.

\begin{theorem}\label{union}
Let $\omega\in \bw^3 V^*$ be a $3$-form satisfying 
condition \ref{GC2}. Let $x\in V^*$ be general, let  $V_{x}$ be the hyperplane $\{x=0\}$. 
Let $e\in V$ be a fixed vector such that $x(e)=1$.  
Then let $\omega=\omega_x+\beta_x\w x$ be the unique decomposition as in  \eqref{decomp}. 
Moreover,  for any $L\in \bw^2 V$ let $L=L_x+e\w v_x$ be the unique expression with $L_x\in \bw^2 V_x$  and $v_x\in V_x$. 
Finally let $Q=Q_{\omega\w x}$.  
Then
\begin{enumerate}
\item\label{u:1} $\Lambda_{\omega}^x\cap \GR=X_{\omega}\cup X_{\omega_{x}}$. 
\item\label{u:2} If $\Lambda_{\omega'}^x$ is a maximal isotropic space in $Q$ of the same family as $\Lambda_{\omega}^x$, for some   $3$-form $\omega'$ such that  $\omega'_x=\omega_x$,  then  $\Lambda_{\omega'}^x\cap \GR=X_{\omega'}\cup X_{\omega_{x}}$. 
\item\label{u:3} $X_\omega\cap X_{\omega_x}=X_\omega\cap G(2,V_x)$ is the hyperplane section of $X_{\omega_x}$ of equation $\beta_x(L_x)=0$. 
\end{enumerate}
\end{theorem}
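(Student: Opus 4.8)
The plan is to deduce all three parts from two elementary facts about the contraction $\omega(L) \in V^*$. The first is a \emph{restriction formula}: using the decomposition $\omega = \omega_x + \beta_x \w x$ of \eqref{decomp}, for any $L \in \bw^2 V_x$ (equivalently $x(L) = 0$) one has $\omega(L) = \omega_x(L) + \beta_x(L)\,x$, with $\omega_x(L) \in V_x^*$ and $\beta_x(L) \in \CC$; this holds because $x$ contracts trivially against vectors of $V_x$, so both slots of $L$ must pair with $\beta_x$. The second, conceptual, fact is that for a decomposable $L = u \w w$ the linear form $\omega(L)$ annihilates the plane $L = \gen{u,w}$, since $\omega(L)(u) = \omega(u,w,u) = 0$ and $\omega(L)(w) = \omega(u,w,w) = 0$ by alternation; thus $\omega(L) \in L^\perp := \{f \in V^* \mid f|_L = 0\}$. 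I would record these two observations first.

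For part \eqref{u:1}, the inclusion $X_\omega \cup X_{\omega_x} \subseteq \Lambda_\omega^x \cap \GR$ is immediate: on $X_\omega$ we have $\omega(L) = 0$, while on $X_{\omega_x} \subseteq G(2,V_x)$ the restriction formula gives $\omega(L) = \beta_x(L)\,x \in \gen{x}$, hence $\omega(L) \w x = 0$. For the reverse inclusion I would take decomposable $L$ with $\omega(L) \w x = 0$, i.e. $\omega(L) \in \gen{x}$, and split into cases: if $\omega(L) = 0$ then $[L] \in X_\omega$; if $\omega(L) = \lambda x$ with $\lambda \neq 0$, then the annihilation fact forces $x \in L^\perp$, i.e. $L \subseteq V_x$, so $[L] \in G(2,V_x)$, and the restriction formula $\omega(L) = \omega_x(L) + \beta_x(L)\,x = \lambda x$ then forces $\omega_x(L) = 0$, giving $[L] \in X_{\omega_x}$. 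This is the step where the geometric input genuinely upgrades the weak membership $\omega(L) \in \gen{x}$ to the containment $L \subseteq V_x$.

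Part \eqref{u:2} is then a direct corollary: by Theorem \ref{rk2n-quadrics} the maximal isotropic subspaces of $Q$ lying in the same family as $\Lambda_\omega^x$ are exactly the $\Lambda_{\omega'}^x$ with $\omega'_x = \omega_x$, so running the argument of \eqref{u:1} verbatim for $\omega'$ yields $\Lambda_{\omega'}^x \cap \GR = X_{\omega'} \cup X_{(\omega')_x} = X_{\omega'} \cup X_{\omega_x}$. For part \eqref{u:3} I would first note $X_\omega \cap X_{\omega_x} = X_\omega \cap G(2,V_x)$: one inclusion is trivial since $X_{\omega_x} \subseteq G(2,V_x)$, and for the other, $[L] \in X_\omega \cap G(2,V_x)$ forces $\omega_x(L) = 0$ by the restriction formula, hence $[L] \in X_{\omega_x}$. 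Finally, on $G(2,V_x)$ the vanishing $\omega(L) = 0$ splits via $\omega(L) = \omega_x(L) + \beta_x(L)\,x$ into the two conditions $\omega_x(L) = 0$ and $\beta_x(L_x) = 0$; the first cuts out $X_{\omega_x}$ and the second is a single linear condition, so $X_\omega \cap G(2,V_x)$ is exactly the hyperplane section of $X_{\omega_x}$ of equation $\beta_x(L_x) = 0$.

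The only delicate point I anticipate is the sign and convention bookkeeping in the restriction formula and, for the general-$L$ version, the extra term $\beta_x(v_x)$ coming from $L = L_x + e \w v_x$; however, only the restricted formula on $G(2,V_x)$ is actually needed, so this bookkeeping stays light. The true heart of the argument is the single observation $\omega(L) \in L^\perp$ for decomposable $L$, which is what separates the two components $X_\omega$ and $X_{\omega_x}$ of the linear section.
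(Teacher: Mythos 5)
Your proof is correct, and its decisive step takes a genuinely different route from the paper's. For the hard inclusion $\Lambda_{\omega}^x\cap \GR\subseteq X_\omega\cup X_{\omega_x}$ in part \eqref{u:1}, the paper splits on $x(L)=0$ versus $x(L)\neq 0$ (i.e.\ $v_x=0$ or not), fixes coordinates with $e_0=e$, $e_1=v_x$, and expands the Pl\"ucker relation $L\w L=0$ to force $L_{01}=0$ before recomputing $\omega(L)$ term by term; you instead split on the value of $\omega(L)\in\gen{x}$ and use the single coordinate-free observation that $\omega(u\w w)$ annihilates the plane $\gen{u,w}$, so that $\omega(L)=\lambda x$ with $\lambda\neq 0$ immediately forces $L\subseteq V_x$, after which the restriction formula $\omega(L)=\omega_x(L)+\beta_x(L)\,x$ --- a direct-sum decomposition, since $\omega_x(L)$ lies in $V_x^*\simeq e^\perp$ while $x(e)=1$ --- kills $\omega_x(L)$. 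This buys you real simplification: in the paper's case $v_x\neq 0$, passing from $\omega(L)\w x=0$ to $\omega(L)=0$ still requires the residual $x_0$-component to vanish (in their notation, $\alpha(v_1)=0$, which holds because $\omega(L)\w x_0=0$ gives $\alpha=-\beta_1(v_1)$ and $\beta_1(v_1)(v_1)=0$ by skew-symmetry), a cancellation the paper's write-up glosses over, whereas your annihilation argument absorbs it automatically: a nonzero multiple of $x$ cannot annihilate a plane not contained in $V_x$. Your argument also visibly uses no genericity and works for an arbitrary $3$-form, which is exactly what makes part \eqref{u:2} a verbatim corollary, as in the paper. Parts \eqref{u:2} and \eqref{u:3} then proceed essentially as the paper does: \eqref{u:2} by rerunning \eqref{u:1} for $\omega'$ with $\omega'_x=\omega_x$ (your appeal to Theorem \ref{rk2n-quadrics} for the description of the family is consistent with, though not strictly needed beyond, the hypothesis as stated), and \eqref{u:3} by the same splitting of $\omega(L)$ on $\bw^2 V_x$ into the conditions $\omega_x(L_x)=0$ and $\beta_x(L_x)=0$.
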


\begin{proof}
The inclusion $\supset$ in \eqref{u:1} is clear, because $X_\omega\subset\Lambda_\omega\subset \Lambda_{\omega}^x$, 
and $X_{\omega_x}\subset \Lambda_{\omega}^x$  is contained in the singular locus of $Q$. 
Conversely, 
we have to prove that, if $L\in \bw^2 V$ satisfies the conditions $\omega(L)\w x=L\w L=0$, 
then either $\omega(L)=0$ or $x(L)=\omega_x(L)=0$.  

Note that  $\omega(L)\w x=(-\beta_{x}(v_x)+\omega_{x}(L_{x}))\w x=0$  implies $\omega_{x}(L_{x})=\beta_{x}(v_x)$. Moreover $x(L)=0$ if and only if $v_x=0$, so in this case $[L]\in G(2,V_{x})$ and $\omega_{x}(L_{x})=\omega_{x}(L)=0$, and we conclude that $L\in X_{\omega_{x}}$.

If instead $v_x\neq 0$, we fix a basis of $V$ with $e_0=e$, $e_1=v_x$, and $x=x_0$, and we  write $\omega$ and $L$ as follows: 
$\omega=\omega_{01}+x_0\w \beta_0+x_1\w \beta_1+x_0\w x_1\w \alpha$, $L=L_{01}+e_0\w e_1+e_1\w v_1$, with 
$\omega_{01}\in  \bw^3 \langle x_2,\dotsc,x_n\rangle$, $\beta_0, \beta_1\in  \bw^2  \langle x_2,\dotsc,x_n\rangle$, $\alpha\in  \langle x_2,\dotsc,x_n\rangle$, $L_{01}\in  \bw^2\langle e_2,\dotsc,e_n\rangle$, $v_1\in \langle e_2,\dotsc,e_n\rangle$.
Now one computes easily that $\omega(L)\w x_0=0$ is equivalent to  $\omega_{01}(L_{01})-\beta_1(v_1)-\alpha+\beta_1(L_{01})x_1=0$, and $\omega(L)=0$ is equivalent to  $\omega(L)\w x_0+x_0\beta_0(L_{01})=0$. But $L\w L=0$ is equivalent to $L_{01}\w L_{01}-2e_1\w L_{01}\w v_1+2e_0\w e_1 \w L_{01}=0$, which implies $L_{01}=0$. This concludes the proof of  \eqref{u:1}.  

The proof of \eqref{u:2} is similar. 
To prove \eqref{u:3}, note that if $L\in \bw^2V_{x}$ then $v_x=0$, so the condition $\omega(L)\w x=0$ is equivalent to $\omega_x(L_x)=0$, whereas the condition $\omega(L)=0$ is equivalent  to $\omega_x(L_x)+\beta_x(L_x)x=0$. Therefore $X_\omega\cap X_{\omega_{x}}$ is equal to the intersection of $X_{\omega_{x}}$ with the hyperplane of equation $\beta_x(L_x)=0$. 

\end{proof}

%%%%%%%%%%%%%%%%%%%%%%%%%%%%%%%%%%%%%%%%%%%%%%%%%
\section{The residual congruences and their fundamental loci}\label{residual}

%%%%%%%%%%%%%%%%%%%%%%%%%%%%%%%%%%%%%%%%%%%%%%%%%%

As noted in Theorem \ref{thm:1}, the variety $X_\omega$ is contained in a reducible linear congruence, i.e. in a proper section of the Grassmannian by a linear space  of codimension $n-1$.  
\begin{defi}\label{def:res}
Let $\Gamma$ be a linear subspace of $\p(V)$ of codimension $n-1$, containing $X_\omega$ and such that the intersection $Z=\GR\cap \Gamma$ is proper. Then the congruence $Y\subset \GR$ whose ideal $I_Y$ satisfies $[I_Z\colon I_{X_{\omega}}]\simeq I_Y$ and  $[I_Z\colon I_Y]\simeq I_{X_{\omega}}$, is called the\emph{ residual congruence  to $X_\omega$ in $\GR\cap \Gamma$}.  
We also say that $X_\omega$ and $Y$ are \emph{linked in $\GR\cap \Gamma$}.
\end{defi}

In this case, set-theoretically, we have $\GR\cap\Gamma=X_\omega\cup Y$.  
In Proposition \ref{Yred} we shall give a more precise description of $Y$ in terms of $\omega$ and the choice of linear space $\Gamma$.   

We first describe some general facts about the residual congruence.
\begin{prop}\label{Y is CM} Let $Y$ be the residual congruence to $X_\omega$ in $\GR\cap \Gamma$.
\begin{enumerate} 
\item\label{y:1} For general $\Gamma$, $Y$ is an irreducible congruence; 
\item\label{y:2} $Y$ has order $1$ if $n$ is even and order $0$ if $n$ is odd; 
\item\label{y:3} $Y$ is locally Cohen--Macaulay.
\end{enumerate}
\end{prop}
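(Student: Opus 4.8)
My plan is to treat the three assertions separately, after the usual reduction. Since the claims concern a \emph{general} $\Gamma$, I may assume that $\omega$ satisfies \ref{GC2} (Proposition \ref{dimPn}), so that by Lemma \ref{supsets} a codimension $n-1$ linear space $\Gamma\supset\Lambda_\omega$ is exactly one of the spaces $\Lambda_\omega^{xy}$; thus I will work with $\Gamma=\Lambda_\omega^{xy}$ and $Y=Y_{\omega,x\w y}$ for general $x,y\in V^*$. Recall also that $Z=\GR\cap\Gamma$ is a proper section, hence is Cohen--Macaulay and pure of dimension $n-1$ (being a proper linear section of the Cohen--Macaulay irreducible variety $\GR$), so that both $X_\omega$ and $Y$ are pure of dimension $n-1$.

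For \eqref{y:3} I would run Gorenstein linkage. The key point is that $\GR=G(2,V)$ is arithmetically Cohen--Macaulay with $\omega_{\GR}\cong\OO_{\GR}(-n-1)$ a multiple of the hyperplane class, hence \emph{arithmetically Gorenstein}. Since $Z$ is a proper section, the $n-1$ linear forms cutting out $\Gamma$ form a regular sequence on the homogeneous coordinate ring of $\GR$, so $Z$ is again arithmetically Gorenstein. By Definition \ref{def:res}, $X_\omega$ and $Y$ are geometrically linked in $Z$, and $X_\omega$ is arithmetically Cohen--Macaulay by Corollary \ref{cor:acm}; the linkage theorem for Gorenstein ideals then transfers the Cohen--Macaulay property to the residual, so $Y$ is Cohen--Macaulay (in fact arithmetically so, which is re-proved separately in Proposition \ref{Yred}). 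I expect this to be the routine part.

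For \eqref{y:2} the plan is to use additivity of the order. The linear congruence $Z$ has order one: for general $P\in\p(V)$ the Schubert space $\Sigma_P\cong\p^{n-1}$ of lines through $P$ meets the codimension $n-1$ space $\Gamma$ in a single reduced point, so $Z\cap\Sigma_P$ is one reduced point. As $\GR\cap\Gamma=X_\omega\cup Y$ set-theoretically and, for general $P$, $\Sigma_P$ avoids the lower-dimensional locus $X_\omega\cap Y$, this point lies on exactly one of the two components, whence $\operatorname{order}(X_\omega)+\operatorname{order}(Y)=1$. Proposition \ref{prop:ordine} gives $\operatorname{order}(X_\omega)=0$ for $n$ even and $=1$ for $n$ odd, so $\operatorname{order}(Y)$ is $1$ for $n$ even and $0$ for $n$ odd.

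The main work is \eqref{y:1}, for which I would set up a fibration of $Y$ over the pencil $\p^1=\p(\gen{x,y})$. Writing $\Lambda_\omega^{xy}=\{[L]\mid\omega(L)\in\gen{x,y}\}=\bigcup_{[z]\in\p^1}\Lambda_\omega^z$ and applying Theorem \ref{union}\,\eqref{u:1} fibrewise gives
\begin{equation*}
Z=\GR\cap\Lambda_\omega^{xy}=X_\omega\cup\bigcup_{[z]\in\p^1}X_{\omega_z},
\qquad\text{hence}\qquad
Y=\overline{\textstyle\bigcup_{[z]\in\p^1}X_{\omega_z}},
\end{equation*}
where $\omega_z$ is the restriction of $\omega$ to $V_z=\{z=0\}$ and $X_{\omega_z}\subset G(2,V_z)$ has dimension $n-2$. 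I would then study the incidence variety $\widetilde Y=\overline{\{([z],[L])\in\p^1\times\GR\mid [L]\in X_{\omega_z}\}}$: its projection to $Y$ is birational, because a general $[L]\in Y$ has $\omega(L)\neq 0$ and recovers $[z]=[\omega(L)]$, while its projection to $\p^1$ has fibre $X_{\omega_z}$. For general $\omega$, $x\w y$ and $z$, the form $\omega_z$ is a general $3$-form on the $n$-dimensional space $V_z$, so $X_{\omega_z}$ is smooth, connected and irreducible of dimension $n-2$ by Theorems \ref{thm:qq1}--\ref{thm:qq2}. A family over the irreducible base $\p^1$ with irreducible general fibre has a unique component dominating $\p^1$, of dimension $n-1$, all other components lying in fibres of dimension at most $n-2$; since $Y$ is pure of dimension $n-1$ and $\widetilde Y\to Y$ is birational, $Y$ is the image of that dominant component, hence irreducible. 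The delicate step, and the one I expect to be the real obstacle, is verifying that the general member $\omega_z$ of the pencil $\{\omega_z\mid z\in\gen{x,y}\}$ lies in the open locus \ref{GC5}, so that the fibres are genuinely irreducible of the expected dimension; concretely this amounts to checking that, for general $\omega$ and $x\w y$, the one-parameter family $z\mapsto\omega_z$ is not contained in the proper closed complement of the \ref{GC5}-locus, which I would settle by openness together with a single explicit example. Once this is in place, the lower-dimensional components of $\widetilde Y$ over special $z$ are harmless for the irreducibility of $Y$.
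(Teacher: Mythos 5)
Your parts (2) and (3) are correct and essentially coincide with the paper's proof. For (3), the paper likewise observes that $\GR$ is arithmetically Gorenstein, so the proper linear section $Z=\GR\cap\Gamma$ is arithmetically Gorenstein by adjunction, and then transfers the Cohen--Macaulay property across the Gorenstein link; the paper invokes \cite[Corollary 5.2.13]{M} via smoothness of $X_\omega$, whereas you use transfer of the aCM property (as the paper itself does later in Proposition \ref{Yred}), and both are valid. For (2), the paper gives exactly your additivity argument in compressed form: the linear congruence $X_\omega\cup Y$ has order one, and Proposition \ref{prop:ordine} determines the order of $X_\omega$.

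For (1), however, you depart from the paper --- which disposes of irreducibility with a one-line appeal to Bertini --- and your alternative has a genuine gap at its final step. Your decomposition $Y=\overline{\bigcup_{[z]}X_{\omega_z}}$ is sound (it is Proposition \ref{Yred}, part (3), in the paper; and note that you do not even need to apply Theorem \ref{union} fibrewise at special $z$: for $[L]\in Z\setminus X_\omega$ the linear form $z:=\omega(L)$ automatically satisfies $z(L)=0$, because $\omega(L)$ annihilates the vectors of $L$ by alternation, whence $\omega_z(L)=0$ and $[L]\in X_{\omega_z}$). The gap is this: you verify only that the \emph{general} member of the pencil satisfies \ref{GC5}, and then assert that components of $\widetilde Y$ over special $z$ are ``lower-dimensional'' and ``harmless''. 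That is precisely what must be proved. If for some special $[z_0]$ in the pencil one has $\dim X_{\omega_{z_0}}=n-1$, then, since $X_{\omega_{z_0}}\subset Z$ always, $X_{\omega_{z_0}}$ contains an entire $(n-1)$-dimensional component of $Z$; that component is not $X_\omega$, hence is a component of $Y$, and $Y$ is reducible. Birationality of $\widetilde Y$ onto its image does not rescue the argument: the projection is injective wherever $\omega(L)\neq 0$, but the image may still consist of several $(n-1)$-dimensional components, one over each jumping fibre, so your inference ``$Y$ is pure of dimension $n-1$ and $\widetilde Y\to Y$ is birational, hence $Y$ is the image of the dominant component'' is a non sequitur. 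To close the gap you need that \emph{every} member of the pencil has $\dim X_{\omega_z}=n-2$; the natural route is to show that the locus of $3$-forms failing \ref{GC4} has codimension at least $2$ (in $\p(\bw^3 W^*)$ with $\dim W=n$), and correspondingly that for general $\omega$ the set $\{[z]\in\p(V^*)\mid \omega_z \text{ fails \ref{GC4}}\}$ has codimension at least $2$, so that the general line $\p(\gen{x,y})\subset\p(V^*)$ misses it entirely. You neither formulate nor prove such a statement, and your ``openness plus one explicit example'' only controls the general member of the pencil, not all of them.
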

\begin{proof} \eqref{y:1} follows from Bertini theorem. For \eqref{y:2} it is enough to recall Proposition \ref{prop:ordine} and to note that the linear congruence $X_\omega\cup Y$ has order $1$.
To prove \eqref{y:3} we recall that the Grassmannian $\GR\subset \p(\bw^2V)$ 
is arithmetically Gorenstein (aG for short in what follows): in fact, it is subcanonical (the canonical sheaf is $\omega_{\GR}\cong \OO_{\GR}(-n-1)$)
and aCM
(by Bott's theorem, cf. \cite[Theorem 4.1.8]{--98W}). 
Therefore, the linear congruence $X_\omega\cup Y$
  is---by adjunction---arithmetically Gorenstein as well (see for example \cite[Theorem 1.3.3]{M}).
But then  $ X_\omega$ and $Y$  
are geometrically (directly) G-linked (see \cite[Definition 5.1.1]{M}), and the results of \cite[Chapter 5]{M} can be 
applied. In particular, since $X_\omega$ is smooth, $Y$ is locally Cohen-Macaulay (\cite[Corollary 5.2.13]{M}).
\end{proof}

  The linkage will allow us to resolve the ideal of $Y$ in $\GR$.

\subsection{Resolving the ideal of the residual congruence $Y$}

Let us recall the Koszul resolution  \eqref{eq:resolution} of
$X_\omega$ in $\GR$ and write down the Koszul resolution of
$X_\omega \cup Y$ in $\GR$:
\begin{equation*}
0 \to \OO_{\GR}(1-n) \to \dotsb \to \OO_{\GR}(-p)^{\binom{n-1}{p}} \to \dotsb\to \OO_{\GR} \to \OO_{X_\omega \cup Y} \to 0.
\end{equation*}
The surjection $\OO_{X_\omega \cup Y} \to \OO_{X_\omega}$ lifts to a morphism
of complexes that reads:
\begin{equation}\label{Schubert hyperplane}
\xymatrix@C-3ex@R-2ex{
&&&&  & \II_{X_\omega / X_\omega \cup Y} \ar[d] \\
\OO_{\GR}(1-n) \ar[d] \ar[r] & \dotsb \ar[r] &
\OO_{\GR}(-p)^{\binom{n-1}{p}} \ar[d] \ar[r] & \dotsb \ar[r]
\ar[r] & \OO_{\GR} \ar[r] \ar@{=}[d]& \OO_{X_\omega \cup Y} \ar[d] \\
\OO_{\GR}(2-n) \ar[r] & \dotsb \ar[r] & \bw^p\QQ(-p) \ar[r] \ar[r] & \dotsb \ar[r]& \OO_{\GR} \ar[r] & \OO_{X_\omega} 
}\end{equation}
This yields a resolution of $\II_{X_\omega / X_\omega \cup Y}$ of the form:
\begin{equation*}
0\to \OO_\GG(1-n)\to\dotsb \to \OO_{\GR}(-p)^{\binom{n-1}{p}} \oplus \bw^{p+1} \QQ(-p-1) \to \dotsb \to \QQ(-1) \to
\II_{X_\omega / X_\omega \cup Y} \to 0,
\end{equation*} 
where $0\le p \le n-1$. 
By linkage, taking duals and taking tensor product with
$\OO_{\GR}(1-n)$, we get a resolution of $\OO_{Y}$ and
hence of $\II_{Y/\GR}$. Using
the duality $\bw^p \QQ^* \simeq \bw^{n-1-p}\QQ(-1)$, this
takes the form:
\begin{equation}\label{linked}
0\to \QQ^*(2-n)\to\dotsb \to \OO_{\GR}(-p)^{\binom{n-1}{p}} \oplus \bw^{p-1} \QQ(-p) \to
\dotsb \rightarrow \OO_{\GR}(-1)^n \rightarrow 
\II_{Y/\GR} \to 0,
\end{equation}
with $1\le p \le n-1$.

\begin{rema}\label{Schubhyp}
The map $\OO_{\GR}(1-n) \to \OO_{\GR}(2-n)$ in diagram \eqref{Schubert hyperplane} defines a Schubert hyperplane containing $Y$.  In fact, by Lemma \ref{supsets},
if $\Gamma\supset \Lambda_\omega$ is 
any codimension $n-1$ linear space of $\p(\bw^2V)$, then $\Gamma=\Lambda_\omega^{xy}$ (cf. the definition of $\Lambda_\omega^{xy}$ 
in \eqref{span2}) for a $2$-form $x\w y\in \bw^2V^*$. 
 In particular, if  $Y$ is residual to $X_\omega$ in a linear congruence, then 
\begin{equation*}
X_\omega\cup Y=\GR\cap\Lambda_\omega^{xy}
\end{equation*}
for some $x\w y\in \bw^2V^*$. The Schubert hyperplane defined by $x\w y$ intersects $\Lambda_\omega^{xy}$ in $\Lambda_{\omega,x\w y}$.
We shall show in Proposition \ref{spanY} that $Y=\Lambda_{\omega,x\w y}\cap \GR.$
\end{rema}
\begin{nota}\label{resid}
We denote by $Y_{\omega,x\w y}$ the residual congruence to $X_\omega$ in $\GR\cap \Lambda_\omega^{xy}$, whenever the latter intersection is proper. 
It will be denoted simply by $Y_{x\w y}$ when $\omega$ is understood, or $Y$ if also $x,y$ are understood. 
\end{nota}

The residual congruences to $X_\omega$ appear naturally from quadrics of $\QQ_\omega$. 
Let $n\geq 5$ and assume that $\omega$  satisfies \ref{GC4}. Let  us consider the quadric $Q=Q_{\omega\w x}$  for some $x\in V^*$.
It contains $\GR$ and its rank is $2n$, so $Q$ has two $\binom{n}{2}$-dimensional families 
of maximal dimensional subspaces, i.e. subspaces of codimension $n$ in $\p({\bw^2V})$.  These two families were described in   Theorem \ref{rk2n-quadrics}.
The linear space $\Lambda_\omega^x$ belongs to one of these families, and in view of Proposition \ref{Qn+1}, 
the linear span $\gen{X_\omega}=\Lambda_\omega$ is a hyperplane in $\Lambda_\omega^x$. 
A subspace of the other family that intersects $\Lambda_\omega^x$ in codimension one is of the form 
$\Lambda_{\omega,x\w y}$ for some $y\in V^*$. 
The union $\Lambda_\omega^x\cup \Lambda_{\omega,x\w y}$ spans the codimension $n-1$ linear subspace $\Lambda_\omega^{xy}$. 
The restriction of $Q_{\omega\w x}$ to $ \Lambda_\omega^{xy}$ decomposes as
\begin{equation*}
Q_{\omega\w x}\cap \Lambda_\omega^{xy}=\Lambda_\omega^x\cup \Lambda_{\omega,x\w y},
\end{equation*}
and the subspace $\Lambda_{\omega,x\w y}\subset \Lambda_\omega^{xy}$ is the intersection of $\Lambda_\omega^{xy}$ with the Schubert hyperplane $\{x\w y=0\}\subset\p(\bw^2V)$.

Therefore, when $x,y\in V^*$ are general linear forms, the linear congruence $\Lambda_\omega^{xy}\cap   \GR$ decomposes as
\begin{equation*}
\Lambda_\omega^{xy}\cap   \GR=(\Lambda_\omega^x\cap\GR)\cup(\Lambda_{\omega,x\w y}\cap \GR).
\end{equation*}
By Theorem \ref{union} \eqref{u:1}, $\Lambda_\omega^x\cap\GR=X_\omega\cup X_{\omega_x}$ and, by Theorem \ref{2n-quadric}, 
$\Sing(Q_{\omega\w x})\cap\GR=X_{\omega_x}$, so
  $X_\omega\cup Y_{\omega,x\w y}=\Lambda_\omega^{xy}\cap   \GR$ implies the following lemma:  
\begin{prop}\label{spanY} 
The residual congruence $Y_{\omega, x\w y}$ is a non-proper linear congruence.  More precisely
\begin{equation*}
Y_{\omega,x\w y}=\Lambda_{\omega,x\w y}\cap\GR,
\end{equation*}
with $\codim \Lambda_{\omega, x\w y}=n$. 
In particular, the congruence $Y_{\omega,x\w y}$ is contained in the Schubert hyperplane formed by the lines meeting $\{x=y=0\}$.
Furthermore, 
\begin{equation*}
X_\omega\cap Y_{\omega,x\w y}
\end{equation*}
is the intersection of $X_\omega$ with this Schubert hyperplane and 
\begin{equation*}
 X_\omega\cap Y_{\omega,x\w y}\cong H_{Y_{\omega,x\w y}}-X_{\omega_x}
\end{equation*}
as Weil divisors on $Y_{\omega,x\w y}$.
  \end{prop}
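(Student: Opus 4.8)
The plan is to promote the set-theoretic decomposition obtained just above the statement to the scheme-theoretic identity $Y_{\omega,x\w y}=\Lambda_{\omega,x\w y}\cap\GR$, and then to extract the divisor relation from the factorisation of the quadric $Q_{\omega\w x}$. Write $W:=\Lambda_{\omega,x\w y}\cap\GR$. Since $\omega$ has rank $n+1$, Remark \ref{rmk:l} gives $\codim\Lambda_{\omega,x\w y}=n$, so $W$ is an improper (non-proper) linear section of $\GR$ of dimension $n-1$; this is the first assertion once $W=Y_{\omega,x\w y}$ is known. One checks directly from \eqref{span3} and \eqref{span4} that $\Lambda_{\omega_x}\subset\Lambda_{\omega,x\w y}$, whence $X_{\omega_x}=\gen{X_{\omega_x}}\cap\GR=\Lambda_{\omega_x}\cap\GR\subset W$ by Corollary \ref{n:4}. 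Feeding this into the equalities established before the statement, namely $\Lambda_\omega^{xy}\cap\GR=(\Lambda_\omega^x\cap\GR)\cup W$ and $\Lambda_\omega^x\cap\GR=X_\omega\cup X_{\omega_x}$ (Theorem \ref{union}\eqref{u:1}), shows that $\Lambda_\omega^{xy}\cap\GR=X_\omega\cup W$ as sets, with $X_{\omega_x}$ absorbed into $W$.

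To upgrade this to a scheme-theoretic statement, note that $Z:=\GR\cap\Lambda_\omega^{xy}$ is a complete intersection of $n-1$ hyperplanes in the arithmetically Gorenstein $\GR$, hence is Cohen--Macaulay with no embedded components; being generically reduced along each of its two components $X_\omega$ (smooth by \ref{GC5}) and $W$, it is reduced for general $x,y$. For a reduced $Z=X_\omega\cup W$ whose components share no irreducible piece one has $I_Z=I_{X_\omega}\cap I_W$ and $(I_Z\colon I_{X_\omega})=I_W$, so Definition \ref{def:res} yields $Y_{\omega,x\w y}=W=\Lambda_{\omega,x\w y}\cap\GR$, proving the main identity. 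The containment in the Schubert hyperplane and the shape of $X_\omega\cap Y_{\omega,x\w y}$ are then formal: $\Lambda_{\omega,x\w y}=\Lambda_\omega^{xy}\cap\{(x\w y)(L)=0\}$ by construction, so $Y_{\omega,x\w y}\subset\{(x\w y)(L)=0\}$, and since $X_\omega\subset\Lambda_\omega\subset\Lambda_\omega^{xy}$ we get $X_\omega\cap Y_{\omega,x\w y}=X_\omega\cap\Lambda_{\omega,x\w y}=X_\omega\cap\{(x\w y)(L)=0\}$.

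For the divisor equivalence I would use that $Q_{\omega\w x}$ has rank $2n$, so that its restriction to $\Lambda_\omega^{xy}$ is the rank-$2$ quadric $\Lambda_\omega^x\cup\Lambda_{\omega,x\w y}$, i.e. a product $\ell_1\ell_2$ of linear forms on $\Lambda_\omega^{xy}$ with $\{\ell_1=0\}=\Lambda_\omega^x$ and $\{\ell_2=0\}=\Lambda_{\omega,x\w y}$. Restricting $\ell_1$ to $Y:=Y_{\omega,x\w y}$ yields a section of $\OO_{\GR}(1)|_Y$, whose class is $H_{Y_{\omega,x\w y}}$, and it is not identically zero because $\Lambda_{\omega,x\w y}\not\subset\Lambda_\omega^x$. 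Its zero locus is $\Lambda_\omega^x\cap Y=(X_\omega\cup X_{\omega_x})\cap\Lambda_{\omega,x\w y}$, and since $X_{\omega_x}\subset\Lambda_{\omega,x\w y}$ this equals $(X_\omega\cap Y)\cup X_{\omega_x}$. Counted with multiplicity one along each piece, the zero divisor of $\ell_1|_Y$ is $(X_\omega\cap Y_{\omega,x\w y})+X_{\omega_x}$, giving the asserted equivalence $X_\omega\cap Y_{\omega,x\w y}\cong H_{Y_{\omega,x\w y}}-X_{\omega_x}$.

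The main obstacle is the scheme-theoretic input at the two points where multiplicities enter: verifying that $Z$ is genuinely reduced, equivalently that $X_\omega\cup W=Z$ holds scheme-theoretically and not merely set-theoretically, so that Gorenstein linkage pins $Y_{\omega,x\w y}$ down exactly as $W$; and verifying that $\ell_1$ vanishes to order exactly one along each of the distinct prime divisors $X_\omega\cap Y_{\omega,x\w y}$ and $X_{\omega_x}$ on the irreducible Cohen--Macaulay congruence $Y_{\omega,x\w y}$. Both rely on the genericity of $x,y$ together with the fact that, $Q_{\omega\w x}$ having rank $2n$, the factorisation $\ell_1\ell_2$ is into two distinct reduced factors, so the two divisors are distinct and $\ell_1|_Y$ meets each transversally at a general point; a degree count against $\deg H_{Y_{\omega,x\w y}}$ can be used to confirm that no higher multiplicities occur.
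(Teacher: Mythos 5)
Your proposal is correct and takes essentially the same route as the paper: the identity $Y_{\omega,x\w y}=\Lambda_{\omega,x\w y}\cap\GR$ comes from the decomposition $Q_{\omega\w x}\cap\Lambda_\omega^{xy}=\Lambda_\omega^x\cup\Lambda_{\omega,x\w y}$ together with Theorem \ref{union} and Theorem \ref{2n-quadric}, and your linear form $\ell_1$ cutting $\Lambda_\omega^x$ out of $\Lambda_\omega^{xy}$ is exactly the hyperplane the paper's proof uses to obtain $\Lambda_\omega^x\cap Y_{\omega,x\w y}=(X_\omega\cap Y_{\omega,x\w y})\cup X_{\omega_x}$ and hence the divisor relation. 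The scheme-theoretic checks you make explicit (reducedness of the Cohen--Macaulay complete intersection $Z$ so that Definition \ref{def:res} pins $Y_{\omega,x\w y}$ down as $W$, and multiplicity one along both components of the hyperplane section, confirmable by the degree count) are precisely the points the paper's one-line proof leaves implicit, and your handling of them is sound.
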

  \begin{proof} Recalling Lemma \ref{span_quadrics}, 
only the last part remains to be proven.
 But $\Lambda_\omega^x$ is a hyperplane in $\Lambda_\omega^{xy}$, so $\Lambda^x_\omega\cap Y_{\omega,x\w y}=(X_\omega\cap Y_{\omega,x\w y})\cup X_{\omega_x}$ is a hyperplane section of $Y_{\omega,x\w y}$.
 \end{proof}

\subsection{The multidegree}

The multidegree of a general congruence $Y=Y_{\omega, x\w y}$,  residual  to $X_\omega$, can be computed for every $n$ using 
\eqref{multidegree} and the multidegree of a linear congruence. We recall that in \cite[Corollary 2.3]{DP}
the multidegree $(e_0(n),\dotsc,e_{[\frac{n-1}{2}]}(n))$ of a linear congruence $B\subset \GG$, where
\begin{align*}
e_\ell(n)&:=\int_{[B]}\sigma_{n-1-\ell,\ell} & 0\leq \ell&\leq \left[\frac{n-1}{2}\right],
\end{align*}
is given by the following closed formula
\begin{equation*}
\multdeg(B)=\left(1,n-2,\dotsc,\left(\binom{n-2}{i}-\binom{n-2}{i-2}\right),\dotsc,
\left(\binom{n-2}{\nu}-\binom{n-2}{\nu-2}\right)\right),
\end{equation*}
where  $\nu:=[\frac{n-1}{2}]$. 
Similarly (loc. cit.)
\begin{equation*}
\deg(B)=\sum_{j=0}^{\nu}\left(\binom{n-2}{j}-\binom{n-2}{j-2}\right)^2=
\frac{1}{n-1}\binom{2n-2}{n}. 
\end{equation*}
Anyway, in order to obtain $\multdeg(Y)$, it is useful to  organise these degrees 
as we did for those of $X_\omega$ in Section \ref{coho}. In fact, 
the proof of Lemma \ref{recursion} applied to $e_i(n):=\left(\binom{n-2}{i}-\binom{n-2}{i-2}\right)$ gives 
the following lemma: 
\begin{lemma} The multidegree $(e_i(n))$, $i=0,\dotsc,n-1$, satisfies the initial condition
\begin{align*}
e_0(n)&=1, &  n&=2,3,4,\dotsc
\end{align*}
 and the recursion relation
\begin{equation*}
e_i(n)=e_{i-1}(n-1)+e_{i}(n-1)
\end{equation*}
 when $i=1,2,\dotsc,[\frac{n-1}{2}]$.
\end{lemma}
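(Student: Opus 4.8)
The plan is to deduce both assertions directly from the closed-form expression $e_i(n)=\binom{n-2}{i}-\binom{n-2}{i-2}$ recorded above (which is the content of \cite[Corollary 2.3]{DP}), adopting throughout the usual convention that $\binom{m}{k}=0$ whenever $k<0$. With this convention the initial condition is immediate: setting $i=0$ gives
\[
e_0(n)=\binom{n-2}{0}-\binom{n-2}{-2}=1.
\]
Equivalently, and exactly as in the proof of Lemma \ref{recursion}, $e_0(n)$ is by definition the order of the linear congruence $B$, which is one.

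For the recursion, the key step is a twofold application of Pascal's rule. Expanding the right-hand side,
\[
e_{i-1}(n-1)+e_i(n-1)=\left(\binom{n-3}{i-1}-\binom{n-3}{i-3}\right)+\left(\binom{n-3}{i}-\binom{n-3}{i-2}\right),
\]
I would regroup this as $\bigl(\binom{n-3}{i-1}+\binom{n-3}{i}\bigr)-\bigl(\binom{n-3}{i-3}+\binom{n-3}{i-2}\bigr)$ and apply Pascal's identity to each bracket separately. This yields $\binom{n-2}{i}-\binom{n-2}{i-2}=e_i(n)$, which is precisely the claim.

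In the spirit of the sentence preceding the statement, the same recursion can also be obtained geometrically by reproducing the argument of Lemma \ref{recursion}. Writing $B=\Lambda\cap\GR$ with $\Lambda=\Lambda'\cap H$ for a codimension $(n-2)$ space $\Lambda'$ and a hyperplane $H$ — which is harmless, since every proper linear congruence has the same Schubert class $\sigma_1^{n-1}\cap[\GR]$ — one has $B\cap G(2,V_x)=B'\cap H$, a hyperplane section of the linear congruence $B'=\Lambda'\cap G(2,V_x)$ of $\p(V_x)$. Choosing a flag with $P^{n-\ell}\subset\p(V_x)$ so that $Z_\ell(n)\subset G(2,V_x)$, then computing $\deg(B\cap Z_\ell(n))$ inside $G(2,V_x)$ and invoking Pieri's rule $\sigma_{n-2-\ell,\ell-1}\,\sigma_1=\sigma_{n-1-\ell,\ell-1}+\sigma_{n-2-\ell,\ell}$, one recovers the recursion once more.

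The only point requiring care is the bookkeeping at the extreme indices $i=1$ and $i=\left[\frac{n-1}{2}\right]$: one must check that the out-of-range binomial coefficients vanish so that Pascal's rule applies unchanged, and, in the geometric variant, that $Z_\ell(n)$ indeed lies in $G(2,V_x)$ for the relevant values of $\ell$. This bookkeeping is routine and constitutes the only, minor, obstacle.
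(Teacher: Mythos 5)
Your proof is correct and takes essentially the same approach as the paper: the paper defines $e_i(n):=\binom{n-2}{i}-\binom{n-2}{i-2}$ via \cite[Corollary 2.3]{DP} and simply asserts that the proof of Lemma \ref{recursion} applies, which is exactly your geometric variant (restrict the linear congruence to $G(2,V_x)$, where it becomes a hyperplane section of a linear congruence of $\p(V_x)$, and apply Pieri), while your Pascal-rule computation is the direct combinatorial way of carrying this out from the closed formula. The boundary bookkeeping you flag does work out with the convention $\binom{m}{k}=0$ for $k<0$, and at the top index one has $e_i(2i)=\binom{2i-2}{i}-\binom{2i-2}{i-2}=0$ by symmetry of binomial coefficients, consistent with the triangle \eqref{eq:trien}.
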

Then, as we did with the multidegree $(d_i(n))$ of $X_\omega$, we may display the multidegree $(e_i(n))$ of $B$ 
in a triangle with initial entries
\begin{align*}
b_{n,0}&=1,& n&=0,1,2,\dotsc
\end{align*}
and 
\begin{align*}
b_{(i,j)}&=b_{(i,j-1)}+b_{(i-1,j)}& i&=1,2,\dotsc,\textup{ and } j=1,2,\dotsc,i.
\end{align*}
The multidegree of $B$ is identified as $(e_i(n))=(b_{(n-1-i,i)})$, $i=0,\dotsc,n-1$, 
\begin{equation}\label{eq:trien}(b_{i,j})=
 \begin{array}{cccccccccc} 
      1 & & & & & & & & & \\
      1 & 1 & & & & & & & & \\
      1 & 2 & 2& & & & & & & \\
      1 & 3 & 5&5 & & & & & & \\
      1 & 4 & 9&14 & 14& & & & & \\
      1 & 5 & 14&28 & 42& 42 & & & & \\
      1 & 6 & 20& 48& 90& 132& 132& & & \\
      1 & 7 & 27&75 & 165& 297& 429 &429 & & \\
      1 & 8 & 35& 110& 275& 572& 1001& 1430& 1430& \\
      1 & 9 & 44& 154& 429& 1001& 2002& 3432 &4862 &4862 \\
   \end{array}
   \end{equation}
for which we have, with the same proof as in Proposition \ref{prop:tria}: 
\begin{prop}\label{prop:trib}  
The degree of a linear congruence $B\subset G(2,n+1)$ is given 
by the $(n+1)$-st diagonal element 
 \begin{equation*}
\deg B=b_{(n-1,n-1)}=b_{(n-1,n-2)}=e_{n}(2n-1)
\end{equation*}
 in the above triangle of numbers.
  The multidegree of $B$ is given by the antidiagonals 
  \begin{equation*}
\multdeg(B)=(e_{0}(n),e_{1}(n),\dotsc,e_{[\frac{n-1}{2}]}(n))=(b_{(n-1,0)},b_{(n-2,1)},\dotsc,b_{(n-[\frac{n-1}{2}],[\frac{n-1}{2}])}).
    \end{equation*}
  \end{prop}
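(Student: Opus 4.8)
The plan is to follow, step for step, the two-part argument proving Proposition \ref{prop:tria}, with the triangle \eqref{eq:tridn} and the multidegrees $(d_\ell)$ replaced by the triangle \eqref{eq:trien} and the multidegrees $(e_\ell)$. The antidiagonal description of $\multdeg(B)$ is the formal half of the statement: the triangle $(b_{(i,j)})$ is defined by $b_{(n,0)}=1$ and $b_{(i,j)}=b_{(i,j-1)}+b_{(i-1,j)}$, while the preceding Lemma gives $e_0(n)=1$ together with $e_i(n)=e_{i-1}(n-1)+e_i(n-1)$. Since these two recursions agree under the substitution $e_i(n)=b_{(n-1-i,i)}$ and the initial data match, a straightforward induction on $n$ yields $(e_i(n))=(b_{(n-1-i,i)})$. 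The boundary equality $b_{(n-1,n-1)}=b_{(n-1,n-2)}$ then holds automatically, since the recursion gives $b_{(n-1,n-1)}=b_{(n-1,n-2)}+b_{(n-2,n-1)}$ with the out-of-range term $b_{(n-2,n-1)}=0$. Hence the only genuine point is the geometric identity $\deg B=e_n(2n-1)$, i.e. that the degree of $B$ equals the diagonal entry of the triangle.

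For this I would degenerate exactly as in Proposition \ref{prop:tria}. Put $\dim V=2m$, so that $n=2m-1$, fix a general subspace $V_0\subset V$ of dimension $m+1$, set $P^m=\p(V_0)$ and $Z_{m-1}(2m-1)=\{[L]\mid L\subset P^m\}=G(2,V_0)$, whose class is $\sigma_{m-1,m-1}$. Let $B=\GR\cap\Gamma$ be a general linear congruence, with $\Gamma$ of codimension $n-1=2m-2$. Since $\GR\cap G(2,V_0)=G(2,V_0)$ and $G(2,V_0)\subset\p(\bw^2V_0)$, for general $\Gamma$ the intersection $B\cap Z_{m-1}(2m-1)=G(2,V_0)\cap\Gamma$ is the proper intersection of $G(2,V_0)$ with a codimension $2m-2$ linear subspace of $\p(\bw^2V_0)$; writing $2m-2=(m-1)+(m-1)$, this is a general linear congruence $B_0\subset G(2,V_0)=G(2,m+1)$ cut by $m-1$ further general hyperplanes. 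As $B_0$ has dimension $m-1$, cutting by these $m-1$ hyperplanes leaves a reduced $0$-dimensional scheme of length $\deg B_0$, and therefore
\begin{equation*}
\deg B_0=\deg\bigl(B\cap Z_{m-1}(2m-1)\bigr)=\int_{[B]}\sigma_{m-1,m-1}=e_{m-1}(2m-1)=b_{(m-1,m-1)},
\end{equation*}
where the third equality is the definition of the multidegree $e_{m-1}(2m-1)$ of $B$ and the fourth is the antidiagonal identification just proved. As $m$ is arbitrary and plays the role of $n$ for the congruence $B_0\subset G(2,m+1)$, this establishes $\deg B=b_{(n-1,n-1)}=e_n(2n-1)$ for every $n$, which is the assertion.

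The step needing care is the genericity input: one must verify that for general $\Gamma$ the section $G(2,V_0)\cap\Gamma$ has the expected codimension $2m-2$ inside $G(2,V_0)$, so that $B\cap Z_{m-1}(2m-1)$ is reduced, $0$-dimensional, and contributes no excess-intersection correction to $\int_{[B]}\sigma_{m-1,m-1}$; this is where the restriction to a \emph{general} linear congruence is used, together with the fact that a general $\Gamma$ meets the linear span $\p(\bw^2V_0)$ in a linear space of the expected codimension. As an independent confirmation one may instead invoke the closed formula $\deg B=\frac{1}{n-1}\binom{2n-2}{n}$ of \cite[Corollary 2.3]{DP}: this equals the Catalan number $\frac{1}{n}\binom{2n-2}{n-1}$, and the defining recursion of the triangle \eqref{eq:trien} shows at once that the diagonal entries $b_{(k,k)}$ are precisely the Catalan numbers, so that $\deg B=b_{(n-1,n-1)}$ as claimed.
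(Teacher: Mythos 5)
Your proposal is correct and takes essentially the same route as the paper, which proves Proposition~\ref{prop:trib} ``with the same proof as in Proposition~\ref{prop:tria}'': the antidiagonal identification by matching recursions and initial data, and the geometric equality $\deg B=b_{(n-1,n-1)}$ by intersecting with $Z_{m-1}(2m-1)=G(2,V_0)$ and recognising the result as a complete linear section of a smaller linear congruence $B_0\subset G(2,m+1)$ (your bookkeeping $\int_{[B]}\sigma_{m-1,m-1}=e_{m-1}(2m-1)=b_{(m-1,m-1)}$ is in fact the precise form of the paper's loosely indexed ``$e_n(2n-1)$''). The closing cross-check via the closed formula $\frac{1}{n-1}\binom{2n-2}{n}$ of \cite[Corollary 2.3]{DP} and the Catalan diagonal of the triangle \eqref{eq:trien} is a sound independent confirmation beyond what the paper records.
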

  
From this, we can infer the multidegree of $Y$; in fact, it is enough to ``subtract'' triangles \eqref{eq:tridn} and \eqref{eq:trien}
obtaining the following triangle 
\begin{equation}\label{eq:tricn} (c_{i,j})=
 \begin{array}{cccccccccc} 
      0 & & & & & & & & & \\
      1 & 1 & & & & & & & & \\
      0 & 1 & 1& & & & & & & \\
      1 & 2 & 3&3 & & & & & & \\
      0 & 2 & 5&8 & 8& & & & & \\ 
      1 & 3 & 8&16 & 24& 24 & & & & \\ 
      0 & 3 & 11& 27& 51& 75& 75& & & \\ 
      1 & 4 & 15& 42 & 93& 168& 243 & 243& & \\ 
      0 & 4 & 19& 61 &154  & 322 &565  & 808& 808& \\ 
      1 & 5 &24 &85 &239 & 561& 1126 & 1934 &2742 &  2742\\ 
   \end{array},
   \end{equation}
and if we call $c_{i,j}$ the entries of the triangle \eqref{eq:tricn}, we have $c_{i,j}=b_{i,j}-a_{i,j}$.
Let 
\begin{align*}
f_\ell(n)&:=\int_{[Y]}\sigma_{n-1-\ell,\ell} & 0\leq \ell&\leq \left[\frac{n-1}{2}\right],
\end{align*}
then $(f_0(n),\dotsc,f_{[\frac{n-1}{2}]}(n))$
is the multidegree of $Y$.   From 
Propositions \ref{prop:tria} and \ref{prop:trib}, we obtain  the following proposition: 

\begin{prop}\label{prop:tric}  
The degree of $Y=Y_{\omega,x\w y}\subset \GG$ is the  $(n+1)$-st diagonal element 
 \begin{equation*}
\deg Y=c_{(n-1,n-1)}=c_{(n-1,n-2)}=f_{n}(2n-1)
\end{equation*}
 in the above triangle of numbers.
  The multidegree of $Y$ is given by the antidiagonals 
  \begin{equation*}
  (f_{0}(2m),f_{1}(2m),\dotsc,f_{m-1}(2m))=(c_{(2m-1,0)},c_{(2m-2,1)},\dotsc,c_{(m,m-1)}),
    \end{equation*}
 when $n=2m$,  and
    \begin{equation*}
  (f_{0}(2m-1),f_{1}(2m-1),\dotsc,f_{m-1}(2m-1))=(c_{(2m-2,0)},c_{(2m-3,1)},\dotsc,c_{(m-1,m-1)}),
    \end{equation*}
    when $n=2m-1$.
  \end{prop}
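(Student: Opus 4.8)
The plan is to reduce the entire statement to the single cycle-class identity
\[
[Y_{\omega,x\w y}] = [B] - [X_\omega] \quad \text{in } A^*(\GR),
\]
where $B := \GR\cap\Lambda_\omega^{xy}$ is the proper linear congruence containing $X_\omega$ described in Remark \ref{Schubhyp} and Notation \ref{resid}, and then to invoke Propositions \ref{prop:tria} and \ref{prop:trib} by linearity. First I would record that, since $\Lambda_\omega^{xy}$ has codimension $n-1$ and meets $\GR$ properly, the section $B$ is a cycle of pure dimension $n-1$ whose class is that of a linear congruence; hence its multidegree $(e_\ell(n))$ is the one computed in \cite[Corollary 2.3]{DP} and organised in the triangle \eqref{eq:trien}. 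Set-theoretically $B = X_\omega\cup Y_{\omega,x\w y}$ by Definition \ref{def:res}; both components have dimension $n-1$, the first being smooth, hence reduced, by Theorem \ref{thm:qq1}, and the second being reduced (irreducible for general $\Gamma$ by Proposition \ref{Y is CM}\eqref{y:1}, and generically reduced as a residual congruence). Therefore each component occurs with multiplicity one in the proper section, giving $[B] = [X_\omega] + [Y_{\omega,x\w y}]$.

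Capping with the Schubert classes $\sigma_{n-1-\ell,\ell}$ and using Poincar\'e duality on $\GR$, the identity $[Y]=[B]-[X_\omega]$ yields
\[
f_\ell(n) = e_\ell(n) - d_\ell(n), \qquad 0 \le \ell \le \left[\tfrac{n-1}{2}\right].
\]
Equivalently, setting $c_{(i,j)} := b_{(i,j)} - a_{(i,j)}$, the triangle \eqref{eq:tricn} is the termwise difference of \eqref{eq:trien} and \eqref{eq:tridn}; since both $(a_{(i,j)})$ and $(b_{(i,j)})$ obey the interior recursion $t_{(i,j)} = t_{(i,j-1)} + t_{(i-1,j)}$, so does $(c_{(i,j)})$, its initial column being governed by $f_0(n) = e_0(n) - d_0(n)$, which alternates between $0$ and $1$ according to the parity of $n$ (Proposition \ref{prop:ordine}). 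As the degree of the reduced cycle $B = X_\omega\cup Y$ in the Pl\"ucker embedding is additive in its components, one also has $\deg Y = \deg B - \deg X_\omega$. The three assertions of the proposition — the diagonal reading $\deg Y = c_{(n-1,n-1)} = c_{(n-1,n-2)} = f_n(2n-1)$ and the two antidiagonal descriptions of $\multdeg(Y)$ according to the parity of $n$ — then follow by subtracting the corresponding identities of Propositions \ref{prop:tria} and \ref{prop:trib} entry by entry, the underlying specialisation argument (restricting to $G(2,V_x)$ through a flag adapted to $\omega = \omega_x + \beta_x\w x$) being common to all three cases.

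The one point that needs genuine care is the multiplicity-one claim underlying $[B] = [X_\omega] + [Y]$: a priori a proper linear section could carry a component with multiplicity exceeding one. Here this is excluded precisely because $X_\omega$ is smooth and $Y$ is reduced, the latter resting ultimately on the linkage of $X_\omega$ and $Y$ inside the arithmetically Gorenstein scheme $B$ (Proposition \ref{Y is CM}). Once this is established, every remaining step is the formal bookkeeping of the two Pascal-type triangles and the reindexing already carried out for $X_\omega$ and for linear congruences, so no new computation is required.
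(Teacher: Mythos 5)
Your proof is correct and takes essentially the same route as the paper: Proposition \ref{prop:tric} is obtained there precisely by subtracting the triangles \eqref{eq:tridn} and \eqref{eq:trien}, i.e. from $f_\ell(n)=e_\ell(n)-d_\ell(n)$ combined entrywise with the identities of Propositions \ref{prop:tria} and \ref{prop:trib}. Your explicit justification of the cycle identity $[X_\omega\cup Y]=[X_\omega]+[Y]$ (multiplicity one on each component, via smoothness of $X_\omega$ and the Gorenstein linkage of Proposition \ref{Y is CM}) merely makes precise a point the paper leaves implicit, and is a welcome addition rather than a different argument.
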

For $3\leq n \leq 9$, we get the following multidegree for $Y$:
\begin{align*}
(0,1)&, &(1,1)&, &(0,2,1)&, &(1,2,3)&, &(0,3,5,3)&, &(1,3,8,8)&, &(0,4,11,16,8).
\end{align*}

%%%%%%%%%%%%%%%%%%%%%%%%%%%%%%%%%%%%%%%%%%%%%%%%%
\subsection{Divisors and singularities on the residual congruences to $X_\omega$.}\label{alternative}
Recall that $Y_{\omega,x\w y}=\Lambda_{\omega,x\w y}\cap \GR$ and thus, by Lemma \ref{supsets}, $Y_{\omega,x\w y}$ is contained 
in the pencil of quadrics generated by $Q_{\omega\w x}$ and $Q_{\omega\w y}$.  
Furthermore,  $Y_{\omega,x\w y}=Y_{\omega',x\w y'}$ only if $\gen{ x,y}=\gen{ x,y'}$,  $\omega_x=\omega'_x$ and $\omega_{y}=\omega'_{y}$.

\begin{prop}
There is an $(n-1)$-dimensional family of congruences $X_{\omega'}$ that are linked to $Y_{\omega, x\w y}$ in a linear congruence. 
They are defined by $3$-forms $\omega'=\omega+\alpha'\w x\w y$, for some $\alpha'\in V^*$.
\end{prop}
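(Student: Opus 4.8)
The plan is to reduce the statement to an explicit contraction identity and then to identify exactly when two residual congruences coincide. First I would unwind what it means for $X_{\omega'}$ to be linked to $Y:=Y_{\omega,x\w y}$ in a linear congruence. Since $X_{\omega'}$ spans $\Lambda_{\omega'}$, a linear congruence containing it is cut by a codimension $n-1$ linear space $\Gamma\supseteq\Lambda_{\omega'}$, so Lemma \ref{supsets} forces $\Gamma=\Lambda_{\omega'}^{x'y'}$ for some $x'\w y'\in\bw^2V^*$, and then by definition $\Gamma\cap\GR=X_{\omega'}\cup Y_{\omega',x'\w y'}$. Hence $X_{\omega'}$ is linked to $Y$ exactly when $Y_{\omega',x'\w y'}=Y$; since $Y$ has span $\Lambda_{\omega,x\w y}$ by Proposition \ref{spanY}, this amounts to the equality of linear spans $\Lambda_{\omega',x'\w y'}=\Lambda_{\omega,x\w y}$.

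For the forward inclusion I would take $x'=x$, $y'=y$, set $\omega'=\omega+\alpha'\w x\w y$ and show $\Lambda_{\omega',x\w y}=\Lambda_{\omega,x\w y}$ directly. The key computation is the contraction of the decomposable $3$-form $\alpha'\w x\w y$ against a bivector $L$: expanding the interior product gives
\begin{equation*}
(\alpha'\w x\w y)(L)=(x\w y)(L)\,\alpha'-(\alpha'\w y)(L)\,x+(\alpha'\w x)(L)\,y.
\end{equation*}
On $\Lambda_{\omega,x\w y}$ one has $(x\w y)(L)=0$, so the $\alpha'$-term drops out and $(\alpha'\w x\w y)(L)\in\gen{x,y}$; wedging with $x\w y$ therefore annihilates it, and $\omega'(L)\w x\w y=\omega(L)\w x\w y$ for every such $L$. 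Thus the two pairs of defining equations agree and $\Lambda_{\omega',x\w y}=\Lambda_{\omega,x\w y}$, whence $Y_{\omega',x\w y}=Y$ and $X_{\omega'}$ is linked to $Y$ in $\Lambda_{\omega'}^{xy}\cap\GR$. This last intersection is a genuine linear congruence of codimension $n-1$, since $\omega'$ still has rank $n+1$ for general $\alpha'$ (rank is maximal at $\alpha'=0$), so $\gen{x,y}\subset\im{}^tf_{\omega'}$ and Remark \ref{rmk:l} applies.

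The dimension count is then routine: the map $\alpha'\mapsto\alpha'\w x\w y$ has kernel exactly $\gen{x,y}$, so the $3$-forms $\omega'=\omega+\alpha'\w x\w y$ sweep out an affine subspace of dimension $n-1$. For general $\alpha'$ the form $\omega'$ still satisfies \ref{GC4} (an open condition containing $\omega$), so $X_{\omega'}$ is a congruence, and distinct members of the family are non-proportional for general $\omega$ (proportionality would force $\omega$ to be divisible by $x\w y$). By the injectivity of $[\omega']\mapsto[X_{\omega'}]$ for $n\ge 6$ in Theorem \ref{Hilbsch}, these yield an $(n-1)$-dimensional family of distinct congruences.

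Finally, for the converse—that every such $\omega'$ is of this form—I would invoke the uniqueness statement preceding the proposition: linkage forces $\gen{x',y'}=\gen{x,y}$ together with $\omega'_x=\omega_x$ and $\omega'_y=\omega_y$. Setting $\delta=\omega'-\omega$, the vanishing of its restrictions to $V_x$ and $V_y$ means, via the decomposition \eqref{decomp}, that $\delta$ is divisible by $x$ and by $y$; a short exterior-algebra argument in a basis adapted to $x,y$ shows that a form divisible by two independent linear forms is divisible by their product, so $\delta=\alpha'\w x\w y$ for some $\alpha'\in V^*$. I expect the contraction identity of the forward direction to be the conceptual heart of the argument; the main thing to handle with care is pinning down the sign conventions there and confirming that $\omega'$ genuinely satisfies \ref{GC4}, so that the linkage takes place between honest congruences rather than degenerate linear sections.
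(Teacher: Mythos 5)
Your proposal is correct, and it in fact supplies the argument that the paper leaves implicit: the proposition is stated there without proof, its intended justification being the unproved remark immediately preceding it (that $Y_{\omega,x\w y}=Y_{\omega',x'\w y'}$ forces $\gen{x,y}=\gen{x',y'}$, $\omega'_x=\omega_x$ and $\omega'_y=\omega_y$) combined with the observation that a $3$-form whose restrictions to $V_x$ and $V_y$ both vanish is divisible by $x\w y$, and with the count $\dim\bigl(V^*/\gen{x,y}\bigr)=n-1$. Your contraction identity
\begin{equation*}
(\alpha'\w x\w y)(L)=(x\w y)(L)\,\alpha'-(\alpha'\w y)(L)\,x+(\alpha'\w x)(L)\,y
\end{equation*}
is exactly the right computation for the forward direction (it shows $\Lambda_{\omega',x\w y}=\Lambda_{\omega,x\w y}$ while correctly \emph{not} claiming $\Lambda_{\omega'}^{xy}=\Lambda_{\omega}^{xy}$, which would be false), and your converse via divisibility by $x$ and by $y$ is the intended one. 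Two minor caveats are worth recording: your appeal to Theorem \ref{Hilbsch} to get \emph{distinct} congruences only applies for $n\ge 6$ (for $n=5$ the fibres of $\rho$ are curves, so the family of congruences, as opposed to the family of forms $\omega'$, could a priori drop dimension), and condition \ref{GC4} for $\omega'=\omega+\alpha'\w x\w y$ is an open condition on $\alpha'$ that you verify contains $\alpha'=0$ but need not hold for every member, so the statement should be read as holding for the generic member of the $(n-1)$-dimensional family.
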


To further analyse $Y_{\omega, x\w y}$ we consider the pencil of hyperplanes generated by $\p(V_x)$ and $\p(V_y)$:
\begin{align*}
\p(V_{[a:b]})&=\{ax+by=0\},& [a:b]&\in \p^1
\end{align*}
and denote by 
$\omega_{[a:b]}$ the restriction of $\omega$ to $V_{[a:b]}$.

\begin{prop}\label{Yred}
Let $\omega\in \bw^3V^*$ be a $3$-form satisfying condition \ref{GC4}.
Let  $x,y\in V^*$ be linearly independent linear forms, and let $Y_{\omega, x\w y}$ be  the residual congruence to $X_\omega$ in $\GR\cap \Lambda_\omega^{xy}$.
Then: 
\begin{enumerate}
\item\label{r:1} $Y_{\omega, x\w y}$ is aCM in its linear span;
\item\label{r:2} $X_{\omega_x}$ is a Weil divisor in $Y_{\omega, x\w y}$, and $X_{\omega_x}=Y_{\omega, x\w y}\cap G(2, V_x)$; 
\item\label{r:3} $Y_{\omega, x\w y}=\bigcup_{[a:b]\in \p^1} X_{\omega_{[a:b]}},$
where 
\begin{equation*} 
X_{\omega_{[a:b]}}=\{[L]\in G(2,V_{[a:b]})\subset\p(\bw^2V_{[a:b]})\mid\omega_{[a:b]}(L)=0\}\subset G(2,V_{[a:b]})
\end{equation*}
varies in the pencil of divisors on $Y_{\omega,x\w y}$ generated by $X_{\omega_x}$ and $X_{\omega_y}$.
\end{enumerate}
\end{prop}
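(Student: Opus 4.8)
The plan is to prove the three assertions in turn, leaning on the linkage set-up already in place for $Y=Y_{\omega,x\w y}$ and on the restriction computations from Theorem \ref{union}. For \eqref{r:1}, I would argue exactly as in the proof of Proposition \ref{Y is CM}\eqref{y:3}. The linear congruence $X_\omega\cup Y=\GR\cap\Lambda_\omega^{xy}$ is arithmetically Gorenstein, being by adjunction a section of the aG Grassmannian $\GR$, and $X_\omega$ is aCM (indeed aG) by Corollary \ref{cor:acm}. Since $X_\omega$ and $Y$ are geometrically G-linked in $X_\omega\cup Y$, the liaison results of \cite[Chapter 5]{M} apply: linkage through an aG scheme carries the aCM property of one residual onto the other, so $Y$ is aCM in $\p(\bw^2V)$, hence aCM in its linear span $\Lambda_{\omega,x\w y}$. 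One can also read this off from the resolution \eqref{linked}, whose terms $\OO_{\GR}(-p)^{\binom{n-1}{p}}\oplus\bw^{p-1}\QQ(-p)$ and extremes $\QQ^*(2-n)$, $\OO_{\GR}(-1)^n$ are aCM bundles on $\GR$ by Bott's theorem; a cohomology chase then gives $H^i_*(\GR,\II_{Y/\GR})=0$ for $1\le i\le\dim Y$, which transfers to $\p(\bw^2V)$ along the aCM inclusion $\GR\subset\p(\bw^2V)$.

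For \eqref{r:2}, I would first record that $\dim X_{\omega_x}=n-2=\dim Y-1$ and that $X_{\omega_x}\subset Y$: for $[L]\in X_{\omega_x}$ one has $[L]\in G(2,V_x)$ and $\omega(L)=\omega_x(L)+\beta_x(L)\,x=\beta_x(L)\,x\in\gen{x,y}$, so $(x\w y)(L)=0$ and $\omega(L)\w x\w y=0$, whence $[L]\in\Lambda_{\omega,x\w y}\cap\GR=Y$ by Proposition \ref{spanY}. As $Y$ is irreducible (Proposition \ref{Y is CM}\eqref{y:1}) and aCM, hence $S_2$, the irreducible codimension-one subvariety $X_{\omega_x}$ is a prime Weil divisor on $Y$. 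To identify $Y\cap G(2,V_x)$, I would compute for $[L]\in G(2,V_x)$ that $\omega(L)\w x\w y=\omega_x(L)\w x\w y'$, where $y=y'+cx$ with $y'\in V_x^*$, so that the condition cutting out $Y$ becomes $\omega_x(L)\in\gen{y'}$. Its only component of dimension $n-2$ is $X_{\omega_x}=\{\omega_x(L)=0\}$, the residual locus being contained in $G(2,\Pi)$ and of strictly smaller dimension; thus $X_{\omega_x}$ is the divisorial part of $Y\cap G(2,V_x)$, and for general data the two coincide.

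For \eqref{r:3}, the key tool is Theorem \ref{union}\eqref{u:1} applied to the linear form $ax+by$: for every $[a:b]\in\p^1$ one has $\Lambda_\omega^{ax+by}\cap\GR=X_\omega\cup X_{\omega_{[a:b]}}$, where $\omega_{[a:b]}$ is the restriction of $\omega$ to $V_{[a:b]}=\{ax+by=0\}$, and as in \eqref{r:2} each $X_{\omega_{[a:b]}}\subset Y$. Conversely, for $[L]\in Y$ with $\omega(L)\neq0$ one has $\omega(L)=\alpha x+\beta y\in\gen{x,y}$ and $\omega(L)\w(ax+by)=(\alpha b-\beta a)\,x\w y$, which vanishes for the unique $[a:b]=[\alpha:\beta]$; then $[L]\in\Lambda_\omega^{ax+by}\cap\GR$ and, not lying on $X_\omega$, it lies on $X_{\omega_{[a:b]}}$. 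Hence $\bigcup_{[a:b]}X_{\omega_{[a:b]}}$ contains the dense open set $Y\setminus X_\omega$, and being the image of the projective incidence variety $\{([a:b],[L])\mid[L]\in X_{\omega_{[a:b]}}\}\subset\p^1\times\GR$ it is closed, so it equals the irreducible $Y$. The pencil structure then comes from the rational map $Y\dashrightarrow\p^1$, $[L]\mapsto[\alpha(L):\beta(L)]$, where $\alpha,\beta$ are the coordinates of $\omega(L)$ along $x,y$ (linear in $L$, hence sections of $\OO_Y(1)$): its indeterminacy locus is $X_\omega\cap Y$ and its fibre over $[a:b]$ is $X_{\omega_{[a:b]}}$, so $\{X_{\omega_{[a:b]}}\}$ is the linear pencil generated by $X_{\omega_x}$ (at $[1:0]$) and $X_{\omega_y}$ (at $[0:1]$), with fixed part $X_\omega\cap Y$.

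The main obstacle I anticipate lies in \eqref{r:2}--\eqref{r:3}: keeping the contraction conventions straight and, above all, controlling the excess loci. The naive intersection $Y\cap G(2,V_x)$ carries a spurious lower-dimensional piece sitting in $G(2,\Pi)$, and one must argue, using Theorem \ref{union}\eqref{u:1} together with $V_{[a:b]}\cap V_x=\Pi$ for $[a:b]\neq[1:0]$, that it does not contribute in codimension one, so that both the Weil-divisor statement and the sweeping-out are clean. By contrast, assertion \eqref{r:1} is an essentially formal consequence of the liaison already established, and I expect it to be quick.
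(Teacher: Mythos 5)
Your proposal is correct and its skeleton matches the paper's, but the detailing of \eqref{r:2} and \eqref{r:3} takes a genuinely different route, so a comparison is in order. Part \eqref{r:1} is verbatim the paper's argument: Gorenstein liaison through the aG linear congruence $X_\omega\cup Y$, citing \cite{M}; your alternative cohomology chase on the resolution \eqref{linked} works but is not needed. For \eqref{r:2} the paper obtains $X_{\omega_x}\subset Y_{\omega,x\w y}$ purely from the inclusion of linear spans $\Lambda_{\omega_x}\subset\Lambda_{\omega,x\w y}$, concludes by the dimension count, and does not analyse $Y\cap G(2,V_x)$ at all; your contraction computation reducing that intersection to $\{[L]\in G(2,V_x)\mid \omega_x(L)\in\gen{y'}\}$ is sound and in fact more informative, since by Theorem \ref{union}\eqref{u:1} applied to $(\omega_x,y')$ on $V_x$ this locus is $X_{\omega_x}\cup X_{\omega_{x\w y}}$. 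For \eqref{r:3} the paper argues geometrically: every line of $Y$ meets $\Pi$ (Proposition \ref{spanY}), hence lies in some $\p(V_{[a:b]})$, hence in $X_{\omega_{[a:b]}}$. Your mechanism---locating the member of the pencil by $\omega(L)=\alpha x+\beta y$, invoking Theorem \ref{union}\eqref{u:1} for the form $ax+by$, and closing up over $X_\omega\cap Y$ by irreducibility of $Y$---is tighter at exactly the place where the paper's ``i.e.'' is quick: from $L\subset V_{[a:b]}$ and $[L]\in Y$ one only gets $\omega_{[a:b]}(L)$ proportional to a fixed linear form, and the points with $L\subset \Pi$ get absorbed into \emph{other} members of the pencil. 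Your explicit rational map $Y\dashrightarrow\p^1$, $[L]\mapsto[\alpha(L):\beta(L)]$, with fixed part $X_\omega\cap Y$, also substantiates the ``pencil of divisors'' phrase, which the paper's proof leaves implicit.

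One sentence of yours should be retracted: ``for general data the two coincide.'' Your own computation shows that the residual piece of $Y\cap G(2,V_x)$ is $Y\cap G(2,\Pi)=X_{\omega_{x\w y}}$, of dimension $n-3$, and by Theorem \ref{union}\eqref{u:3} this piece meets $X_{\omega_x}$ only in a hyperplane section, so it is \emph{not} contained in $X_{\omega_x}$ even for general $\omega,x,y$ (for $n=5$, for instance, it is the $\alpha$-plane of lines through a point of $\Pi$, which does not lie in the quadric threefold $X_{\omega_x}$). Thus the displayed equality in \eqref{r:2} is really an equality of divisorial parts, i.e.\ an equality in codimension one on $Y$---which is precisely what your first, correct sentence establishes, and all that is used downstream (Proposition \ref{singular} only needs the clean equality $Y\cap\Lambda_{\omega_x}=X_{\omega_x}$, which does hold on the nose). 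The paper's own one-line ``so \eqref{r:2} follows'' glosses the same subtlety, so this does not undermine your argument; just replace the hedge by the divisorial-part statement.
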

\begin{proof}
The first statement follows by Gorenstein liaison, see \cite[Remark 5.3.2]{M}, since $Y_{\omega, x\w y}$ is linked to $X_\omega$, which is aCM, see 
Corollary \ref{cor:acm}, and a linear congruence is aG.

For the second statement note that $\Lambda_{\omega_x}\subset\Lambda_{\omega, x\w y}$, so
\begin{equation*}
X_{\omega_x}=\Lambda_{\omega_x}\cap G(2, V_x) \subset \Lambda_{\omega, x\w y}\cap \GR =Y_{\omega, x\w y}.
\end{equation*}
But $\dim X_{\omega_x}=\dim Y_{\omega, x\w y}-1$, so \eqref{r:2} follows. 
Similarly, 
\eqref{r:2} implies that 
$X_{\omega_{[a:b]}}$ is a Weil divisor in $Y_{\omega, x\w y}$ for any $[a:b]\in \p^1$.
Finally, since any $[L]\in Y_{\omega, x\w y}$ meets the codimension $2$ linear subspace $\p(V_{x\w y})$, it lies in $\p(\bw^2V_{[a:b]})$ for some $[a:b]\in \p^1$, i.e. $[L]\in X_{[a:b]}$. Therefore 
\begin{equation*}
Y_{\omega, x\w y}=\cup_{[a:b]\in \p^1} X_{\omega_{[a:b]}}.
\end{equation*}
\end{proof}

Notice that in view of Proposition \ref{Yred}, $X_{\omega_{[a:b]}}=Y_{\omega,x\w y} \cap G(2,V_{[a:b]})$, while $X_{\omega_{[a:b]}}\cap G(2,V_{[a':b']})$ has codimension at least $2$ in $X_{\omega_{[a:b]}}$ whenever $[a':b']\neq [a:b]$.  Since every $X_{\omega_{[a:b]}}$ is a divisor in $Y_{\omega,x\w y}$, this will allow us to conclude that 
$Y_{\omega,x\w y}$ must be singular.

\begin{lemma}\label{codim4} 
For any general $[a':b']\neq [a:b]$ the intersection $X_{\omega_{[a:b]}}\cap X_{\omega_{[a':b']}}$ has codimension $4$ in $Y_{x\w y}$.
\end{lemma}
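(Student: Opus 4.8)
The plan is to compute the intersection set-theoretically, identify it inside the smaller Grassmannian $G(2,V_{x\w y})$, and then do a dimension count. First I would record the defining conditions of the divisors: a point $[L]$ lies in $X_{\omega_{[a:b]}}$ exactly when $L\subset V_{[a:b]}$ and $\omega_{[a:b]}(L)=0$, and the second condition says precisely that the form $\omega(L)\in V^*$ restricts to zero on $V_{[a:b]}$, i.e.\ $\omega(L)\in\gen{ax+by}$, the annihilator of the hyperplane $V_{[a:b]}$. Intersecting two such divisors for distinct $[a:b]\neq[a':b']$, the two containments $L\subset V_{[a:b]}$ and $L\subset V_{[a':b']}$ force $L\subset V_{[a:b]}\cap V_{[a':b']}=V_{x\w y}$, while $\omega(L)\in\gen{ax+by}\cap\gen{a'x+b'y}=\{0\}$ forces $\omega(L)=0$ outright. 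Hence for every pair of distinct points of $\p^1$ one obtains the \emph{same} subvariety, namely
\[
X_{\omega_{[a:b]}}\cap X_{\omega_{[a':b']}}=\{[L]\in G(2,V_{x\w y})\mid \omega(L)=0\}=X_\omega\cap G(2,V_{x\w y}).
\]

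Next I would decompose $\omega$ compatibly with $V_{x\w y}$, extending the decomposition \eqref{decomp}: fix $e,f\in V$ with $x(e)=1$, $y(e)=0$, $x(f)=0$, $y(f)=1$, so that $\{V_{x\w y},e,f\}$ spans $V$. For $L\in\bw^2V_{x\w y}$ the form $\omega(L)$ vanishes identically if and only if its restriction to $V_{x\w y}$, which is $\omega_{x\w y}(L)$, vanishes and the two scalars $\omega(L\w e)$ and $\omega(L\w f)$ vanish. Writing $\beta,\beta'\in\bw^2V_{x\w y}^*$ for the two $2$-forms defined by $\beta(L)=\omega(L\w e)$ and $\beta'(L)=\omega(L\w f)$ (a short contraction computation as in the proof of Lemma \ref{eta}\eqref{x}), the condition $\omega(L)=0$ splits into the three conditions $\omega_{x\w y}(L)=0$, $\beta(L)=0$, $\beta'(L)=0$. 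Therefore
\[
X_{\omega_{[a:b]}}\cap X_{\omega_{[a':b']}}=X_{\omega_{x\w y}}\cap H_{\beta}\cap H_{\beta'},
\]
the intersection of the congruence $X_{\omega_{x\w y}}$ with the two Pl\"ucker hyperplanes $H_\beta$, $H_{\beta'}$.

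The dimension count then finishes the argument. The congruence $X_{\omega_{x\w y}}\subset G(2,V_{x\w y})$ is a congruence of lines in $\p(V_{x\w y})=\p^{n-2}$, so by Theorem \ref{thm:qq1} applied to the $(n-1)$-dimensional space $V_{x\w y}$ it has dimension $n-3$ and, for general $\omega$, is smooth and irreducible. Since in the decomposition above the graded pieces $\omega_{x\w y}$, $\beta$ and $\beta'$ are independent summands of $\bw^3V^*=\bw^3V_{x\w y}^*\oplus(\bw^2V_{x\w y}^*\w x)\oplus(\bw^2V_{x\w y}^*\w y)\oplus(V_{x\w y}^*\w x\w y)$, they vary independently as $\omega$ ranges over general $3$-forms; hence $H_\beta,H_{\beta'}$ are general relative to $X_{\omega_{x\w y}}$, and by Bertini their intersection with it has pure dimension $(n-3)-2=n-5$. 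As $Y_{\omega,x\w y}$ is a congruence, $\dim Y_{\omega,x\w y}=n-1$, so $\codim_{Y_{x\w y}}\bigl(X_{\omega_{[a:b]}}\cap X_{\omega_{[a':b']}}\bigr)=(n-1)-(n-5)=4$, as claimed. The only delicate step, which I expect to be the main obstacle, is the genericity needed to guarantee that $H_\beta$ and $H_{\beta'}$ cut $X_{\omega_{x\w y}}$ properly; I would settle it precisely through the independence of $(\omega_{x\w y},\beta,\beta')$ observed above, which reduces the claim to the standard statement about general hyperplane sections of the irreducible variety $X_{\omega_{x\w y}}$.
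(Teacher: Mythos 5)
Your proof is correct and follows essentially the same route as the paper: both arguments identify the pairwise intersection as $X_{\omega_{x\w y}}\cap H\cap H'$, the congruence on $G(2,V_{x\w y})$ cut by the two hyperplanes arising from the $x$- and $y$-components of $\omega$, and then count dimensions — the paper obtains this identification by applying Theorem \ref{union}, \eqref{u:3}, twice after reducing WLOG to the pair $(x,y)$, where you compute directly from $\omega(L)\in\gen{ax+by}\cap\gen{a'x+b'y}=\{0\}$. If anything, your genericity step is slightly more careful than the paper's: the paper concludes from the mere distinctness of the two hyperplanes for general $\omega$ (distinctness alone would not formally exclude an improper intersection), whereas your observation that $\omega_{x\w y}$, $\beta$, $\beta'$ are independent summands of $\bw^3V^*$ supplies exactly the Bertini-type genericity needed.
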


\begin{proof} Without loss of generality  we can take $X_{\omega_{[a:b]}}=X_{\omega_x}$ and $X_{\omega_{[a':b']}}=X_{\omega_y}$, and we choose coordinates so that $x=x_0$, $y=x_1$. With the usual conventions, we can write 
\begin{align*}
\omega&=\omega_{01}+\gamma_0\w x_0 +\gamma_1\w x_1+\alpha_{01}\w x_0\w x_1,\\
L&=L_{01}+w_0\w e_0 + w_1\w e_1+ce_0\w e_1.
\end{align*}
 With reference to Theorem \ref{union} and the notations used in its proof,  we get 
\begin{align*}
\omega_0&=\omega_{01}+\gamma_1\w x_1,  &  \beta_0&=\gamma_0-\alpha_{01}\w x_1,\\
\omega_1&=\omega_{01}+\gamma_0\w x_0,  &  \beta_1&=\gamma_1+\alpha_{01}\w x_0,\\
L_0&=L_{01}+w_1\w e_1, &  v_0&=w_0-ce_1,\\
L_1&=L_{01}+w_0\w e_0, &  v_1&=w_1+ce_0.
\end{align*}

Consider the intersection
\begin{equation*}
X_{\omega_x}\cap X_{\omega_y}=(X_{\omega_x}\cap G(2, V_{x\w y})\cap( X_{\omega_y}\cap G(2,V_{x\w y})\subset X_{\omega_{xy}}.
\end{equation*}

 We apply Theorem \ref{union} \eqref{u:3} 
 to $X_{\omega_x}$ and $X_{\omega_y}$, and get that $X_{\omega_x}\cap G(2, V_{x\w y})=X_{\omega_{01}}\cap H_0$, where $H_0$ is the hyperplane  $\{\gamma_0(w_0)=0\}\subset\p(\bw^2V_x)$.  So $X_{\omega_x}\cap G(2, V_{x\w y})$ has codimension two in $X_{\omega_x}$. 
 
 Similarly $X_{\omega_y}\cap G(2, V_{x\w y})=X_{\omega_{01}}\cap H_1$, where $H_1$ is the hyperplane $\{\gamma_1(w_1)=0\}\subset\p(\bw^2V_y)$. So $X_{\omega_y}\cap G(2, V_{x\w y})$ has codimension  two in $X_{\omega_y}$.  
 
 But the hyperplanes $H_0$, $H_1$ are  distinct if $\omega$ is general, so  $X_{\omega_x}\cap X_{\omega_y}=X_{\omega_{xy}}\cap H_0 \cap H_1$ has codimension three in $X_{\omega_x}$ and hence codimension four in $Y_{x\w y}$.
\end{proof}

The hyperplanes $H_0, H_1$ are equal if, in the expression \ref{eq:3form} of $\omega$, we have $a_{0,i,j}=a_{1,i,j}$ for any $1<i<j$.  But then $\omega(e_0-e_1)=0$, so $\omega$ has rank at most $n$.

\begin{prop} \label{singular} 
Let $\omega\in \bw^3V^*$ be a $3$-form satisfying condition \ref{GC4}.   
 Let  $x,y\in V^*$ be general linearly independent linear forms, and let $Y_{\omega, x\w y}$ be  the residual congruence  to $X_\omega$ in $\GR\cap \Lambda_\omega^{xy}$.
The singular locus of the congruence $Y_{\omega, x\w y}$ is 
\begin{align*}
\Sing(Y_{\omega, x\w y})&=\bigcap_{[a:b]\in \p^1} X_{\omega_{[a:b]}}=\{[L]\in G(2,V_{x\w y})\mid\omega_{x\w y}(L)=\omega_x(L)=\omega_{y}(L)=0\}\\
&=X_\omega\cap G(2,V_{x\w y})
=X_{\omega_{x\w y}}\cap H_x\cap H_y,
\end{align*}
where $\omega_{x\w y}$ is the restriction of $\omega$ to $\Pi= \{x=y=0\}$, and $H_x$ and $H_y$ are the hyperplanes defined---using notation as in Theorem \ref{union}---by $\beta_x(L_x)$ and  $\beta_y(L_y)$, respectively. 

In particular the codimension of the singular locus  of $Y_{\omega, x\w y}$ is $4$.
\end{prop}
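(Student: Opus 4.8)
The plan is to identify the set $B:=\bigcap_{[a:b]\in\p^1}X_{\omega_{[a:b]}}$ explicitly and then prove the two inclusions $B\subseteq\Sing(Y)$ and $\Sing(Y)\subseteq B$ separately, where $Y:=Y_{\omega,x\w y}$. For the identification I would use that, whenever $L\subset V_{[a:b]}$, contraction commutes with restriction, so $\omega_{[a:b]}(L)=\omega(L)|_{V_{[a:b]}}$. A point lies in every $X_{\omega_{[a:b]}}$ precisely when $L\subset\bigcap_{[a:b]}V_{[a:b]}=V_{x\w y}$ and $\omega(L)|_{V_{[a:b]}}=0$ for all $[a:b]$; since the hyperplanes $V_{[a:b]}$ cover $V$, the second condition forces $\omega(L)=0$, giving $B=X_\omega\cap G(2,V_{x\w y})$. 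Restricting $\omega(L)$ to $V_x$ and to $V_y$ and using $V=V_x+V_y$ shows that, for $L\subset V_{x\w y}$, the equality $\omega(L)=0$ is equivalent to $\omega_x(L)=\omega_y(L)=0$ (and then $\omega_{x\w y}(L)=0$ is automatic); two applications of Theorem~\ref{union}\eqref{u:3}, or equivalently decomposing $\omega$ along $V_{x\w y}$ and reading off the $V_{x\w y}^*$-, $x$- and $y$-components, rewrite $B$ as $X_{\omega_{x\w y}}\cap H_x\cap H_y$. An expected-dimension count then gives $\codim_Y B=4$, in agreement with Lemma~\ref{codim4}.

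For $B\subseteq\Sing(Y)$ I would use the divisorial argument announced before the statement. By Proposition~\ref{Yred}, $X_{\omega_x}$ and $X_{\omega_y}$ are Weil divisors on $Y$, and being two distinct members of the pencil of Proposition~\ref{Yred}\eqref{r:3}, their intersection is exactly the base locus $B$. Suppose $Y$ were smooth at some $p\in B$. Then near $p$ the variety $Y$ is locally factorial, so $X_{\omega_x}$ and $X_{\omega_y}$ are locally principal, and by Krull's principal ideal theorem every component of $X_{\omega_x}\cap X_{\omega_y}$ through $p$ would have codimension at most $2$. This contradicts $\codim_Y B=4$ (Lemma~\ref{codim4}), so $Y$ is singular along $B$.

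The harder inclusion is $\Sing(Y)\subseteq B$, i.e. smoothness of $Y$ away from $B$. Here I would introduce the incidence variety $\widetilde Y=\{([L],[a:b])\mid L\subset V_{[a:b]},\ \omega_{[a:b]}(L)=0\}\subset Y\times\p^1$ with its projections $p\colon\widetilde Y\to Y$ and $q\colon\widetilde Y\to\p^1$. The fibre of $q$ over $[a:b]$ is $X_{\omega_{[a:b]}}$, while the fibre of $p$ over $[L]$ is $\{[a:b]\mid\omega(L)|_{V_{[a:b]}}=0\}$, which is a single point unless $[L]\in B$ (where it is all of $\p^1$); hence $p$ restricts to an isomorphism over $Y\setminus B$. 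If every member $\omega_{[a:b]}$ of the pencil satisfies \ref{GC5}, then $q$ is a smooth morphism over $\p^1$, so $\widetilde Y$ is smooth of dimension $n-1$, and therefore $Y\setminus B$ is smooth. Combined with the previous paragraph this yields $\Sing(Y)=B$, of codimension $4$.

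The main obstacle is precisely the genericity input that makes $q$ smooth: one must show that for general $x,y$ the \emph{entire} pencil $\{\omega_{[a:b]}\}_{[a:b]\in\p^1}$ lies in the smooth locus \ref{GC5}, i.e. that the general line $\gen{[x],[y]}\subset\p(V^*)$ misses the discriminant $\{[z]\mid X_{\omega_z}\ \text{is singular}\}$. I would control this by bounding the codimension of the discriminant (it should be at least $2$, so that a general pencil avoids it); the delicate point is that $\omega$ is assumed only to satisfy \ref{GC4}, so one must either strengthen the hypothesis to general $\omega$ or argue the bound directly. As a pointwise fallback that sidesteps this global statement, away from $X_\omega\cup G(2,V_{x\w y})$ the scheme $Y$ agrees locally with the proper linear section $\Lambda_\omega^{xy}\cap\GR$ (Proposition~\ref{spanY}), and a transversality computation of $\TT_{[L_0]}\GR\cap\Lambda_\omega^{xy}$—showing that the $n-1$ conditions $\omega(M)\in\gen{x,y}$ remain independent on the tangent space exactly off $B$—handles the remaining points, including the boundary points with $L_0\subset V_{x\w y}$ and $\omega(L_0)\in\gen{x,y}\setminus\{0\}$, in suitable coordinates.
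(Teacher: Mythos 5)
Your identification of $B=\bigcap_{[a:b]\in \p^1} X_{\omega_{[a:b]}}$ with $X_\omega\cap G(2,V_{x\w y})=X_{\omega_{x\w y}}\cap H_x\cap H_y$ (done intrinsically, where the paper computes in coordinates) and your proof of the inclusion $B\subseteq \Sing(Y)$ match the paper: the Krull/Cartier argument against Lemma \ref{codim4} is exactly the one used there. The genuine gap is in the reverse inclusion, and it is not a removable technicality of your route — the route itself fails. Your primary argument requires \emph{every} member $X_{\omega_{[a:b]}}$ of the pencil to satisfy \ref{GC5}, but there is no reason a general pencil avoids the discriminant: the locus of $[z]\in\p(V^*)$ for which $X_{\omega_z}$ is singular is expected to be a \emph{hypersurface} in $\p(V^*)$ (the map $z\mapsto[\omega_z]$ sweeps only an $n$-dimensional family of $3$-forms on an $n$-dimensional space, and the relevant discriminant has codimension one), so a general line $\gen{[x],[y]}$ meets it in finitely many points and finitely many fibres of $q$ are singular; your hoped-for codimension-$2$ bound is the wrong expectation, not a detail to be checked. (One could weaken ``all fibres smooth'' to ``$\widetilde Y$ smooth'', but that would still have to be proved; and even granting it, the claim that $p$ is an isomorphism over $Y\setminus B$ needs an argument — a bijective projective morphism onto a possibly non-normal variety need not be an isomorphism; here one must check the scheme-theoretic fibres of $p$ are reduced points, making $p$ finite, unramified and universally injective, hence a closed immersion.)

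Your fallback is also insufficient as stated: at a point of $(X_\omega\cap Y)\setminus B$ the linear congruence $\Lambda_\omega^{xy}\cap\GR=X_\omega\cup Y$ has two $(n-1)$-dimensional components through the point, so its local ring is not a domain, hence not regular, and its Zariski tangent space has dimension at least $n$; consequently the $n-1$ conditions cutting $\Lambda_\omega^{xy}$ have rank at most $n-2$ on the tangent space of $\GR$ there, and the transversality computation you propose can never certify smoothness of $Y$ along $X_\omega\cap Y$. To make such a computation work you would have to use the full codimension-$n$ span $\Lambda_{\omega,x\w y}$ of Proposition \ref{spanY}, including the Schubert equation $(x\w y)(L)=0$, and show those $n$ linear forms have rank exactly $n-1$ on the tangent space precisely off $B$ — none of which is carried out, and your sketch only mentions the boundary points with $\omega(L_0)\neq 0$. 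The paper sidesteps all of this with one idea your proposal is missing: for each $[a:b]$, the span $\Lambda_{\omega,x\w y}$ is a maximal isotropic subspace of the rank-$2n$ quadric $Q_{\omega\w(ax+by)}\supset\GR$, and Kleiman's transversality theorem \cite{Kl}, applied on the homogeneous open locus $Q_{\omega\w(ax+by)}\setminus\Sing(Q_{\omega\w(ax+by)})$, gives smoothness of $Y=\Lambda_{\omega,x\w y}\cap\GR$ away from $Y\cap\Sing(Q_{\omega\w(ax+by)})=X_{\omega_{[a:b]}}$ (Proposition \ref{Yred}, \eqref{r:2}); intersecting over the pencil yields smoothness off $B$ with no smoothness hypothesis on the individual divisors $X_{\omega_{[a:b]}}$ at all.
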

\begin{proof} By Bertini-Kleiman \cite{Kl} applied to $Q_{\omega\w x}\setminus \Sing(Q_{\omega\w x})$, $Y_{\omega, x\w y}$ is smooth outside $Y_{\omega,x\w y}\cap \Sing(Q_{\omega\w x})=Y_{\omega,x\w y}\cap\Lambda_{\omega_x}=X_{\omega_{x}}$ (Proposition \ref{Yred} \eqref{r:2}).

Since the same is true for any linear form $ax+by$, $Y_{\omega, x\w y}$ is smooth outside $\cap_{[a:b]\in \p^1} X_{\omega_{[a:b]}}$.  But if $y\in \cap_{[a:b]\in \p^1} X_{\omega_{[a:b]}}$, $y$ cannot be a smooth point on $Y$.   If it were, the  Weil divisors $X_{\omega_{[a:b]}}$ would be Cartier in a neighbourhood of $y$, so the intersection of any two of them would have codimension two, contradicting Lemma \ref{codim4}.

For the third equality, 
a computation in  coordinates shows that $X_\omega\cap G(2,V_{x\w y})=X_{\omega_x}\cap X_{\omega_y}$. For the fourth we notice that the second equality is equivalent to 
\begin{equation*}
\Sing (Y_{\omega, x\w y})=X_{\omega_x}\cap X_{\omega_y}\cap X_{\omega_{x\w y}};
\end{equation*}
so we conclude by Theorem \ref{union}, 
\eqref{u:3}, 
\begin{align*}
X_{\omega_x}\cap X_{\omega_{x\w y}}&=X_{\omega_{x\w y}}\cap H_x, & X_{\omega_y}\cap X_{\omega_{x\w y}}&=X_{\omega_{x\w y}}\cap H_y.
\end{align*}

\end{proof}

\begin{coro} Assume $\omega\in \bw^3V^*$ satisfies condition \ref{GC4} and let $x,y\in V^*$ be general forms.   
If $n\leq 4$, then $Y_{\omega, x\w y}$ is smooth. 

If $n\geq 5$,
 $Y_{\omega, x\w y}$ is not even factorial in any point $y\in
 \bigcap_{[a:b]\in \p^1} X_{\omega_{[a:b]}}$. In particular, a general congruence
 linked to $X_\omega$ in a linear congruence is  non-singular in
 codimension $3$, but it is not Gorenstein. 
\end{coro}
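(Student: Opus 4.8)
The plan is to split along the dimension of $Y=Y_{\omega,x\w y}$ and then, for $n\geq 5$, to extract all three assertions from the description of $\Sing(Y)$ in Proposition \ref{singular} together with the liaison data already assembled. First I would dispose of the case $n\leq 4$ by a pure dimension count: Proposition \ref{singular} gives that $\Sing(Y)$ has codimension $4$ in $Y$, while $\dim Y=n-1\leq 3$, so $\Sing(Y)$ is forced to be empty and $Y$ is smooth.

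Assume now $n\geq 5$ and write $S=\bigcap_{[a:b]\in\p^1}X_{\omega_{[a:b]}}=\Sing(Y)$. Before speaking of factoriality I would record that $Y$ is normal: it is Cohen--Macaulay by Proposition \ref{Y is CM} \eqref{y:3}, hence satisfies $S_2$, and by Proposition \ref{singular} its singular locus has codimension $4\geq 2$, hence it satisfies $R_1$; Serre's criterion then gives normality, so we may talk about Weil divisors, Cartier divisors and the local class group at a point $y\in S$. The non-factoriality is then exactly the dichotomy already used in the proof of Proposition \ref{singular}: were $\OO_{Y,y}$ factorial, every Weil divisor would be locally principal at $y$; in particular the two distinct prime divisors $X_{\omega_x}$ and $X_{\omega_y}$ of Proposition \ref{Yred} \eqref{r:3}, both passing through $y\in S$, would be Cartier, and then Krull's principal ideal theorem would force $X_{\omega_x}\cap X_{\omega_y}$ to have codimension at most $2$ at $y$, contradicting the codimension $4$ of Lemma \ref{codim4}.

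For the two consequences, regularity in codimension $3$ is immediate since $\codim_Y\Sing(Y)=4>3$. The failure of the Gorenstein property I would deduce by liaison rather than from non-factoriality (Gorenstein does not imply factorial, so the implication is not formal). Recall from the proof of Proposition \ref{Y is CM} that $Z:=\GR\cap\Lambda_\omega^{xy}=X_\omega\cup Y$ is arithmetically Gorenstein, so $\omega_Z\cong\OO_Z(-2)$ is a line bundle and $X_\omega,Y$ are geometrically $G$-linked in $Z$. The standard liaison formula (cf. \cite[Ch.~5]{M}) then yields $\omega_Y\cong\II_{X_\omega/Z}\otimes\omega_Z|_Y$, where $\II_{X_\omega/Z}$, viewed on $Y$, is the divisorial sheaf $\OO_Y(-(X_\omega\cap Y))$ of the linkage divisor. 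By Proposition \ref{spanY} we have $X_\omega\cap Y\cong H_Y-X_{\omega_x}$ as Weil divisors, whence $\omega_Y\cong\OO_Y(X_{\omega_x})\otimes\mathcal{L}$ for a genuine line bundle $\mathcal{L}$. Thus $\omega_Y$ is invertible at $y\in S$ if and only if $X_{\omega_x}$ is Cartier there; but if $X_{\omega_x}$ were Cartier at $y$ then $X_{\omega_x}\cap X_{\omega_y}$ would have codimension at most $2$ at $y$, again contradicting Lemma \ref{codim4}. Hence $X_{\omega_x}$ is non-Cartier along $S$, $\omega_Y$ is not a line bundle there, and $Y$ is not Gorenstein.

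I expect the main obstacle to be this last step: one must justify carefully the liaison identification of $\omega_Y$ with a twist of the divisorial sheaf $\OO_Y(X_{\omega_x})$ — using that $Z$ is locally Gorenstein and that $\II_{X_\omega/Z}$ restricts on $Y$ to the ideal sheaf of the divisor $X_\omega\cap Y$ — and then translate the non-principality of $X_{\omega_x}$ at $S$ into the non-invertibility of $\omega_Y$ at those points. Everything else reduces to the dimension count for $n\leq 4$, Serre's criterion for normality, and the single factoriality-versus-intersection-codimension argument furnished by Lemma \ref{codim4}.
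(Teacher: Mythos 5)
Your proposal is correct and follows essentially the same route as the paper, which leaves the corollary's proof implicit: smoothness for $n\leq 4$ by the codimension count from Proposition \ref{singular}, and non-factoriality by exactly the Cartier-versus-Lemma \ref{codim4} dichotomy that the paper itself uses in the proof of Proposition \ref{singular}. Your explicit liaison computation $\omega_Y\cong\OO_Y(X_{\omega_x}-3H_Y)$ is precisely the paper's own mechanism (compare Proposition \ref{spanY} and the subsequent canonical divisor formula for $n\le 4$), and you rightly note that the non-Gorenstein claim needs this extra step rather than following formally from non-factoriality.
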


%\begin{coro} 
%If $\omega\in \bw^3V^*$ satisfies condition \ref{GC4} and $x,y\in V^*$ are general forms, then
%\begin{align*}
%\Sing(Y_{\omega, x\w y})&=X_\omega\cap G(2,V_{x\w y})\\
%&=X_{\omega_{x\w y}}\cap H_x\cap H_y,
%\end{align*}
%where $X_x$ and $H_y$ are the hyperplanes defined---using notation as in Theorem \ref{union}---by $\beta_x(L_x)$ and  $\beta_y(L_y)$, respectively. 
%\end{coro}
%\begin{proof}
%For the first equality, 
%a computation in  coordinates shows that $X_\omega\cap G(2,V_{x\w y})=X_{\omega_x}\cap X_{\omega_y}$, and we conclude with Proposition \ref{singular}.

%For the second, equation \eqref{oxy} is equivalent to 
%\begin{equation*}
%\Sing (Y_{\omega, x\w y})=X_{\omega_x}\cap X_{\omega_y}\cap X_{\omega_{x\w y}};
%\end{equation*}
%then we conclude by Theorem \ref{union}, 
%\eqref{u:3}, 
%\begin{align*}
%X_{\omega_x}\cap X_{\omega_{x\w y}}&=X_{\omega_{x\w y}}\cap H_x, & X_{\omega_y}\cap X_{\omega_{x\w y}}&=X_{\omega_{x\w y}}\cap H_y.
%\end{align*}
%\end{proof}

The canonical divisor of $Y_{\omega, x\w y}$ can be computed.

\begin{prop} Let $n\le 4$ and let $Y$ be a general residual congruence to
  $X_\omega$ in a linear congruence. Then
\begin{equation*}
K_Y \cong X_{\omega_{x}}-3H_Y,
\end{equation*}
as Cartier divisors.
\end{prop}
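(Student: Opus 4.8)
The plan is to deduce the formula from Gorenstein liaison, exploiting that for $n\le 4$ the residual congruence $Y=Y_{\omega,x\w y}$ is smooth (by the preceding Corollary), so that $K_Y$ is a genuine Cartier divisor and the scheme-theoretic intersection $W:=X_\omega\cap Y$ is a Cartier divisor on $Y$. First I would record the dualising sheaf of the linking scheme $Z:=X_\omega\cup Y=\GR\cap\Lambda_\omega^{xy}$. Since this is a proper section of $\GR$ by the codimension $n-1$ linear space $\Lambda_\omega^{xy}$, it is a complete intersection of $n-1$ hyperplanes in $\GR$, hence a local complete intersection, and adjunction from $\omega_{\GR}\cong\OO_{\GR}(-n-1)$ gives
\[
\omega_Z\cong \omega_{\GR}(n-1)|_Z\cong \OO_Z(-2).
\]
This is exactly the arithmetically Gorenstein, index-$2$ statement already used in the proof of Proposition \ref{Y is CM}.

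Next I would invoke the geometric linkage of $X_\omega$ and $Y$ in $Z$ established in Proposition \ref{Y is CM} (they are directly G-linked, have no common component, and $X_\omega$ is smooth). The key input is the standard liaison identity for the dualising sheaf, cf. \cite[Ch.~5]{M}: since $Z$ is arithmetically Gorenstein,
\[
\omega_Y\cong \mathcal{H}om_{\OO_Z}(\OO_Y,\omega_Z)\cong \II_{X_\omega/Z}\otimes\omega_Z .
\]
The point is then to identify $\II_{X_\omega/Z}$, viewed as a sheaf on $Z$, with $\OO_Y(-W)$: the composite $\II_{X_\omega/Z}\hookrightarrow\OO_Z\twoheadrightarrow\OO_Y$ is injective because $\II_{X_\omega/Z}\cap\II_{Y/Z}=0$ in geometric linkage, and its image is precisely the ideal $\II_{W/Y}=\OO_Y(-W)$ of $W=X_\omega\cap Y$ inside $Y$. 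Tensoring the resulting $i_*\OO_Y(-W)$ with the line bundle $\omega_Z\cong\OO_Z(-2)$ and reading the answer on $Y$ then yields
\[
\omega_Y\cong \OO_Y(-2H_Y)\otimes\OO_Y(-W),\qquad\text{i.e.}\qquad K_Y\sim -2H_Y-W .
\]

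Finally I would substitute the divisor class of $W$ computed in Proposition \ref{spanY}, namely $W=X_\omega\cap Y_{\omega,x\w y}\sim H_Y-X_{\omega_x}$ as Weil divisors on $Y$, obtaining
\[
K_Y\sim -2H_Y-(H_Y-X_{\omega_x})=X_{\omega_x}-3H_Y,
\]
which is the asserted identity of Cartier divisors. The hard part will be the bookkeeping in the liaison step: one must check that $W$ is Cartier (this is exactly where smoothness of $Y$, hence $n\le 4$, enters), that the identification $\II_{X_\omega/Z}\cong\OO_Y(-W)$ and the accompanying twist are correct, and that the Weil-divisor relation of Proposition \ref{spanY} may legitimately be read as an equality of Cartier divisors on the smooth $Y$. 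As a cross-check I would keep in reserve the alternative of computing $\omega_Y=\EExt^{n-1}_{\GR}(\OO_Y,\omega_{\GR})$ directly from the locally free resolution \eqref{linked}, whose leftmost term $\QQ^*(2-n)$ dualises to $\QQ(-3)|_Y$ and governs the canonical sheaf.
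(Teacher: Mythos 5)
Your proposal is correct and takes essentially the same route as the paper's proof: both use $\omega_{X_\omega\cup Y}\cong \OO_{X_\omega\cup Y}(-2)$ for the linear congruence $Z=X_\omega\cup Y$, deduce $K_Y\cong -2H_Y-(X_\omega\cap Y)$ by adjunction/liaison (using smoothness of $Y$ for $n\le 4$), and then substitute the relation $X_\omega\cap Y\cong H_Y-X_{\omega_x}$ from Proposition \ref{spanY}. The paper compresses the middle step into the phrase ``by adjunction and Proposition \ref{spanY}''; your explicit identification $\omega_Y\cong \II_{X_\omega/Z}\otimes\omega_Z$ with $\II_{X_\omega/Z}\cong\OO_Y(-W)$ is precisely the unpacking of that step.
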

\begin{proof}

The union $X_{\omega}\cup Y$ is a linear congruence with canonical sheaf 
\begin{equation*}
\omega_{X_{\omega}\cup Y}\cong {\OO}_{X_{\omega}\cup Y}(-2).
\end{equation*}
$Y$ is smooth so, by adjunction and Proposition \ref{spanY}, 
\begin{equation*}
K_{Y}\cong -2H_{Y}-X_{\omega}\cap Y \cong-2H_{Y}-(H_{Y}-X_{\omega_{x}})\cong-3H_{Y}+X_{\omega_{x}}.
\end{equation*}
\end{proof}

%%%%%%%%%%%%%%%%%%%%%%%%%%%%%%%%%%%
\subsection{Fundamental locus of the residual congruence}

 Let $Y=Y_{x\wedge y}$ be a residual congruence to $X_\omega$, defined by some general pencil of hyperplanes
 \begin{align*}
\p(V_{[a:b]})&=\{ax+by=0\},& [a:b]&\in \p^1, 
\end{align*}
  of $\p(V)$, with base locus $\Pi=\{x=y=0\}$. Let us denote by $G$ its fundamental locus.
 Our goal here is to describe  $G$. We distinguish two cases
 according to whether $n$ is even or odd.

 \begin{theorem}\label{Gomega}
   Assume $\omega$ satisfies \ref{GC4} and let $Y$ be the residual congruence to $X_\omega$, associated
   with a general pencil of 
   hyperplanes of $\p(V)$, with base locus $\Pi$. 
Then the fundamental locus $G$ of
   $Y$ is
   \begin{enumerate}[i)]
   \item a hypersurface of degree $m$ in $\p(V)$ containing $\Pi$ and $F_\omega$, when $n=2m+1$;
   \item the union of $\Pi$ and of a subvariety $G_0$ of codimension $3$ and degree
     $(2 m^{3}-3 m^{2}-5 m+12)/6=2\binom{m+1}{3}-\binom{m+1}{2}+2$, contained in the degree $m$ hypersurface $F_\omega$, when $n=2m$.
   \end{enumerate}
 \end{theorem}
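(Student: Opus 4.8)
The plan is to reduce the description of $G$ to the fundamental loci of the slices $X_{\omega_{[a:b]}}$ produced by Proposition \ref{Yred}, and then to carry out the two degree computations by a relative degeneracy--locus argument over the pencil. The starting data are the decomposition $Y=\bigcup_{[a:b]\in\p^1}X_{\omega_{[a:b]}}$ together with the order of $Y$ from Proposition \ref{Y is CM}: order $1$ for $n$ even, order $0$ for $n$ odd. The first step is the key reduction: if $P\in\p(V)\setminus\Pi$ then $P$ lies on a \emph{unique} member $\p(V_{[a:b]})$ of the pencil (the one with $ax(P)+by(P)=0$), and a short coordinate computation using $\omega(L)=M_\omega(P)\cdot f$ for $L=e\w f$ through $P$ shows that the lines of $Y$ through $P$ are exactly the lines of $X_{\omega_{[a:b]}}$ through $P$. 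Hence $P\in G$ if and only if $P\in F_{\omega_{[a:b]}}$, so that
\[
G\setminus\Pi=\bigcup_{[a:b]\in\p^1}\bigl(F_{\omega_{[a:b]}}\setminus\Pi\bigr).
\]

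Next I would analyse the slices. Each $X_{\omega_{[a:b]}}$ is a congruence in $\p(V_{[a:b]})=\p^{n-1}$, so Proposition \ref{prop:fonda} applies with ambient dimension $n-1$. For $n=2m+1$ the dimension $n-1=2m$ is even, so $F_{\omega_{[a:b]}}$ is a hypersurface of degree $m-1$ in $\p(V_{[a:b]})$; sweeping these out over $\p^1$ gives a hypersurface in $\p(V)$, and since $Y$ has order $0$ its fundamental locus is precisely the union of its lines, that is $\bigcup F_{\omega_{[a:b]}}$, whence $\Pi$ is automatically contained in it. For $n=2m$ the dimension $n-1=2m-1$ is odd, so $F_{\omega_{[a:b]}}$ has codimension $3$ in $\p(V_{[a:b]})$, hence codimension $4$ in $\p(V)$, and the union $G_0:=\bigcup F_{\omega_{[a:b]}}$ has codimension $3$; here $\Pi$ appears as a separate component, because the slices have order $1$, so a general point of $\Pi$ lies on exactly one line of each $X_{\omega_{[a:b]}}$, producing a $\p^1$--family of lines of $Y$ through it. The containments are then read off from the comparison of $M_\omega(P)$ with $M_{\omega_{[a:b]}}(P)$, the latter being the principal submatrix obtained by restricting the skew form to $V_{[a:b]}$: since deleting one row and column changes the rank of a skew matrix by $0$ or $2$, one gets $F_\omega\subseteq G$ when $n$ is odd and $G_0\subseteq F_\omega$ when $n$ is even (using Remark \ref{rank1} for the generic ranks).

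The degrees are the only genuinely computational step. I would organise the slices into the universal hyperplane $\mathcal W=\{(P,[a:b]):ax(P)+by(P)=0\}\subset\p(V)\times\p^1$, a $\p^{n-1}$--bundle over $\p^1$ whose projection $\pi\colon\mathcal W\to\p(V)$ is the blow--up of $\p(V)$ along $\Pi$. On $\mathcal W$ the form $\omega$ induces a skew bundle map whose fibrewise degeneracy locus $\mathcal D$ restricts on each fibre to $F_{\omega_{[a:b]}}$, so that $G$ (resp. $G_0$) is $\pi(\mathcal D)$. The class $[\mathcal D]$ is computed by the Harris--Tu formula \eqref{class} applied relatively, in terms of the Chern classes of $\Omega^1_{\p(V)}(1)\otimes\sqrt{\OO_{\p(V)}(1)}$ restricted to $\mathcal W$, the $\p^1$--direction, and the twist coming from the tautological sequence $0\to\mathcal S\to V\otimes\OO\to\OO(1)\to0$ defining the varying hyperplane. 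Pushing $[\mathcal D]$ forward along $\pi$ and intersecting with the appropriate power of the hyperplane class should yield $\deg G=m$ for $n=2m+1$ and $\deg G_0=2\binom{m+1}{3}-\binom{m+1}{2}+2$ for $n=2m$, irreducibility of $G_0$ following from that of the total space of the family.

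The hard part will be the degree computation of the third paragraph: keeping track of the bundle bookkeeping on $\mathcal W$ — in particular the precise twists relating $\TTT_{\p(V)}$, $\Omega^1_{\p(V)}$ and their analogues on the moving hyperplanes, and the excess contribution concentrated along the exceptional locus $\pi^{-1}(\Pi)$ — so that the relative Harris--Tu class pushes forward to exactly the stated numbers rather than to these numbers plus spurious multiples of $[\Pi]$. A secondary point to be checked carefully is that $\mathcal D$, equivalently each general $F_{\omega_{[a:b]}}$, is reduced of the expected codimension, so that the set--theoretic identities above are scheme--theoretically faithful and the pushforward computes the honest degree; this should follow from the genericity of $\omega$ by a Bertini--Kleiman argument as in Proposition \ref{prop:fonda}.
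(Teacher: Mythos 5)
Your set-theoretic skeleton is essentially sound, and in fact it parallels material the paper proves separately rather than inside this theorem: the identification $G_0=\overline{\bigcup_{[a:b]}F_{[a:b]}}$ is exactly equation \eqref{eq:go} after Theorem \ref{pippo2}, and the behaviour along $\Pi$ is Proposition \ref{prop:fxy}. Your reduction (for $P\notin\Pi$ the lines of $Y$ through $P$ are the lines of $X_{\omega_{[a:b]}}$ through $P$ in the unique slice containing $P$) follows correctly from Propositions \ref{Yred} and \ref{spanY}, and your principal-submatrix rank comparison does yield $F_\omega\subseteq G$ for odd $n$ and $G_0\subseteq F_\omega$ for even $n$. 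One small repair: for odd $n$, ``$\Pi$ is automatically contained'' does not follow formally from order zero; you need an extra line, e.g.\ that every line of $Y$ meets $\Pi$ (Proposition \ref{spanY}) while $\dim Y=n-1>\dim\Pi$, so the lines of $Y$ through a general point of $\Pi$ form a positive-dimensional family. The genuine gap is elsewhere: the two degree formulas, which are the quantitative content of the theorem, are never established. Your final paragraphs only assert that a relative Harris--Tu computation on the universal hyperplane $\mathcal{W}$ (the blow-up of $\p(V)$ along $\Pi$) ``should yield'' $\deg G=m$ and $\deg G_0=2\binom{m+1}{3}-\binom{m+1}{2}+2$, while explicitly conceding that the twists and the contributions supported over $\Pi$ are unresolved. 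These are precisely the delicate points: for $n=2m+1$ each slice $F_{\omega_{[a:b]}}$ has degree $m-1$ by Proposition \ref{prop:fonda}, so to reach $\deg G=m$ you must show the pushforward picks up exactly one extra unit from the exceptional direction (equivalently, that a general point of $\Pi$ lies on exactly one slice fundamental locus); for $n=2m$ you must rule out components of the relative degeneracy locus inside $\pi^{-1}(\Pi)$ whose images are codimension-$3$ in $\p(V)$ and would pollute $\pi_*[\mathcal{D}]$. As written, the proposal proves the qualitative description but not the theorem.

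By contrast, the paper obtains the numbers with no excess-intersection bookkeeping at all: it realises the universal line over $X_\omega\cup Y$ as the projectivised cokernel $\FF$ of a Palatini-type map $\OO_{\p(V)}^{n-1}\to\Omega^1_{\p(V)}(2)$ and compares with the resolutions \eqref{f-odd} and \eqref{linked}. For $n=2m+1$, $G$ is the support of the torsion part $\TTT$ of $\FF$, whose degree $m$ is a one-line Chern-class computation; $\Pi\subset G$ comes from a surjection $\II_{\Pi/\p(V)}(1)\twoheadrightarrow\TTT$, and $F_\omega\subset G$ from dualising the bottom row of the defining diagram. For $n=2m$, pushing the linked resolution down to $\p(V)$ gives the four-term sequence \eqref{longg} with cokernel $\II_{G/\p(V)}(m)$, and a chain of diagrams exhibits $\bar G=G_0\cup G_1$ (with $G_1=\Pi\cap F_\omega$) as the zero locus of a section of $\C_\omega$ on $F_\omega$, whence $\deg G_0=\deg\bar G-(m-1)$ with both terms again computed by Chern classes. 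If you wish to salvage the fibrewise approach, the missing pushforward analysis over $\Pi$ is exactly what you would have to supply; it is genuinely harder than the sheaf-theoretic route the paper takes.
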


The proof occupies the rest of this section.
To start with, let us consider the reducible linear congruence
$R = X_\omega \cup Y$ and the universal line
$\p(\UU^*|_{R})$ over
$R$. Following the classical description of 
degeneracy loci of webs of twisted $2$-forms, we write $\p(\UU^*|_{R})$ 
as a  reducible Palatini scroll, i. e. degeneracy locus of a morphism
(cf. for instance
\cite{Ott2, FF}):
\begin{equation}
  \label{pala}
\OO_{\p(V)}^{n-1} \to \Omega_{\p(V)}(2).  
\end{equation}

Let $\FF$ be the cokernel of this map. We have $\p(\FF) \cong \p(\UU^*|_{R})$.
Also, the map \eqref{pala} corresponds to the choice of $n-1$ independent hyperplanes
containing $X_\omega$. So it factors through the map $\phi_\omega\colon
T_{\p(V)}(-1) \to \Omega_{\p(V)}(2)$ defining $F_\omega$.

For the remaining part of the proof, we distinguish two cases as in the
statement according to the parity of $n$.

%%%%%%%%%%%%%%%%%%%%%%%%
\subsubsection{If $n=2m+1$}

Recall the resolution \eqref{f-odd} of $F_\omega$. Since the map in
\eqref{pala} factors through $\phi_\omega$, we get an exact
commutative diagram:
\begin{equation} \label{diagram-defining-T}
\xymatrix@-2.9ex{
& 0 \ar[d]  & 0 \ar[d]  & \\
& \OO_{\p(V)}^{n-1} \ar[d] \ar@{=}[r] & \OO_{\p(V)}^{n-1} \ar[d] \\
0 \ar[r] & \im(\phi_\omega) \ar[d] \ar[r] & \Omega_{\p(V)}(2) \ar[r]
\ar[d] & \II_{F_\omega /  \p(V)}(m) \ar[r]  \ar@{=}[d]& 0\\
0  \ar[r]  & \TTT \ar[r] \ar[d]& \FF \ar[r] \ar[d]& \II_{F_\omega /  \p(V)}(m)\ar[r] & 0 \\
& 0  & 0   & 
}
\end{equation}
The coherent sheaf $\TTT$, defined by the diagram, is thus the
torsion part of $\FF$. Note that the exact sequence on the bottom
line, after projectivisation, accounts for the exact sequence 
\begin{equation*}
0 \to \II_{X_\omega / R} \to \OO_{R} \to \OO_{X_\omega}
\to 0
\end{equation*}
extracted from \eqref{Schubert hyperplane}, once taken the universal
lines above $R$ and $X_\omega$.
Then the fundamental locus  $G$ is the support of the sheaf
$\TTT$, which is a hypersurface of degree $m$, by a straightforward
Chern class computation.

To check that $G$ contains $\Pi$, we observe that the cokernel of the induced map $\OO_{\p(V)}^{n-1} \to
T_{\p(V)}(-1)$ is clearly $\II_{\Pi/\p(V)}(1)$. So 
 the leftmost part of
the resolution \eqref{f-odd}, combined with the left column of the
above diagram, yields another commutative exact diagram:
\begin{equation*}
\xymatrix@-2.9ex{
& & 0 \ar[d]  & 0 \ar[d]  & \\
& & \OO_{\p(V)}^{n-1} \ar[d] \ar@{=}[r] & \OO_{\p(V)}^{n-1} \ar[d] \\
0 \ar[r] & \OO_{\p(V)}(1-m)   \ar@{=}[d] \ar[r] & T_{\p(V)}(-1)\ar[r] \ar[d] & \im(\phi_\omega) \ar[d] \ar[r]  & 0\\
0 \ar[r] &  \OO_{\p(V)}(1-m) \ar[r] & \II_{\Pi/\p(V)}(1) \ar[r] \ar[d]& \TTT \ar[r]\ar[d]& 0 \\
&& 0  & 0   & 
}
\end{equation*}
The bottom row shows that $\Pi$ lies in the support of $\TTT$,
i.e. $G$ contains $\Pi$.

To show that $G$ contains $F_\omega$, we look back at
Diagram \eqref{diagram-defining-T}. First note that, dualising the
middle column, we easily get $\EExt^i_X(\FF,\OO_X)=0$ for all $i >
1$. On the other hand, $\TTT$ is  supported on the hypersurface $G$ of degree $m$.
Therefore, 
\begin{equation*}
\EExt^1_X(\TTT,\OO_X) \simeq \HH om_G(\TTT,\OO_G(m))
\end{equation*}
is also supported on $G$. 
Also, since $F_\omega$ is a Gorenstein subvariety of codimension $3$
in $\p(V)$ with $\omega_{F_\omega} \simeq \OO_{F_\omega}(-3)$
we have that
\begin{equation*}
\EExt^2_X(\II_{F_\omega/\p(V)}(m),\OO_X)\simeq \OO_{F_\omega}(m-1).
\end{equation*}
Therefore,
taking duals of the bottom row of Diagram \eqref{diagram-defining-T}
we end up with a surjection:
\begin{equation*}
\HH om_G(\TTT,\OO_G(m)) \twoheadrightarrow \OO_{F_\omega}(m-1).
\end{equation*}
In particular, $F_\omega$ is contained in the
support of $\HH om_G(\TTT,\OO_G(m))$, i.e. in $G$.
%%%%%%%%%%%%%%%%%%%%%%%%%%%
\subsubsection{If $n=2m$}

We consider the resolution \eqref{linked} of
$\II_{Y/\GG}$ and twist with $\OO_\GG(1)$.
Recalling the setup of \S \ref{proj bundles},
 we lift this resolution to the universal lines
over $R$ and $Y$ by
pulling back via $\lambda$. Finally, we take direct
image in $\p(V)$ via $\mu$. 
The
universal line over $Y$ is the projectivisation of
$\mu_*(\lambda^*(\OO_{Y}(1))$, cf.  \cite{P2}.

Recall that $\mu_*(\lambda^*(\OO_\GG(1))) \cong \Omega_{\p(V)}(2)$
and that $\mu_*(\lambda^*(\OO_\GG)) \cong \OO_{\p(V)}$. 
The remaining terms of the resolution of
$\mu_*(\lambda^*(\OO_{Y_\omega}(1))$ are
computed via Bott's theorem, which provides a long exact sequence:
\begin{equation}
  \label{longg}
  0 \to \OO_{\p(V)}(1-m) \to \OO_{\p(V)}^{n} \xrightarrow{g} \Omega_{\p(V)}(2) \to \II_{G/\p(V)}(m) \to 0,  
\end{equation}
where $\mu_*(\lambda^*(\OO_{Y}(1)) \cong \II_{G/\p(V)}(m)$.

The middle map $g$ in the above sequence is just the result of applying $\mu_*
\circ \lambda^*$ to the map $\OO_\GG^n \to \OO_\GG^n(1)$ expressing the generators of
$\II_{Y/\GG}(1)$.
By construction of the resolution of \eqref{linked}, the map
\eqref{pala} then fits into the long exact sequence above to give the
exact commutative diagram
\begin{equation*}
\xymatrix@-2.9ex{
& 0 \ar[d]  & 0 \ar[d]  & \\
& \OO_{\p(V)}^{n-1} \ar[d] \ar@{=}[r] & \OO_{\p(V)}^{n-1} \ar[d] \\
0 \ar[r] & \im(g) \ar[d] \ar[r] & \Omega_{\p(V)}(2) \ar[r] \ar[d] & \II_{G /  \p(V)}(m) \ar[r]  \ar@{=}[d]& 0\\
0  \ar[r]  & \TTT \ar[r] \ar[d]& \FF \ar[r] \ar[d]& \II_{G /  \p(V)}(m)\ar[r] & 0 \\
& 0  & 0   & 
}
\end{equation*}
Again, $\TTT$ is the torsion part of $\FF$, but this time its support
is just $F_\omega$. Actually from the leftmost part of \eqref{longg} we
can see that $\TTT \cong \OO_{F_\omega}$. Indeed, we have the diagram
\begin{equation*}
\xymatrix@-2.9ex{
& & 0 \ar[d]  & 0 \ar[d]  & \\
& & \OO_{\p(V)}^{n-1} \ar[d] \ar@{=}[r] & \OO_{\p(V)}^{n-1} \ar[d] \\
0 \ar[r] & \OO_{\p(V)}(1-m)   \ar@{=}[d] \ar[r] & \OO_{\p(V)}^n\ar[r] \ar[d] & \im(\phi_\omega) \ar[d] \ar[r]  & 0\\
0 \ar[r] &  \OO_{\p(V)}(1-m) \ar[r] & \OO_{\p(V)} \ar[r] \ar[d]& \TTT \ar[r]\ar[d]& 0 \\
&& 0  & 0   & 
}
\end{equation*}
and the form of degree $m-1$ appearing in the bottom row must define $F_\omega$.
Furthermore, recalling that the map $\OO_{\p(V)}^{n-1} \to \Omega_{\p(V)}(2)$ factors
through $\phi_\omega$, i. e. through $T_{\p(V)}(-1)$, we get an exact sequence
\begin{equation*}
0 \to \II_{\Pi/\p(V)}(1) \to \FF \to \C_\omega \to 0,
\end{equation*}
where $\II_{\Pi/\p(V)}(1)$ again appears as cokernel of
$\OO_{\p(V)}^{n-1} \to T_{\p(V)}(-1)$.

Also, the torsion part $\OO_{F_\omega}$ of $\FF$
goes to zero under
composition to $\II_{G/\p(V)}(m)$, so we finally get an exact
commutative diagram:
\begin{equation*}
\xymatrix@-2.9ex{
&& 0 \ar[d] & 0 \ar[d] & \\
&& \II_{\Pi/\p(V)}(1)  \ar[d] \ar@{=}[r] & \II_{\Pi/\p(V)}(1) \ar[d] \\ 
0 \ar[r] & \OO_{F_\omega} \ar@{=}[d] \ar[r] & \FF \ar[r] \ar[d] & \II_{G/\p(V)}(m) \ar[d] \ar[r] & 0 \\
0\ar[r] & \OO_{F_\omega} \ar[r] & \C_\omega \ar[r] \ar[d] & \II_{\bar  G/F_\omega}(m) \ar[r] \ar[d] & 0\\
& & 0 & 0
}
\end{equation*}

Here $\bar G$ is defined by the bottom row of the diagram as the zero
locus of a global section of $\C_\omega$ in $F_\omega$. The subvariety
$\bar G$ of $F_\omega$ has thus codimension $2$ in $F_\omega$ (and thus codimension
$3$ in $\p(V))$ by a standard argument relying on the vanishing
$H^0(F_\omega,\C_\omega(-1))=0$ and on the fact that
$\Pic(F_\omega)$ is generated by $\OO_{F_\omega}(1)$.
A direct Chern class computation shows that $\deg(\bar G)=(m+1) (2 m^{2}-5 m+6)/6$.

Our goal is to describe the component $G_0$, the closure of $G
\setminus \Pi$. 
 To this end, let us use the rightmost column of the diagram to describe $\bar
G$ in more detail. The inclusion $\II_{\Pi/\p(V)}(1) \subset
\II_{\Pi/\p(V)}(m)$ factors through $\II_{G/\p(V)}(m)
\subset \II_{\Pi/\p(V)}(m)$. 

On the other hand, if 
we write 
\begin{equation}\label{eq:g1}
G_1=\Pi \cap F_\omega
\end{equation}
and note that $G_1$ is
a codimension $2$ subvariety of 
$F_\omega$ of degree $m-1$, then there is an obvious exact sequence:
\begin{equation*}
0 \to \II_{\Pi/\p(V)}(1) \to \II_{\Pi/\p(V)}(m) \to \II_{G_1/F_\omega}(m) \to 0.
\end{equation*}

We are now in position to compute the
degree of $G_0$. Indeed, we have a last exact commutative diagram:
\begin{equation*}
\xymatrix@-2.9ex{
& 0 \ar[d] & 0 \ar[d] & 0 \ar[d] \\
0 \ar[r] & \II_{\Pi/\p(V)}(1)\ar@{=}[d] \ar[r] & \II_{G / \p(V)}(m)\ar[d]\ar[r]&\II_{\bar G/F_\omega}(m)\ar[r] \ar[d]& 0 \\
0 \ar[r] & \II_{\Pi/\p(V)}(1) \ar[r] & \II_{\Pi / \p(V)}(m)\ar[r]\ar[d]&\II_{G_1/F_\omega}(m)\ar[r] \ar[d]& 0 \\
& & \II_{\Pi/G}(m)\ar[d] \ar^{\cong}[r] & \II_{G_1/G}(m)\ar[d]\\
& & 0 & 0
}
\end{equation*}

Since $\Pi$ and $G_0$ are irreducible components of $G$, the ideal $\II_{\Pi/G}$ is
supported at $G_0$ and is torsion-free of rank $1$ over $G_0$. The
isomorphism in the lower-right corner induced by the diagram expresses
$\bar G$ as union of two irreducible components $G_0$ and
$G_1$, of codimension $2$ in $F_\omega$.
So the degree of $G_0$ is computed
by $\deg(G_0)=\deg(\bar G)-\deg(G_1)$.
Plugging in the formula for $\deg(\bar G)$ and $\deg(G_1)=m-1$, we get
the desired expression of $\deg(G_0)$.

\begin{flushright}$\Box$
\end{flushright}

From this proof we in particular get

\begin{coro} The component $G_0$ of the fundamental locus of $Y$ is obtained as residual in the zero-locus
     $\bar G$ of a global section of $\C_\omega$ over $F_\omega$ with respect to 
      $G_1=\Pi \cap F_\omega$.
\end{coro}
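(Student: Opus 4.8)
The plan is to assemble the statement directly from the objects constructed in the even case $n=2m$ of the proof of Theorem \ref{Gomega}; no new computation should be needed. First I would recall the exact sequence obtained there,
\begin{equation*}
0 \to \OO_{F_\omega} \to \C_\omega \to \II_{\bar G/F_\omega}(m) \to 0,
\end{equation*}
whose injection $\OO_{F_\omega}\to\C_\omega$ is a global section of $\C_\omega$ over $F_\omega$ and whose cokernel exhibits $\bar G$ as the zero-locus of that section. This already produces $\bar G$ in exactly the form demanded by the statement.

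Next I would invoke the final commutative diagram of that same proof, whose lower-right square yields the isomorphism
\begin{equation*}
\II_{\Pi/G}(m) \xrightarrow{\ \cong\ } \II_{G_1/G}(m),
\end{equation*}
with $G_1=\Pi\cap F_\omega$. Reading this together with the fact, also established there, that $\II_{\Pi/G}$ is torsion-free of rank one supported on $G_0$ (the closure of $G\setminus\Pi$), one sees that removing the component $\Pi$ from $G$, equivalently removing $G_1$ from $\bar G$, recovers exactly $G_0$ with the scheme structure coming from the ideal quotient. This is precisely the meaning of $G_0$ being residual to $G_1$ inside $\bar G$, so the corollary follows.

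The only delicate point, but one already settled in the proof of the theorem, is that $\bar G$ genuinely splits into the two codimension-two (in $F_\omega$) pieces $G_0$ and $G_1$, with neither acquiring embedded or spurious structure. This is guaranteed by the rank-one torsion-freeness of $\II_{\Pi/G}$ over $G_0$ together with the additivity of degrees $\deg(G_0)=\deg(\bar G)-\deg(G_1)$ verified there; hence there is nothing left to prove beyond collecting these facts.
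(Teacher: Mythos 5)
Your proposal is correct and follows essentially the same route as the paper: the corollary is read off from the even case of the proof of Theorem \ref{Gomega}, using the exact sequence $0 \to \OO_{F_\omega} \to \C_\omega \to \II_{\bar G/F_\omega}(m) \to 0$ to realise $\bar G$ as the zero-locus of a section of $\C_\omega$, and the final diagram's isomorphism $\II_{\Pi/G}(m)\cong\II_{G_1/G}(m)$, with $\II_{\Pi/G}$ torsion-free of rank one supported on $G_0$, to identify $G_0$ as the residual component. Indeed, the paper introduces the corollary with ``From this proof we in particular get,'' so collecting precisely these facts, as you do, is exactly the intended argument.
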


Finally, we can give a geometric description of the congruence $Y$ as multisecant lines to its fundamental locus, when $n=2m$, 
as we did in Theorem \ref{thm:pippo}.

\begin{theorem}\label{pippo2}
 Let $\dim(V)=n+1=2m+1$ and assume $\omega\in \bw^3V^*$ satisfies \ref{GC4} and let $Y$ be the residual congruence to $X_\omega$ associated
   with a general pencil of hyperplanes of $\p(V)$, with base locus $\Pi$. 
Then $Y$ is the closure of the family of $(\frac{n-2}{2})$-secant lines of $G_0$ that also meet $\Pi$, where $G_0\cup \Pi$ is the fundamental locus of $Y$. 
\end{theorem}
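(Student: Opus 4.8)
The plan is to exploit the fibration of $Y$ over $\p^1$ furnished by Proposition \ref{Yred}, namely $Y_{\omega,x\w y}=\bigcup_{[a:b]\in\p^1}X_{\omega_{[a:b]}}$, and to apply Theorem \ref{thm:pippo} to each fibre. Since $\dim V=2m+1$ we have $n=2m$ even, so each member $\p(V_{[a:b]})=\{ax+by=0\}$ of the pencil is a $\p^{n-1}$ whose underlying vector space $V_{[a:b]}$ has even dimension $n$. For general $\omega$ and general $[a:b]$ the restriction $\omega_{[a:b]}$ has rank $n$, i.e. satisfies \ref{GC2}, so Theorem \ref{thm:pippo} applies inside $\p(V_{[a:b]})$ and shows that the general line of $X_{\omega_{[a:b]}}$ is $\frac{(n-1)-1}{2}=(m-1)$-secant to the fundamental locus $F_{\omega_{[a:b]}}$. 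As $\frac{n-2}{2}=m-1$, this already yields the expected secant index. Moreover $\Pi=\p(V_{x\w y})$ is a hyperplane of $\p(V_{[a:b]})$, so every line of $X_{\omega_{[a:b]}}$, being contained in $\p(V_{[a:b]})$, automatically meets $\Pi$, and for a general such line the meeting is a single point off $F_{\omega_{[a:b]}}$.

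Next I would identify secancy to $F_{\omega_{[a:b]}}$ with secancy to $G_0$. Each point of $F_{\omega_{[a:b]}}$ lies on infinitely many lines of $X_{\omega_{[a:b]}}\subseteq Y$, since it is the fundamental locus of that congruence (Proposition \ref{prop:fonda}), hence on infinitely many lines of $Y$; so $F_{\omega_{[a:b]}}$ is contained in the fundamental locus $G=\Pi\cup G_0$ of $Y$ from Theorem \ref{Gomega}, and away from $\Pi$ it lies in $G_0$, giving $F_{\omega_{[a:b]}}\subseteq G_0$. By Proposition \ref{prop:fonda}, $F_{\omega_{[a:b]}}$ has codimension $3$ in $\p(V_{[a:b]})$, hence dimension $n-4$; letting $[a:b]$ vary adds one parameter, so $\overline{\bigcup_{[a:b]}F_{\omega_{[a:b]}}}$ has dimension $n-3=\dim G_0$. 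As $G_0$ is irreducible of this dimension, I conclude $G_0=\overline{\bigcup_{[a:b]}F_{\omega_{[a:b]}}}$ and, comparing fibres, $G_0\cap\p(V_{[a:b]})=F_{\omega_{[a:b]}}$ away from $\Pi$: any point of $G_0\cap\p(V_{[a:b]})$ off $\Pi$ lies in some $F_{\omega_{[a':b']}}$, and $\p(V_{[a:b]})\cap\p(V_{[a':b']})=\Pi$ forces $[a':b']=[a:b]$.

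With these identifications both inclusions follow. For a general line $L\in Y$ there is a unique $[a:b]$ with $L\subset\p(V_{[a:b]})$ — indeed $L$ meets $\Pi$ in one point, so the restrictions of $x$ and $y$ to $L$ are proportional and a unique $[a:b]$ annihilates them — and $L$ is $(m-1)$-secant to $F_{\omega_{[a:b]}}$. Its $m-1$ secant points lie off $\Pi$ and, by the previous paragraph, coincide with $L\cap(G_0\setminus\Pi)$, so $L$ is $(m-1)$-secant to $G_0$ and meets $\Pi$; thus $Y$ is contained in the closure of the family of $(m-1)$-secant lines to $G_0$ meeting $\Pi$. Conversely a general line $L$ meeting $\Pi$ is contained in a unique $\p(V_{[a:b]})$, and if it is $(m-1)$-secant to $G_0$ then all its secant points lie in $G_0\cap\p(V_{[a:b]})=F_{\omega_{[a:b]}}$, whence $L$ is $(m-1)$-secant to $F_{\omega_{[a:b]}}$ and so $L\in X_{\omega_{[a:b]}}\subseteq Y$ by Theorem \ref{thm:pippo}. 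Since $Y$ is irreducible of dimension $n-1$ (Proposition \ref{Y is CM}) and closed, these two general-element containments give the asserted equality. I expect the main obstacle to be the geometric identification $G_0=\overline{\bigcup_{[a:b]}F_{\omega_{[a:b]}}}$, i.e. matching the abstract description of $G_0$ arising from the section of $\C_\omega$ in the proof of Theorem \ref{Gomega} with the explicit union of fibrewise fundamental loci; once this is established, the bookkeeping of secant points off $\Pi$ and the uniqueness of $[a:b]$ are routine.
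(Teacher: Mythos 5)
Your proposal is correct and follows essentially the same route as the paper's proof: decompose $Y=\bigcup_{[a:b]\in\p^1}X_{\omega_{[a:b]}}$ via Proposition \ref{Yred}\eqref{r:3}, apply Theorem \ref{thm:pippo} to each fibre (noting $\dim V_{[a:b]}=2m$ is even, giving secant index $m-1=\frac{n-2}{2}$), use Proposition \ref{spanY} to see that lines of $Y$ meet $\Pi$ and hence lie in a unique $\p(V_{[a:b]})$, and identify $G_0$ with $\overline{\bigcup_{[a:b]}F_{[a:b]}}$. Your dimension-and-irreducibility argument for this last identification is a mild elaboration of the paper's more direct pointwise observation that, off $\Pi$, the lines of $Y$ through a point $P$ are exactly the lines of $X_{\omega_{[a:b]}}$ through $P$ for the unique hyperplane of the pencil containing $P$, but it does not constitute a different approach.
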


\begin{proof}
By Proposition \ref{Yred}, \eqref{r:3},  
\begin{equation*}
Y=\bigcup_{[a:b]\in \p^1} X_{\omega_{[a:b]}},
\end{equation*}
where $X_{\omega_{[a:b]}}\subset G(2,V_{[a:b]})$ is defined by $\omega_{[a:b]}(L)=0$.  But  $X_{\omega_{[a:b]}}$  
is a congruence defined by a $3$-form on the 
$n$-dimensional vector space $V_{[a:b]}$ where $n=2m$ is even, so Theorem \ref{thm:pippo} applies.   We deduce that the lines of 
$X_{\omega_{[a:b]}}$ are $(\frac{n-2}{2})$-secants to its fundamental locus: call it $F_{[a:b]}$. 

Moreover, by Proposition \ref{spanY}, 
the lines of $Y$ are contained in the Schubert hyperplane of the lines meeting $\Pi$.  Since $\Pi$ is a hyperplane 
in $\p(V_{[a:b]})$, all the lines of  $X_{\omega_{[a:b]}}$ meet $\Pi$ and  any line that meets $\Pi$ is contained in a $\p(V_{[a:b]})$.  We infer that 
\begin{align*}
\left(\bigcup_{[a:b]\in \p^1}F_{[a:b]}\right)\setminus \Pi&= G_0\setminus \Pi, &   \bigcup_{[a:b]\in \p^1}F_{[a:b]}&\subset G_0
\end{align*}
and that any  $(\frac{n-2}{2})$-secant line to $G_0$ that meets $\Pi$ belongs to one of the  $X_{\omega_{[a:b]}}$ and 
is a line of $Y$. 
\end{proof}

\begin{rema}
In the proof of the preceding theorem, we have proven that---outside $\Pi$---$G_0$ coincides with $\bigcup_{[a:b]\in \p^1}F_{[a:b]}$, 
where, as in the proof, $F_{[a:b]}$ denotes the fundamental locus of the congruence  $X_{\omega_{[a:b]}}$; in other words, 
\begin{equation}\label{eq:go}
G_0=\overline{\bigcup_{[a:b]\in \p^1}F_{[a:b]}}.
\end{equation}
\end{rema}

We can also analyse what happens to $G_0$ in $\Pi$: 

\begin{prop}\label{prop:fxy}
With notations as above, if $n=2m$, then 
\begin{equation*}
G_0\cap \Pi=G_1\cap F_{\omega_{x\w y}},
\end{equation*}
where  $F_{\omega_{x\w y}}$ is the fundamental locus of the congruence $X_{\omega_{x\w y}}$. 

In particular, $G_0\cap \Pi$ is an improper intersection,
it has codimension $2$ in $\Pi$ and degree $(m-1)(m-2)$,  and is the complete intersection of  
$G_1=F_\omega\cap \Pi$ of degree $m-1$ and $F_{\omega_{x\w y}}$ of degree $m-2$. 
\end{prop}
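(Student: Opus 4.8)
The plan is to fix coordinates with $x=x_0$, $y=x_1$, so that $\Pi=\p(V_{x\w y})$ with $V_{x\w y}=\gen{e_2,\dots,e_n}$, and to use the decomposition of Lemma \ref{codim4},
\[
\omega=\omega_{01}+\gamma_0\w x_0+\gamma_1\w x_1+\alpha_{01}\w x_0\w x_1,
\]
for which $\omega_{x\w y}=\omega_{01}$ and hence $F_{\omega_{x\w y}}=F_{\omega_{01}}$. The two inputs are the description $G_0=\overline{\bigcup_{[a:b]\in\p^1}F_{[a:b]}}$ from \eqref{eq:go}, and the matrix description of fundamental loci from Proposition \ref{prop:fonda} and Corollary \ref{fundamental}, whereby $F_{\omega'}$ is the locus where the skew form $\omega'(v)$ drops rank.

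First I would evaluate the relevant skew forms at a point $[v]\in\Pi$, i.e. $v\in V_{x\w y}$. Setting $A:=\omega_{01}(v)$ (a $(2m-1)\times(2m-1)$ skew form, always having $v$ in its kernel), a direct contraction shows that $M_{\omega_{[a:b]}}(v)$ is the skew border of $A$ by the covector $u_{[a:b]}:=b\,\gamma_0(v)-a\,\gamma_1(v)$, while $M_\omega(v)$ is the double skew border of $A$ by $\gamma_0(v),\gamma_1(v)$ with corner $\alpha:=\alpha_{01}(v)$. By the elementary rank rule for a skew border (the rank is unchanged precisely when the bordering covector lies in $\im A=(\ker A)^\circ$), on the open stratum of $F_{\omega_{01}}$ where $\rk A=2m-4$ a point $[v]$ lies in some $F_{[a:b]}$ exactly when $u_{[a:b]}\in\im A$ for some $[a:b]$. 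Writing $\ker A=\gen{v,w_1,w_2}$ and using $u_{[a:b]}(v)=0$, this is the single determinantal condition
\[
(\ast)\qquad \gamma_0(v\w w_1)\,\gamma_1(v\w w_2)-\gamma_0(v\w w_2)\,\gamma_1(v\w w_1)=0,
\]
equivalently the vanishing of $(\gamma_0(v)\w\gamma_1(v))|_{\ker A}$. Together with the border rule for $\rk A=2m-2$ (which forbids membership in any $F_{[a:b]}$), this gives $G_0\cap\Pi\subseteq F_{\omega_{01}}$, and since $G_0\subseteq F_\omega$ by Theorem \ref{Gomega} we also get $G_0\cap\Pi\subseteq G_1=\Pi\cap F_\omega$.

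Next I would describe $G_1$ on $\Pi$. Eliminating the two border coordinates of $M_\omega(v)$, which is possible on the dense open set where $\alpha\neq0$, yields $\rk M_\omega(v)=2+\rk\Phi$ with $\Phi:=\omega_{01}(v)+\tfrac1\alpha\,\gamma_0(v)\w\gamma_1(v)$; hence $[v]\in G_1$ iff $\rk\Phi\le2m-4$. Restricting to $\rk A=2m-4$ and forming the $3\times3$ skew Schur complement $S$ of $\Phi$ along $\ker A$, a short computation exploiting that $\gamma_0(v)\w\gamma_1(v)$ has rank $2$ shows that the correction term is a scalar multiple of $B_{22}:=(\gamma_0(v)\w\gamma_1(v))|_{\ker A}$, so that $S=c(1+c\kappa)B_{22}$ for explicit scalars $c=1/\alpha$ and $\kappa$. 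For general $\omega$ the factor $1+c\kappa$ is nonzero on the locus in question, so $\rk\Phi\le2m-4$ is again equivalent to $(\ast)$. Comparing with the previous paragraph gives the reverse inclusion on a dense open subset of $G_1\cap F_{\omega_{x\w y}}$, and taking closures yields the set-theoretic equality $G_0\cap\Pi=G_1\cap F_{\omega_{x\w y}}$.

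Finally, the numerical statement is formal: $G_1=\Pi\cap F_\omega$ is a hypersurface of degree $m-1$ in $\Pi\cong\p^{2m-2}$ and $F_{\omega_{x\w y}}$ is a hypersurface of degree $m-2$ in $\Pi$; for general $x,y$ they meet in the expected codimension $2$, so their intersection is a complete intersection of pure dimension $2m-4$ and degree $(m-1)(m-2)$. The intersection is improper because $\codim G_0=3$ and $\codim\Pi=2$ in $\p(V)=\p^{2m}$ predict dimension $2m-5$, one less than the actual $2m-4$. The main obstacle I expect is the two rank computations—especially proving that the Schur-complement correction is proportional to $B_{22}$, so that both $G_0\cap\Pi$ and $G_1\cap F_{\omega_{x\w y}}$ collapse onto the single equation $(\ast)$—and controlling, by genericity of $\omega$ and of $x,y$, the lower strata $\{\rk A\le2m-6\}$, the hyperplane $\{\alpha=0\}$ and the spurious factor $1+c\kappa$, none of which contributes a top-dimensional component.
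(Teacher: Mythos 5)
Your proposal is correct in substance, but it takes a genuinely different route at the decisive step. The forward inclusion $G_0\cap\Pi\subseteq G_1\cap F_{\omega_{x\w y}}$ is in both arguments essentially the same observation about nested skew matrices: the paper notes that for $P\in G_0\cap\Pi$, Proposition \ref{Yred} \eqref{r:3}, equation \eqref{eq:go} and Theorem \ref{pippo2} force $\rk M_{\omega_x}(P)\le n-4$, whence $\rk M_\omega(P)\le n-2$ and $\rk M_{\omega_{x\w y}}(P)\le n-4$. The reverse inclusion, however, is never proved pointwise in the paper: instead, the hyperplane section $G_0\cap\p(V_{[a:b]})$ decomposes as $F_{[a:b]}\cup(G_0\cap\Pi)$, so $\deg (G_0\cap\Pi)=\deg G_0-\deg F_{[a:b]}=(m-1)(m-2)$ by Theorem \ref{Gomega} and Proposition \ref{prop:fonda}; since this equals the degree of the codimension-$2$ complete intersection $G_1\cap F_{\omega_{x\w y}}$ and the dimensions agree, the inclusion is an equality. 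This global degree count sidesteps your entire stratum analysis; your local argument is longer but yields more, namely the explicit determinantal equation $(\ast)$ cutting the intersection on the stratum $\{\rk A=2m-4\}$, which the paper never exhibits.

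Your flagged genericity items are of unequal difficulty, and one is dismissed too quickly as written. The stratum $\{\rk A\le 2m-6\}$ is harmless for free: it has codimension $6$ in $\Pi$ (Section \ref{deg loci}), while every component of the intersection of the two hypersurfaces $G_1$ and $F_{\omega_{x\w y}}$ has codimension $2$. But the locus $\{1+c\kappa=0\}\cap F_{\omega_{x\w y}}$ is cut by one equation on a hypersurface, hence has a priori codimension exactly $2$, i.e.\ it is potentially top-dimensional in $G_1\cap F_{\omega_{x\w y}}$; and note that $S=c(1+c\kappa)B_{22}$ vanishes wherever $1+c\kappa=0$, so every such point of the stratum automatically lies in $G_1\cap F_{\omega_{x\w y}}$. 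So ``does not contribute a top-dimensional component'' genuinely needs proof. Fortunately the issue is vacuous, and you can drop $\alpha\neq 0$, the invertibility of $A_W+cB_{11}$ and the factor $1+c\kappa$ altogether by computing kernels of the doubly bordered matrix directly: write a kernel vector of $M_\omega(v)$ as $(w,s,t)$, with $w\in V_{x\w y}$ and $s,t$ along the two bordering directions. If $B_{22}\neq 0$, i.e.\ $u_0=\gamma_0(v)$ and $u_1=\gamma_1(v)$ restrict to independent forms on $\ker A$, then evaluating the first block of kernel equations on $\ker A$ gives $s\,u_0+t\,u_1=0$ there, hence $s=t=0$; then $Aw=0$ and $u_0(w)=u_1(w)=0$, so $w\in\ker A\cap\ker u_0\cap\ker u_1=\gen{v}$, whence $\rk M_\omega(v)=2m$ and $v\notin F_\omega$. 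If instead $B_{22}=0$, your border argument puts $v$ in some $F_{[a:b]}$, so $\rk M_\omega(v)\le 2m-2$ and $v\in G_1$. Thus on the stratum one has unconditionally $G_1\cap F_{\omega_{x\w y}}=\{(\ast)\}=G_0\cap\Pi$ (as a byproduct, invertibility of $A_W+cB_{11}$ together with $B_{22}\neq 0$ forces $1+c\kappa\neq 0$, confirming your Schur computation is consistent), and with this replacement your proof closes completely as a valid alternative to the paper's.
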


\begin{proof}
Let $P\in G_1\cap F_{\omega_{x\w y}}$; as usual, up to a change of coordinates, we can suppose that $P=[1,0,\dotsc,0]=[e_0]$, we fix a basis 
$(e_0,\dotsc,e_n)$ and dual basis $(x_0,\dotsc,x_n)$ such that $x=x_{n-1}$ and 
$y=x_n$, i.e. $\Pi^\perp=\gen{x_0,\dotsc,x_{n-2}}$ and we can write uniquely 
\begin{equation*}
\omega =\omega_{x\w y}+\beta_x\w x+\beta_y\w y+ z\w x\w y, 
\end{equation*}
where $\omega_{x\w y}\in \bw^3 \gen{x_0,\dotsc,x_{n-2}}$, $\beta_x,\beta_y\in  \bw^2 \gen{x_0,\dotsc,x_{n-2}}$, 
and $z\in \gen{x_0,\dotsc,x_{n-2}}$. 

Since---see Equation \eqref{eq:g1}---$P\in G_1=F\cap \Pi$,
 we have that the skew-symmetric matrix $M_\omega$ at the point $P$, defined as in \eqref{eq:mop},
\begin{equation*}
M_\omega(P)=\left( (-1)^{i+j-1}(a_{0,i,j}) \right)_{\substack{i=0,\dotsc,n \\ j=0,\dotsc,n}},
\end{equation*}
has rank (at most) $n-2$. The matrix $M_{\omega_{x\w y}}$ at the point $P$ is 
\begin{equation*}
M_{\omega_{x\w y}}(P)=\left( (-1)^{i+j-1}(a_{0,i,j}) \right)_{\substack{i=0,\dotsc,n-2 \\ j=0,\dotsc,n-2}},
\end{equation*}
i.e. it is the submatrix of $M_\omega(P)$ obtained removing  the last $2$ rows and $2$ columns, and, since $P\in F_{\omega_{x\w y}}$, 
it has rank (at most) $n-4$.

In other words, $P\in G_1\cap F_{\omega_{x\w y}}$ if and only if $\rk M_\omega(P)\le n-2$ and $\rk M_{\omega_{x\w y}}(P)\le n-4$, with 
 $\rk M_\omega(P)=\rk M_{\omega_{x\w y}}(P)+2=n-2$ (if $P$ is general). 

It is obvious, from Proposition \ref{prop:fonda}, that $\deg G_1\cap F_{\omega_{x\w y}}=(m-1)(m-2)$ and that 
$G_1\cap F_{\omega_{x\w y}}$ has codimension $2$ in $\Pi$.

On the other hand, if $P\in G_0\cap \Pi$, by Proposition \ref{Yred}, \eqref{r:3}, 
it belongs to infinitely many lines of the congruence $X_{\omega_{[a:b]}}$ 
for some $[a:b]\in \p^1$. Up to a change of coordinates, we can suppose that $[a:b]=[1:0]$ and  $P=[1,0,\dotsc,0]=[e_0]$, so
 by Equation \eqref{eq:go} and Theorem \ref{pippo2} 
\begin{equation*}
M_{\omega_{x}}(P)=\left( (-1)^{i+j-1}(a_{0,i,j}) \right)_{\substack{i=0,\dotsc,n-1 \\ j=0,\dotsc,n-1}},
\end{equation*}
i.e. the submatrix of $M_\omega(P)$ without the last row and column, has rank (at most) $n-4$. If follows that $M_\omega(P)$
has rank at most $n-2$ and that $M_{\omega_{x\w y}}(P)$ has at most the rank of $M_{\omega_{x}}(P)$, which is at most $n-4$ therefore 
$P\in G_1\cap F_{\omega_{x\w y}}$. Therefore, $G_0\cap \Pi\subset G_1\cap F_{\omega_{x\w y}}$, and they have the same dimension.

To finish the proof, it is sufficient to see that 
$\deg G_0\cap \Pi=\deg G_0-\deg  F_{[a:b]}=2\binom{m+1}{3}-\binom{m+1}{2}+2-(\frac{1}{4}\binom{2m-2}{3}+1)=(m-1)(m-2)$, by Theorem 
\ref{Gomega} and Proposition \ref{prop:fonda}. 
\end{proof}

%%%%%%%%%%%%%%%%%%%%%%%%%%%%%%%%%
\subsection{Examples}
Let $\omega$ be a general $3$-form, and $Y$ a general residual congruence to $X_\omega$.

\subsubsection{n=3} $Y$ is  the congruence of lines of a $2$-plane $G$,  the plane spanned by the point $F_\omega$ and the line $\Pi$.  The linear congruence $X_\omega\cup Y$ is obtained intersecting $\GR$ with a $\p^3$ tangent to $\GR$ along a line, which is $X_\omega\cap Y$.

\subsubsection{n=4} We recall (see Example \ref{ex2}) that $X_\omega$ is a linear complex of lines in a hyperplane 
$F_\omega\subset \p^4$.  A general $Y$ is obtained by choosing a general $2$-plane $\Pi\subset \p^4$, intersecting $F_\omega$ is a line $G_1$. 
Then $G_0$ is a line skew to $G_1$, and $Y$, which is smooth of degree $3$, is the family of lines meeting $\Pi$ and $G_0$, i.e. $Y=\p^2\times \p^1$.

 $X_\omega\cap Y$ is a Schubert hyperplane section of a linear complex of lines in $\p^3$, so it is the linear congruence of the lines meeting two skew lines: $G_0$ and $G_1=\Pi\cap F_\omega$.

\subsubsection {n=5}
As explained in Example \ref{ex3}, $X_\omega$ is the congruence of the lines meeting two skew planes $\alpha$, $\beta$. 
In this case $\Pi$ is a $3$-space contained in $\p(V)$. If we identify $\p(V)$ with $P_\omega\subset \p(\bw^2V^*)$, it is generated by its intersection with 
$\Sing(\GG^*)\simeq \GG$, which is the union of two $2$-planes, 
each one representing the incidence condition to a plane in $\p^5$. 
The choice of $\Pi$ identifies a point $A\in\alpha$ and similarly $B\in\beta$, and the line $\ell$ joining $A$ and $B$. 
The residual congruence $Y$ 
is a fourfold of degree $8$ in $\p^{9}$, of multidegree $(0,2,1)$, representing a family of lines contained in a quadric $G$.
$G$ contains the two planes $\alpha, \beta$, $\Pi$ and all the fundamental loci $F_{[a:b]}$ of the pencil of congruences $X_{\omega_{[a:b]}}$, which are also $3$-spaces. Therefore $G$ is a quadric cone of rank $4$ with vertex a line. The spaces $F_{[a:b]}$ belong to one family, and $\Pi$ belongs to the other family.
The lines of $Y$ form a subfamily of dimension $4$ of  the lines contained in one of the two  families of $3$-spaces contained in $G$.  The singular locus of $Y$ is the point representing the line $\ell$, which is also the vertex of $G$. The intersection $X_\omega\cap Y$ represents the lines meeting $\alpha$, $\beta$ and $\ell$, so it  is a tangent hyperplane section of $\p^2\times \p^2$, and it results to be the blow up of $\p^3$ at the two points $A,B$. It is embedded in $\p^7$ with the linear system of quadrics.

\subsubsection {n=6}
 $X_\omega$ is the $G_2$-variety, representing a family of lines in a smooth quadric $F_\omega$. Fixed $\Pi$, $Y$ is the congruence of lines that are secant to a rational normal scroll $G_0$ of degree $4$ and dimension $3$ in $\p^6$ and intersect the $4$-space $\Pi$. $G_0$ is smooth because it contains pairs of skew planes, the fundamental loci $F_{[a:b]}$ of the congruences $X_{\omega_{[a:b]}}$.
 The singular locus of $Y$ is the intersection $X_{\omega_{x\w y}}\cap H_0\cap H_1$, where $H_0, H_1$ are hyperplanes. But $X_{\omega_{x\w y}}$ is a complex of lines in a $\p^3\subset \Pi$, i.e. a $3$-dimensional quadric. We deduce that $\Sing Y$ is a conic, the lines of a quadric surface contained in $\Pi$.
 Now consider $G_0\cap \Pi=G_0\cap \Pi\cap F_\omega=G_0\cap G_1$. To understand it, consider first $G_0\cap \p(V_{[a:b]})$: this is a quartic surface containing the two planes whose union is $F_{[a:b]}$, hence $(G_0\cap \p(V_{[a:b]})\setminus F_{[a:b]}$ is a quadric contained in $\Pi$. Moreover $G_0\cap \Pi=G_0\cap \p(V_x)\cap \p(V_y)\subset \Pi \cap F_\omega=G_1$. Since the missing quadric in $G_0\cap \p(V_{[a:b]})$ is $G_1\cap F_{\omega_{x\w y}}$, it is the union of the lines of $\Sing Y$ .  
 
 Thus  $G_0\cap\Pi$ is a quadric surface (cf. Proposition \ref{prop:fxy}). 

\subsubsection {n=7}
The lines of $Y$ are contained in a cubic hypersurface, containing $\Pi$ and a pencil of projections of $\p^2\times \p^2$. 
The singular locus of $Y$ is $(\p^2\times \p^2)\cap H_x\cap H_y$, a del Pezzo surface of degree $6$ in $\p^6$. 

\subsubsection {n=8}
For $n=8$ $Y$ is formed by the trisecant lines of a $5$-dimensional variety $G_0$ of degree 12 that meets $\Pi=\p^6$ in a $4$-fold of degree 6.  This $4$-fold is a complete intersection of the cubic hypersurface $F_\omega\cap \Pi$ and the quadric hypersurface $F_{\omega_{x\w y}}\subset \Pi$. 

\subsubsection {n=9}
$Y$ represents a family of dimension $8$ of lines in a quartic hypersurface containing a $\p^7$  
and a pencil of Peskine varieties. $\Sing Y$ is formed by the trisecant lines to a del Pezzo surface of degree $6$ in $\p^6$.

%%% Although
%%% not uniquely attached to $\omega$ (as depending also of the choice of the subspace
%%% $x = y = 0$) these quartics may contain additional data for the Hyperk\"a hler
%%% structure related to the Debarre-Voisin $4$-fold obtained by the geometry of
%%% the same $3$-form $\omega$ in \cite{DV10}.
%%%%%%%%%%%%%%%%%% 

\section{The tables} \label{tables}

%%%%%%%%%%%%%%%%%% 

In  tables \ref{t:1} and \ref{t:2} we collect some  geometrical properties of the congruences we have studied. The notations are as usual: 
\begin{itemize}
\item $\omega$ is a general $3$-form in $n+1$ variables;
\item $X_\omega\subset \GR$ is the congruence of lines where $\omega$ vanishes;
\item $F_\omega$ is the fundamental  locus of $X_\omega$;
\item $Y$ is the residual congruence of $X_\omega$ in a general linear section of $\GR$   of codimension $n-1$; 
\item $G$ is the fundamental  locus of $Y$. 
\end{itemize}

 In the last column of table \ref{t:1} we write the dimension of $\bw^3V/\GL(n+1)$, the number of moduli of our construction.

\begin{table}[htbp]
\begin{center}
\textup{
\begin{tabular}{| p{0.5cm} | p{2.9cm} | p{2cm} | p{1cm} | p{3.2cm} | p{2cm}|
}
\hline
$n$ & $X_\omega$ &  $\multdeg X_\omega$ & $\deg X_\omega$ &  $F_\omega$ &  moduli
\\[6pt]
 \hline
 \hline
$3$ & $\p^2$ & $(1,0)$ & $1$ & $\{*\}$ &0 
        \\[6pt]
 \hline
$4$ & $\GG(1,3)\cap\p^4$& $(0,1)$ & 2& $\p^3$ & 0
\\[6pt]
\hline
$5$ & $\p^2\times \p^2$ & $(1,1,1)$ & $6$ &  $\p^2\cup\p^2$ & 0
       \\[6pt]
\hline
$6$ & $G_2$ & $(0,2,2)$ & $18$ &  smooth quadric of $\p^6$ &0
       \\[6pt]
\hline
$7$ & trisecant lines of $F_\omega$ & $(1,2,4,2)$ & 57 &   general projection of $\p^2\times\p^2$ of degree $6$ &0
       \\[6pt]
\hline
$8$ & $7$-dimensional family of lines in $F_\omega$ & $(0,3,6,6)$ & 186 & Coble cubic hypersurface in $\p^8$ singular along an Abelian surface& 3
       \\[6pt]
\hline
$9$ & four-secant lines of $F_\omega$ & $(1,3,9,12,6)$ & 622 & Peskine variety of degree $15$ &20
\\[6pt]
\hline
\end{tabular}
}
\end{center}
\caption{}\label{t:1}
\end{table}

\begin{table}[htbp]
\begin{center}
\textup{
\begin{tabular}{| p{0.5cm} | 
p{2.5cm} | 
p{2.0cm} | p{1.0cm} | p{3cm} | p{2.5cm} |
}
\hline
$n$ & 
$Y$ & 
 $\multdeg Y$  & $\deg Y$ & $G$\newline ($=G_0\cup \Pi$, $n$ even) &$\Pi\cap G_0$, $n$ even
\\[6pt]
 \hline
 \hline
$3$ & $\p^2$  & 
$(0,1)$ & $1$ &   $\p^2$  &
        \\[6pt]
 \hline
$4$ & 
$\p^1\times \p^2$  & 
$(1,1)$ & 3 &  line $G_0$, plane $\Pi$  & $\emptyset$
       \\[6pt]
\hline
$5$ & 
$4$-fold of degree $8$ in $\p^9$, with a singular point & 
$(0,2,1)$ &8 & quadric of rank $4$ &
       \\[6pt]
\hline
$6$ & 
secant lines of rational normal $3$-fold scroll meeting a $\p^4$ & 
$(1,2,3)$ & 24 & rational normal scroll $G_0$ of dim $3$ and degree $4$,   $\Pi=\p^4$& quadric surface
       \\[6pt]
\hline
$7$ & $6$-fold of degree $75$ singular along a del Pezzo surface of degree $6$
  & 
$(0,3,5,3)$ & 75 & cubic hypersurface containing a $\p^5$ and $\pi(\p^2\times \p^2)$&
       \\[6pt]
\hline
$8$ & $3$-secant lines to $G_0$ that meet a $\p^6$
  &
$(1,3,8,8)$ & 243 & $5$-fold $G_0$ of degree $12$, $\Pi=\p^6$ & complete intersection (2,3) fourfold of degree $6$ in $\p^6$
       \\[6pt]
\hline
$9$ & $8$-fold of degree $808$ singular along a $4$-fold of degree $57$
  &
$(0,4,11,16,8)$  & 808 & quartic hypersurface containing a $\p^7$ and a Peskine variety&
\\[6pt]
\hline
\end{tabular}
}
\end{center}
\caption{}\label{t:2}
\end{table}

\providecommand{\bysame}{\leavevmode\hbox to3em{\hrulefill}\thinspace}
\providecommand{\MR}{\relax\ifhmode\unskip\space\fi MR }
% \MRhref is called by the amsart/book/proc definition of \MR.
\providecommand{\MRhref}[2]{%
\href{http://www.ams.org/mathscinet-getitem?mr=#1}{#2}
}
\providecommand{\href}[2]{#2}


\begin{thebibliography}{ACGH85}

\bibitem[AOP12]{AOP}
H. Abo, G. Ottaviani \and C. Peterson, \emph{Non-defectivity of Grassmannians of planes}, J. Algebraic Geom. \textbf{21} (2012), no. 1, 1--20. 

\bibitem[AF01]{AF01} S.I. Agafonov and E.V. Ferapontov,  \emph{Systems of conservation laws of Temple class, equations of associativity and linear congruences in P4}, Manuscripta Math. \textbf{106}, no. 4 (2001), 461--488.

\bibitem[BCDeP14]{BCDeP14} F. Bastianelli, R. Cortini \and P. De Poi,  \emph{The gonality theorem of Noether for hypersurfaces}, 
J. Algebraic Geom. \textbf{23} no. 2 (2014), 313--339.

\bibitem[BDeP15]{BDeP15}  F. Bastianelli, P. De Poi,  L. Ein, R. Lazarsfeld \and B. Ullery,  
\emph{Measures of irrationality for hypersurfaces of large degree},
\url{arXiv:1511.01359}.


\bibitem[CMR04]{CMR04} C. Ciliberto, M. Mella \and F. Russo,  \emph{Varieties with one apparent double point}, 
J. Algebraic Geom. \textbf{13}, no. 3 (2004), 475--512.

\bibitem[CR11]{CR11} C. Ciliberto \and F. Russo,  \emph{On the classification of OADP varieties}, Science China Mathematics \textbf{54} (2011), 1561--1575.

\bibitem[DV10]{DV10}
O. Debarre \and C. Voisin, \emph{Hyper-K\"ahler fourfolds and Grassmann geometry}, J. reine angew. Math. \textbf{649} (2010), 63--87.


\bibitem[DeP01]{DP0}
P. De Poi, \emph{On first order congruences of lines of $\p^4$ with a fundamental curve}, Manuscripta Math. \textbf{106} (2001), 
no. 1, 101--116.

\bibitem[DeP03]{DP}
\bysame, \emph{Threefolds in $\p^5$ with one apparent quadruple point}, Comm. Algebra \textbf{31} (2003), no. 4, 1927--1947. 


\bibitem[DeP05]{DPa}
\bysame, \emph{On first order congruences of lines in $\p^4$ with irreducible fundamental surface}, Math. Nachr. 
\textbf{278} (2005), no. 4, 363--378.

\bibitem[DeP08]{DP1}
\bysame, \emph{On first order congruences of lines in $\p^4$ with
generically non-reduced fundamental surface},  Asian J. Math.
\textbf{12} (2008), no.~1, 55--64. 

\bibitem[DePM05]{DePM05}P. De Poi \and E. Mezzetti,  \emph{Linear congruences and systems of conservation laws}, Projective Varieties with Unexpected Properties (Berlin), Walter de Gruyter, KG (2005), pp. 209--230.

\bibitem[DePM07]{DePM07}
\bysame,  \emph{On Congruences of Linear Spaces of Order One}, Rend. Istit. Mat. Univ. Trieste \textbf{39}  (2007), 177--206.


\bibitem[DePM08]{Zak}
\bysame, \emph{Congruences of lines in $\p^5$, quadratic normality, and completely exceptional Monge-Amp\`ere equations}, 
Geom. Dedicata \textbf{131} (2008), 213--230. 


\bibitem[DS01]{DS}
E. Deutsch  \and L. Shapiro, \emph{A survey of the Fine numbers},  Discrete Math.
\textbf{241} (2001), no.~1-3, 241--265. 


\bibitem[\DJ ok83]{Djokovic} 
D. \DJ okovi\'c, \emph{Closures of equivalence classes of trivectors of an eight-dimensional complex vector space}, 
Canad. Math.
Bull. \textbf{26} (1) (1983), 92--99.

%\bibitem[ELU15]{ELU15} L. Ein, R. Lazarsfeld \and B. Ullery,  \emph{Measures of irrationality for hypersurfaces of large degree}, 
%\url{arXiv:1511.01359}.


\bibitem[FF10]{FF} D. Faenzi \and M. L. Fania, 
\emph{Skew-symmetric matrices and Palatini scrolls},
Math. Ann. \textbf{347} (2010), no. 4, 859--883.

\bibitem[Fine]{fine} The Online Encyclopedia of Integer Sequences,  \url{https://oeis.org/A000108}. 

\bibitem[FH91]{FH} W. Fulton \and J. Harris, \emph{Representation theory. A first course}, Springer-Verlag, New York, 1991.

\bibitem[GH94]{GH}
P. Griffiths \and J. Harris, \emph{Principles of algebraic geometry}, Wiley Classics Library, 
John Wiley \& Sons, Inc., New York, 1994, Reprint of the 1978 original.

\bibitem[GS15]{GS15}
 L. Gruson \and S. V. Sam, \emph{Alternating trilinear forms on a
   nine-dimensional space and degenerations of $(3,3)$-polarized Abelian
   surfaces}, 
 Proc. Lond. Math. Soc. (3) \textbf{110} (2015), no. 3, 755--785.

\bibitem[GSW13]{GSW13}
L. Gruson, S. V. Sam \and J. Weyman,
\emph{Moduli of Abelian varieties, Vinberg $\theta$-groups, and free
  resolutions},  
Commutative algebra, 419--469, Springer, New York, 2013. 

\bibitem[Gu35]{Gurevich35} G.B. Gurevich, \emph{Classification of tri-vectors of rank $8$}, Dokl. Akad. Nauk. SSSR \textbf{2}, (1935), 
353--355.

\bibitem[Gu64]{Gurevich} 
\bysame, \emph{Foundations of the theory of algebraic invariants}, P. Noordhoff LTD, Groningen (1964).

\bibitem[Han15]{Han}
F. Han, \emph{Duality and quadratic normality}, 
Rend. Istit. Mat. Univ. Trieste \textbf{47} (2015), 9--16. 


\bibitem[HT84]{HT} J. Harris \and L. Tu, \emph{On symmetric and skew-symmetric determinantal varieties}, 
Topology \textbf{23} (1984), no. 1, 71--84.

\bibitem[Har83]{H}
R. Hartshorne, \emph{Algebraic geometry}, {Graduate Texts in Mathematics},
vol.~52, Springer-Verlag, New York-Heidelberg-Berlin, 1983, {Corr. 3rd
printing}.


\bibitem[Ho11]{Holweck}
F. Holweck, \emph{Singularities of duals of {G}rassmannians},
J. Algebra \textbf{337} (2011), 369--384.


\bibitem[IM05]{Iliev-Manivel}
A. Iliev \and L. Manivel, \emph{Severi varieties and their varieties of reductions},
J. reine. angew. Math. \textbf{585} (2005), 93--139.


\bibitem[KR13]{KR}
M. Kapustka \and K. Ranestad, \emph{Vector bundles on Fano varieties of genus ten},  
Math. Ann. \textbf{356} (2013), no. 2, 439--467.

\bibitem[Kl74]{Kl}
S. L. Kleiman, \emph{The transversality of a general translate},  Compositio Math.  \textbf{28} (1974), 287--297. 



\bibitem[Lan12]{L}
J. M. Landsberg, 
\emph{Tensors: geometry and applications}, 
Graduate Studies in Mathematics, 128. American Mathematical Society, Providence, RI, 2012.


\bibitem[Mig98]{M}
J. Migliore, 
\emph{Introduction to liaison theory and deficiency modules}, 
Progress in Mathematics, 165. Birkh\"auser Boston, Inc., Boston, MA, 1998.


\bibitem[Muk89]{Mukai}
S. Mukai, \emph{Biregular classification of Fano $3$-folds and Fano manifolds of coindex $3$},
Proc. Natl. Acad. Sci. USA \textbf{86} (1989), 3000--3002.


\bibitem[Muk93]{Mu}
\bysame,
\emph{Curves and Grassmannians}, Algebraic geometry and related topics (Inchon, 1992), 19--40,
Conf. Proc. Lecture Notes Algebraic Geom., I, Int. Press, Cambridge, MA, 1993. 




\bibitem[Ott90]{Ott}
G. Ottaviani,
\emph{On Cayley bundles on the five-dimensional quadric},
 Boll. Un. Mat. Ital. A (7) \textbf{4} (1990), no. 1, 87--100. 


\bibitem[Ott92]{Ott2}
\bysame,
\emph{On 3-folds in $\p^5$ which are scrolls},
Ann. Scuola Norm. Sup. Pisa Cl. Sci. (4) \textbf{19} (1992), no. 3, 451--471.


\bibitem[Oz80]{ozeki}
I. Ozeki, \emph{On the Microlocal Structure of a Regular Prehomogeneous Vector Space Associated with $GL(8)$}, 
Proc. Japan Acad. \textbf{56}, Ser. A (1980), 18--21.


\bibitem[Pes15]{P2}
C. Peskine, \emph{Order $1$ congruences of lines with smooth fundamental scheme},
Rend. Istit. Mat. Univ. Trieste \textbf{47} (2015), 203--216. 



\bibitem[Rei07]{Rei}
W. Reichel,
\emph{\"Uber die trilinearen alternierenden Formen in $6$ und $7$ Ver\"anderlichen}, (Dissertation, Greifswald), (1907).


\bibitem[Sch31]{Sch}
J. A. Schouten,
\emph{Klassifizierung der alternierenden Gr\"ossen dritten Grades in $7$ dimensionen}, Rend. Circ. Mat. Palermo, \textbf{55} 
(1931), 137--156.

\bibitem[Seg17]{S}
C. Segre,
\emph{Sui complessi lineari di piani nello spazio a cinque dimensioni}, Ann.
Mat. Pura Appl. \textbf{27} (1917), 75--123.

\bibitem[VE88]{VE}
E.B. Vinberg \and A.G. \`Elashvili, \emph{Classification of trivectors of a $9$-dimensional space}, 
Sel. Math. Sov., \textbf{7} (1988), no. 1, 63--98.

\bibitem[Wey03]{--98W}
J. Weyman, 
\emph{Cohomology of vector bundles and syzygies},
Cambridge Tracts in Mathematics, 149. Cambridge University Press, Cambridge, 2003.


\end{thebibliography}
\end{document}